\documentclass[12pt,reqno]{amsart} 

\makeatletter
\@namedef{subjclassname@2020}{\textup{2020} Mathematics Subject Classification}
\makeatother

\usepackage{amsmath,amssymb,amsthm,amscd}
\usepackage[T1]{fontenc}
\usepackage{blkarray} 

\usepackage{mathrsfs,euscript} 
\usepackage{graphicx} 
\usepackage[colorlinks=true,urlcolor=blue,citecolor=red,linkcolor=blue,hyperfootnotes=true]{hyperref} 

\let\mc\mathcal

\let\eu\EuScript
\let\nc\newcommand

\textwidth 6.5truein
\textheight 8.67truein
\oddsidemargin 0truein
\evensidemargin 0truein
\topmargin 0truein

\newtheorem{thm}{Theorem}[section]
\newtheorem{cor}[thm]{Corollary}
\newtheorem{lem}[thm]{Lemma}
\newtheorem{prop}[thm]{Proposition}
\newtheorem{conj}[thm]{Conjecture}

\theoremstyle{definition}
\newtheorem{defn}[thm]{Definition}
\newtheorem{rem}[thm]{Remark}

\newtheorem{example}[thm]{Example}

\numberwithin{equation}{section}

\def\beq{\begin{equation}}
\def\eeq{\end{equation}}
\def\be{\begin{equation*}}
\def\ee{\end{equation*}}
\nc{\bea}{\begin{eqnarray*}}
\nc{\eea}{\end{eqnarray*}}

\let\al\alpha
\let\bt\beta
\let\dl\delta
\let\Dl\Delta
\let\eps\varepsilon
\let\gm\gamma
\let\Gm\Gamma

\let\la\lambda
\let\La\Lambda

\let\phi\varphi
\let\si\sigma

\let\thi\vartheta

\let\om\omega
\let\Om\Omega

\let\der\partial
\let\Hat\widehat
\let\Tilde\widetilde

\let\on\operatorname

\def\End{\on{End}}

\def\Hom{\on{Hom}}

\def\N{{\mathbb N}}
\def\C{{\mathbb C}}
\def\Z{{\mathbb Z}}
\def\Q{{\mathbb Q}}

\def\Pb{{\mathbb P}}
\def\R{{\mathbb R}}

\usepackage[OT2,T1]{fontenc}
\usepackage[all,cmtip]{xy} 
\usepackage{cancel}
\usepackage[vcentermath]{youngtab} 
\usepackage{empheq} 
\usepackage{changepage}
\usepackage{bm} 

\newlanguage\fakelanguage
\newcommand\cyr{\fontencoding{OT2}\fontfamily{wncyr}\selectfont
   \language\fakelanguage}
\DeclareTextFontCommand{\textcyr}{\cyr}

\setcounter{secnumdepth}{5}
\setcounter{tocdepth}{2}

\usepackage{calligra}

\DeclareMathOperator{\HOM}{\mathscr{H}\text{\kern -3pt {\calligra\large om}}\,}

\makeatletter
\newsavebox{\@brx}
\newcommand{\llangle}[1][]{\savebox{\@brx}{$\m@th{#1\langle}$}%
  \mathopen{\copy\@brx\kern-0.5\wd\@brx\usebox{\@brx}}}
\newcommand{\rrangle}[1][]{\savebox{\@brx}{$\m@th{#1\rangle}$}%
  \mathclose{\copy\@brx\kern-0.5\wd\@brx\usebox{\@brx}}}
\makeatother

\newcommand\xqed[1]{%
  \leavevmode\unskip\penalty9999 \hbox{}\nobreak\hfill
  \quad\hbox{#1}}
\newcommand\qetr{\xqed{$\triangle$}}
\newcommand\qrem{\xqed{$\spadesuit$}}

\let\bi\bibitem

\usepackage{scalerel,stackengine}  
\stackMath
\newcommand\reallywidehat[1]{%
\savestack{\tmpbox}{\stretchto{%
  \scaleto{%
    \scalerel*[\widthof{\ensuremath{#1}}]{\kern.1pt\mathchar"0362\kern.1pt}%
    {\rule{0ex}{\textheight}}
  }{\textheight}%
}{2.4ex}}%
\stackon[-6.9pt]{#1}{\tmpbox}%
}
\parskip 1ex

\usepackage{pifont}

\newcommand{\sq}{\mathbin{\text{\tiny\ding{73}}}}
\newcommand{\bqf}{\mathbin{\text{\tiny\ding{72}}^{\scriptscriptstyle{\rm Fib}}}}
\newcommand{\sqf}{\mathbin{\text{\tiny\ding{73}}^{\scriptscriptstyle \rm Fib}}}
\newcommand{\sqqf}[1]{\mathbin{\text{\tiny\ding{73}}_{#1}^{\scriptscriptstyle \rm Fib}}}

\DeclareRobustCommand{\lcyr}{\textnormal{\textcyr{l}}}
\DeclareRobustCommand{\Lcyr}{\textnormal{\textcyr{L}}}
\DeclareRobustCommand{\Lcyrit}{\textnormal{\LARGE\it\textcyr{l}}}
\DeclareRobustCommand{\tlcyr}{\tilde{\textnormal{\textcyr{l}}}}
\DeclareRobustCommand{\tLcyr}{\widetilde{\textnormal{\textcyr{L}}}}
\DeclareRobustCommand{\tLcyrit}{{\textnormal{\LARGE{\it \textcyr{\~l}}}}}

\usepackage{booktabs}
\usepackage{adjustbox}  
\usepackage{lscape}     
\usepackage{longtable}  
\usepackage{bbm}
\usepackage{enumitem}

\begin{document}
\title[Fiberwise GW theory of flag bundles, and prime factorization]{Fiberwise Gromov--Witten theory,  quantum spectra of flag bundles, and prime factorization of integers}
\author[Giordano Cotti]{Giordano Cotti$\>^{\circ}$}

{\let\thefootnote\relax
\footnotetext{\vskip5pt 
\noindent
$^\circ\>$\textit{ E-mail}:  giordano.cotti@tecnico.ulisboa.pt}}

\maketitle
\begin{center}
\textit{ 
$^\circ\>$Grupo de F\'isica Matem\'atica \\
Departamento de Matem\'atica, Instituto Superior T\'ecnico\\
Av. Rovisco Pais, 1049-001 Lisboa, Portugal\/\\}
\end{center}
\vskip2cm

\begin{abstract}
This paper is devoted to the study of the vertical quantum cohomology and quantum spectra of flag bundles, establishing new connections between the Gromov--Witten theory of homogeneous fibrations and prime number theory.

Building on the constructions of Astashkevich--Sadov \cite{AS95} and Biswas--Das--Oh--Paul \cite{BDOP25}, we first prove functorial and inductive properties of vertical quantum cohomology, and relate vertical and absolute quantum spectra.  
Consequently, we show that the degeneracy of the small vertical quantum spectrum of a Grassmann bundle -- that is, the occurrence of eigenvalues with higher-than-expected multiplicities --   is governed by the prime decomposition of the involved ranks, extending previous results for Grassmannians of \cite{Cot22} to the relative setting. This applies, in particular, to partial flag varieties, viewed as total spaces of suitable Grassmann bundles.

We then introduce three families of double sequences, denoted by $\lcyr(n,N)$, $\tlcyr(n,N)$, and $\ell(n,N)$, which enumerate partial flag varieties according to different quantum spectral and combinatorial conditions.  
We analyse their recursive, combinatorial, and arithmetic properties via ordinary and Dirichlet generating functions.  
The sequence $\lcyr$ satisfies a Pascal-type recursion, enabling a detailed study of its partial Dirichlet series, whose analytic continuations exhibit logarithmic singularities determined by the non-trivial zeros of the Riemann zeta function.  
Furthermore, we establish that, for every fixed integer shift~$k$, the diagonal subsequences 
$N \mapsto \lcyr(N+k,N)$, $N \mapsto \tlcyr(N+k,N)$, and $N \mapsto \ell(N+k,N)$ 
exhibit eventual polynomial behaviour, which can be naturally interpreted in terms of weighted walks on graphs.  
Finally, we study the vanishing pattern of $\ell$, deriving equivalent formulations of Goldbach’s conjecture.

Overall, our results reveal a deep interplay between enumerative geometry, quantum spectral degeneracy, and classical problems in analytic number theory.
\end{abstract}

\vskip0,3cm
\begin{adjustwidth}{35pt}{35pt}
{\footnotesize {\it Key words:} Gromov--Witten invariants, quantum cohomology, quantum spectra, partial flag varieties, prime numbers}
\vskip2mm
\noindent
{\footnotesize {\it 2020 Mathematics Subject Classification:} Primary: 53D45 Secondary: 11N99}
\end{adjustwidth}

\tableofcontents

\section{Introduction}

\noindent1.1.\,\,{\bf From Gromov--Witten theory to quantum spectra.} Gromov--Witten theory provides a powerful algebro--geometric framework for the virtual enumeration of curves inside smooth projective varieties. Its fundamental objects, the Gromov--Witten invariants, are defined as intersection numbers on moduli spaces of stable maps and can be regarded as virtual counts of curves of prescribed genus and degree subject to incidence conditions. These invariants encode subtle geometric information, and when organized into generating functions they give rise to rich algebraic and analytic structures. Among the most prominent are quantum cohomology, which deforms the classical cohomology ring by incorporating curve-counting data, and more refined constructions such as quantum spectra.  See e.g.\,\,\cite{KM94,Man99}.

The \emph{small quantum cohomology} of a smooth projective variety $X$ is a family of algebra structures 
\[
(H_X, \sq_{\bm q}) \quad \text{parametrized by } \bm q \in (\C^*)^{D}, \qquad D=\dim_\C H^{1,1}(X,\C),
\] 
supported on the finite-dimensional $\C$-vector space $H_X = H^\bullet(X,\C)$. The product $\sq_{\bm q}$ encodes information about rational curves on $X$ with three incidence constraints, while the parameters $\bm q$ serve as deformation variables. In the classical limit (identifiable with the regime $\bm q\to 0$), the quantum product reduces to the ordinary cup product.  

The \emph{quantum spectrum} of $X$ is defined as the spectrum (i.e.~the multiset of eigenvalues) of the endomorphism  
\[
c_1(X)\sq_{\bm q}\colon H_X \longrightarrow H_X,
\]
given by quantum multiplication by the first Chern class of $X$. The study of the quantum spectrum has recently attracted considerable attention: it is conjectured to encode deep aspects of the complex geometry of $X$, its derived category, and even its birational geometry. Indeed, the structure of the spectrum is the object of a growing number of conjectures and constructions of new invariants.  

Several motivations for this interest can be highlighted:  

\begin{enumerate}
    \item \textbf{Quantum differential equations.}  
    The operator $c_1(X)\sq_{\bm q}$ governs the asymptotics and the Stokes phenomena of solutions to the quantum differential equation associated with $X$, since it appears as the dominant term in the corresponding differential operator.  See e.g.\,\,\cite{CDG24}.

    \item \textbf{Conjecture ${\mc O}$ (Galkin--Golyshev--Iritani).}  
    For a Fano variety $X$, Conjecture $\mc O$ of \cite{GGI16} predicts a precise structure for the spectrum at the special point $\bm q=(1,\dots,1)$. Namely, the spectral radius 
    $ \delta_0 = \max\{|x| : x \ \text{eigenvalue of } c_1(X)\sq_{\bm q}\}$
    is expected to itself be an eigenvalue, and any other eigenvalue of maximal modulus should be of the form $\xi \cdot \delta_0$, where $\xi$ is an $r_X$-th root of unity ($r_X$ being the Fano index of $X$).  

    \item \textbf{Derived categories and exceptional collections.}  
    The multiplicity and symmetry structure of the spectrum are conjecturally related to the geometry of the derived category $D^b(X)$. For instance, if $(H_X,\sq_{\bm q})$ is semisimple for some $\bm q$, and the spectrum has an eigenvalue of algebraic multiplicity $m>1$, a refined version of Dubrovin’s conjecture predicts the existence of $m$-block exceptional collections in $D^b(X)$, see \cite{CDG20,CDG24}. Furthermore, in the conjecture of A.\,Kuznetsov and M.\, Smirnov, the spectrum plays a central role in linking quantum cohomology to semiorthogonal decompositions \cite{KS21}.  

    \item \textbf{Birational geometry and blowups.}  
    The behavior of the quantum spectrum under birational transformations has been conjectured to reflect semiorthogonal decompositions of derived categories. A conjecture due to M.\,Kontsevich asserts that, for the blowup $\widetilde X$ of $X$ along a subvariety $Z \subset X$, the spectrum of $\widetilde X$ should decompose in a manner compatible with D.\,Orlov’s description of $D^b(\widetilde X)$ in terms of $D^b(X)$ and $D^b(Z)$ \cite{Orl92}. Partial confirmations of this conjecture are known, in particular in the surface case, see \cite{GS25} and references therein.  

    \item \textbf{Birational invariants from atoms and chemical formulas.}  
    More recently, L.\,Katzarkov, M.\,Kontsevich, T.\,Pantev, and T.Y.\,Yu have constructed new birational invariants of algebraic varieties, known as \emph{atoms} and \emph{chemical formulas} \cite{KKPY25}. The eigenspace decomposition of the operator $c_1(X)\sq_{\bm q}$ is closely related to this construction, providing yet another bridge between quantum invariants and birational geometry.  
\end{enumerate}

\noindent1.2.\,\,{\bf Fiberwise Gromov--Witten theory, and vertical quantum cohomology. }
Beyond the absolute case, recent years have witnessed a growing interest in developing \emph{family} versions of Gromov--Witten theory, where the target space varies over a base rather than being fixed once and for all. This perspective resonates with Grothendieck’s philosophy that the “correct” form of a geometric statement is often relative: it should be formulated not for an isolated object, but for morphisms or families. Placing Gromov--Witten theory in this relative setting provides a more flexible framework, within which specialization, degeneration, and deformation phenomena can be treated in a systematic way.  

The first appearance of such a relative version can be traced back to the notion of \emph{vertical quantum cohomology} of an algebraic bundle $\pi \colon X \to B$
with fiber $F$, introduced by A.\,Astashkevich and V.\,Sadov \cite{AS95}. In this setting, the classical limit of the vertical quantum cohomology of $(\pi,X,B,F)$ recovers the cohomology of the total space $X$. The vertical quantum product (denoted by $\sqqf{\bm q}$) is defined as a deformation of the usual cup product on $H^\bullet(X,\C)$, governed by contributions of \emph{vertical} rational curves $C \subset X$, i.e.\,\,satisfying $\pi(C) = \mathrm{pt}$. Moreover, the construction naturally contains $H^\bullet(B,\C)$ as a subring, so that the vertical quantum cohomology acquires the structure of an $H^\bullet(B,\C)$-algebra.  

The approach of Astashkevich and Sadov largely relied on expected properties of a relative moduli space of stable maps to the fibers of $\pi$, without providing a rigorous construction. This gap has been recently addressed by I.\,Biswas, N.\,Das, J.\,Oh, A.\,Paul  \cite{BDOP25}, who constructed a genuine moduli space of stable maps to the fibers of a fiber bundle. This new space serves as a family version of the classical moduli space of stable maps to a smooth projective variety, and it carries a virtual fundamental class. On this basis, the authors define analogues of Gromov--Witten invariants in the relative/family setting, thereby placing vertical quantum cohomology on firm mathematical foundations.

Remarkably, as already observed by Astashkevich and Sadov, vertical quantum cohomology enjoys more natural properties than its absolute counterpart. For instance, it satisfies {\it functorial properties} with respect to base changes, as well as an {\it induction property}: given two algebraic bundles $(\pi,X,B,F)$ and $(\pi', B, B', F')$, the vertical quantum cohomology of $(\pi,X,B,F)$ can be identified with a suitable quotient of the vertical quantum cohomology of the locally trivial fibration $(\pi'\circ\pi, X,B', \pi^{-1}(F'))$. In particular, this makes possible to identify the vertical quantum cohomology of $(\pi,X,B,F)$ with a suitable {\it partially classical limit} of the absolute quantum cohomology of $X$.

In the first part of this paper, we relate the constructions of Astashkevich--Sadov and Biswas--Das--Oh--Paul, and we review and generalize the functorial and induction properties of vertical quantum cohomology.

\noindent1.3.\,\,{\bf Results on vertical quantum spectra of flag bundles.}  
Our first main result concerns the vertical quantum spectrum of flag bundles. Let $E \to X$ be an algebraic (or holomorphic) vector bundle of rank ${\rm rk}\,E=n$ over a smooth projective variety $X$. For any \emph{composition} of $n$ into $N$ positive parts, i.e.\ an $N$-tuple $\bm\la=(\la_1,\dots,\la_N)\in\Z^N_{>0}$ with $\la_1+\dots+\la_N=n$, denote by $\eu F_{\bm\la}(E)$ the fiber bundle over $X$ whose fiber over $p\in X$ is the partial flag variety $F_{\bm\la}(E_p)\cong F_{\bm\la}$ parametrizing filtrations
\[
0=V_0 \subset V_1 \subset \dots \subset V_N=\C^n, 
\qquad \dim_{\C}(V_i/V_{i-1})=\la_i, \quad i=1,\dots,N.
\]
When $N=2$ and $\bm\la=(k,n-k)$, this construction recovers the Grassmann bundle $\eu G_k(E)\to X$, with fiber the Grassmannian ${\rm Gr}(k,n)$ of $k$-dimensional subspaces in $\C^n$.

\medskip

Consider the vertical quantum multiplication operator
\[
c_1(\eu F_{\bm\la}(E))\sqqf{\bm q}\colon H^\bullet(\eu F_{\bm\la}(E),\C)\to H^\bullet(\eu F_{\bm\la}(E),\C),
\]
together with its associated \emph{vertical quantum characteristic polynomial}
\[
f_{(\eu F_{\bm\la}(E),X,F_{\bm\la})}(\zeta;\bm q)
=\det\!\left(\zeta\cdot{\rm Id}-c_1(\eu F_{\bm\la}(E))\sqqf{\bm q}\right).
\]
Similarly, for the fiber $F_{\bm\la}$, consider the (absolute) quantum multiplication operator
\[
c_1(F_{\bm\la})\sq_{\bm q}\colon H^\bullet(F_{\bm\la},\C)\to H^\bullet(F_{\bm\la},\C),
\]
and its \emph{quantum characteristic polynomial}
\[
f_{F_{\bm\la}}(\zeta;\bm q)=\det\!\left(\zeta\cdot{\rm Id}-c_1(F_{\bm\la})\sq_{\bm q}\right).
\]

\medskip

Our first main theorem provides an explicit relation between the two characteristic polynomials $f_{(\eu F_{\bm\la}(E),X,F_{\bm\la})}(\zeta;\bm q)$ and $f_{F_{\bm\la}}(\zeta;\bm q)$. For $n\in\N_{>0}$, denote by $p_1(n)$ the smallest prime divisor of $n$.

\begin{thm}[Thm.\,\ref{thmvqcp}, Cor.\,\ref{corexcee1}, Cor.\,\ref{corvqsgrb}]$\,$\newline
\noindent $(1)$ We have
\[
f_{(\eu F_{\bm\la}(E),X,F_{\bm\la})}(\zeta;\bm q)
=\bigl[f_{F_{\bm\la}}(\zeta;\bm q)\bigr]^{\dim_\C H^\bullet(X,\C)}.
\]
In particular, every element of the vertical quantum spectrum of $(\eu F_{\bm\la}(E),X,F_{\bm\la})$ has algebraic multiplicity at least $\dim_\C H^\bullet(X,\C)$.  

\smallskip
\noindent $(2)$ The vertical quantum spectrum of $(\eu F_{\bm\la}(E),X,F_{\bm\la})$ is \emph{exceeding} (i.e.\ some eigenvalue has algebraic multiplicity $>\dim_\C H^\bullet(X,\C)$) if and only if the fiber $F_{\bm\la}$ does not have simple quantum spectrum.  

\smallskip
\noindent $(3)$ The Grassmann bundle $(\eu G_k(E),X,{\rm Gr}(k,{\rm rk}\,E))$ has exceeding vertical quantum spectrum if and only if
\[
p_1({\rm rk}\,E)\leq k\leq {\rm rk}\,E-p_1({\rm rk}\,E).
\]
\end{thm}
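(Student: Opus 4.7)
The plan is to reduce part~(1) to a linear-algebra statement about operators on free modules over local Artinian rings, by exploiting the $H^\bullet(X,\C)$-algebra structure of vertical quantum cohomology established earlier in the paper; parts~(2) and~(3) will then follow as immediate corollaries.

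First, I would invoke the Leray--Hirsch theorem for the locally trivial fibration $\pi\colon\eu F_{\bm\la}(E)\to X$: this identifies $H^\bullet(\eu F_{\bm\la}(E),\C)$ as a free $H^\bullet(X,\C)$-module of rank $r=\dim_\C H^\bullet(F_{\bm\la},\C)$, via a collection of classes on $\eu F_{\bm\la}(E)$ (e.g.\ polynomials in Chern classes of the tautological bundles) lifting a $\C$-basis of $H^\bullet(F_{\bm\la},\C)$. By the functorial and inductive properties of vertical quantum cohomology reviewed earlier, the product $\sqqf{\bm q}$ is $H^\bullet(X,\C)$-bilinear, and in particular $c_1(\eu F_{\bm\la}(E))\sqqf{\bm q}$ is $H^\bullet(X,\C)$-linear; hence in the chosen Leray--Hirsch basis it is represented by a matrix $T_0\in M_r(H^\bullet(X,\C))$.

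The second step is the following lemma. Let $R:=H^\bullet(X,\C)$, a finite-dimensional local $\C$-algebra with maximal ideal $\mf m=H^{>0}(X,\C)$ and residue field $\C$. For any $R$-linear endomorphism of a free $R$-module of rank $r$ with matrix $T_0\in M_r(R)$, the characteristic polynomial of the induced $\C$-linear operator on the underlying $\C$-vector space equals $\det(\zeta\cdot{\rm Id}-\overline{T_0})^{\dim_\C R}$, where $\overline{T_0}\in M_r(\C)$ denotes the reduction of $T_0$ modulo $\mf m$. I would prove this by the $\mf m$-adic filtration on $R^r$: the operator preserves each $\mf m^i R^r$, and on the associated graded piece $\mf m^i R^r/\mf m^{i+1}R^r\cong(\mf m^i/\mf m^{i+1})\otimes_\C\C^r$ it acts as $\mathrm{Id}\otimes\overline{T_0}$, whose characteristic polynomial is $\det(\zeta\cdot{\rm Id}-\overline{T_0})^{\dim_\C(\mf m^i/\mf m^{i+1})}$; multiplying over $i$ yields the exponent $\sum_i\dim_\C(\mf m^i/\mf m^{i+1})=\dim_\C R$.

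To conclude~(1), I would identify $\overline{T_0}$ with the matrix of $c_1(F_{\bm\la})\sq_{\bm q}$. This is where base-change functoriality of vertical quantum cohomology enters: restriction along the inclusion of a fiber $\iota_p\colon\{p\}\hookrightarrow X$ corresponds under the Leray--Hirsch identification precisely to reduction modulo $\mf m$, it carries the vertical quantum operator on $\eu F_{\bm\la}(E)$ to the absolute quantum operator on $F_{\bm\la}\cong\pi^{-1}(p)$, and maps $c_1(\eu F_{\bm\la}(E))$ to $c_1(F_{\bm\la})$, since from the short exact sequence $0\to T_\pi\to T_{\eu F_{\bm\la}(E)}\to\pi^*T_X\to 0$ one has $c_1(T_\pi)|_{\pi^{-1}(p)}=c_1(F_{\bm\la})$ while $\pi^*c_1(X)|_{\pi^{-1}(p)}=0$. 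Parts~(2) and~(3) then drop out: by~(1), every eigenvalue of the vertical spectrum occurs with multiplicity exactly $\dim_\C H^\bullet(X,\C)$ times its multiplicity in the fiber spectrum, so the vertical spectrum is exceeding iff the fiber spectrum is non-simple; and for $F_{\bm\la}={\rm Gr}(k,n)$ the precise criterion in terms of $p_1(n)$ is the main theorem of \cite{Cot22}. The main obstacle is ensuring that base-change functoriality is established in exactly the form required to identify the mod-$\mf m$ reduction of $T_0$ with the absolute fiber quantum operator, but this should be part of the formal apparatus developed in the earlier sections of the paper building on \cite{AS95,BDOP25}.
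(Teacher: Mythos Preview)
Your proposal is correct and uses essentially the same ingredients as the paper's proof, but the packaging differs in a way worth noting.

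The paper's proof of~(1) (Theorem~\ref{thmvqcp}) proceeds by first splitting $c_1(\eu F_{\bm\la}(E))=\pi^*c_1(X)+e_1$ with $e_1=\sum_{i<j}c_1(\eu Q_i^*\otimes\eu Q_j)$, then arguing that $A_1:=\pi^*c_1(X)\sqqf{\bm q}$ is nilpotent (it acts as cup product by Theorem~\ref{thm1}(1)) and commutes with $A_2:=e_1\sqqf{\bm q}$, so the characteristic polynomial equals that of $A_2$ alone. The paper then invokes Lemma~\ref{fundlemma} to show that $A_2$ preserves the span of the Leray--Hirsch classes $e_j$, giving a genuinely block-\emph{diagonal} matrix $M\oplus\cdots\oplus M$ in the basis $\{\pi^*b_i\cup e_j\}$, with $M$ identified via Corollary~\ref{corfibra} as the fiber operator.

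Your route is more abstract: you observe directly that the full operator is $R$-linear for $R=H^\bullet(X,\C)$ (this is exactly Theorem~\ref{thm1}(1)), and then apply a general lemma about $R$-linear endomorphisms over local Artinian $\C$-algebras via the $\mf m$-adic filtration. This yields only a block-\emph{upper-triangular} form on the associated graded, but that suffices for the characteristic polynomial. Your argument thus bypasses both the explicit splitting $A_1+A_2$ and the direct appeal to Lemma~\ref{fundlemma} at this stage (though that lemma is still needed upstream for Corollary~\ref{corfibra}, which you use to identify $\overline{T_0}$ with the fiber operator). The trade-off: the paper's block-diagonal form is a sharper structural statement, while your filtration lemma is more portable and would apply verbatim to any cohomologically decomposable bundle without reference to the specific Chern-class decomposition of $c_1$. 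Parts~(2) and~(3) are handled identically in both.
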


\medskip

\noindent1.4.\,\,{\bf Semiclassical spectra of partial flag varieties, and prime numbers.} Already point~(3) of Theorem~\ref{thmvqcp} provides a direct extension of the results in~\cite{Cot22} 
from complex Grassmannians to Grassmann \emph{bundles}. 
This generalization shows that the correspondence between the prime decomposition of the rank of $E$ 
and the structure of the quantum spectrum persists in the relative setting. 
In particular, it reveals that the phenomenon relating prime numbers to the degeneracy of quantum spectra 
is intrinsic to the geometry of homogeneous fibrations, rather than being specific to absolute Grassmannians.

This result can therefore be used to further deepen the connection between 
the enumerative geometry of more general homogeneous varieties and prime number theory. 
As a concrete application, we shall consider in Section~\ref{secpfv} the case of {partial flag varieties} themselves, 
and study suitable partially classical limits of their quantum spectra. 
These limits provide additional insight into how the arithmetic structure of the parameters 
governs the quantum geometry of flag manifolds.

Given a composition $\bm\lambda \in \mathbb{Z}_{>0}^N$ of $n$, 
the small quantum cohomology of the partial flag variety $F_{\bm\lambda}$ 
is parametrized by points $\bm q = (q_1, \dots, q_{N-1}) \in (\mathbb{C}^*)^{N-1}$. 
For each $i = 1, \dots, N$, consider the limiting operator
\[
A_i(q_i) := \lim_{q_j \to 0,\, j \neq i} 
c_1(F_{\bm\lambda}) \sq_{\bm q} 
\in \operatorname{End}_{\mathbb{C}} H^\bullet(F_{\bm\lambda}, \mathbb{C}).
\]
We refer to the spectrum of $A_i$ as the \emph{$i$-th semiclassical spectrum} of $F_{\bm\lambda}$.

This limiting procedure admits an enumerative--geometric reinterpretation.  
In Section~\ref{secpfv}, we show that each operator $A_i$ coincides with the vertical quantum product---and its semiclassical spectrum with the vertical quantum spectrum---of one of $N-1$ distinct fiber bundles, all having $F_{\bm\lambda}$ as total space.

For any fixed $i = 1, \dots, N-1$, define the composition
\[
\bm\lambda_{/i} = (\lambda_1, \dots, \lambda_{i-1}, 
\lambda_i + \lambda_{i+1}, \lambda_{i+2}, \dots, \lambda_N)
\in \mathbb{Z}_{>0}^{N-1}.
\]
We have a natural forgetful morphism
\[
F_{\bm\lambda} \to F_{\bm\lambda_{/i}}, \qquad
(V_0 \subset \dots \subset V_i \subset \dots \subset V_N)
\mapsto
(V_0 \subset \dots \subset V_{i-1} \subset V_{i+1} \subset \dots \subset V_N).
\]
This morphism defines a Grassmann bundle over $F_{\bm\lambda_{/i}}$, with total space
\[
F_{\bm\lambda} = \mathcal{G}_{\lambda_i}(\mathcal{Q}_i), 
\qquad \mathcal{Q}_i \to F_{\bm\lambda_{/i}}
\text{ the $i$-th tautological quotient bundle.}
\]

\begin{thm}[Thm.\,\ref{THMSEMICLSPEC}]\label{thmintro2}
Each eigenvalue in the $i$-th semiclassical spectrum of $F_{\bm\lambda}$ has algebraic multiplicity 
at least $\dim_{\mathbb{C}} H^\bullet(F_{\bm\lambda_{/i}}, \mathbb{C})$. 
The semiclassical spectrum is \emph{exceeding} if and only if
\[
p_1(\lambda_i + \lambda_{i+1})
\leq \lambda_i, \lambda_{i+1}
\leq \lambda_i + \lambda_{i+1} - p_1(\lambda_i + \lambda_{i+1}).
\]
\end{thm}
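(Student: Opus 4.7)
The strategy is to fix $i$ and reduce the statement to a direct application of Theorem~\ref{thmvqcp}, parts~(1) and~(3), via the Grassmann bundle presentation $F_{\bm\lambda}=\mathcal{G}_{\lambda_i}(\mathcal{Q}_i)\to F_{\bm\lambda_{/i}}$ recalled just before the statement, with base $X=F_{\bm\lambda_{/i}}$ and vector bundle $E=\mathcal{Q}_i$ of rank $\lambda_i+\lambda_{i+1}$. The crux is to identify the $i$-th semiclassical operator $A_i(q_i)$ with the \emph{vertical} quantum multiplication $c_1(F_{\bm\lambda})\sqqf{q_i}$ attached to this bundle structure.

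To achieve this identification, I would invoke the induction property of vertical quantum cohomology established in the first part of the paper and recalled in Section~1.2. Applied to the tower $F_{\bm\lambda}\xrightarrow{\pi} F_{\bm\lambda_{/i}}\to\mathrm{pt}$, it exhibits the vertical quantum cohomology of the Grassmann bundle as a partial classical limit of the absolute quantum cohomology of $F_{\bm\lambda}$, obtained by sending to zero the quantum parameters dual to curve classes pushed forward non-trivially by $\pi$. Using the standard Schubert divisor description of $H^2$, $\pi^*$ identifies the Picard generators of $F_{\bm\lambda_{/i}}$ with the divisors dual to $\{q_j\}_{j\neq i}$, while $q_i$ is dual to the vertical line class along the fiber ${\rm Gr}(\lambda_i,\lambda_i+\lambda_{i+1})$. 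Hence the limit $q_j\to 0$ for $j\neq i$ realises precisely the required partial classical limit, and $A_i(q_i)=c_1(F_{\bm\lambda})\sqqf{q_i}$.

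With this identification in place, Theorem~\ref{thmvqcp}~(1) immediately yields the multiplicity bound $\dim_{\C}H^\bullet(F_{\bm\lambda_{/i}},\C)$, while Theorem~\ref{thmvqcp}~(3), applied with $k=\lambda_i$ and ${\rm rk}\,E=\lambda_i+\lambda_{i+1}$, characterises the exceeding semiclassical spectrum by
\[
p_1(\lambda_i+\lambda_{i+1})\leq \lambda_i \leq (\lambda_i+\lambda_{i+1})-p_1(\lambda_i+\lambda_{i+1}).
\]
The upper inequality rearranges to $\lambda_{i+1}\geq p_1(\lambda_i+\lambda_{i+1})$, which together with the lower one is exactly the displayed inequality in the theorem.

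The main obstacle is the first step. One must verify concretely that the $q_j$ with $j\neq i$ are genuinely the horizontal quantum parameters, and not some non-trivial combinations also involving $q_i$; equivalently, this amounts to analysing how Schubert divisor classes and their dual curve classes behave under the forgetful morphism $\pi$. This is standard but requires a careful unravelling of conventions (and, implicitly, a check that the degree pairings behave as expected under the induction set-up of Section~1.2). Once settled, the remaining arithmetic manipulation is a short calculation.
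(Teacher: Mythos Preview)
Your proposal is correct and follows essentially the same route as the paper: realise $F_{\bm\lambda}$ as the Grassmann bundle $\eu G_{\lambda_i}(\mathcal{Q}_i)\to F_{\bm\lambda_{/i}}$, use the induction property (Corollary~\ref{pcl2} and the discussion after it) to identify the semiclassical limit $q_j\to 0$, $j\neq i$, with the vertical quantum product for this bundle, and then apply Theorem~\ref{thmvqcp} together with Corollary~\ref{corvqsgrb}. The point you flag as the ``main obstacle''---that the $q_j$ with $j\neq i$ are exactly the horizontal parameters---is handled in the paper by the choice of the nef basis $c_1(Q_1),\dots,c_1(Q_{N-1})$ and the explicit identification of $Q'_j$ with the pullbacks of the quotient bundles under the forgetful map; this is indeed a matter of unravelling conventions rather than a new idea.
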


\medskip

\noindent 1.5.\,\,{\bf Three double sequences. } 
In the second part of the paper, we focus on three distinct double sequences, viewed as functions of the two parameters $(n, N)$. 
These sequences are defined by counting partial flag varieties $F_{\bm\lambda}$, parametrizing $N$-step chains of subspaces in $\mathbb{C}^n$, that satisfy three different types of conditions.

For any $2 \leq N \leq n$, we denote by:
\begin{itemize}
\item $\lcyr(n,N)$ the number of partial flag varieties $F_{\bm\lambda}$, with $\bm\lambda \in \mathbb{Z}_{>0}^N$ and $|\bm\lambda| = n$, admitting \emph{at least one} non-exceeding semiclassical spectrum;

\item $\tlcyr(n,N)$ the number of such $F_{\bm\lambda}$ admitting \emph{only} non-exceeding semiclassical spectra;

\item $\ell(n,N)$ the number of $F_{\bm\lambda}$, with $\bm\lambda \in \mathbb{Z}_{>0}^N$ and $|\bm\lambda| = n$, such that for every index $i = 1, \dots, N-1$, the pair of subspaces $(V_{i-1}, V_{i+1})$ in the associated flag -- that is, subspaces separated by one intermediate step -- has prime-dimensional gap, namely
\[
\dim_{\mathbb{C}}(V_{i+1}/V_{i-1}) = \lambda_i + \lambda_{i+1} \text{ is prime.}
\]

\end{itemize}

All these sequences are \emph{triangular}, in the sense that they vanish unless $N \leq n$. Moreover, they satisfy
\[
0 \leq \ell(n,N) \leq \tlcyr(n,N) \leq \lcyr(n,N) \leq \binom{n-1}{N-1},
\]
where the only non-trivial inequality $\ell \leq \tlcyr$ follows from Theorem~\ref{thmintro2}. 

\medskip

Quite remarkably, these sequences intrinsically encode information about prime numbers -- a fact that is far from obvious from their definitions. 
To reveal their underlying arithmetic and combinatorial structure, we shall study them through suitable generating functions, obtained by collecting the above counting data in various ways.

We begin by extending their definition to the degenerate case $N=1$, by setting
\[
\lcyr(n,1)=\tlcyr(n,1)=\ell(n,1)=1, \qquad n \geq 1.
\]
Among the three, the sequence $\lcyr$ turns out to exhibit the richest arithmetic behaviour.  
Its key feature is a Pascal–type identity,
\[
\lcyr(n,N) + \lcyr(n,N+1) = \lcyr(n+1,N+1), \qquad 2 \leq N \leq n,
\]
which endows $\lcyr$ with the combinatorial structure of a genuine Pascal–Tartaglia triangle.  
As a consequence, the entire array $\{\lcyr(n,N)\}_{n,N}$ can be reconstructed recursively from the single sequence $\lcyr(n,2)$, according to
\[
\sum_{k=N}^n \lcyr(k,N) = \lcyr(n+1,N+1), \qquad N \geq 2.
\]
Thus, the double sequence $\lcyr$ is completely determined by its second column, in perfect analogy with the binomial triangle. See Section \ref{OGFPascal} for detailed proofs.

\medskip

This recursive behaviour extends naturally to the generating functions associated with $\lcyr$.  
For each $N \geq 1$, we consider both the ordinary and Dirichlet generating series:
\[
\Lcyrit_N(z):=\sum_{n=N}^\infty \lcyr(n,N)z^n,
\qquad 
\Lcyr_N(s):=\sum_{n=N}^\infty \frac{\lcyr(n,N)}{n^s}.
\]
They satisfy
\[
\Lcyrit_1(z)=\frac{z}{1-z}, \qquad \Lcyr_1(s)=\zeta(s)\text{ (Riemann zeta function)},
\]
and the recursion
\[
\Lcyrit_N(z)=\Lcyrit_2(z)\Lcyrit_1(z)^{N-2}, \qquad 
\Lcyrit_{N+1}(z)=\Lcyrit_N(z)\Lcyrit_1(z), \qquad N\geq 2.
\]
Introducing $\widehat{\Lcyr}_N(s):=\Gamma(s)\Lcyr_N(s)$, one may equivalently write
\[
\widehat{\Lcyr}_{N+1}(s)=\bigl(\widehat{\Lcyr}_N*\widehat{\Lcyr}_1\bigr)(s), \qquad N\geq 2,
\]
where $*$ denotes convolution along a vertical line within the common domain of holomorphy of $\Lcyr_1$ and $\Lcyr_N$. See Theorem \ref{thmHaL} and Theorem \ref{iterint} for more details.

\medskip

The Dirichlet series $\Lcyr_N(s)$ reveal an unexpectedly rich arithmetic structure.  
They intertwine divisor statistics, prime factorizations, and additive properties of integers in subtle ways.  
For instance, when $N=2$, one finds an explicit identity linking $\Lcyr_2(s)$ to classical arithmetic functions.  
If we let $\zeta(s)$ be the Riemann zeta function, then
\[
\Lcyr_2(s)\zeta(s)
=2\!\left(\sum_{n=1}^\infty\frac{d(n)}{n^s}\right)
\!\!\sum_{\substack{p\text{ prime}}}\frac{p-1}{p^s}\prod_{\substack{q\text{ prime}\\ q< p}}\left(1-\frac{1}{q^s}\right)
+\sum_{n=1}^\infty\frac{\omega_0(n)}{n^s}
-\sum_{n=1}^\infty\frac{\omega_1(n)}{n^s},
\]
where $d(n)$ counts the divisors of $n$, while $\omega_0(n)$ and $\omega_1(n)$ respectively count and sum the distinct prime factors of $n$ (Proposition \ref{identityLcyr2}).  
This formula exemplifies how the $\Lcyr_N(s)$, though defined through geometric data, encode deep arithmetic information about divisibility and the distribution of primes.

\medskip

Analytically, the functions $\Lcyr_N(s)$ admit a meromorphic continuation beyond their line of absolute convergence ${\rm Re}(s)=N$.  
Their analytic continuation is far from regular: it exhibits a dense pattern of logarithmic singularities, located at points determined by the non-trivial zeros of the Riemann zeta function.  
The following theorem describes this phenomenon.

\begin{thm}[Thm.\,\ref{mthLcyrN}]
For every $N\geq 2$, the function $\Lcyr_N(s)$ is holomorphic on the line ${\rm Re}(s)=N$, except at $s=N$.  
Moreover, in a neighbourhood of $s=N$ and for ${\rm Re}(s)>N$, one has
\[
\Lcyr_N(s)\sim \frac{1}{(N-1)!}\log\!\left(\frac{1}{s-N}\right),\qquad s\to N.
\]
By analytic continuation, $\Lcyr_N(s)$ extends to the universal cover of the punctured half-plane
\[
\{s\in\C : {\rm Re}(s)>\bar{\sigma}+N-2\}\setminus Z_N,
\]
where $\bar{\sigma}\in[1,\tfrac{3}{2}]$ is defined in \eqref{barsi}, and 
\[
Z_N=\Bigl\{\,s=\tfrac{\rho}{k}+N-1 \;\big|\;
\rho\text{ zero or pole of }\zeta(s),\,
k\text{ squarefree positive integer}\Bigr\}.
\]
\end{thm}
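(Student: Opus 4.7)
The plan is induction on $N$, with base case $N=2$ derived from the explicit identity of Proposition~\ref{identityLcyr2}, and inductive step from the Mellin--Barnes convolution $\widehat{\Lcyr}_{N+1} = \widehat{\Lcyr}_N \ast \widehat{\Lcyr}_1$ of Theorem~\ref{iterint}.

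\emph{Base case ($N=2$).} First I rewrite the identity of Proposition~\ref{identityLcyr2} in terms of the prime zeta function $P(s) = \sum_p p^{-s}$, via the elementary identities $\sum_n d(n) n^{-s} = \zeta(s)^2$, $\sum_n \omega_0(n) n^{-s} = \zeta(s) P(s)$, and $\sum_n \omega_1(n) n^{-s} = \zeta(s) P(s-1)$. Dividing by $\zeta(s)$ yields
$$\Lcyr_2(s) = 2\zeta(s)\, S(s) + P(s) - P(s-1), \qquad S(s) = \sum_p \frac{p-1}{p^s} \prod_{q<p}\!\bigl(1 - q^{-s}\bigr).$$
The next step is to extract the singular part of $S(s)$: combining the telescoping identity $\sum_p p^{-s} \prod_{q<p}(1-q^{-s}) = 1 - 1/\zeta(s)$ with the approximation $\prod_{q<p}(1-q^{-s}) = 1/\zeta(s) + \varepsilon_p(s)$ and controlling $|\varepsilon_p(s)|$ by the tail $\sum_{q \geq p} q^{-{\rm Re}(s)}$, I isolate the principal part and arrive at
$$\Lcyr_2(s) = P(s-1) + G_2(s),$$
with $G_2$ holomorphic on $\{{\rm Re}(s) > \bar\sigma\}$. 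The assertion for $N=2$ then follows from the classical formula $P(s) = \sum_{k \geq 1} (\mu(k)/k) \log \zeta(ks)$: $P(s-1)$ inherits a logarithmic singularity at $s=2$ with principal part $\log(1/(s-2))$ (matching $\tfrac{1}{(N-1)!}\log(1/(s-N))$ at $N=2$), and further logarithmic branch points exactly at $Z_2 = \{\rho/k + 1 \mid \rho \text{ zero/pole of } \zeta,\, k \text{ squarefree}\}$.

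\emph{Inductive step.} Assuming the statement for $\Lcyr_N$, the recursion of Theorem~\ref{iterint} reads
$$\widehat{\Lcyr}_{N+1}(s) = \frac{1}{2\pi i} \int_{{\rm Re}(w) = c} \widehat{\Lcyr}_N(w)\, \Gamma(s-w)\, \zeta(s-w)\, dw, \qquad c \in \bigl(N,\, {\rm Re}(s) - 1\bigr).$$
The key move is to deform the contour rightward across the simple pole of $\zeta(s-w)$ at $w = s-1$; since $\Res_{w = s-1} \zeta(s-w) = -1$ and the resulting closed loop is traversed clockwise, one obtains
$$\widehat{\Lcyr}_{N+1}(s) = \widehat{\Lcyr}_N(s-1) + I(s),$$
where $I(s)$ is a Mellin--Barnes integral on a new contour $c' \in ({\rm Re}(s)-1,\, {\rm Re}(s))$. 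Division by $\Gamma(s)$, using $\Gamma(s-1)/\Gamma(s) = 1/(s-1)$, produces the fundamental identity
$$\Lcyr_{N+1}(s) = \frac{1}{s-1}\, \Lcyr_N(s-1) + R_{N+1}(s).$$
By the inductive hypothesis, $\Lcyr_N(s-1)$ extends along the universal cover of $\{{\rm Re}(s) > \bar\sigma + N - 1\} \setminus Z_{N+1}$ (noting $Z_{N+1} = Z_N + 1$), with leading singularity $\tfrac{1}{(N-1)!} \log(1/(s - (N+1)))$ at $s = N+1$. Multiplication by $(s-1)^{-1}$, holomorphic and equal to $1/N$ at $s = N+1$, rescales the coefficient to $\tfrac{1}{N!}$, exactly as required. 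The remainder $R_{N+1}(s)$ is holomorphic in the claimed domain by standard estimates on the Mellin--Barnes integrand, via Stirling decay of $\Gamma$ and polynomial growth of $\zeta$ on vertical lines.

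\emph{Main obstacle.} The most delicate point is the base case: the sieve-type series $S(s)$ involves the non-multiplicative factor $\prod_{q<p}(1-q^{-s})$, and isolating its singular part cleanly --- so that the remainder $G_2$ extends as far left as $\{{\rm Re}(s) > \bar\sigma\}$ --- requires careful quantitative control of the convergence $\prod_{q<p}(1-q^{-s}) \to 1/\zeta(s)$. The inductive step carries a secondary subtlety: $\widehat{\Lcyr}_N(w)$ exhibits logarithmic rather than polar singularities in its continued domain, so residue calculus applies unambiguously only at the $\zeta$-pole $w = s-1$, whereas further deformations past the log branch points of $\widehat{\Lcyr}_N$ introduce monodromy. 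This is precisely what forces the analytic continuation of $\Lcyr_{N+1}$ onto the universal cover of the punctured half-plane, and the consistent tracking of this branching structure through the iterated convolution is the remaining technical burden.
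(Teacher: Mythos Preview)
Your proposal is correct and follows essentially the same route as the paper. The paper's proof is induction on $N$: the base case $N=2$ is deferred to \cite{Cot22}, and the inductive step shifts the contour in the convolution identity \eqref{receqLcyrN} across the pole of $\zeta(\tau)$ at $\tau=1$ (equivalently, across the pole of $\zeta(s-w)$ at $w=s-1$ in your variable), extracting the identity $\Lcyr_{N+1}(s)=\tfrac{1}{s-1}\Lcyr_N(s-1)+\text{remainder}$ and reading off the shifted singularity structure from the inductive hypothesis. Your contour shift, residue computation, and rescaling $\tfrac{1}{(N-1)!}\cdot\tfrac{1}{N}=\tfrac{1}{N!}$ are exactly this argument.

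The one place you go beyond the paper is the base case: rather than citing \cite{Cot22}, you sketch how to derive $\Lcyr_2(s)=P(s-1)+G_2(s)$ from Proposition~\ref{identityLcyr2} and the M\"obius expansion $P(s)=\sum_{k\ge1}\tfrac{\mu(k)}{k}\log\zeta(ks)$. This is in fact the content of \cite{Cot22}, and your identification of the sieve-type control of $\prod_{q<p}(1-q^{-s})\to 1/\zeta(s)$ as the delicate point is accurate; note however that tying the resulting threshold to the specific $\bar\sigma$ of \eqref{barsi} requires one more step linking the error to the Dirichlet series over composite $k$, which you do not make explicit. On the remainder $R_{N+1}(s)$, both you and the paper are brief; the paper simply asserts regularity of the shifted integral, so your level of detail matches.
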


In particular, the Riemann Hypothesis can be reformulated in terms of the analytic behaviour of the functions $\Lcyr_N(s)$ (Corollary~\ref{coRH}).

\medskip

The asymptotic behaviour of $\Lcyr_N(s)$ near $s=N$ implies
\[
\lcyr(n,N)\sim \frac{1}{(N-1)!}\frac{n^{N-1}}{\log n},\qquad N\geq 3.
\]
Hence, the set of partial flag varieties possessing at least one non-exceeding semiclassical spectrum has density zero within the set of all partial flag varieties. See Corollary \ref{cordensity}.

\medskip

The rigidity induced by the Pascal identity also produces a striking combinatorial phenomenon:  
for any fixed integer $k$, the sequence $N\mapsto \lcyr(N+k,N)$ is eventually polynomial.  
Even more surprisingly, the same property holds for $\tlcyr$ and $\ell$, despite the absence of any comparable recursive structure.

\begin{thm}[Thms.\,\,\ref{thmpolylcyr}, \ref{thmpolytlcyr}, \ref{thmpolyell}]
For every integer $k$, there exist three polynomials $P_k, \widetilde{P}_k, \mathscr{P}_k \in \mathbb{Q}[n]$ and integers $N_1(k), N_2(k), N_3(k)$ such that
\begin{align*}
\lcyr(N+k,N)&=P_k(N),\quad N\geq N_1(k),\\
\tlcyr(N+k,N)&=\widetilde{P}_k(N),\quad N\geq N_2(k),\\
\ell(N+k,N)&=\mathscr{P}_k(N),\quad N\geq N_3(k).
\end{align*}
\end{thm}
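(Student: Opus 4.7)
My plan is to handle $\lcyr$ via complementary counting (or, equivalently, via the Pascal identity established earlier in the paper) and to handle $\tlcyr$ and $\ell$ via a cluster decomposition of compositions.

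For $\lcyr$, let $E(n,N)$ count compositions $\bm\lambda \in \Z_{>0}^N$ of $n$ whose $N-1$ semiclassical spectra are all exceeding, so that $\lcyr(n,N) = \binom{n-1}{N-1} - E(n,N)$. By Theorem~\ref{thmintro2}, the exceeding condition at each index $i$ forces $\lambda_i, \lambda_{i+1} \geq p_1(\lambda_i+\lambda_{i+1}) \geq 2$; imposing it for every $i = 1, \dots, N-1$ therefore gives $\lambda_j \geq 2$ for all $j$, and hence $n = \sum_j \lambda_j \geq 2N$. Consequently, $E(N+k, N) = 0$ whenever $N > k$, yielding
\[
\lcyr(N+k, N) = \binom{N+k-1}{k}, \qquad N \geq k+1,
\]
a polynomial in $N$ of degree $k$. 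The same conclusion can also be reached inductively: setting $a_k(N) := \lcyr(N+k, N)$, the Pascal identity rewrites as $a_k(N+1) - a_k(N) = a_{k-1}(N+1)$, so the eventual polynomiality of $a_k$ follows from that of $a_{k-1}$ by summation, starting from the base $a_0 \equiv 1$.

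For $\tlcyr$ and $\ell$, no Pascal-type recursion is available. The idea is to substitute $\lambda_i = 1 + \mu_i$ with $\mu_i \in \Z_{\geq 0}$ and $\sum_{i=1}^N \mu_i = k$, and to decompose the positions of the nonzero $\mu_i$ into maximal consecutive runs, which I shall call \emph{clusters}. Two observations make the approach work. First, transitions across a zero are essentially free: for $\tlcyr$, the condition $1 + \min(\mu_i, \mu_{i+1}) < p_1(2 + \mu_i + \mu_{i+1})$ is automatic whenever one of $\mu_i, \mu_{i+1}$ vanishes, since $p_1(m) \geq 2$ for $m \geq 2$; for $\ell$, the condition that $\mu_i + \mu_{i+1} + 2$ be prime reduces to a one-sided primality constraint on the cluster-boundary value. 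Second, the set of admissible within-cluster value sequences and the admissible boundary values depend only on the cluster profile $(c_1, \dots, c_r)$ of cluster sizes and on whether the first and/or last cluster touches position $1$ or $N$.

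For each fixed cluster profile with $C := \sum c_j \leq k$ and each admissible value assignment summing to $k$, the number of placements in $\{1,\dots,N\}$ with at least one zero between consecutive clusters is a stars-and-bars count. It splits naturally into four sub-cases according to the boundary status of the extremal clusters, each equal to a binomial coefficient of the form $\binom{N-C-1}{r-s}$ with $s \in \{0,1,2\}$, hence a polynomial in $N$ of degree at most $r$. Since there are only finitely many profiles and value assignments compatible with $C \leq k$, the total count is a polynomial in $N$ once $N$ is large enough (say $N \geq 2k+1$), yielding the polynomials $\widetilde{P}_k$ and $\mathscr{P}_k$ with thresholds linear in $k$. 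The main obstacle is the careful bookkeeping in the $\tlcyr$ and $\ell$ cases: one must rigorously verify the decoupling across zero-to-cluster transitions (immediate for $\tlcyr$, a primality check for $\ell$) and uniformly combine the four boundary sub-cases into a single polynomial expression. Once this is in place, polynomiality is a direct consequence of finiteness, and the construction coincides with the walks-on-graphs perspective mentioned in the introduction: vertices correspond to nonnegative integer values of $\mu$, edges to admissible transitions, and the bounded total weight $k$ confines the walk to finitely many excursions from the vertex $0$.
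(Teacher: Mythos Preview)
Your complementary-counting argument for $\lcyr$ is flawed. Despite the prose description, the working definition of $\lcyr(n,N)$ (the one used in formula~\eqref{eqlcyr} and producing the tabulated values) is $\lcyr(n,N,i)$ for a \emph{fixed} index $i$: the number of compositions whose $i$-th semiclassical spectrum is non-exceeding. These counts agree for different $i$ by the cyclic-shift bijection, but they do \emph{not} agree with the number of compositions having at least one non-exceeding index. The correct complement in $\binom{n-1}{N-1}$ is therefore ``index $1$ exceeding'', which forces only $\lambda_1,\lambda_2\ge 2$, not all parts $\ge 2$. Your formula $\lcyr(N+k,N)=\binom{N+k-1}{k}$ already fails at $k=2$: the composition $(2,2,1,\dots,1)$ has index $1$ exceeding but index $2$ non-exceeding, and indeed $P_2(N)=\binom{N+1}{2}-1$, not $\binom{N+1}{2}$. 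Your fallback argument via the Pascal rule is correct and is essentially the paper's route: iterating $a_k(N+1)-a_k(N)=a_{k-1}(N+1)$ gives $\Delta^k a_k\equiv a_0\equiv 1$, which is what Theorem~\ref{thmpolylcyr} establishes through Corollary~\ref{magiccancelcor}.

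For $\tlcyr$ and $\ell$, your cluster decomposition is correct and genuinely different from the paper's argument. The paper realises $\tlcyr(n,N)$ and $\ell(n,N)$ as weighted walk counts on finite graphs, shows by Cayley--Hamilton that the bivariate generating functions are rational (Theorems~\ref{thmratLm} and~\ref{thmratmathscrLm}), and then extracts $[u^k]$ under the substitution $(z,t)\mapsto(u,t/u)$ as a rational function in $t$ with denominator a power of $(1-t)$, whence eventual polynomiality (Lemmas~\ref{lemtchn1}--\ref{lemtchn2}). Both arguments rest on the same structural fact --- the part $\lambda=1$ (your $\mu=0$) is the unique self-loop and imposes no two-sided constraint --- but you exploit it combinatorially by decomposing each composition into boundedly many excursions away from $1$, while the paper exploits it algebraically as the identity $\tr\,T^{[{\sf m}]}(z)=z$ feeding into Lemma~\ref{lemtchn1}. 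Your route is more elementary and makes the threshold $N\ge 2k+1$ transparent; the paper's yields the explicit rational generating functions $\sum_N\tlcyr(N+k,N)\,t^N$ and $\sum_N\ell(N+k,N)\,t^N$ as a by-product.
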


While the eventual polynomiality of $\lcyr$ follows naturally from its recursive structure, in the cases of $\tlcyr$ and $\ell$ the phenomenon is far more elusive.  
Here, the proof relies on an interpretation of $\tlcyr(n,N)$ and $\ell(n,N)$ as counting numbers for weighted walks on two suitable graphs endowed with fixed monomial weights. See Section \ref{secgraphGm} and Section \ref{secgraphPim}.

\medskip

Finally, the double sequence $\ell$ exhibits a remarkably subtle arithmetic behaviour.  
For certain pairs $(n,N)$, one finds $\ell(n,N)=0$.  
For example,
\begin{multline*}
\ell(n,2)=0 \ \text{whenever $n$ is not prime}, \\ 
\ell(11,4)=\ell(17,4)=\ell(23,4)=\ell(29,4)=\ell(35,4)=0,\dots
\end{multline*}
This vanishing pattern reveals a deep connection between $\ell$ and additive prime number theory, leading to equivalent formulations of Goldbach’s conjecture.  
Indeed, we show that $\ell(n,N)$ can vanish only for $N=2,4,6$, and that:
\begin{itemize}
\item for $N=2$, $\ell(n,2)=0$ if and only if $n$ is not prime;
\item for $N=4$, $\ell(n,4)=0$ if and only if $n$ cannot be written as a sum of two primes, so that Goldbach’s conjecture is equivalent to the statement that $\ell(n,4)=0$ implies that $n$ is odd;
\item for $N=6$, the non-vanishing of $\ell(n,6)$ for all $n$ is again equivalent to Goldbach’s conjecture.
\end{itemize}

See Theorem \ref{thmsum1}, Theorem \ref{thmell7+.1}, and Theorem \ref{thmequivgoldconj}.
\medskip

\noindent1.6.\,\,{\bf Structure of the paper}

Section~\ref{sec2} reviews the fiberwise Gromov--Witten theory as developed by Astashkevich--Sadov and by Biswas--Das--Oh--Paul. 
We formulate several functorial and inductive properties of vertical quantum cohomology that will be used throughout the paper.

Section~\ref{sec3} is devoted to the study of vertical quantum spectra of flag bundles, and to their connection with the semiclassical spectra of partial flag varieties. 
This section contains the statements and proofs of the main theorems.

Sections~\ref{sec4}--\ref{sec6} introduce the three double sequences $\lcyr$, $\tlcyr$, and $\ell$, and investigate their combinatorial and arithmetic properties through both ordinary and Dirichlet generating functions. 
In particular, we relate these sequences to several aspects of prime number theory, unveiling unexpected links between enumerative geometry and arithmetic phenomena.

Appendix~A collects well-known facts on the cohomology of fiber bundles, 
while Appendix~B recalls basic definitions and identities for double Schubert polynomials.

Appendix~C summarizes results of A.\,Varchenko and V.\,Tarasov relating dynamical operators, stable envelopes, and quantum products in the cohomology of flag varieties.

Appendix~D gathers background material on generating functions and Mellin transform techniques.  

Finally, Appendix~E recalls a classical theorem of E.\,Fabry and E.\,Lindel\"of on boundary singularities of power series, and presents numerical evidence suggesting that the generating functions $\Lcyrit_N(z)$ admit a natural boundary.

\medskip

\noindent{\bf Acknowledgements.}
The author wishes to thank P.\,Jossen, D.\,Masoero, M.\,Mendes Lopes, L.\,Monsaingeon, Y.-G.\,Oh, T.\,Pantev, V.\,Roubtsov, G.\,Ruzza, C.\,Sabbah, and A.\,Varchenko for valuable discussions. The author is also grateful to the Institute for Basic Science, Center for Geometry and Physics at Pohang University of Science and Technology, for its hospitality during July 2025, when a substantial part of this work was carried out. This research was supported by the FCT -- Portuguese national funding, UID/00208/2025.

\section{Fiberwise Gromov--Witten theory, and vertical quantum cohomology}\label{sec2}
\subsection{Fiberwise Gromov--Witten theory}Consider the datum of three smooth complex projective varieties $E,F,B$, and assume $\pi\colon E\to B$ is an $F$-fiber bundle over $B$. Following the works \cite{AS95,BDOP25}, a relative version of Gromov--Witten theory can be developed, in order to count curves on $E$ satisfying suitable incidence conditions and {\it vertical} with respect to the fibration $\pi$. 

Namely, given $g, n \geq 0$ and $\beta \in H_2(F, \mathbb{Z})$, one aims to define a moduli space parametrizing stable maps $f \colon (C, \bm{p}) \to E$ such that $\pi \circ f$ is constant, and $f_*[C] = \beta$, by identifying $F$ with the fiber containing the image of $f$. Such a moduli space (when it exists) is expected to be a $\overline{\mc{M}}_{g,n}(F,\beta)$-fiber bundle over $B$.

As pointed out in \cite{BDOP25}, this setup presents the following issue. Given two local trivializations of the fibre bundle $E$, namely $\phi_i\colon \pi^{-1}(U_i)\cong U_i\times F$ with $i=1,2$ and $U_1,U_2\subseteq B$ open sets with non-empty intersection,  the map $\phi_2\circ \phi_1^{-1}\colon U_1 \cap U_2\to{\rm Aut}(F)$ induces an isomorphism $\overline{\mc M}_{g,n}(F,\bt)\cong \overline{\mc M}_{g,n}(F,[\phi_2\circ \phi_1^{-1}(b)]_*\bt)$ for any $b\in U_1\cap U_2$. Thus, the transition maps of the desired moduli space may depend on the choice of local trivializations of $\pi$. To avoid this, we impose the following assumption:

\noindent{\bf Assumption G:} $E$ is an $F$-fiber bundle over $B$ with structure group
\[G:=\{\phi\in{\rm Aut}(F)\colon \phi_*\in {\rm Aut}(H_2(F,\Z))\text{ is the identity map}\}.
\]
\vskip2mm
\begin{rem}If $B$ is simply connected, then Assumption G holds.
Also, if the automoprhism group ${\rm Aut}(F)$ is connected, then Assumption G holds, as any $\phi\in {\rm Aut}(F)$ is isotopic to the identity.
\qrem\end{rem}

Under Assumption G, a detailed construction of the desired moduli space, denoted here by $\overline{\mathcal{M}}_{g,n}^{\mathrm{Fib}}(E,B,F,\beta)$, is given in \cite{BDOP25}, along with its key properties. We summarize the main result as follows.

\begin{thm}\label{fibervirt}\cite{BDOP25}
If Assumption G holds, given $g,n\geq 0$ and an effective class $\bt\in H_2(F,\Z)$, the moduli space $\overline{\mc M}_{g,n}^{\rm Fib}(E,B,F,\bt)$ exists as a proper Deligne--Mumford stack, and it defines a $\overline{\mc M}_{g,n}(F,\bt)$-bundle over $B$. Moreover, it comes equipped with a natural virtual fundamental class
\begin{multline*}[\overline{\mc M}_{g,n}^{\rm Fib}(E,B,F,\bt)]^{\rm virt}\in A_{D}\left(\overline{\mc M}_{g,n}^{\rm Fib}(E,B,F,\bt)\right)\otimes_\Z\Q,\\ D:=\dim B+ \int_\bt c_1(F)+(\dim F-3)(1-g)+n.\tag*{\qed}
\end{multline*}
\end{thm}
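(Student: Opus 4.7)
The plan is to construct $\overline{\mc M}_{g,n}^{\rm Fib}(E,B,F,\bt)$ by stack descent from local trivializations of $\pi$, and then to equip it with a virtual class by combining the Behrend--Fantechi obstruction theory of the fibre $\overline{\mc M}_{g,n}(F,\bt)$ with the smoothness of the base $B$. First I would fix an open cover $\{U_\al\}_{\al\in A}$ of $B$ with trivializations $\phi_\al\colon \pi^{-1}(U_\al)\xrightarrow{\sim} U_\al\times F$, and take the local model for the moduli to be $U_\al\times \overline{\mc M}_{g,n}(F,\bt)$. On each overlap $U_{\al\gm}:=U_\al\cap U_\gm$ the transition cocycle
\[
g_{\al\gm}:=\phi_\gm\circ\phi_\al^{-1}\colon U_{\al\gm}\to G\subseteq {\rm Aut}(F)
\]
acts on $\overline{\mc M}_{g,n}(F,\bt)$ by post-composition of stable maps; Assumption G ensures that $g_{\al\gm}(b)_*\bt=\bt$ for every $b\in U_{\al\gm}$, hence the action preserves the specified component $\overline{\mc M}_{g,n}(F,\bt)$ rather than merely the total moduli $\coprod_{\bt'}\overline{\mc M}_{g,n}(F,\bt')$. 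The cocycle identity is inherited from $\{g_{\al\gm}\}$, so standard stack descent glues the local pieces into a global stack fitting in a $\overline{\mc M}_{g,n}(F,\bt)$-bundle over $B$. The DM property and properness are then inherited from the corresponding properties of $\overline{\mc M}_{g,n}(F,\bt)$ and $B$ via this bundle structure.

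Next I would construct a relative perfect obstruction theory for $\overline{\mc M}_{g,n}^{\rm Fib}(E,B,F,\bt)\to \mf M_{g,n}\times B$, where $\mf M_{g,n}$ denotes the Artin stack of prestable $n$-pointed genus $g$ curves. Letting $T_{E/B}:=\ker(d\pi)$ be the vertical tangent bundle of $\pi$, whose restriction to each fibre is $TF$, the universal curve and stable map
\[
p\colon \mc C\to \overline{\mc M}_{g,n}^{\rm Fib}(E,B,F,\bt),\qquad u\colon \mc C\to E
\]
give rise to a two-term complex $\bigl(Rp_*u^*T_{E/B}\bigr)^\vee$. On any trivializing patch $U_\al\times \overline{\mc M}_{g,n}(F,\bt)$ this reduces to the pullback of the classical Behrend--Fantechi complex $(R\pi_*f^*T_F)^\vee$ relative to $\mf M_{g,n}$. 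The local perfect obstruction theories therefore glue, and the Behrend--Fantechi formalism, combined with the smoothness of $B$, yields a virtual class of dimension
\[
\dim B + \langle c_1(T_{E/B})|_F,\bt\rangle + (\dim F - 3)(1-g) + n \;=\; D,
\]
since $c_1(T_{E/B})|_F = c_1(F)$.

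The principal obstacle is verifying $G$-equivariance of the Behrend--Fantechi obstruction theory on $\overline{\mc M}_{g,n}(F,\bt)$, in order to make the gluing of obstruction theories genuine rather than merely local. Explicitly, one must show that for every $\phi\in G$ the post-composition autoequivalence of $\overline{\mc M}_{g,n}(F,\bt)$ lifts canonically to the universal curve and stable map, and that the induced pullback of $(R\pi_*f^*T_F)^\vee$ is naturally isomorphic to the original complex, compatibly with triple overlaps. Once this functoriality of the Behrend--Fantechi construction under $\bt$-preserving automorphisms of the target is established, the descent of both the stack and its virtual fundamental class becomes formal, yielding the statement.
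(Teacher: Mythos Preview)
The paper does not give its own proof of this theorem: it is stated with a citation to \cite{BDOP25} and closed with a \qed, so there is no argument in the paper to compare against. Your sketch is a plausible reconstruction of the construction in \cite{BDOP25}---indeed, the paper later remarks (in the proof of Theorem~\ref{thmfunct}) that the moduli spaces are compatible with base change in the sense that $\overline{\mc M}^{\mathrm{Fib}}_{g,n}(E',B',F,\bt)\cong \overline{\mc M}^{\mathrm{Fib}}_{g,n}(E,B,F,\bt)\times_B B'$, which is exactly what your gluing-from-local-trivializations description would predict. Your identification of the key technical point (the $G$-equivariance of the Behrend--Fantechi obstruction theory, needed to descend the virtual class and not just the underlying stack) is apt; this is precisely the kind of verification that \cite{BDOP25} must carry out and that a self-contained proof would need to supply in detail.
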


The moduli space is naturally equipped with evaluation maps ${\rm ev}_i\colon \overline{\mc M}_{g,n}^{\rm Fib}(E,B,F,\bt)\to E$, $i=1,\dots,n$, mapping the point $[f\colon (C,\bm p)\to E]$ to $f(p_i)$. We can thus introduce the following fiberwise version of Gromov--Witten invariants.

\begin{defn}
Given cycles $\gm_1,\dots,\gm_n\in H^\bullet(E,\Q)$, the {\it fiberwise Gromov--Witten invariant} $\langle\gm_1,\dots,\gm_n\rangle_{g,n,\bt}^{{\rm Fib}}\in \Q$ is the rational number
\beq\label{fibGW}
\langle\gm_1,\dots,\gm_n\rangle_{g,n,\bt}^{{\rm Fib}}:=\int_{[\overline{\mc M}_{g,n}^{\rm Fib}(E,B,F,\bt)]^{\rm virt}}\prod_{i=1}^n{\rm ev}_i^*\gm_i.
\eeq
\end{defn}

\begin{rem}
The invariant $\langle\gm_1,\dots,\gm_n\rangle_{g,n,\bt}^{{\rm Fib}}$ vanishes unless $\sum_{i=1}^n\deg(\gm_i)=2D$ -- where $\deg$ denotes the cohomological degree, $D$ is the virtual dimension in Theorem \ref{fibervirt} -- and $\bt$ is an effective class (i.e.\,\,represented by an algebraic/holomorphic curve). 
\qrem\end{rem}

\begin{rem}
If $B={\rm Spec}(\C)$, the moduli space $\overline{\mc M}_{g,n}^{\rm Fib}(E,B,F,\bt)$ equals the classical moduli space $\overline{\mc M}_{g,n}(E,\bt)=\overline{\mc M}_{g,n}(F,\bt)$, the virtual fundamental class $[\overline{\mc M}_{g,n}^{\rm Fib}(E,B,F,\bt)]^{\rm virt}$ coincides with the Behrend--Fantechi \cite{BF97} and Li--Tian \cite{LT98} classes, and the fiberwise Gromov--Witten invariant $\langle\gm_1,\dots,\gm_n\rangle_{g,n,\bt}^{{\rm Fib}}$ equals the classical one $\langle\gm_1,\dots,\gm_n\rangle_{g,n,\bt}$.
\qrem\end{rem}

\begin{rem}
To the best of our knowledge, the first discussion of the fiberwise (or vertical) version of Gromov--Witten theory for a fibration appears in \cite{AS95}, at least in the genus-zero sector. That work outlines the expected properties of the moduli space $\overline{\mc M}_{0,n}^{\rm Fib}(E,B,F,\bt)$, although no stack-theoretic or algebro-geometric detailed constructions are provided. 
In particular, rather than defining a virtual fundamental class on $\overline{\mc M}_{0,n}^{\rm Fib}(E,B,F,\bt)$, A.\,Astashkevich and V.\,Sadov introduce an integration along the fibers morphism $\tau_!$ associated with the projection $\tau \colon \overline{\mc M}_{0,n}^{\rm Fib}(E,B,F,\bt) \to B$, and essentially define their {\it vertical} Gromov--Witten invariants as $\tau_!\left(\prod_{i=1}^n{\rm ev}_i^*\gm_i\right)\in H^\bullet(B,\Q)$.
The fiberwise Gromov--Witten invariant $\langle\gm_1,\dots,\gm_n\rangle_{0,n,\bt}^{\rm Fib}$ from equation \eqref{fibGW} is then related to the Astashkevich--Sadov invariant via the ``Fubini formula'':
\beq\label{Fubini}
\langle\gm_1,\dots,\gm_n\rangle_{0,n,\bt}^{\rm Fib} = \int_B \tau_!\left(\prod_{i=1}^n {\rm ev}_i^* \gm_i\right),
\eeq
see \cite[formulas (3.3) and (3.4)]{AS95}.
\qrem\end{rem}

The fiberwise Gromov--Witten invariants $\langle\gm_1,\dots,\gm_n\rangle_{g,n,\bt}^{{\rm Fib}}$ satisfy analogues of the classical axioms such as the string, divisor, point mapping, and splitting axioms. We highlight below a few key properties. Further details and complete proofs can be found in \cite[Sec.\,3.3]{BDOP25}.

\begin{prop}\label{axioms}\cite{AS95,BDOP25}
 For any $\gm_1,\dots,\gm_n\in H^\bullet(E,\Q)$ we have:
\begin{enumerate}
\item If $n>3$ or $\bt\neq 0$, then $\langle\pi^*\dl,\gm_2,\dots,\gm_n\rangle^{\rm Fib}_{g,n,\bt}=0$ for any $\dl\in H^\bullet(B,\Q)$.
\item More generally, in terms of the natural projection $\tau \colon \overline{\mc M}_{g,n}^{\rm Fib}(E,B,F,\bt) \to B$, we have
\[\langle\pi^*\dl\cdot\gm_1,\gm_2,\dots,\gm_n\rangle^{\rm Fib}_{g,n,\bt}=\int_{[\overline{\mc M}_{g,n}^{\rm Fib}(E,B,F,\bt)]^{\rm virt}}\tau^*\dl\cdot \prod_{i=1}^n{\rm ev}_i^*\gm_i.
\]
\item If $n>3$ or $\bt\neq 0$, and if $\iota_b^*\gm_1\in H^2(F,\Q)$ where $\iota_b\colon F\hookrightarrow E$ is an identification of $F$ with the fiber $\pi^{-1}(b)$, with $b\in B$, then 
\[\langle\gm_1,\gm_2,\dots,\gm_n\rangle^{\rm Fib}_{g,n,\bt}=\left(\int_\bt\iota_b^*\gm_1\right)\langle\gm_2,\dots,\gm_n\rangle^{\rm Fib}_{g,n-1,\bt}.
\]
\item If $n>3$, we have $\langle\gm_1,\dots,\gm_n\rangle^{\rm Fib}_{0,n,0}=0$. Moreover, $\langle\gm_1,\gm_2,\gm_3\rangle^{\rm Fib}_{0,3,0}=\int_E\gm_1\gm_2\gm_3$. \qed
\end{enumerate}
\end{prop}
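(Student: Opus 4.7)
The plan is to reduce each of the four axioms to the corresponding property of classical (absolute) Gromov--Witten theory by exploiting the $\overline{\mc M}_{g,n}(F,\bt)$-bundle structure $\tau\colon\overline{\mc M}_{g,n}^{\rm Fib}(E,B,F,\bt)\to B$ supplied by Theorem~\ref{fibervirt}. The single geometric identity underlying the entire proof is
\[
\pi\circ{\rm ev}_i=\tau,\qquad i=1,\dots,n,
\]
expressing the verticality $\pi\circ f=\mathrm{const}$ of every stable map parametrized by the moduli space. Part~(2) is then immediate: substituting ${\rm ev}_1^*(\pi^*\dl\cdot\gm_1)=\tau^*\dl\cdot{\rm ev}_1^*\gm_1$ into definition~\eqref{fibGW} produces the required formula.

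For part~(1), I would combine~(2) with the forgetful morphism
\[
F_1\colon \overline{\mc M}_{g,n}^{\rm Fib}(E,B,F,\bt)\longrightarrow \overline{\mc M}_{g,n-1}^{\rm Fib}(E,B,F,\bt)
\]
which forgets the first marked point (and stabilizes the underlying curve), and is well defined precisely under the hypothesis $n>3$ or $\bt\neq 0$. The morphism $F_1$ commutes with the projection to $B$, and satisfies ${\rm ev}_i={\rm ev}_i'\circ F_1$ for $i\geq 2$, so that the integrand $\tau^*\dl\cdot\prod_{i\geq 2}{\rm ev}_i^*\gm_i$ is a pullback $F_1^*(\tau'^*\dl\cdot\prod_{i\geq 2}{\rm ev}_i'{}^*\gm_i)$. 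By the projection formula, the invariant equals the integral of this class against $(F_1)_*[\overline{\mc M}_{g,n}^{\rm Fib}]^{\rm virt}$, which vanishes: the fiberwise virtual classes should transform under $F_1$ in the same way as in the absolute theory, and $(F_1)_*(1)=0$ by dimension, since $F_1$ is proper with positive relative dimension.

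For part~(3), I would again exploit $F_1$: the factors $\prod_{i\geq 2}{\rm ev}_i^*\gm_i$ are pullbacks via $F_1$, so the invariant reduces, via the projection formula, to
\[
\int_{\overline{\mc M}_{g,n-1}^{\rm Fib}}\Bigl(\textstyle\prod_{i\geq 2}{\rm ev}_i'{}^*\gm_i\Bigr)\cap (F_1)_*\!\left({\rm ev}_1^*\gm_1\cap [\overline{\mc M}_{g,n}^{\rm Fib}]^{\rm virt}\right).
\]
The key identity to verify is then the fiberwise divisor relation
\[
(F_1)_*\!\left({\rm ev}_1^*\gm_1\cap [\overline{\mc M}_{g,n}^{\rm Fib}]^{\rm virt}\right)=\left(\textstyle\int_\bt\iota_b^*\gm_1\right)[\overline{\mc M}_{g,n-1}^{\rm Fib}]^{\rm virt},
\]
obtained by viewing $F_1$ as the universal curve over the target: integration of the divisor class ${\rm ev}_1^*\gm_1$ along the fiber curve produces $\int_\bt\iota_b^*\gm_1$, and Assumption~G guarantees that this number is independent of the choice of base point $b\in B$. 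Finally, part~(4) is proved by direct inspection: for $\bt=0$ every stable map is constant, so $\overline{\mc M}_{0,n}^{\rm Fib}(E,B,F,0)\cong \overline{\mc M}_{0,n}\times E$ (for $n\geq 3$), with virtual class equal to the ordinary fundamental class (virtual and actual dimensions agree, both equal to $\dim E+n-3$). All evaluation maps collapse to the second projection onto $E$, giving $\prod_i{\rm ev}_i^*\gm_i=1\boxtimes\prod_i\gm_i$ and hence $\langle\gm_1,\dots,\gm_n\rangle^{\rm Fib}_{0,n,0}=\bigl(\int_{\overline{\mc M}_{0,n}}1\bigr)\cdot\int_E\prod_i\gm_i$, which equals $\int_E\gm_1\gm_2\gm_3$ for $n=3$ and vanishes for $n>3$.

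The main technical obstacle is the careful treatment of the compatibility of the virtual fundamental class of Theorem~\ref{fibervirt} with the forgetful morphism $F_1$, which underpins both~(1) and~(3). In the absolute case this compatibility is standard, coming from the relative perfect obstruction theory of the universal curve over $\overline{\mc M}_{g,n-1}(F,\bt)$; in the fiberwise setting, one must verify that the obstruction theories of $\overline{\mc M}_{g,n}^{\rm Fib}$ and $\overline{\mc M}_{g,n-1}^{\rm Fib}$ constructed in \cite{BDOP25} are related in precisely the same fashion. Once this is established, all four axioms follow as minor adaptations of the classical arguments, with Assumption~G ensuring the global well-definedness of the integrals.
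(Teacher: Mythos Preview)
The paper does not supply its own proof of this proposition: it is stated with a terminal \qed and attributed to \cite{AS95,BDOP25}, with the surrounding text directing the reader to \cite[Sec.\,3.3]{BDOP25} for ``further details and complete proofs.'' Your proposal is a correct and standard outline of how such a proof proceeds---reducing each axiom to its classical counterpart via the bundle structure $\tau$ and the identity $\pi\circ{\rm ev}_i=\tau$---and your closing paragraph accurately isolates the one genuinely technical ingredient, namely the compatibility of the fiberwise virtual class with the forgetful morphism, which is exactly what \cite{BDOP25} supplies.
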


\begin{rem}
The definition of fiberwise Gromov--Witten invariants naturally extends to classes in $H^\bullet(B, R)$ for any $\Q$-algebra $R$. The properties above still hold in this more general setting.
\qrem\end{rem}

\subsection{Vertical quantum cohomology} Let $\pi\colon E\to B$ be a locally trivial algebraic (or holomorphic) $F$-bundle satisfying Assumption G, as in the previous section. For simplicity, we also assume:

\noindent{\bf Assumption F:} for any $n\in\N$, there exist a finite number of effective classes $\bt$ such that $\int_\bt c_1(F)\leq n$. 
\begin{rem}\label{remF}If the fiber $F$ is Fano or a homogeneous space, then Assumption F automatically holds. See the arguments of \cite[Lemma 15]{FP97}\cite[Prop.\,8.1.3]{CK99}.
\qrem\end{rem}

\subsubsection{Big vertical quantum cohomology} Fix a basis $(T_0=1,T_1,\dots, T_N)$ of $H^\bullet(E,\C)$, and denote by $\bm t=(t^0,\dots, t^N)$ the dual coordinates. Denote by $\eta\colon H^\bullet(E,\C)\times H^\bullet(E,\C)\to\C$ the $\C$-bilinear non-degenerate Poincar\'e pairing, with Gram matrix ${\bm \eta}:=\left(\eta_{ij}\right)_{ij=0}^N$, $\eta_{ij}:=\int_ET_iT_j$, and inverse matrix ${\bm\eta}^{-1}=\left(\eta^{ij}\right)_{i,j=0}^N$.
\vskip2mm
The {\it big fiberwise} (or {\it vertical}) {\it quantum cohomology} of $(E,B,F)$ is the algebra structure $\left(H^\bullet(E,\C[\![\bm t]\!]),\,\bqf\right)$ defined by
\beq\label{qprod}
T_i\bqf T_j=\sum_{n\geq 0}\sum_{\al_1,\dots,\al_n=0}^N\sum_{h,\ell=0}^N\sum_\bt\frac{t^{\al_1}\dots t^{\al_n}}{n!}\langle T_{\al_1},\dots,T_{\al_n},T_i,T_j,T_h\rangle^{\rm Fib}_{0,n+3,\bt}\eta^{h\ell}T_{\ell}.
\eeq
Assumption F ensures that, for fixed $i$, $j$, $n$, $\alpha_1,\dots,\alpha_n$, $h$, and $\ell$, the sum over $\bt$ is finite, so the product $\bqf$ is well defined.

\begin{thm}\cite{AS95,BDOP25}
The algebra $\left(H^\bullet(E,\C[\![\bm t]\!]),\,\bqf,\,\eta\right)$ is a Frobenius super-algebra: it is super-commutative, associative, unital (with unit $T_0=1$), and the product is compatible with the Poincar\'e pairing, namely
\[\eta\left(x_1\bqf x_2,x_3\right)=\eta\left(x_1,x_2\bqf x_3\right).\tag*{\qed}
\]
\end{thm}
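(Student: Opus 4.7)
The plan is to check the four axioms (super-commutativity, unitality, Frobenius compatibility, associativity) in turn, mimicking the classical argument for ordinary quantum cohomology but invoking the fiberwise axioms collected in Proposition~\ref{axioms} and their unstated (but asserted) companions from \cite{BDOP25}. Throughout, Assumption F together with the dimension constraint $\sum_i\deg(\gamma_i)=2D$ ensures that the defining series \eqref{qprod} is well defined in $H^\bullet(E,\C[\![\bm t]\!])$, so that all manipulations below take place in a genuine $\C[\![\bm t]\!]$-algebra.

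For \textbf{super-commutativity} I would observe that the fiberwise Gromov--Witten invariants $\langle T_{\alpha_1},\dots,T_{\alpha_n},T_i,T_j,T_h\rangle^{\rm Fib}_{0,n+3,\beta}$ are symmetric in their arguments up to the Koszul sign (they are defined via symmetric integrals over a moduli space with unordered marked points of the same type), hence swapping $T_i$ and $T_j$ in \eqref{qprod} produces the sign $(-1)^{|T_i||T_j|}$. For the \textbf{Frobenius property}, pairing with $T_k$ collapses the factor $\sum_\ell\eta^{h\ell}T_\ell$ against $T_k$ through $\eta_{\ell k}$, giving
\[
\eta(T_i\bqf T_j,T_k)=\sum_{n,\beta,\bm\alpha}\frac{t^{\alpha_1}\cdots t^{\alpha_n}}{n!}\langle T_{\alpha_1},\dots,T_{\alpha_n},T_i,T_j,T_k\rangle^{\rm Fib}_{0,n+3,\beta},
\]
and the symmetry of the right-hand side in the triple $(T_i,T_j,T_k)$ (again up to Koszul signs) matches the corresponding expansion of $\eta(T_i,T_j\bqf T_k)$. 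For \textbf{unitality}, set $T_i=T_0=1$ in \eqref{qprod}: by Proposition \ref{axioms}(1) every term with $n+3>3$ or $\beta\neq 0$ vanishes, so only the contribution from $n=0$, $\beta=0$ remains; by Proposition \ref{axioms}(4) this is $\langle 1,T_j,T_h\rangle^{\rm Fib}_{0,3,0}=\int_E T_jT_h=\eta_{jh}$, so that
\[
T_0\bqf T_j=\sum_{h,\ell}\eta_{jh}\eta^{h\ell}T_\ell=T_j.
\]

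The main obstacle is \textbf{associativity}, which I expect to absorb most of the work. The plan is to reformulate $(T_i\bqf T_j)\bqf T_k=T_i\bqf(T_j\bqf T_k)$ as a WDVV-type identity for the fiberwise potential
\[
\Phi(\bm t):=\sum_{n\geq 3}\sum_\beta \sum_{\bm\alpha}\frac{t^{\alpha_1}\cdots t^{\alpha_n}}{n!}\langle T_{\alpha_1},\dots,T_{\alpha_n}\rangle^{\rm Fib}_{0,n,\beta},
\]
so that the third partials $\Phi_{ijk}:=\partial_i\partial_j\partial_k\Phi$ are the structure constants $\eta(T_i\bqf T_j,T_k)$. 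The associativity then amounts to the identity $\sum_{h,\ell}\Phi_{ijh}\eta^{h\ell}\Phi_{\ell km}=\sum_{h,\ell}\Phi_{jkh}\eta^{h\ell}\Phi_{\ell im}$ for all $i,j,k,m$. The derivation of this identity proceeds, as in the absolute case, by comparing the contributions of the three boundary strata $\{1,2|3,4\}$, $\{1,3|2,4\}$, $\{1,4|2,3\}$ in $\overline{\mc{M}}_{0,n+4}$ via the splitting axiom for the fiberwise virtual class. Concretely, I would invoke the fiberwise splitting axiom (the analogue of the classical one, stated in \cite{BDOP25}) applied to the universal evaluation diagram of $\overline{\mc M}^{\rm Fib}_{0,n+4}(E,B,F,\beta)$, split $\beta=\beta'+\beta''$ with $\beta',\beta''$ effective (possible thanks to Assumption F), and sum over intermediate insertions $T_h$ against the inverse metric $\eta^{h\ell}$.

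The delicate point in the fiberwise version, and likely the only genuine deviation from the absolute case, is that the splitting map factors through the fiber product $\overline{\mc M}^{\rm Fib}_{0,n'+1}(E,B,F,\beta')\times_B\overline{\mc M}^{\rm Fib}_{0,n''+1}(E,B,F,\beta'')$ over $B$ rather than over a point, and the virtual class of the boundary must be identified with the $\eta$-weighted sum of such fiber-product classes; verifying this at the level of virtual classes is precisely the content of the splitting axiom asserted in \cite[Sec.\,3.3]{BDOP25}, and once granted, the rest of the argument is purely formal. With this in hand, associativity follows exactly as in the classical derivation of the WDVV equations, completing the proof that $(H^\bullet(E,\C[\![\bm t]\!]),\bqf,\eta)$ is a Frobenius super-algebra.
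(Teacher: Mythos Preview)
The paper does not supply its own proof of this statement: the theorem is quoted as a result of \cite{AS95,BDOP25} and closed immediately with a \qed marker, after which the text simply names the resulting algebra and moves on. So there is no ``paper's proof'' to compare against beyond the references themselves.

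Your sketch is the standard argument one would expect to find in those references: super-commutativity and the Frobenius compatibility come from the (graded) $S_n$-symmetry of the invariants, unitality from the string/point-mapping axioms (your use of Proposition~\ref{axioms}(1) is legitimate since $T_0=1=\pi^*1$), and associativity from the splitting axiom for the fiberwise virtual class. Your identification of the key subtlety---that the gluing takes place over $B$ rather than over a point, so the boundary virtual class must be matched with a fiber product $\overline{\mc M}^{\rm Fib}_{0,n'+1}\times_B\overline{\mc M}^{\rm Fib}_{0,n''+1}$---is exactly the point that distinguishes the fiberwise proof from the absolute one, and you are right that this is precisely what \cite[Sec.\,3.3]{BDOP25} supplies. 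There is no gap in the strategy; the proposal is a faithful outline of the argument the cited papers carry out.
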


We will denote ${\rm QH}^{\rm Fib}_{\rm big}(E,B,F)$ this Frobenius super-algebra. 

When $B={\rm Spec}(\C)$, the Frobenius super-algebra above defines the (ordinary) {\it big quantum cohomology} ${\rm QH_{big}}(E)$ of $E$.

\begin{rem}
We have $T_i\bqf T_j=T_i\cup T_j+O(\bm t)$, by Proposition \ref{axioms}. Thus the $\bqf$-product defines a deformation of the classical cohomological Frobenius super-algebra $(H^\bullet(E,\C),\cup,\eta)$.
\qrem\end{rem}

Consider the pullback morphism $\pi^*\colon H^\bullet(B,\C[\![\bm t]\!]) \to H^\bullet(E,\C[\![\bm t]\!])$, in general not injective.
\begin{thm}\label{thm1}
$\quad$
\begin{enumerate}
\item If $a_1$ or $a_2$ lies in $\pi^*H^\bullet(B,\C[\![\bm t]\!])$, then $a_1 \bqf a_2 = a_1 \cup a_2$.
\item The pullback $\pi^*$ induces an isometric morphism of Frobenius super-algebras:
\[
(H^\bullet(B,\C[\![\bm t]\!]),\cup,\eta_B) \rightarrow (H^\bullet(E,\C[\![\bm t]\!]),\bqf,\eta_E).
\]
Thus, the big vertical quantum cohomology ${\rm QH}^{\rm Fib}_{\rm big}(E)$ carries a natural $H^\bullet(B,\C[\![\bm t]\!])$-algebra structure.
\end{enumerate}
\end{thm}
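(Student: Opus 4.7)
The plan is to derive both assertions from a direct unpacking of the defining series \eqref{qprod} of $\bqf$, using the vanishing properties of fiberwise Gromov--Witten invariants recorded in Proposition \ref{axioms}. Part (1) is the technical core; part (2) will then follow essentially formally.

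For part (1), super-commutativity of $\bqf$ allows me to reduce to the case $a_1 = \pi^*\delta \in \pi^*H^\bullet(B,\C[\![\bm t]\!])$. Expanding $(\pi^*\delta)\bqf T_j$ via \eqref{qprod} and invoking Proposition \ref{axioms}(1), every correlator
\[
\langle \pi^*\delta, T_{\al_1},\dots,T_{\al_n}, T_j, T_h\rangle^{\rm Fib}_{0,n+3,\bt}
\]
vanishes unless the total number of marked points $n+3$ is at most $3$ and $\bt=0$, i.e.\ $n=0$ and $\bt=0$, so the series collapses to the single term
\[
(\pi^*\delta)\bqf T_j = \sum_{h,\ell=0}^N \langle \pi^*\delta, T_j, T_h\rangle^{\rm Fib}_{0,3,0}\,\eta^{h\ell}T_\ell.
\]
By Proposition \ref{axioms}(4), the three-point degree-zero correlator equals $\int_E \pi^*\delta \cup T_j \cup T_h$, and I would close the argument with the completeness identity
\[
\sum_{h,\ell}\left(\int_E X\cup T_h\right)\eta^{h\ell}T_\ell = X,
\]
which is the defining property of $\bm{\eta}^{-1}$ as inverse of the Gram matrix of the Poincar\'e pairing. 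Applied to $X = \pi^*\delta \cup T_j$, this yields $(\pi^*\delta)\bqf T_j = \pi^*\delta \cup T_j$; $\C[\![\bm t]\!]$-linear extension then completes part (1).

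Part (2) is then formal. Multiplicativity $\pi^*(x\cup y) = \pi^*x \cup \pi^*y = \pi^*x \bqf \pi^*y$ is immediate from part (1) combined with functoriality of $\pi^*$, and unitality follows from $\pi^*(1_B) = 1_E = T_0$. The Frobenius-compatible pairing condition $\eta_E(\pi^*x \bqf a, b) = \eta_E(a, \pi^*x \bqf b)$ for $x\in H^\bullet(B,\C[\![\bm t]\!])$ and $a,b\in H^\bullet(E,\C[\![\bm t]\!])$ reduces, via part (1), to the classical Frobenius identity for $\cup$ on $E$. The induced $H^\bullet(B,\C[\![\bm t]\!])$-algebra structure on ${\rm QH}^{\rm Fib}_{\rm big}(E,B,F)$ is then defined by $\delta\cdot a := \pi^*\delta \bqf a = \pi^*\delta \cup a$, and its associativity and module axioms reduce to the same identities for the classical cup product on $E$.

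The main obstacle is almost entirely interpretive rather than mathematical: pinning down the precise reading of ``isometric morphism'' in a setting where $\dim B \neq \dim E$ prevents a literal identity $\eta_E(\pi^*x, \pi^*y) = \eta_B(x, y)$. In degree-graded Frobenius super-algebras with pairings of different top degrees, the natural reading is the Frobenius-compatibility above, together with the projection-formula identity $\int_E \pi^*x \cup \alpha = \int_B x \cup \pi_*\alpha$, which governs the interplay of $\eta_E$ and $\eta_B$ via integration along the fibers. Modulo this interpretive step, the entire theorem rests on the single observation that the insertion of a pullback class in a fiberwise genus-zero correlator forces the correlator to be classical, i.e.\ to reduce to a cup product integral on $E$.
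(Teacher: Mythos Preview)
Your proof is correct and follows essentially the same route as the paper's: for part (1), both you and the paper invoke Proposition~\ref{axioms}(1) to kill all correlators with a $\pi^*\delta$ insertion except the $(n{=}0,\beta{=}0)$ term, then Proposition~\ref{axioms}(4) to identify that term with the classical triple intersection. You are simply more explicit about the completeness identity $\sum_{h,\ell}(\int_E X\cup T_h)\eta^{h\ell}T_\ell=X$, which the paper leaves implicit.

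Your caution about the word ``isometric'' is well taken and worth highlighting. The paper's own argument for part (2) is the projection formula $\int_E \pi^*\tau\cup\omega=\int_B\tau\cup\pi_*\omega$, specialized to $\omega=\pi^*\tau'$; but since $\pi_*(\pi^*\tau')$ lands in negative degree whenever $\dim F>0$, the literal identity $\eta_E(\pi^*x,\pi^*y)=\eta_B(x,y)$ reads $0=\eta_B(x,y)$ and fails for nondegenerate pairings. Your reading---that the content is the adjunction $\eta_E(\pi^*x,\omega)=\eta_B(x,\pi_*\omega)$ together with Frobenius compatibility $\eta_E(\pi^*x\bqf a,b)=\eta_E(a,\pi^*x\bqf b)$, both of which reduce to classical facts via part (1)---is the correct way to make the statement precise, and is exactly what the paper's projection-formula computation actually establishes.
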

\proof
The fiberwise Gromov--Witten invariant
$\langle \pi^*\delta, T_{i_1}, \dots, T_{i_h} \rangle^{\rm Fib}_{0,h+1,\beta},
$
with $\delta \in H^\bullet(B,\C)$, is nonzero only if $h = 2$ and $\beta = 0$, in which case it reduces to a classical triple intersection:
\[
\langle \pi^*\delta, T_{i_1}, T_{i_2} \rangle^{\rm Fib}_{0,3,0} = \int_E \pi^*\delta \cdot T_{i_1} \cdot T_{i_2}.
\]
This proves point (1). For point (2), we have
\[
\int_E \pi^*\tau \cdot \omega = \int_B \tau \cdot \pi_*\omega, \quad \text{for } \tau \in H^\bullet(B,\C),\ \omega \in H^\bullet(E,\C),
\]see Appendix \ref{projApp}. 
Setting $\omega = \pi^*\tau'$, we find that $\pi^*$ preserves the Poincar\'e pairings $\eta_B$ and $\eta_E$, hence it is an isometry.
This completes the proof of point (2).
\endproof

We say that the fiber bundle $(E, B, F)$ is \emph{cohomologically decomposable} if  
\[
H^\bullet(E, \C) \cong H^\bullet(B, \C) \otimes_\C H^\bullet(F, \C)
\]
as $H^\bullet(B, \C)$-modules (not necessarily as rings). This holds if and only if there exist classes $e_1, \dots, e_k \in H^\bullet(E, \C)$ such that:
\begin{itemize}
  \item their restrictions $\iota_b^* e_1, \dots, \iota_b^* e_k$ form a basis of $H^\bullet(F, \C)$ for every fiber $\pi^{-1}(b)$,
  \item and every class in $H^\bullet(E, \C)$ can be uniquely written as $\sum_{j=1}^k \pi^* b_j \cup e_j$ for suitable $b_j \in H^\bullet(B, \C)$, see Theorem~\ref{equivs}.
\end{itemize}

\begin{rem}\label{remG}
If $(E,B,F)$ is cohomologically decomposable, then assumption G holds. Moreover, if $B$ is simply connected, then $(E,B,F)$ is automatically cohomologically decomposable. See Theorem \ref{equivs}.
\qrem\end{rem}

\begin{thm}\label{elemprod}
If $(E, B, F)$ is cohomologically decomposable, then the map $\pi^*$ injects $(H^\bullet(B,\C), \cup, \eta_B)$ into ${\rm QH}^{\rm Fib}_{\rm big}(E,B,F)$ as a subalgebra. Moreover, the big quantum $\bqf$-product is uniquely determined by the fiberwise products $e_i \bqf e_j$.
\end{thm}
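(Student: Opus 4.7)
The plan is to deduce both claims directly from Theorem \ref{thm1} together with the decomposability hypothesis, extended to $\C[\![\bm t]\!]$-coefficients. The hypothesis gives a decomposition $H^\bullet(E,\C)\cong H^\bullet(B,\C)\otimes_\C H^\bullet(F,\C)$ of $H^\bullet(B,\C)$-modules; tensoring with $\C[\![\bm t]\!]$ preserves this, so every $a\in H^\bullet(E,\C[\![\bm t]\!])$ admits a unique expression $a=\sum_j\pi^*b_j\cup e_j$ with $b_j\in H^\bullet(B,\C[\![\bm t]\!])$.

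For the first assertion, by Theorem \ref{thm1}(2) the map $\pi^*$ is already a morphism of Frobenius super-algebras, so it only remains to verify injectivity. Since $\{\iota_b^*e_1,\dots,\iota_b^*e_k\}$ is a basis of $H^\bullet(F,\C)$, I may normalize so that one of the $e_j$, say $e_1$, equals $1\in H^0(E,\C)$. Then $\pi^*b=\pi^*b\cup e_1$ lies in the $e_1$-component of the decomposition, and uniqueness of the expansion forces $\pi^*b=0$ to imply $b=0$; hence $\pi^*$ is injective.

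For the second assertion, let $a_1=\sum_i\pi^*b_i\cup e_i$ and $a_2=\sum_j\pi^*b_j'\cup e_j$ be arbitrary elements. Using Theorem \ref{thm1}(1), every factor of the form $\pi^*b$ cup-multiplies and $\bqf$-multiplies in the same way, so
\[
a_1\bqf a_2=\sum_{i,j}(\pi^*b_i\bqf e_i)\bqf(\pi^*b_j'\bqf e_j).
\]
Applying associativity, super-commutativity of $\bqf$, and the algebra-morphism property of $\pi^*$ (Theorem \ref{thm1}(2)), every occurrence of $\pi^*b_i$ and $\pi^*b_j'$ can be pushed to the left and combined into $\pi^*(b_i\cup b_j')$, at the cost of a standard Koszul sign $(-1)^{|e_i|\,|b_j'|}$. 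This yields
\[
a_1\bqf a_2=\sum_{i,j}(-1)^{|e_i|\,|b_j'|}\,\pi^*(b_i\cup b_j')\cup(e_i\bqf e_j).
\]
Expanding each fiberwise product in the decomposition, $e_i\bqf e_j=\sum_k \pi^*c_{ij}^{\,k}\cup e_k$ with $c_{ij}^{\,k}\in H^\bullet(B,\C[\![\bm t]\!])$, one obtains $a_1\bqf a_2=\sum_{i,j,k}(-1)^{|e_i|\,|b_j'|}\pi^*(b_i\cup b_j'\cup c_{ij}^{\,k})\cup e_k$. Thus the product is completely determined by the structure constants $c_{ij}^{\,k}$, hence by the fiberwise products $e_i\bqf e_j$.

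There is no serious obstacle: the argument is essentially bookkeeping, and the only mildly delicate point is the choice of $e_1=1$ in the decomposable basis, together with careful handling of the Koszul signs arising from super-commutativity. Both are routine once one has Theorem \ref{thm1} at hand.
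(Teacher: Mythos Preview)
Your proof is correct and follows essentially the same route as the paper: both use Theorem~\ref{thm1}(1), associativity, and super-commutativity to reduce the general product to the fiberwise products $e_i\bqf e_j$, and your Koszul sign is in fact more explicit than the paper's $(-1)^{\eps_{ij}}$. The only minor difference is in the injectivity argument: the paper simply cites Theorem~\ref{piinj} from the appendix, whereas you give a direct proof by choosing $e_1=1$ and invoking uniqueness of the decomposition---both are valid and equivalent in spirit.
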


\begin{proof}
Cohomological triviality implies the injectivity of $\pi^*$, see Theorem~\ref{piinj}.  
Let $e_1, \dots, e_k \in H^\bullet(E, \C)$ be as above. Then every class in $H^\bullet(E, \C)$ has a unique expression $\sum_j \pi^* b_j \cup e_j$.

Consider the product $(\pi^* b_i \cup e_i) \bqf (\pi^* b_j \cup e_j)$. By associativity and super-commutativity of $\bqf$, and using Theorem~\ref{thm1}(1), we compute:
\begin{multline*}
(\pi^* b_i \cup e_i) \bqf (\pi^* b_j \cup e_j) 
= (\pi^* b_i \bqf e_i) \bqf (\pi^* b_j \bqf e_j) \\
= (-1)^{\eps_{ij}} (\pi^* b_i \bqf \pi^* b_j) \bqf (e_i \bqf e_j) 
= (-1)^{\eps_{ij}} (\pi^* b_i \cup \pi^* b_j) \cup (e_i \bqf e_j),
\end{multline*}
where the sign $(-1)^{\eps_{ij}}$ accounts for super-commutativity.

This shows that the $\bqf$-product on $H^\bullet(E, \C)$ is fully determined by the fiberwise products $e_i \bqf e_j$.
\end{proof}

\subsubsection{Small vertical quantum cohomology}\label{secsqc} Let us now consider a ``restriction to the $H^2$-locus'' of the quantum product $\bqf$, under the following mild additional assumption:

\medskip
\noindent{\bf Assumption G':} For any element $\phi$ of the monodromy group ${\rm im}(\pi_1(B)\to G)$, the induced morphism $\phi^*\in{\rm Aut}(H^2(F,\C))$ is the identity.

\begin{rem}\label{remG'}
Assumption G' is clearly independent of the choice of base point $b \in B$ used to define $\pi_1(B,b)$. Moreover, if $(E,B,F)$ is cohomologically decomposable then Assumption G' automatically holds. 
\qrem\end{rem}

Deligne's theorem \cite{Del68} asserts that the Leray spectral sequence associated with the fibration $\pi\colon E \to B$ degenerates at $E_2$, yielding the decomposition
\[
H^2(E,\C)\cong H^2(B,\C)\oplus H^1(B,R^1\pi_*\underline{\C})\oplus H^0(B,R^2\pi_*\underline{\C}).
\]
Assumption G' implies that the space of global sections of the local system $R^2\pi_*\underline{\C}$ -- naturally identified with the monodromy-invariant subspace $H^2(F,\C)^{\pi_1(B)} \subseteq H^2(F,\C)$ -- coincides with the full cohomology group $H^2(F,\C)$. Consequently, for any $b \in B$, the canonical restriction map $\iota_b^*\colon H^2(E,\C) \to H^2(F,\C)$
is surjective. See Appendix \ref{Deligneapp}.

Let us now introduce another mild additional assumption:

\noindent{\bf Assumption F':} The fiber $F$ is simply connected.

\begin{rem}\label{remF'}
If $F$ is Fano, then Assumption F' automatically holds. 
\qrem\end{rem}

Fix bases $\tilde T_1,\dots,\tilde T_k$ of $H^2(F,\C)$ and $\tilde T_{k+1},\dots,\tilde T_{k+m}$ of $H^2(B,\C)$. Under Assumption F', and by the surjectivity established above, we can choose a basis $T_1,\dots,T_{k+m}$ of $H^2(E,\C)$ such that
\[
\iota^* T_i = \tilde T_i,\quad i = 1,\dots,k, \qquad \pi^* \tilde T_j = T_j,\quad j = k+1,\dots,k+m.
\]

We now specialize the r.h.s.\,\,of \eqref{qprod} to those tuples $\bm t = (t^i)_{i=0}^N$ for which $t^i = 0$ unless $i = 1,\dots,k+m$. By the string and divisor properties of fiberwise Gromov--Witten invariants (points (1) and (3) in Prop.~\ref{axioms}), the infinite sum reduces to a finite one:
\begin{multline}\label{restrqprod}
\left. T_i \bqf T_j \right|_{\substack{t^i=0 \\ i \notin \{1,\dots,k+m\}}}
= \sum_{\ell=0}^N c^\ell_{ij}(t^1,\dots,t^k)\, T_\ell,\\
c^\ell_{ij}(t^1,\dots,t^k)
:= \sum_{d=0}^N \sum_{\bt} \exp\left(\sum_{u=1}^k t^u \int_{\bt} \tilde T_u \right)
\langle T_i, T_j, T_d \rangle^{\rm Fib}_{0,3,\bt} \, \eta^{d\ell}.
\end{multline}
Each sum $\sum_{\bt}$ has finite support, by Assumption~F.

\begin{rem}\label{remperiodicity}
Since $\bt$ is an effective class, the integral $\int_\bt \tilde T_u$ is non-zero only if $\tilde T_u$ is a $(1,1)$-class. Therefore, without loss of generality, we may assume that $\tilde T_1, \dots, \tilde T_k$ form a basis for the subspace $H^{1,1}(F, \C) \subseteq H^2(F, \C)$. Moreover, we can further assume, still without loss of generality, that $\tilde T_1, \dots, \tilde T_k$ lie in the lattice $H^{1,1}(F, \C) \cap H^2(F, \Z)$. 
With such a choice, the structure constants $c^\ell_{ij}$ appearing in \eqref{restrqprod} satisfy the periodicity conditions
\begin{equation}\label{periodicity}
c^\ell_{ij}(t^1, \dots, t^a + 2\pi\sqrt{-1}, \dots, t^k) = c^\ell_{ij}(t^1, \dots, t^k), \quad a = 1, \dots, k.
\end{equation}
Furthermore, if $F$ is Fano, we can refine our choice even more: the classes $\tilde T_1, \dots, \tilde T_k$ can be taken in both $H^{1,1}(F, \C) \cap H^2(F, \Z)$ and in the NEF cone. In this case, we also have $\int_\bt \tilde T_u \in \Z_{\geq 0}$ for all $u = 1, \dots, k$. This follows from the Mori Cone Theorem \cite{KM98, Laz04}.
\qrem\end{rem}

Let ${\rm Eff}_1(F)\subseteq H_2(F,\Z)$ be the cone of effective 1-cycles (the additive semigroup generated by homological classes of effective algebraic curves on $F$), and introduce the semigroup ring $\La_F:=\C[{\rm Eff}_1(F)]=\C[{\bf q}^\bt\colon \bt\in {\rm Eff}_1(F)]$, where $\bf q$ is an indeterminate. Formula \eqref{restrqprod} suggests the following definition.

\begin{defn}The {\it small vertical quantum cohomology} ${\rm QH}^{\rm Fib}(E,B,F)$ is the Frobenius super-algebra structure $\left(H^\bullet(E,\La_F),\sqf,\eta_E\right)$ defined by the small product
\beq\label{sqprod}
T_i \sqf T_j:=\sum_{d,\ell=0}^N\sum_\bt {\bf q}^\bt\langle T_i, T_j, T_d \rangle^{\rm Fib}_{0,3,\bt} \, \eta^{d\ell} T_\ell,\qquad i,j=0,\dots,N.
\eeq
When $B={\rm Spec}(\C)$, we obtain the (ordinary) {\it small quantum cohomology} ${\rm QH}(E)$.
\end{defn}
For each $\gm\in \iota^*H^2(F,\C)$, the ring morphism $\La_F\to\C$ defined by the evaluation ${\bf q}^\bt\mapsto \prod_{i=1}^k e^{\int_\bt \gm}$ 
induces a family of products on $H^\bullet(E,\C)$, identified with \eqref{restrqprod}, and labelled by points of $\iota^*H^2(E,\C)$.

Introduce the torus $(\C^*)^k$, with coordinates $\bm q=(q_1,\dots,q_k)$. If we choose the basis $\tilde T_1,\dots, \tilde T_k$ in $H^2(F,\Z)$, as in Remark \ref{remperiodicity}, for each point $\bm q \in (\C^*)^k$, we have a well-defined product $\sqqf{\bm q}$ on $H^\bullet(E,\C)$, defined by
\beq\label{smallqprodq}
T_i\sqqf{\bm q} T_j=\sum_{d,\ell=0}^N\sum_\bt\bm q^\bt\langle T_i, T_j, T_d \rangle^{\rm Fib}_{0,3,\bt} \, \eta^{d\ell} T_\ell,\qquad \bm q^\bt:=\prod_{i=1}^k q_i^{\int_\bt \tilde T_i}.
\eeq
Notice that $\bm q\mapsto \sqqf{\bm q}$ is well-defined, thanks to the periodicity condition \eqref{periodicity}.
We thus obtain a trivial bundle $(\C^*)^k \times H^\bullet(E,\C) \to (\C^*)^k$, whose fibers carry a Frobenius super-algebra structure: the {\it small fiberwise} (or {\it vertical}) {\it quantum cohomology} over any point $\bm q \in (\C^*)^k$ is the Frobenius super-algebra
\[
\left(H^\bullet(E,\C),\,\sqqf{\bm q},\,\eta_E\right),
\]
which we denote by ${\rm QH}^{\rm Fib}_{\bm q}(E,B,F)$.

\begin{defn}\label{vqcp}
The {\it vertical quantum characteristic polynomial}  of $(E,B,F)$ at $\bm q\in(\C^*)^{k}$ is the characteristic polynomial $f_{(E,B,F)}(-;\bm q)\in\C[\zeta]$ of the $\C$-linear operator
\[c_1(E)\sqqf{\bm q}\colon H^\bullet(E,\C)\to H^\bullet(E,\C),\qquad\text{that is }f_{(E,B,F)}(\zeta;\bm q):=\det(\zeta\cdot{\rm Id}-c_1(E)\sqqf{\bm q}).
\]The {\it vertical quantum spectrum} of $(E,B,F)$ at $\bm q\in(\C^*)^{k}$ is the multi-set of zeroes of $f_{(E,B,F)}(\zeta;\bm q)$. When $B={\rm Spec}(\C)$, we will speak about (ordinary) {\it quantum characteristic polynomial}, simply denoted by $f_E(\zeta;\bm q)$, and {\it quantum spectrum}.
\end{defn}

\subsection{Functoriality properties of small vertical quantum cohomology} If $(E,B,F)$ is a locally trivial algebraic bundle, given an algebraic map $f\colon B'\to B$, we have the Cartesian diagram 
\beq\label{cartesian}
\xymatrix{
E' \ar[r]^{f'} \ar[d]_{\pi'} \ar@{}[dr]|{\lrcorner} &  E\ar[d]^\pi \\
B' \ar[r]_f & B
},\qquad E':=B'\times_BE.
\eeq
\begin{lem}
If $(E,B,F)$ is cohomologically decomposable, then also $(E',B',F)$ is cohomologically decomposable.
\end{lem}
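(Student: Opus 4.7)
My plan is to exhibit an explicit family of classes on $E'$ that witness cohomological decomposability, obtained by pulling back the ones already available on $E$. Fix classes $e_1,\dots,e_k \in H^\bullet(E,\C)$ whose restrictions $\iota_b^*e_j$ form a basis of $H^\bullet(F,\C)$ for every $b \in B$ (such classes exist by the hypothesis on $(E,B,F)$ and the characterization recalled before Remark~\ref{remG}, i.e.\ Theorem~\ref{equivs} in Appendix~A). I then set $e'_j := (f')^*e_j \in H^\bullet(E',\C)$, and my claim is that these classes witness cohomological decomposability of $(E',B',F)$.

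The first step is a fiberwise verification. For any $b' \in B'$, the base-change square \eqref{cartesian} identifies the fiber $(\pi')^{-1}(b')$ with $\pi^{-1}(f(b'))$ via $f'$, and this identification is compatible with the chosen trivializations of $F$-bundle structure (using Assumption~G, which is automatic under cohomological decomposability, cf.\ Remark~\ref{remG}). Denoting $\iota'_{b'}\colon F \hookrightarrow E'$ the inclusion of the fiber over $b'$ and $\iota_{f(b')}\colon F \hookrightarrow E$ the corresponding inclusion in $E$, one has $f' \circ \iota'_{b'} = \iota_{f(b')}$ (up to the identification above), so that
\[
(\iota'_{b'})^*e'_j = (\iota'_{b'})^*(f')^*e_j = \iota_{f(b')}^*e_j.
\]
Hence the restrictions $(\iota'_{b'})^*e'_1,\dots,(\iota'_{b'})^*e'_k$ form a basis of $H^\bullet(F,\C)$.

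The second step is to invoke the characterization of cohomological decomposability from Theorem~\ref{equivs} (equivalently, the Leray--Hirsch theorem): since the $e'_j$ restrict to a basis of the fiber cohomology, every class in $H^\bullet(E',\C)$ admits a unique expression $\sum_j (\pi')^*b'_j \cup e'_j$ with $b'_j \in H^\bullet(B',\C)$, and this produces the $H^\bullet(B',\C)$-module isomorphism
\[
H^\bullet(E',\C) \cong H^\bullet(B',\C) \otimes_\C H^\bullet(F,\C).
\]
This is precisely cohomological decomposability of $(E',B',F)$.

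The one point that requires care, and that I would flag as the only mildly subtle step, is the naturality of the fiber identification under base change: one must ensure that pulling back by $f'$ genuinely restricts, on fibers, to the identity of $H^\bullet(F,\C)$ (and not merely to a monodromy-related automorphism). This is guaranteed by Assumption~G applied to the pulled-back bundle $(E',B',F)$, whose structure group is a subgroup of the original one. Once this compatibility is noted, the argument is a clean application of Leray--Hirsch and requires no computation with the quantum product.
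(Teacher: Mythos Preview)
Your proof is correct and follows exactly the same approach as the paper's: the paper's one-line proof simply states that if $e_1,\dots,e_k \in H^\bullet(E,\C)$ restrict to bases on each fiber, then so do $f'^*e_1,\dots,f'^*e_k \in H^\bullet(E',\C)$. You have spelled out the details the paper leaves implicit (the fiber identification via the Cartesian square and the invocation of Leray--Hirsch/Theorem~\ref{equivs}), but the underlying idea is identical.
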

\proof If $e_1,\dots,e_k\in H^\bullet(E,\C)$ restrict to bases on each fiber, the same holds true for $f'^*e_1,\dots, f'^*e_k\in H^\bullet(E',\C)$.
\endproof
For each $F$ Fano or homogeneous space, define the category $\text{{\sc Fib}}_F$ whose objects are cohomologically decomposable $F$-fiber bundles $(E,B,F)$  satisfying Assumptions G,G' and F', and whose morphisms are Cartesian diagrams \eqref{cartesian}.

Also, introduce the category $\text{\sc Alg}_F$ of super-algebras over the ring $\La_F$, with the natural morphisms.

The small vertical quantum cohomology defines an association of objects 
\[{\rm QH}^{\rm Fib}\colon {\rm Ob} (\text{{\sc Fib}}_F)\to {\rm Ob} (\text{\sc Alg}_F),\quad (E,B,F)\mapsto {\rm QH}^{\rm Fib}(E,B,F). \]
This turns out to be a functor. This remarkable fact was already understood by A. Astashkevich and V.\,Sadov in \cite{AS95}. Here we give more details about the proof.

\begin{thm}\label{thmfunct}
The small vertical quantum cohomology defines a contravariant functor ${\rm QH}^{\rm Fib}\colon \text{{\sc Fib}}_F\to \text{\sc Alg}_F$:
\[\xymatrix@R=0.5em@C=1em{
(E',B',F')\ar[dd]_{(f,f')} & \ar@{|->}[r] & &{\rm QH}^{\rm Fib}(E',B',F') \\
&\ar@{|->}[r]&&\\
(E,B,F)&\ar@{|->}[r] && {\rm QH}^{\rm Fib}(E,B,F) \ar[uu]_{f'^*}
}
\]where $f'^*\colon H^\bullet(E,\La_F)\to H^\bullet(E',\La_F)$ is the pullback.
\end{thm}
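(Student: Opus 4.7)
The plan is to reduce the functoriality of ${\rm QH}^{\rm Fib}$ to a single identity, and then derive it from base change along the Cartesian square \eqref{cartesian}. Functoriality under composition is automatic from the contravariant functoriality of singular cohomology; $\La_F$-linearity and unit preservation of $f'^*$ are tautological from \eqref{sqprod}; and super-commutativity is respected since $f'^*$ is a morphism of graded rings. Thus everything reduces to establishing
\[
f'^*\bigl(\alpha\sqf\beta\bigr) \;=\; f'^*\alpha\,\sqf\,f'^*\beta, \qquad \alpha,\beta\in H^\bullet(E,\C).
\]

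The first step is to establish three compatibility statements at the level of the relative moduli spaces of Theorem \ref{fibervirt}. Since $\overline{\mc M}_{0,3}^{\rm Fib}(E,B,F,\bt)$ and $\overline{\mc M}_{0,3}^{\rm Fib}(E',B',F,\bt)$ are $\overline{\mc M}_{0,3}(F,\bt)$-bundles over $B$ and $B'$ respectively, the induced diagram
\[
\xymatrix{
\overline{\mc M}_{0,3}^{\rm Fib}(E',B',F,\bt) \ar[r]^{\tilde f'} \ar[d]_{\tau'} \ar@{}[dr]|{\lrcorner} & \overline{\mc M}_{0,3}^{\rm Fib}(E,B,F,\bt) \ar[d]^\tau \\
B' \ar[r]_f & B
}
\]
is Cartesian, where $\tau,\tau'$ denote the natural projections to the bases. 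Combining this with $E'=B'\times_B E$ and the identity $\pi\circ{\rm ev}_i=\tau$ (resp.~$\pi'\circ{\rm ev}_i'=\tau'$), one obtains, for every $i=1,2,3$, a Cartesian square with horizontal arrows $\tilde f'$, $f'$ and vertical arrows given by the $i$-th evaluation maps. Finally, since the virtual class of Theorem \ref{fibervirt} is assembled fiberwise from the Behrend--Fantechi class on $\overline{\mc M}_{0,3}(F,\bt)$, it satisfies
\[
\tilde f'^!\,[\overline{\mc M}_{0,3}^{\rm Fib}(E,B,F,\bt)]^{\rm virt} \;=\; [\overline{\mc M}_{0,3}^{\rm Fib}(E',B',F,\bt)]^{\rm virt},
\]
where $\tilde f'^!$ is the refined Gysin pullback of the Cartesian square above.

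Granted these ingredients, I rewrite the product by Poincar\'e duality on $E$:
\[
\alpha\sqf\beta \;=\; \sum_\bt {\bf q}^\bt\,({\rm ev}_3)^{\rm virt}_*\bigl({\rm ev}_1^*\alpha\cup{\rm ev}_2^*\beta\bigr),
\]
where $({\rm ev}_3)^{\rm virt}_*(-):=\on{PD}_E^{-1}({\rm ev}_3)_*\bigl(-\cap[\overline{\mc M}_{0,3}^{\rm Fib}(E,B,F,\bt)]^{\rm virt}\bigr)$; equivalence with \eqref{sqprod} is the identity $\eta_E(\alpha\sqf\beta,T_h)=\langle\alpha,\beta,T_h\rangle^{\rm Fib}_{0,3,\bt}$. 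Applying $f'^*$, proper base change along the evaluation-map Cartesian square yields $f'^*({\rm ev}_3)_*=({\rm ev}_3')_*\tilde f'^*$; the virtual-class compatibility allows $\tilde f'^*$ to pass through the cap product; and ${\rm ev}_i\circ\tilde f'=f'\circ{\rm ev}_i'$ converts $\tilde f'^*{\rm ev}_i^*\alpha$ into $({\rm ev}_i')^*f'^*\alpha$. Collecting the pieces produces $f'^*\alpha\,\sqf\,f'^*\beta$ term by term in $\bt$.

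The main obstacle will be the rigorous justification of base change for the virtual pushforward in the Deligne--Mumford stack setting of Theorem \ref{fibervirt}, specifically the commutation of the refined Gysin pullback $\tilde f'^!$ with $({\rm ev}_3)_*$ and with the cap-product operations. The geometric content is entirely transparent -- every operation is fiberwise along the $\overline{\mc M}_{0,3}(F,\bt)$-bundle structure -- but writing down a fully rigorous argument requires invoking the flat base-change formalism for virtual fundamental classes developed in \cite{BDOP25}. Once this technical input is in place, the identity above follows by a straightforward unwinding.
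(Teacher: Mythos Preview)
Your outline is correct but takes a genuinely different route from the paper. The paper's argument rests on Lemma~\ref{fundlemma}: for classes $e_1,\dots,e_k\in H^\bullet(E,\C)$ restricting to a fixed fiber basis, each product $e_i\sqf e_j$ already lies in $\mathrm{Span}_\C\{e_1,\dots,e_k\}$. Combined with the decomposition $\eta_E=\eta_B\otimes\eta_F$ (Corollary~\ref{deccorpairing}), this reduces the claim to the equality of specific three-point invariants $\langle e_i,e_j,\pi^*b_h\cup e_m\rangle^{\rm Fib}_{0,3,\bt}=\langle f'^*e_i,f'^*e_j,\pi'^*b'_h\cup f'^*e_m\rangle^{\rm Fib}_{0,3,\bt}$, where $b_h,b'_h$ are normalized top-degree classes on $B,B'$. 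Both sides are then computed via the Fubini formula \eqref{Fubini}: since the $e_i$ restrict to the \emph{same} fixed basis on every fiber, the pushforward $\tau_!\bigl(\prod_i{\rm ev}_i^*e_{j_i}\bigr)\in H^\bullet(B,\C)$ is the constant class equal to the ordinary three-point invariant of $F$, and both integrals collapse to that same constant.

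Your approach trades this explicit computation for the refined Gysin formalism: it is conceptually cleaner and bypasses Lemma~\ref{fundlemma} entirely, but the price is exactly what you flag. The identity you write as $f'^*({\rm ev}_3)_*=({\rm ev}_3')_*\tilde f'^*$ must really be read at the level of Chow groups via the refined Gysin pullback $f^!$ along the lci morphism $f\colon B'\to B$ (the moduli stacks being singular), with Poincar\'e duality on the smooth targets $E,E'$ reconciling $f'^!$ with cohomological $f'^*$ only at the last step. The paper's route, by contrast, needs only the Fubini formula \eqref{Fubini} and the compatibility of fiber integration with base change (equation \eqref{beautiful}), both immediate from the $\overline{\mc M}_{0,3}(F,\bt)$-bundle structure.
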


For the proof we need a preliminary result.

Consider a cohomologically decomposable bundle $(E,B,F)$, and let $e_1,\dots, e_k\in H^\bullet(E,\C)$ be classes restricting to cohomological bases at each fiber (identifiable with a fixed basis of $H^\bullet(F,\C)$), and $b_1,\dots,b_h$ a basis of $H^\bullet(B,\C)$, so that $\pi^*b_i\cup e_j$, with $i=1,\dots,h$ and $j=1,\dots,k$ define a basis of $H^\bullet(E,\C)$. By Theorem \ref{elemprod}, the $\sqf$-product is uniquely determined by the products $e_i\sqf e_j$.

\begin{lem}\label{fundlemma}
Any elementary product $e_i\sqf e_j$ is a linear combination of $e_1,\dots,e_k$ only.
\end{lem}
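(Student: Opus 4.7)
The plan is to show that $e_i\sqf e_j$ lies in the $H^\bullet(B,\La_F)$-submodule of $H^\bullet(E,\La_F)$ generated by $e_1,\dots,e_k$.  Equivalently, expanding in the Leray--Hirsch basis $\{\pi^*b_a\cup e_c\}_{a,c}$ of $H^\bullet(E,\La_F)$ (available by cohomological decomposability and Theorem \ref{equivs}), I would show that the coefficient of every basis vector $\pi^*b_a\cup e_c$ with $\deg b_a>0$ vanishes. By the definition \eqref{sqprod}, each such coefficient has the form
\[
\sum_{\bt\in{\rm Eff}_1(F)}\bm q^\bt\,\langle e_i,e_j,T^{(a,c)}\rangle^{\rm Fib}_{0,3,\bt},
\]
where $T^{(a,c)}\in H^\bullet(E,\C)$ denotes the $\eta_E$-Poincar\'e dual of $\pi^*b_a\cup e_c$.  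My plan is then to split the analysis according to whether $\bt=0$ (classical cup product) or $\bt\neq 0$ (genuine quantum corrections).

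For $\bt\neq 0$, I would first use the projection formula (Appendix \ref{projApp}) together with cohomological decomposability to identify the Gram matrix of $\eta_E$ in the Leray--Hirsch basis: it factors through the fiberwise integrals $\om_{c,c'}:=\pi_*(e_c\cup e_{c'})\in H^\bullet(B,\C)$ and the Poincar\'e pairing on $B$.  A direct block-matrix inversion then shows that, for $a\neq 1$, the dual element $T^{(a,c)}$ lies in the ideal $\pi^*H^{>0}(B,\C)\cdot H^\bullet(E,\C)$.  Since fiberwise Gromov--Witten invariants are symmetric under permutation of insertions, Proposition \ref{axioms}(1) applies in the third slot and kills the invariant whenever $\bt\neq 0$, so every quantum contribution to the coefficient vanishes.

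For $\bt=0$, Proposition \ref{axioms}(4) identifies the invariant with the classical triple intersection $\int_E e_i\cup e_j\cup T^{(a,c)}$, which by Poincar\'e duality equals the coefficient of $\pi^*b_a\cup e_c$ in the ordinary cup product $e_i\cup e_j$.  Under cohomological decomposability this classical product admits a unique expansion $\sum_c(\pi^*A^c_{ij})\cdot e_c$ with $A^c_{ij}\in H^\bullet(B,\C)$, which fits the claimed $H^\bullet(B,\La_F)$-module structure.  The hard part will be the careful identification of the dual basis $T^{(a,c)}$ in a general cohomologically decomposable bundle: because the Gram matrix of $\eta_E$ is not block-diagonal in the Leray--Hirsch basis, one must control its block-inverse using the projection formula and the structure of the $\om_{c,c'}$'s.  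Once this is done, the combination of the GW symmetry argument for $\bt\neq 0$ with the Leray--Hirsch decomposition of the classical term completes the proof.
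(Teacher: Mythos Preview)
There are two substantive gaps.

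First, your $\bt=0$ paragraph does not prove what the lemma asserts. The lemma claims $e_i\sqf e_j$ lies in the $\La_F$-span of $e_1,\dots,e_k$ (scalar coefficients only), which is \emph{not} the same as the $H^\bullet(B,\La_F)$-submodule they generate: that submodule is all of $H^\bullet(E,\La_F)$ by Leray--Hirsch, so your opening sentence is vacuous. Observing that the classical product admits an expansion $e_i\cup e_j=\sum_c(\pi^*A^c_{ij})e_c$ with $A^c_{ij}\in H^\bullet(B,\C)$ is therefore no progress; you must show each $A^c_{ij}$ is a constant. In a projective bundle $\Pb(V)\to B$ with $e_j=h^{j-1}$ one has $h\cup h^{r-1}=h^r=-\sum_j\pi^*c_j(V)\,h^{r-j}$, so positive-degree base contributions really do appear in the $\bt=0$ term, and your argument never addresses them.

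Second, your $\bt\neq0$ step misapplies Proposition~\ref{axioms}(1). That proposition kills invariants with a \emph{pure} pullback insertion $\pi^*\delta$; it says nothing about mixed insertions $\pi^*\delta\cup e_m$. Even if $T^{(a,c)}$ lay in the ideal $\pi^*H^{>0}(B)\cdot H^\bullet(E)$ (a claim already problematic when $b_a$ has top degree, since its $\eta_B$-dual then sits in $H^0(B)$), you cannot invoke part~(1) in the third slot. One would instead need part~(2), which unwinds into a Fubini/fiber-integral computation rather than an axiomatic vanishing.

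The paper's argument avoids the case split entirely. It writes
\[
\langle e_i,e_j,\pi^*b_\ell\cup e_m\rangle^{\rm Fib}_{0,3,\bt}
=\int_B b_\ell\cup \tau_!\bigl({\rm ev}_1^*e_i\cup{\rm ev}_2^*e_j\cup{\rm ev}_3^*e_m\bigr)
\]
via the Fubini formula for the moduli fibration $\tau\colon\overline{\mc M}^{\rm Fib}_{0,3}(E,B,F,\bt)\to B$, and argues that because the classes $e_i,e_j,e_m$ are fiberwise constant the fiber integral $\tau_!(\cdots)$ is nonzero only in degree~$0$, i.e.\ only when $\deg e_i+\deg e_j+\deg e_m$ equals twice the fiber's virtual dimension. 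This forces $b_\ell$ to have top degree, and the tensor decomposition $\eta_E=\eta_B\otimes\eta_F$ (Corollary~\ref{deccorpairing}) then gives $b_o\in H^0(B)$. The essential input---that the pushforward along $\tau$ of a product of fiberwise-constant evaluation classes lands in $H^0(B)$---is exactly what your approach is missing, and it is what handles the $\bt=0$ term on the same footing as the quantum terms.
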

\proof
By definition, we have 
\[e_i\sqf e_j=\sum_{\ell,o=1}^h\sum_{m,p=1}^k\sum_\bt {\bf q}^\bt\langle e_i,e_j,\pi^*b_\ell\cup e_m\rangle^{\rm Fib}_{0,3,\bt}\eta^{\ell o,mp}\pi^*b_o\cup e_p.\]The Gromov--Witten invariant $\langle e_i,e_j,\pi^*b_\ell\cup e_m\rangle^{\rm Fib}_{0,3,\bt}$ is nonzero only if
\[\deg(e_i)+\deg(e_j)+\deg(\pi^*b_\ell\cup e_m)=2\left(\dim B+\dim F+\int_\bt c_1(F)\right).
\]Moreover, if $\tau\colon \overline{\mc M}^{\rm Fib}_{0,3}(E,B,F,\bt)\to B$ denotes the $\overline{\mc M}_{0,3}(F,\bt)$-fibration of the moduli space, we have (Fubini formula \eqref{Fubini})
\[\langle e_i,e_j,\pi^*b_\ell\cup e_m\rangle^{\rm Fib}_{0,3,\bt}=\int_B b_\ell\cup \tau_!({\rm ev}_1^*e_i\cup{\rm ev}_1^*e_j\cup{\rm ev}_1^*e_m),
\]and the integral over the fiber $\tau_!({\rm ev}_1^*e_i\cup{\rm ev}_1^*e_j\cup{\rm ev}_1^*e_m)$ is nonzero only if 
\[\deg(e_i)+\deg(e_j)+\deg(e_m)=2\left(\dim F+\int_\bt c_1(F)\right).
\]So $b_\ell$ must be a $(2\dim B)$-degree (top-degree) form. By the tensor decomposition of the Poincar\'e pairing $\eta_E$ as $\eta_B\otimes \eta_F$ (see Corollary \ref{deccorpairing}), we deduce that $b_o\in H^0(B,\C)$.
\endproof

\proof[Proof of Theorem \ref{thmfunct}]
The nontrivial statement to be proved is that morphisms are mapped to morphisms.

As above, consider classes $e_1, \dots, e_k \in H^\bullet(E, \C)$ which restrict to cohomological bases on each fiber (identifiable with a fixed basis of $H^\bullet(F,\C)$), a basis $b_1, \dots, b_h$ of $H^\bullet(B, \C)$, and a basis $b'_1, \dots, b'_h$ of $H^\bullet(B', \C)$. Without loss of generality, we may assume that $b_1 = 1$, $b'_1 = 1$, and that $b_h$ and $b'_h$ are their respective Poincar\'e duals (i.e., top-degree classes), normalized so that
\[
\int_B b_h = 1, \qquad \int_{B'} b'_h = 1.
\]
Let us denote by $\sqf$ and $\Hat{\sqf}$ the products on ${\rm QH^{Fib}}(E,B,F)$ and ${\rm QH^{Fib}}(E',B',F)$, respectively.
We need to prove 
\[
f'^*(e_i \sqf e_j) = f'^*e_i \,\Hat{\sqf}\, f'^*e_j.
\]

By Lemma \ref{fundlemma} and Corollary \ref{deccorpairing}, we have:
\[
f'^*(e_i \sqf e_j) = \sum_{m,p=1}^k \sum_\bt \mathbf{q}^\bt \, \langle e_i, e_j, \pi^*b_h\cup e_m \rangle^{\mathrm{Fib}}_{0,3,\bt} \, \tilde\eta^{mp} f'^*e_p, \qquad \tilde\eta_{mp} = \int_F \iota^*e_m \cup \iota^*e_p.
\]

Since the classes $f'^*e_1, \dots, f'^*e_k \in H^\bullet(E', \C)$ also restrict to cohomological bases on each fiber (identifiable with the same fixed basis of $H^\bullet(F,\C)$ as above), the same lemma and corollary give:
\[
f'^*e_i \,\Hat{\sqf}\, f'^*e_j = \sum_{m,p=1}^k \sum_\bt \mathbf{q}^\bt \, \langle f'^*e_i, f'^*e_j, \pi'^*b'_h\cup f'^*e_m \rangle^{\mathrm{Fib}}_{0,3,\bt} \, \tilde\eta^{mp} f'^*e_p.
\]

The crucial point is that the moduli spaces $\overline{\mc M}^{\mathrm{Fib}}_{g,n}(E', B', F, \bt)$ are equal to the fibered products $\overline{\mc M}^{\mathrm{Fib}}_{g,n}(E, B, F, \bt) \times_B B'$. This follows directly, for instance, from the descriptions given around formulas (2.5), (2.6), and (2.7) in \cite{BDOP25}.

For brevity, let us write $\mc M_n := \overline{\mc M}^{\mathrm{Fib}}_{g,n}(E, B, F, \bt)$ and $\mc M_n' := \overline{\mc M}^{\mathrm{Fib}}_{g,n}(E', B', F, \bt)$, and denote their projections to $B$ and $B'$ by $\tau$ and $\tau'$, respectively. We then have the commutative diagram:
\beq\label{cartmgn}
\xymatrix{
E' \ar[d]_{\pi'} \ar[r]^{f'} & E \ar[d]^{\pi} \\
B' \ar[r]^f & B \\
\mc M_n' \ar[u]^{\tau'} \ar[r]^{\bar{f}} \ar@/^3em/[uu]^{{\rm ev}_i'} & \mc M_n \ar[u]_\tau \ar@/_3em/[uu]_{{\rm ev}_i}
}
\eeq

By compatibility of integration along the fibers with base change (since fibers are preserved under pullback), we obtain:
\beq\label{beautiful}
\tau'_! \, {\rm ev}_i'^* f'^* = f^* \, \tau_! \, {\rm ev}_i^*.
\eeq
By projection formula, we have
\begin{multline}\label{magincint}\langle f'^*e_{j_1}, f'^*e_{j_2}, \pi'^*b'_{h}\cup f'^*e_{j_3} \rangle^{\mathrm{Fib}}_{0,3,\bt}=\int_{B'}\left[b'_h\cup \tau'_!\left(\prod_{i=1}^3{\rm ev}'^*_if'^*e_{j_i}\right)\right]\\=\int_{B'}\left[b_h'\cup f^*\tau_!\left(\prod_{i=1}^3{\rm ev}^*_ie_{j_i}\right)\right].
\end{multline}
Since the restrictions $\iota_b^*e_{j_i}=e_{j_i}|_{F_b}=e_{j_i}|_F\in H^\bullet(F,\C)$ are constant with respect to $b\in B$, the pushforward $\tau_!\left(\prod_{i=1}^3{\rm ev}^*_ie_{j_i}\right)\in H^\bullet(B,\C)$ is the cohomology class give by the constant function
\[b\mapsto \int_{\tau^{-1}(b)}\prod_{i=1}^3{\rm ev}^*_i(e_{j_i}|_{F_b})=\int_{\overline{\mc M}_{0,3}(F,\bt)}\prod_{i=1}^3{\rm ev}^*_i(e_{j_i}|_{F}),
\]and similarly for the pullback $f^*\tau_!\left(\prod_{i=1}^3{\rm ev}^*_ie_{j_i}\right)\in H^\bullet(B',\C)$. By applying Proposition \ref{decompint} to \eqref{magincint}, we conclude:
\begin{multline*}
\langle f'^*e_{j_1}, f'^*e_{j_2}, \pi'^*b'_{h}\cup f'^*e_{j_3} \rangle^{\mathrm{Fib}}_{0,3,\bt}=\left(\int_{B'}b_h'\right)\cdot\left(\int_{\overline{\mc M}_{0,3}(F,\bt)}\prod_{i=1}^3{\rm ev}^*_i(e_{j_i}|_{F})\right)\\
=\left(\int_{B}b_h\right)\cdot\left(\int_{\overline{\mc M}_{0,3}(F,\bt)}\prod_{i=1}^3{\rm ev}^*_i(e_{j_i}|_{F})\right)=\int_{B}\left[b_h\cup \tau_!\left(\prod_{i=1}^3{\rm ev}^*_ie_{j_i}\right)\right]=\langle e_{j_1}, e_{j_2}, \pi^*b_h\cup e_{j_3} \rangle^{\mathrm{Fib}}_{0,3,\bt}.
\end{multline*}
This completes the proof.
\endproof

\begin{rem}
Lemma \ref{fundlemma} plays a crucial role in the proof of Theorem \ref{thmfunct}. We do not expect any analog of Lemma \ref{fundlemma} for the big vertical $\bqf$-product, as its proof breaks for Gromov--Witten invariants with higher insertions. This appears to be a genuine obstruction to any functoriality property for the big vertical quantum cohomology on the category $\text{\sc Fib}_F$. In the next section, we will recast a functoriality property of ${\rm Q^{Fib}_{big}}$, at the price of restricting to a {\it wide}\footnote{A subcategory of a category $\eu C$ is called \emph{wide} if it has the same objects as $\eu C$, but possibly fewer morphisms.} subcategory of $\text{\sc Fib}_F$.
\qrem\end{rem}

The following result relates the small vertical quantum cohomology of a bundle with the ordinary small quantum cohomology of the fiber.
\begin{cor}\label{corfibra}
Let $(E,B,F)$ be cohomologically decomposable, and denote by $I$ the ideal in ${\rm QH}^{\rm Fib}(E,B,F)$ generated by $\pi^*\bigoplus_{i\geq 1}H^i(B,\La_F)$. We have the isomorphism of  algebras 
\[{\rm QH}(F)\cong {\rm QH}^{\rm Fib}(E,B,F)/I.
\]
\end{cor}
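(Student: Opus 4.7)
The plan is to construct an explicit $\La_F$-algebra surjection
\[\Phi\colon {\rm QH}^{\rm Fib}(E,B,F)\twoheadrightarrow {\rm QH}(F),\]
whose kernel coincides with $I$. Fix a homogeneous basis $b_1=1,b_2,\dots,b_h$ of $H^\bullet(B,\C)$ with $b_h$ a top-degree class normalized so that $\int_B b_h=1$, and classes $e_1,\dots,e_k\in H^\bullet(E,\C)$ whose fiber restrictions $\iota^*e_1,\dots,\iota^*e_k$ form a basis of $H^\bullet(F,\C)$. By cohomological decomposability, $\{\pi^*b_i\cup e_j\}$ is a $\La_F$-basis of $H^\bullet(E,\La_F)$. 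I define $\Phi$ $\La_F$-linearly by
\[\Phi(\pi^*b_i\cup e_j):=\varepsilon(b_i)\,\iota^*e_j,\]
where $\varepsilon\colon H^\bullet(B,\C)\to\C$ is the augmentation ($\varepsilon(1)=1$, $\varepsilon|_{H^{>0}(B,\C)}=0$). Since $\pi^*b\,\sqf\,x=\pi^*b\cup x$ by Theorem \ref{thm1}(1), the quantum ideal $I$ coincides with the classical cup-product ideal $\bigoplus_{i\geq 2,\,j}\La_F\cdot(\pi^*b_i\cup e_j)$; hence $\Phi$ is surjective and $\ker\Phi=I$.

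It remains to check that $\Phi$ is an algebra morphism. Theorem \ref{elemprod} gives
\[(\pi^*b_i\cup e_j)\,\sqf\,(\pi^*b_{i'}\cup e_{j'})=\pm\,\pi^*(b_i\cup b_{i'})\cup(e_j\,\sqf\,e_{j'}),\]
with a Koszul sign, and Lemma \ref{fundlemma} ensures that $e_j\,\sqf\,e_{j'}$ is a $\La_F$-combination of $e_1,\dots,e_k$ alone. Applying $\Phi$, the image of the right-hand side is $\varepsilon(b_i\cup b_{i'})\,\iota^*(e_j\,\sqf\,e_{j'})=\varepsilon(b_i)\varepsilon(b_{i'})\,\iota^*(e_j\,\sqf\,e_{j'})$, the sign being irrelevant because $\varepsilon$ annihilates positive-degree classes and only the case $b_i=b_{i'}=1$ (where the sign is trivial) survives. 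The product of the images is $\varepsilon(b_i)\varepsilon(b_{i'})\,(\iota^*e_j\,\sq\,\iota^*e_{j'})$, so compatibility with the quantum products reduces to the single identity
\[\iota^*(e_j\,\sqf\,e_{j'})=\iota^*e_j\,\sq\,\iota^*e_{j'}\qquad\text{in }{\rm QH}(F).\]

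The proof of this identity is the core computation, and the main obstacle. From the expansion underlying Lemma \ref{fundlemma}, together with the tensor decomposition $\eta_E=\eta_B\otimes\eta_F$ of Corollary \ref{deccorpairing} and the duality $\eta_B(b_1,b_h)=1$, one gets
\[e_j\,\sqf\,e_{j'}=\sum_{m,p=1}^k\sum_{\bt}\bm q^{\bt}\,\langle e_j,e_{j'},\pi^*b_h\cup e_m\rangle^{\rm Fib}_{0,3,\bt}\,\tilde\eta^{mp}\,e_p,\qquad \tilde\eta_{mp}=\int_F \iota^*e_m\cup\iota^*e_p.\]
Invoking the Fubini argument used in the proof of Theorem \ref{thmfunct}: since $\iota_b^*e_\bullet$ is independent of $b\in B$, the pushforward $\tau_!\bigl({\rm ev}_1^*e_j\cup{\rm ev}_2^*e_{j'}\cup{\rm ev}_3^*e_m\bigr)$ is the constant class $\langle\iota^*e_j,\iota^*e_{j'},\iota^*e_m\rangle_{0,3,\bt}\in H^0(B,\C)$, and the normalization $\int_B b_h=1$ then yields
\[\langle e_j,e_{j'},\pi^*b_h\cup e_m\rangle^{\rm Fib}_{0,3,\bt}=\langle \iota^*e_j,\iota^*e_{j'},\iota^*e_m\rangle_{0,3,\bt}.\]
Substituting back and recognizing the right-hand side as the structure constants of the small quantum product of $F$ in the basis $\iota^*e_1,\dots,\iota^*e_k$ (with pairing $\tilde\eta$) completes the identification, and hence the isomorphism.
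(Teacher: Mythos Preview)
Your proof is correct, and the map $\Phi$ you construct is precisely the fiber restriction $\iota_b^*$. The paper's proof, however, is a one-liner: observe that the inclusion $\{b\}\hookrightarrow B$ is a base change $(F,\mathrm{Spec}\,\C,F)\to(E,B,F)$ in $\text{\sc Fib}_F$, so Theorem~\ref{thmfunct} immediately gives that $\iota_b^*\colon{\rm QH}^{\rm Fib}(E,B,F)\to{\rm QH}(F)$ is an algebra morphism, and its kernel is $I$ by Theorem~\ref{piinj}. Your ``core computation'' (the identity $\iota^*(e_j\,\sqf\,e_{j'})=\iota^*e_j\,\sq\,\iota^*e_{j'}$ via the Fubini argument) is exactly the special case of the proof of Theorem~\ref{thmfunct} for $B'=\mathrm{pt}$, so you are re-deriving that theorem's content rather than invoking it. Nothing is wrong, but the corollary is meant to be an immediate consequence of functoriality.
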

\proof
Any inclusion $\{b\}\hookrightarrow B$ induces the inclusion $\iota_b\colon F\to E$, which is a base change $(F,0,F)\to (E,B,F)$. By Theorem \ref{thmfunct}, we have a morphism of  algebras 
\[\iota^*_b\colon {\rm QH}^{\rm Fib}(E,B,F)\to {\rm QH}^{\rm Fib}(F,0,F)={\rm QH}(F),\]
whose kernel equals $I$ (see Theorem \ref{piinj}). 
\endproof

We now introduce the following:
\begin{itemize}
\item the {wide} subcategory $\text{\sc Fib}_F^{\rm eq.dim}$ of $\text{\sc Fib}_F$, whose morphisms are those Cartesian diagrams \eqref{cartesian} for which $\dim B = \dim B'$;
\item the category $\text{\sc FrobAlg}_F$ of Frobenius super-algebras over $\La_F$, whose morphisms are super-algebra homomorphisms that are conformal maps.
\end{itemize}

\begin{thm}
The small vertical quantum cohomology restricts to a functor \[{\rm QH^{Fib}}\colon \text{\sc Fib}_F^{\rm eq.dim}\to \text{\sc FrobAlg}_F.\]
Given a morphism $(f,f')$, the conformal factor of $f'^*$ equals the topological degree of $f$.
\end{thm}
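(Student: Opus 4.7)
The plan is to leverage Theorem~\ref{thmfunct}, which already establishes that for any morphism $(f,f')$ in $\text{\sc Fib}_F^{\rm eq.dim}\subseteq \text{\sc Fib}_F$, the pullback $f'^*$ is a homomorphism of super-algebras over $\La_F$. What remains to be shown is that $f'^*$ is additionally conformal as a map of Frobenius super-algebras, namely
\[
\eta_{E'}(f'^*x,\,f'^*y)=\deg(f)\cdot\eta_E(x,y),\qquad x,y\in H^\bullet(E,\La_F),
\]
with the topological degree of $f$ as conformal factor. Since the Frobenius form $\eta$ on each side coincides with the classical Poincar\'e pairing (independent of quantum corrections) and $f'^*$ commutes with the cup product, the identity above reduces, via $f'^*(x\cup y)=f'^*x\cup f'^*y$, to the single integration identity
\[
\int_{E'}f'^*\omega=\deg(f)\int_{E}\omega,\qquad \omega\in H^{2\dim E}(E,\C),
\]
which is non-trivial only in top cohomological degree.

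Next I would verify that $\deg(f')=\deg(f)$. The equidimensionality hypothesis $\dim B=\dim B'$, together with the Cartesian square \eqref{cartesian} and the local triviality of $\pi$ with fiber $F$, gives $\dim E=\dim B+\dim F=\dim B'+\dim F=\dim E'$, so $f'\colon E'\to E$ is a proper morphism of smooth projective varieties of equal dimension. On any open $U\subseteq B$ where $\pi$ trivializes to $U\times F\to U$, the base change yields $\pi'^{-1}(f^{-1}(U))\cong f^{-1}(U)\times F$ and identifies $f'$ on this open set with $f|_{f^{-1}(U)}\times \id_F$. From this local product description the equality $\deg(f')=\deg(f)$ is immediate: both topological degrees vanish when $f$ is not dominant, and otherwise both equal the generic fiber cardinality.

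Finally, the classical identity for any proper morphism $g\colon X\to Y$ of smooth projective varieties of the same dimension,
\[
\int_X g^*\omega=\deg(g)\int_Y\omega,\qquad \omega\in H^{2\dim Y}(Y,\C),
\]
is an immediate consequence of the projection formula applied to $g_*(1)=\deg(g)\cdot 1\in H^0(Y,\C)$. Applied to $g=f'$, this closes the argument and identifies the conformal factor of $f'^*$ as $\deg(f')=\deg(f)$. The only mildly subtle point is the degree identification $\deg(f')=\deg(f)$, and it is precisely here that the restriction to the subcategory $\text{\sc Fib}_F^{\rm eq.dim}$ is indispensable: without equidimensionality, $f'^*$ would not even preserve top cohomological degree, so no statement of conformality could be formulated to begin with.
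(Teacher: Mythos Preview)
Your proof is correct and shares the same overall strategy as the paper: invoke Theorem~\ref{thmfunct} for the algebra-homomorphism part, then reduce conformality to the single integration identity $\int_{E'}f'^*\omega=\deg(f)\int_E\omega$. The two arguments diverge only in how this identity is verified. You establish $\deg(f')=\deg(f)$ directly from the local product description $f'\simeq f\times\id_F$ and then apply the standard degree formula to $f'$. The paper instead passes through fiber integration, using the base-change compatibility $\pi'_*f'^*=f^*\pi_*$ (already recorded as \eqref{beautiful}) together with the Fubini formula to get
\[
\int_{E'}f'^*\omega=\int_{B'}\pi'_*f'^*\omega=\int_{B'}f^*\pi_*\omega=\deg(f)\int_B\pi_*\omega=\deg(f)\int_E\omega.
\]
Your route is slightly more self-contained (it does not appeal to the fiber-integration machinery), while the paper's route reuses a compatibility already needed in the proof of Theorem~\ref{thmfunct} and avoids the separate verification that $\deg(f')=\deg(f)$.
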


\proof
The map $f$ has a well-defined topological degree $\deg(f)\in\Z$, the manifolds $B,B'$ being compact, oriented, and equidimensional. Given $\om\in H^\bullet(E,\C)$, we have
\[\int_{E'}f'^*\om=\int_{B'}\pi_*f'^*\om=\int_{B'}f^*\pi_*\om=\deg(f)\int_B\pi_*\om=\deg(f)\int_E\om,
\]where the second equality follows from the compatibility of integration along the fibers with base change. The result follows.
\endproof

\subsection{Functoriality property of big vertical quantum cohomology} Let us introduce a further wide subcategory $\text{\sc Fib}^{\rm coh}_F$ of $\text{\sc Fib}_F$ such that
\[\text{\sc Fib}^{\rm coh}_F\quad \subset \quad \text{\sc Fib}_F^{\rm eq.dim}\quad \subset\quad \text{\sc Fib}_F.
\]

The morphisms of $\text{\sc Fib}^{\rm coh}_F$ are Cartesian diagrams \eqref{cartesian} for which $f\colon B'\to B$ is a {\it cohomological equivalence}, that is it induces isomorphism in cohomology $f^*\colon H^\bullet(B,\C)\to H^\bullet(B',\C)$.

In particular, we both have $\dim B=\dim B'$, and the morphism $f'^*\colon H^\bullet(E,\C)\to H^\bullet(E',\C)$ is an isomorphism, by cohomological triviality.

Introduce then the category $\text{\sc FrobMan}_F$:
\begin{itemize}
\item its objects are Frobenius super-algebras over the ring $\C[\![H^\bullet(E,\C)^*]\!]\cong \C[\![\bm t]\!]$, where $\bm t=(t^i)_{i=0}^N$ are dual coordinates with respect to an arbitrarily fixed basis;
\item its morphisms are super-algebra homomorphisms that are conformal maps.
\end{itemize}

\begin{thm}
The big vertical quantum cohomology defines a contravariant functor 
\[{\rm QH^{Fib}_{big}}\colon \text{\sc Fib}^{\rm coh}_F\to \text{\sc FrobMan}_F.\]
\[\xymatrix@R=0.5em@C=1em{
(E',B',F')\ar[dd]_{(f,f')} & \ar@{|->}[r] & &{\rm QH}^{\rm Fib}_{\rm big}(E',B',F') \\
&\ar@{|->}[r]&&\\
(E,B,F)&\ar@{|->}[r] && {\rm QH}^{\rm Fib}_{\rm big}(E,B,F) \ar[uu]_{f'^*}
}
\]
Given a morphism $(f,f')$, the conformal factor of $f'^*$ equals the topological degree of $f$.
\end{thm}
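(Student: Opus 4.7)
The plan is to follow the template of the proof of Theorem \ref{thmfunct} for the small case, but now to exploit the stronger hypothesis that $f$ is a cohomological equivalence in order to handle the arbitrary number of insertions that appear in the big quantum product. Under cohomological decomposability, $f^*$ being an isomorphism forces $f'^*\colon H^\bullet(E,\C)\to H^\bullet(E',\C)$ to be likewise an isomorphism of $\C$-vector spaces. Fixing a basis $(T_0,\dots,T_N)$ of $H^\bullet(E,\C)$ with dual coordinates $\bm t$, the images $\tilde T_i:=f'^*T_i$ provide a basis of $H^\bullet(E',\C)$ together with a canonical identification $\C[\![H^\bullet(E',\C)^*]\!]\cong\C[\![\bm t]\!]$ of the two formal base rings. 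With this setup, the statement reduces to two assertions: $f'^*$ is conformal of factor $\deg(f)$ for the Poincaré pairings, and it intertwines the two big vertical $\bqf$-products.

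The conformal property follows by exactly the same base-change computation as in the equidimensional case, combining compatibility of integration along the fibers with pullback and the identity $\int_{B'}f^*=\deg(f)\int_B$. The crucial new ingredient is a general-$n$ base-change identity for fiberwise Gromov--Witten invariants,
\[
\langle f'^*\gm_1,\dots,f'^*\gm_n\rangle^{\mathrm{Fib}}_{0,n,\bt,E'}\;=\;\deg(f)\cdot\langle \gm_1,\dots,\gm_n\rangle^{\mathrm{Fib}}_{0,n,\bt,E},\qquad \gm_i\in H^\bullet(E,\C),
\]
extending the special case used in the proof of Theorem \ref{thmfunct}. I would prove it along the lines of diagram \eqref{cartmgn}: using the Cartesian description $\overline{\mc M}^{\mathrm{Fib}}_{0,n}(E',B',F,\bt)=\overline{\mc M}^{\mathrm{Fib}}_{0,n}(E,B,F,\bt)\times_B B'$ together with $\mathrm{ev}_i\circ\bar f=f'\circ\mathrm{ev}_i'$ to rewrite $\prod_i\mathrm{ev}_i'^*f'^*\gm_i=\bar f^*\prod_i\mathrm{ev}_i^*\gm_i$, then invoking the base-change identity $\tau'_!\bar f^*=f^*\tau_!$ underlying \eqref{beautiful}, Fubini \eqref{Fubini}, and the degree formula to conclude.

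With both identities in hand, substitution into the defining formula \eqref{qprod} for $\bqf$ yields a perfect cancellation: the factor $\deg(f)$ coming from the Gromov--Witten invariants is killed by the factor $\deg(f)^{-1}$ produced by the inverse Gram matrix $\eta_{E'}^{h\ell}=\deg(f)^{-1}\eta_E^{h\ell}$, leaving
\[
(f'^*T_i)\,\bqf\,(f'^*T_j)\;=\;f'^*(T_i\,\bqf\, T_j).
\]
Contravariant functoriality for compositions of Cartesian squares then follows immediately from the naturality of pullbacks and base change. The main technical obstacle is precisely the general-$n$ base-change identity above: unlike in the small case, where Lemma \ref{fundlemma} confines the relevant triples to insertions involving a top-degree base class, here arbitrarily many generic insertions occur, and one must rely on the compatibility $[\overline{\mc M}^{\mathrm{Fib}}_{0,n}(E',B',F,\bt)]^{\mathrm{virt}}=\bar f^*[\overline{\mc M}^{\mathrm{Fib}}_{0,n}(E,B,F,\bt)]^{\mathrm{virt}}$ of the virtual fundamental classes of \cite{BDOP25} with the Cartesian square. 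Verifying this pullback property of the virtual class in a fiber-bundle situation is the technical heart of the argument, and it is expected to follow directly from the construction of \cite{BDOP25}, but deserves to be stated and checked explicitly.
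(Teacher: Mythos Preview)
Your proof is correct and follows essentially the same approach as the paper: both rely on the Cartesian square of moduli spaces (diagram \eqref{cartmgn}), the base-change identity for integration along fibers (equation \eqref{beautiful}), and the degree formula $\int_{B'}f^*=\deg(f)\int_B$ to obtain the scaling of fiberwise Gromov--Witten invariants by $\deg(f)$, which then cancels against the factor $\deg(f)^{-1}$ in the inverse Poincar\'e matrix. The only cosmetic difference is that the paper works with the product basis $\pi^*b_i\cup e_j$ and reduces via Theorem~\ref{elemprod} to checking the identity on products $e_i\bqf e_j$, whereas you phrase the base-change identity for arbitrary insertions; your closing remark on virtual-class compatibility under base change is a reasonable caution that the paper simply absorbs into its use of the Fubini formula \eqref{Fubini}.
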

\proof
Consider classes $e_1, \dots, e_k \in H^\bullet(E, \C)$ that restrict to cohomological bases on each fiber (identifiable with a fixed basis of $H^\bullet(F,\C)$), and a basis $b_1, \dots, b_h$ of $H^\bullet(B, \C)$. Without loss of generality, we may assume $b_1 = 1$ and $b_h$ is its Poincar\'e dual (i.e., a top-degree class), normalized by $\int_B b_h = 1$.

The classes $f'^*e_1,\dots,f'^*e_k \in H^\bullet(E',\C)$ restrict to bases at each fiber, and $f^*b_1,\dots,f^*b_h$ form a basis of $H^\bullet(B',\C)$. Consequently, the classes $\pi^*b_i \cup e_j$ (resp. $f'^*(\pi^*b_i \cup e_j) = \pi'^*f^*b_i \cup f'^*e_j$), with $i = 1,\dots,h$ and $j = 1,\dots,k$, form a basis of $H^\bullet(E,\C)$ (resp. $H^\bullet(E',\C)$), with dual coordinates $(t^{ij})_{i,j}$.

Since $\int_{B'} f^*b_h = \deg(f)\int_B b_h = \deg(f)$, it follows that $\eta_{B'} = \deg(f)\,\eta_B$.

Let $\bqf$ and $\Hat{\bqf}$ denote the products on ${\rm QH^{Fib}_{big}}(E,B,F)$ and ${\rm QH^{Fib}_{big}}(E',B',F)$, respectively. We aim to prove that $f'^*(e_i \bqf e_j) = f'^*e_i \,\Hat{\bqf}\, f'^*e_j$.

By definition, 
\[f'^*(e_i \bqf e_j)=\sum_{n\geq 0}\sum_{\substack{\al_1,\dots,\al_n=1\\ a_1,a_2=1}}^h\sum_{\substack{\gm,\gm_1,\dots,\gm_n=1\\c_1,c_2=1}}^k\sum_{\bt}\frac{\prod_{i=1}^h\prod_{j=1}^kt^{\al_i\gm_j}}{n!}K^{(i,j)}_{\bm\al,\bm\gm,a_1,c_1}\eta_E^{a_1a_2,c_1c_2}f'^*(\pi^*b_{a_2}\cup e_{c_2}),
\]
where $K^{(i,j)}_{\bm\al,\bm\gm,a_1,c_1} = \langle \pi^*b_{\al_1} \cup e_{\gm_1}, \dots, \pi^*b_{\al_n} \cup e_{\gm_n}, e_i, e_j, \pi^*b_{a_1} \cup e_{c_1} \rangle^{\rm Fib}_{0,n+3,\bt}$ and $\eta_E = \eta_B \otimes \eta_F$ with $(\eta_B)_{a_1a_2} = \int_B b_{a_1} b_{a_2}$, $(\eta_F)_{c_1c_2} = \int_F \iota^*e_{c_1} \iota^*e_{c_2}$.
Similarly, 
\[f'^*e_i \,\Hat{\bqf}\, f'^*e_j=\sum_{n\geq 0}\sum_{\substack{\al_1,\dots,\al_n=1\\ a_1,a_2=1}}^h\sum_{\substack{\gm,\gm_1,\dots,\gm_n=1\\c_1,c_2=1}}^k\sum_{\bt}\frac{\prod_{i=1}^h\prod_{j=1}^kt^{\al_i\gm_j}}{n!}\Hat{K}^{(i,j)}_{\bm\al,\bm\gm,a_1,c_1}\eta_{E'}^{a_1a_2,c_1c_2}\pi'^*f^*b_{a_2}\cup f'^*e_{c_2},
\]with $\Hat{K}^{(i,j)}_{\bm\al,\bm\gm,a_1,c_1} = \langle \pi'^*f^*b_{\al_1} \cup f'^*e_{\gm_1}, \dots, \pi'^*f^*b_{\al_n} \cup f'^*e_{\gm_n}, f'^*e_i, f'^*e_j, \pi'^*f^*b_{a_1} \cup f'^*e_{c_1} \rangle^{\rm Fib}_{0,n+3,\bt}$ and $\eta_{E'} = \eta_{B'} \otimes \eta_F$ with $(\eta_{B'})_{a_1a_2} = \int_{B'} f^*(b_{a_1} b_{a_2})$.

We claim that $\Hat{K}^{(i,j)}_{\bm\al,\bm\gm,a_1,c_1} = \deg(f)\, K^{(i,j)}_{\bm\al,\bm\gm,a_1,c_1}$, from which it follows that:
\[
\Hat{K}^{(i,j)}_{\bm\al,\bm\gm,a_1,c_1} \eta_{E'}^{a_1a_2, c_1c_2}
= \deg(f)\, K^{(i,j)}_{\bm\al,\bm\gm,a_1,c_1} \cdot \frac{1}{\deg(f)} \eta_B^{a_1a_2} \cdot \eta_F^{c_1c_2}
= K^{(i,j)}_{\bm\al,\bm\gm,a_1,c_1} \eta_E^{a_1a_2, c_1c_2}.
\]

To prove the claim, we proceed as in the proof of Theorem \ref{thmfunct}, using the commutative diagram \eqref{cartmgn} and equation \eqref{beautiful}. Denote by $\omega \in H^\bullet(B, \C)$ the class
\[
\omega = \prod_{i=1}^n b_{\al_i} \cdot b_{a_1} \cdot \tau_! \left[ \left( \prod_{\ell=1}^n \text{ev}_\ell^* e_{\gm_\ell} \right) \cdot \text{ev}_{n+1}^* e_i \cdot \text{ev}_{n+2}^* e_j \cdot \text{ev}_{n+3}^* e_{c_1} \right].
\]
Then, for a suitable sign $(-1)^\Delta$ coming from super-commutativity,
\[
\Hat{K}^{(i,j)}_{\bm\al,\bm\gm,a_1,c_1} = (-1)^\Delta \int_{B'} f^*\omega = \deg(f) \cdot (-1)^\Delta \int_B \omega = \deg(f)\, K^{(i,j)}_{\bm\al,\bm\gm,a_1,c_1}.
\]
This proves that $f'^*$ is a morphism of algebras. That it is conformal, with conformal factor $\deg(f)$, is clear.
\endproof

\subsection{Induction property, and partial classical limits}
Let $(E_1,E_2,F_1)$ and $(E_2,B,F_2)$ be two locally trivial algebraic bundles such that $(E_1,B,F)$ is also locally trivial:
\[\xymatrix{
E_1\ar[r]^{\pi_1}&E_2\ar[r]^{\pi_2}&B\\
F_1\ar@{_{(}->}[u]^{\iota_1}&F_2\ar@{_{(}->}[u]^{\iota_2}&
}\qquad \pi=\pi_2\circ \pi_1,\qquad F=\pi^{-1}({\rm pt})=\pi_1^{-1}(F_2).
\]
Notice that $F$ is the total space of a locally trivial $F_1$-bundle on $F_2$. We will assume that the bundle $(F,F_2,F_1)$ satisfies the Assumption G, so that
\beq\label{homoF} 
H_2(F,\Z)\cong H_2(F_1,\Z)\oplus H_2(F_2,\Z).
\eeq

\begin{lem}\label{lemcones}
We have ${\rm Eff}_1(F)={\rm Eff_1}(F_1)\oplus{\rm Eff_1}(F_2)$.
\end{lem}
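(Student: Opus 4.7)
The plan is to verify both inclusions of the semigroup identity, exploiting the $F_1$-bundle structure of $\pi_1|_F\colon F\to F_2$ together with the decomposition \eqref{homoF}. First I would establish ${\rm Eff}_1(F_1)\oplus{\rm Eff}_1(F_2)\subseteq{\rm Eff}_1(F)$. The summand ${\rm Eff}_1(F_1)$ embeds into ${\rm Eff}_1(F)$ through the fiber inclusion $\iota_1\colon F_1\hookrightarrow F$, which maps effective algebraic $1$-cycles on $F_1$ to effective ones on $F$ with class independent of the fiber by Assumption~G. For ${\rm Eff}_1(F_2)$, I would lift each irreducible curve $C'\subset F_2$ to an irreducible algebraic curve in $F$ via a section $s\colon C'\to\pi_1|_F^{-1}(C')$ of the restricted bundle, whose existence is guaranteed by Tsen's theorem applied to the algebraic $F_1$-bundle over the curve $C'$ (using the projectivity and rational connectedness of $F_1$, which is Fano or homogeneous).

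For the reverse inclusion ${\rm Eff}_1(F)\subseteq{\rm Eff}_1(F_1)\oplus{\rm Eff}_1(F_2)$, I would write any effective class $\beta=\sum_i n_i[C_i]$ as a non-negative sum of classes of irreducible algebraic curves $C_i\subset F$, and analyse each $[C_i]$ in two cases. Either $\pi_1(C_i)$ is a point, so $C_i$ lies in a fiber isomorphic to $F_1$ and $[C_i]\in(\iota_1)_*{\rm Eff}_1(F_1)$; or $\pi_1(C_i)=C'_i\subset F_2$ is an irreducible curve, in which case $(\pi_1|_F)_*[C_i]=d_i[C'_i]\in{\rm Eff}_1(F_2)$ with $d_i=\deg(\pi_1|_{C_i})>0$. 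In the second case, fixing a section $s\colon C'_i\to\pi_1|_F^{-1}(C'_i)$, the difference $[C_i]-d_i[s(C'_i)]$ has trivial $H_2(F_2)$-component and therefore lies in the fiber summand $H_2(F_1)$; I would then represent it by effective $1$-cycles inside the projective surface $\pi_1|_F^{-1}(C'_i)\subset F$, which restrict to effective $1$-cycles of $F_1$.

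The hard part will be this last effectivity step: proving that the vertical component of each $[C_i]$ is effective in $H_2(F_1)$. The splitting in \eqref{homoF} is not canonically determined by Assumption~G alone --- already for $\mathbb{P}^1$-bundles over curves, a careless choice (e.g.\ the positive section of a Hirzebruch surface) breaks the statement --- so the argument must specify a splitting compatible with a Leray--Hirsch decomposition, pinning down classes in $H^2(F)$ that restrict to a positive (NEF) basis of $H^2(F_1)$. Combined with the homogeneity or Fano property of $F_1$, this rules out negative vertical contributions and delivers the clean semigroup identity ${\rm Eff}_1(F)={\rm Eff}_1(F_1)\oplus{\rm Eff}_1(F_2)$.
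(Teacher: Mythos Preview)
Your approach mirrors the paper's: both decompose effective curves in $F$ according to whether their projection to $F_2$ is a point or a curve, invoking the local triviality of $\pi_1|_F$ together with the splitting \eqref{homoF}. The paper's proof, however, is a two-sentence sketch that simply asserts that the fiberwise component of any effective curve ``lies in $F_1$'' and that ``the image of any such curve corresponds to a sum of effective classes in $F_1$ and $F_2$,'' without further justification. Your plan is considerably more detailed (fiber inclusion, Tsen-type sections for the horizontal direction, case analysis on irreducible components), and in particular it isolates the one step that is genuinely nontrivial.

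You are right that the ``hard part'' is real, and the paper does not address it. The splitting \eqref{homoF} is not canonical, and your Hirzebruch example shows that for a poor choice of horizontal lift the semigroup identity fails as stated. The paper's argument glosses over exactly this point. In the paper's actual applications (flag bundles, where $F_1$ and $F_2$ are partial flag varieties and the relevant classes are explicit), one can choose the splitting so that the effectivity check goes through by hand; but at the generality in which the lemma is stated, your proposed fix---pinning down a Leray--Hirsch-compatible splitting via classes restricting to a nef basis of $H^2(F_1)$, and using the Fano/homogeneous hypothesis on $F_1$ to rule out negative vertical contributions---is the correct way to close the gap, and is more careful than what the paper itself provides. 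Note also that for the downstream use (Corollary~\ref{pcl1}), only the weaker statement $\{\beta\in{\rm Eff}_1(F):(\pi_1|_F)_*\beta=0\}=(\iota_1)_*{\rm Eff}_1(F_1)$ is really needed, and that one is unambiguous.
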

\begin{proof}
This follows from the fact that \( F \to F_2 \) is a locally trivial fibration with fiber \( F_1 \), together with Assumption G, which gives a splitting \eqref{homoF}.
In this identification, any effective curve class in \( F \) is represented by a curve whose projection to \( F_2 \) is either a point or an effective curve, and whose fiberwise component lies in \( F_1 \). Since the bundle is locally trivial, the image of any such curve corresponds to a sum of effective classes in \( F_1 \) and \( F_2 \). Therefore, the monoid of effective classes splits as claimed.
\end{proof}
Let us assume that all the Assumptions G,F,G',F' are satisfied by the bundles $(E_1,E_2,F_1),$ $(E_2,B,F_2),(E_1,B,F)$. Hence, we have well-defined algebras:
\begin{align*}
{\rm QH^{Fib}}(E_1,B,F)&=\left(H^\bullet(E_1,\La_F),\,\sqqf{1}\,,\eta_{E_1}\right),\\
{\rm QH^{Fib}}(E_1,E_2,F_1)&=\left(H^\bullet(E_1,\La_{F_1}),\,\sqqf{2}\,,\eta_{E_1}\right),\\
{\rm QH^{Fib}}(E_2,B,F_2)&=\left(H^\bullet(E_2,\La_{F_2}),\,\sqqf{3}\,,\eta_{E_2}\right).
\end{align*}

By Lemma \ref{lemcones}, we have the ring isomorphism
\[\La_F\cong \La_{F_1}\otimes \La_{F2}=\C[{\bf q}_1^{\bt},\,{\bf q}_2^{\bt'}\colon \bt\in {\rm Eff}_1(F_1),\,\bt'\in{\rm Eff}_1(F_2)].\]
Consider the canonical projection
\[\psi\colon \La_F\longrightarrow\frac{\La_F}{\langle{\bf q}_2^{\bt'}\colon \bt'\in{\rm Eff}_1(F_2)\setminus\{0\}\rangle}\cong \La_{F_1},
\]
inducing a morphism of graded abelian groups\footnote{In general, a ring morphism $\phi\colon R_1 \to R_2$ does not induce a ring morphism $\phi_*\colon H^\bullet(X, R_1) \to H^\bullet(X, R_2)$, but only a morphism of graded abelian groups. For example, let $X = \mathbb{RP}^2$, $R_1 = \mathbb{Z}$ and $R_2 = \mathbb{Z}/2\mathbb{Z}$. Then:
\[
H^\bullet(X, \mathbb{Z}) \cong \mathbb{Z}[0] \oplus (\mathbb{Z}/2\mathbb{Z})[-2], \qquad H^\bullet(X, \mathbb{Z}/2\mathbb{Z}) \cong (\mathbb{Z}/2\mathbb{Z})[x]/(x^3),
\]
where $\deg(x) = 1$. In degree 2, the induced morphism is the identity. Let $\alpha \in H^2(X, \mathbb{Z})$ be the nontrivial torsion class, so that $\phi_*(\alpha) = x$. Then:
\[
\phi_*(\alpha \cup \alpha) = \phi_*(0) = 0 \quad \text{but} \quad \phi_*(\alpha) \cup \phi_*(\alpha) = x \cup x = x^2 \ne 0.
\]
Hence, $\phi_*$ is not compatible with the cup product and thus not a morphism of rings. Similar examples arise in the setting of complex algebraic geometry, e.g., Enriques surfaces, where torsion classes in integral cohomology yield the same failure of multiplicativity. One can prove (as a consequence of the Universal Coefficient Theorem) that if $\phi\colon R_1 \to R_2$ is a {\it flat} morphism of rings and the singular cohomology $H^\bullet(X,R_1)$ is {\it flat} (e.g., a free module) over $R_1$, then the natural change‐of‐coefficients map
$\phi_*$
is in fact a morphism of {graded rings}, i.e.\ it respects the cup product. In general, the natural surjection \( R_1 \to R_1/I \) is not flat, unless \( I = 0 \). As a consequence, the induced map in cohomology
$H^\bullet(X, R_1) \to H^\bullet(X, R_1/I)$
does not preserve the ring structure in general. This explains why change of coefficients via a quotient may fail to respect cup products.
}
\[\psi_*\colon H^\bullet(E_1,\La_F)\to H^\bullet(E_1,\La_{F_1}).
\]It turns out that $\psi_*$ preserves not only the $\cup$-product but even the quantum products. This was already described by A.\,Astashkevich and V.\,Sadov \cite{AS95}.

\begin{thm}
The moprhism $\psi_*$ defines a morphism of rings
\[\psi\colon {\rm QH^{Fib}}(E_1,B,F)\to {\rm QH^{Fib}}(E_1,E_2,F_1).
\]
\end{thm}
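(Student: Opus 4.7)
The plan is to verify, for any basis elements $T_i, T_j \in H^\bullet(E_1,\C)$, the identity
\[
\psi_*(T_i \sqqf{1} T_j) \;=\; \psi_*(T_i)\,\sqqf{2}\,\psi_*(T_j).
\]
Since $\psi$ acts only on Novikov coefficients and leaves cohomology classes of $E_1$ untouched, the right-hand side equals $T_i \sqqf{2} T_j$. Using the expansion \eqref{sqprod} and the decomposition $\La_F \cong \La_{F_1} \otimes \La_{F_2}$ provided by Lemma \ref{lemcones}, every $\bt \in \mathrm{Eff}_1(F)$ writes uniquely as $\bt = \bt_1 + \bt_2$ with $\bt_i \in \mathrm{Eff}_1(F_i)$ and $\mathbf{q}^\bt = \mathbf{q}_1^{\bt_1}\mathbf{q}_2^{\bt_2}$; under $\psi$ only the terms with $\bt_2=0$ survive. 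The claim therefore reduces to the family of identities
\[
\langle T_i,T_j,T_d\rangle^{\mathrm{Fib},(E_1,B,F)}_{0,3,\bt_1} \;=\; \langle T_i,T_j,T_d\rangle^{\mathrm{Fib},(E_1,E_2,F_1)}_{0,3,\bt_1}, \qquad \bt_1 \in \mathrm{Eff}_1(F_1),
\]
the Poincar\'e pairing $\eta_{E_1}$ being the same on both sides.

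The central geometric input is an identification of moduli stacks
\[
\overline{\mc M}_{0,3}^{\mathrm{Fib}}(E_1,B,F,\bt_1) \;\xrightarrow{\sim}\; \overline{\mc M}_{0,3}^{\mathrm{Fib}}(E_1,E_2,F_1,\bt_1),
\]
compatible with the three evaluation maps to $E_1$. The key observation, implicit in \cite{AS95}, is that any vertical stable map $f\colon (C,\bm p)\to E_1$ with respect to $\pi=\pi_2\circ \pi_1$ of class $\bt_1 \in H_2(F_1) \hookrightarrow H_2(F)$ factors through a single $F_1$-fiber of $\pi_1$. Indeed, $\pi_1\circ f$ lands in a fiber of $\pi_2$ (isomorphic to $F_2$) and represents the class $(\pi_1)_*\bt_1=0 \in H_2(F_2,\Z)$; since $F_2$ is Fano or homogeneous (Remark \ref{remF}), any effective curve in $F_2$ of zero class is contracted, so $\pi_1\circ f$ is constant. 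Hence the datum of $f$ is equivalent to the choice of a point $p \in E_2$ (the image of $\pi_1\circ f$) together with a stable map of class $\bt_1$ into the $F_1$-fiber $\pi_1^{-1}(p) \subset E_1$, i.e.\ a point of $\overline{\mc M}_{0,3}^{\mathrm{Fib}}(E_1,E_2,F_1,\bt_1)$.

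A direct numerical check confirms that the expected virtual dimensions computed via Theorem \ref{fibervirt} agree: using $\dim E_2=\dim B+\dim F_2$, $\dim F=\dim F_1+\dim F_2$, and $\int_{\bt_1}c_1(F)=\int_{\bt_1}c_1(F_1)$ (which follows by restricting $0\to T_{F/F_2}\to TF\to \pi_1^*TF_2\to 0$ to the $F_1$-fiber $F_1\subset F$), both virtual dimensions equal $\dim B+\dim F+\int_{\bt_1}c_1(F_1)$. The hardest technical point will be to verify that the two virtual fundamental classes coincide under the set-theoretic isomorphism above. This reduces to the compatibility of the fiberwise perfect obstruction theories constructed in \cite{BDOP25} when the base is changed from $B$ to $E_2$: since on each $F_1$-fiber of $\pi_1$ both obstruction theories restrict to the Behrend--Fantechi obstruction theory of $\overline{\mc M}_{0,3}(F_1,\bt_1)$, the compatibility follows from the base-change properties of the relative cotangent complex encoded in the construction of $\overline{\mc M}_{g,n}^{\mathrm{Fib}}$. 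Granting this, the equality of fiberwise invariants, and hence the ring property of $\psi_*$, follows immediately.
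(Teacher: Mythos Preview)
Your proposal is correct and follows essentially the same approach as the paper: reduce to showing the equality of three-point fiberwise Gromov--Witten invariants for classes $\bt_1\oplus 0$, which in turn rests on identifying $\overline{\mc M}_{0,3}^{\mathrm{Fib}}(E_1,B,F,\bt_1\oplus 0)$ with $\overline{\mc M}_{0,3}^{\mathrm{Fib}}(E_1,E_2,F_1,\bt_1)$. The paper dispatches this last step in a single sentence (``as it follows from their geometrical definitions''), whereas you supply the missing justification---the contraction argument for $\pi_1\circ f$, the virtual dimension check, and the compatibility of obstruction theories. Two minor remarks: the constancy of $\pi_1\circ f$ when $(\pi_1)_*\bt_1=0$ needs only projectivity of $F_2$ (pull back an ample line bundle), not the Fano/homogeneous hypothesis; and your handling of the virtual class, while still somewhat sketchy, is already more careful than what the paper offers.
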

\proof
Given a $\C$-basis $(T_i)_{i=0}^N$ of $H^\bullet(E,\C)$, we need to prove that 
\[\psi_*(T_i\sqqf{1} T_j)=T_i\sqqf{2}T_j.\]
We have
\[\psi_*(T_i\sqqf{1} T_j)=\sum_{d,\ell=0}^N\sum_{\bt\in{\rm Eff}_1(F_1)} {\bf q}^\bt\langle T_i, T_j, T_d \rangle^{\rm Fib}_{0,3,\bt\oplus 0} \, \eta^{d\ell} T_\ell,
\]
and
\[T_i\sqqf{2}T_j=\sum_{d,\ell=0}^N\sum_{\bt\in{\rm Eff}_1(F_1)} {\bf q}^\bt\langle T_i, T_j, T_d \rangle^{\rm Fib}_{0,3,\bt} \, \eta^{d\ell},
\]where:
\begin{itemize} 
\item $\langle T_i, T_j, T_d \rangle^{\rm Fib}_{0,3,\bt\oplus 0}$ is an integral over the moduli space $\overline{\mc M}_{0,3}^{\rm Fib}(E_1,B,F,\bt\oplus0)$,\\
\item and $\langle T_i, T_j, T_d \rangle^{\rm Fib}_{0,3,\bt}$ is an integral over $\overline{\mc M}_{0,3}^{\rm Fib}(E_1,E_2,F_1,\bt)$.
\end{itemize}
But $\overline{\mc M}_{g,n}^{\rm Fib}(E_1,B,F,\bt\oplus0)=\overline{\mc M}_{g,n}^{\rm Fib}(E_1,E_2,F_1,\bt)$, as it follows from their geometrical definitions. The claim follows.
\endproof
\begin{cor}\label{pcl1}
We have an isomorphism of Frobenius super-algebras
\beq\label{isopcl1}
\frac{{\rm QH^{Fib}}(E_1,B,F)}{\langle{\bf q}_2^{\bt'}\colon \bt'\in{\rm Eff}_1(F_2)\setminus\{0\}\rangle}\cong {\rm QH^{Fib}}(E_1,E_2,F_1).
\eeq
\end{cor}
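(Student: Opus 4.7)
The plan is to apply the preceding theorem, which already constructs the ring morphism $\psi\colon {\rm QH^{Fib}}(E_1,B,F)\to {\rm QH^{Fib}}(E_1,E_2,F_1)$ compatible with the quantum products, and then to analyse its surjectivity, its kernel, and its compatibility with the Poincar\'e pairing.

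First I would invoke Lemma \ref{lemcones} to write $\La_F\cong \La_{F_1}\otimes_\C\La_{F_2}$, so that the coefficient ring map $\psi\colon\La_F\to\La_{F_1}$ is precisely the quotient by the ideal $J:=\langle {\bf q}_2^{\bt'}:\bt'\in{\rm Eff}_1(F_2)\setminus\{0\}\rangle\subseteq \La_F$. Since $\La_F$ is free, hence flat, over $\C$, base change gives natural identifications $H^\bullet(E_1,\La_F)\cong H^\bullet(E_1,\C)\otimes_\C\La_F$ and $H^\bullet(E_1,\La_{F_1})\cong H^\bullet(E_1,\C)\otimes_\C\La_{F_1}$. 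From this, surjectivity of $\psi_*$ is immediate, and its underlying kernel is identified, as a $\C$-subspace, with $H^\bullet(E_1,\C)\otimes_\C J$.

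Next I would argue that, since the ${\bf q}_2^{\bt'}$ are central scalars in the quantum ring, the two-sided ideal they generate inside ${\rm QH^{Fib}}(E_1,B,F)$ coincides with $H^\bullet(E_1,\C)\otimes_\C J$. Combined with the previous paragraph, this yields the desired ring isomorphism in \eqref{isopcl1}. For the Frobenius structure, both sides carry the Poincar\'e pairing $\eta_{E_1}$ extended linearly over the respective coefficient rings; the induced pairing on the quotient is obtained by reducing the $\La_F$-valued pairing modulo $J$, and under the ring isomorphism it matches the $\La_{F_1}$-valued pairing on ${\rm QH^{Fib}}(E_1,E_2,F_1)$.

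The main -- and essentially only -- point requiring care is the identification of the two-sided ideal with the scalar-generated one, which however reduces to the trivial observation that the ${\bf q}_2^{\bt'}$ are central elements of the quantum ring. Beyond this, the corollary is a formal consequence of the preceding theorem together with flatness of the coefficient rings, so I do not expect any serious obstacle.
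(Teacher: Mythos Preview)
Your proof is correct and follows essentially the same approach as the paper: the paper's proof is a single sentence identifying the kernel of $\psi_*$ with the ideal $\langle{\bf q}_2^{\bt'}:\bt'\in{\rm Eff}_1(F_2)\setminus\{0\}\rangle$, and you have simply made explicit the details (flatness/base change for the kernel computation, surjectivity, and compatibility with the Poincar\'e pairing) that the paper leaves implicit.
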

\proof
The ideal $\langle{\bf q}_2^{\bt'}\colon \bt'\in{\rm Eff}_1(F_2)\setminus\{0\}\rangle$ is the kernel of $\psi_*$. 
\endproof

\begin{cor}\label{pcl2}
Let $(E,B,F)$ be a locally trivial bundle for which the small quantum cohomology is well-defined. The (ordinary) small quantum cohomology of $E$ and the small vertical quantum cohomology of $(E,B,F)$ are related by the isomorphism
\beq\label{isopcl2}
\frac{{\rm QH}(E)}{\langle{\bf q}^{\bt}\colon \bt\in{\rm Eff}_1(B)\setminus\{0\}\rangle}\cong {\rm QH^{Fib}}(E,B,F).
\eeq
\end{cor}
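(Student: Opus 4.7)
The plan is to deduce the statement as a direct specialization of Corollary \ref{pcl1}, taking the outer base to be a point. Concretely, in the setup preceding Corollary \ref{pcl1}, I would choose
\[
E_1 = E, \qquad E_2 = B, \qquad B_{\text{outer}} = \mathrm{Spec}(\C),
\]
with $\pi_1 = \pi\colon E\to B$ the given locally trivial $F$-bundle, and $\pi_2\colon B\to\mathrm{Spec}(\C)$ the structure map. Then the intermediate fiber is $F_1=F$, the outer fiber is $F_2=B$, and the total fiber (i.e.\ the fiber of $\pi_2\circ\pi_1\colon E\to\mathrm{Spec}(\C)$) is $E$ itself.

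With these identifications, the three bundles entering Corollary \ref{pcl1} are $(E,B,F)$, $(B,\mathrm{Spec}(\C),B)$, and $(E,\mathrm{Spec}(\C),E)$. The assumptions G, G', F' are automatic for the two bundles over $\mathrm{Spec}(\C)$, while they are part of the standing hypothesis for $(E,B,F)$; the assumption F for $(E,\mathrm{Spec}(\C),E)$ is precisely the statement that the ordinary small quantum cohomology of $E$ is well-defined. The vertical quantum cohomologies reduce as follows:
\begin{align*}
{\rm QH}^{\rm Fib}(E_1,B_{\text{outer}},F_{\text{total}}) &= {\rm QH}^{\rm Fib}(E,\mathrm{Spec}(\C),E) = {\rm QH}(E),\\
{\rm QH}^{\rm Fib}(E_1,E_2,F_1) &= {\rm QH}^{\rm Fib}(E,B,F),
\end{align*}
the first identification being the definition of small quantum cohomology at a point. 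Moreover, since $F_2=B$, Lemma \ref{lemcones} yields $\La_E\cong \La_F\otimes \La_B$, and the ideal appearing in the quotient of Corollary \ref{pcl1},
\[
\langle {\bf q}_2^{\bt'}\colon \bt'\in{\rm Eff}_1(F_2)\setminus\{0\}\rangle,
\]
becomes exactly $\langle {\bf q}^{\bt}\colon \bt\in{\rm Eff}_1(B)\setminus\{0\}\rangle$. Substituting these identifications into \eqref{isopcl1} produces \eqref{isopcl2}.

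There is no substantial obstacle in this argument: the only point deserving care is the verification that the hypotheses of Corollary \ref{pcl1} are indeed met in this degenerate setup, and in particular that the ``bundle over a point'' instances of Assumptions G, G', F, F' are either trivially satisfied or covered by the hypothesis that ${\rm QH}(E)$ is well-defined. Once this is checked, the conclusion is immediate from Corollary \ref{pcl1}.
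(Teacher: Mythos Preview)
Your proposal is correct and follows exactly the same approach as the paper: the paper's proof is the single sentence ``This is Corollary \ref{pcl1} specialized to the case $B=\mathrm{Spec}(\C)$,'' and your argument is precisely a careful unpacking of that specialization, with the identifications $E_1=E$, $E_2=B$, $F_1=F$, $F_2=B$, and outer base $\mathrm{Spec}(\C)$.
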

\begin{proof}
This is Corollary \ref{pcl1} specialized to the case $B=\mathrm{Spec}(\C)$.
\end{proof}

Let us reinterpret this result in terms of families of Frobenius super-algebras parametrized by points of a torus, as described in Section \ref{secsqc}. 

Consider a locally trivial bundle $(E,B,F)$ for which the small vertical quantum cohomology is defined. As in Remark \ref{remperiodicity}, choose
\begin{itemize}
\item an integral basis $\tilde T_1,\dots, \tilde T_k$ of $H^{1,1}(F,\C)$, 
\item an integral basis $\tilde T_{k+1},\dots, \tilde T_{k+m}$ of $H^{1,1}(B,\C)$,
\end{itemize}
and construct an integral basis $T_1,\dots, T_{k+m}$ of $H^{1,1}(E,\C)$ such that
\[
\iota^*T_i = \tilde T_i,\quad i=1,\dots,k, \qquad \pi^*(\tilde T_j) = T_j,\quad j = k+1,\dots,k+m.
\]
For each point $\bm q=(q_1,\dots,q_{k+m})\in(\C^*)^{k+m}$, we have a well-defined small quantum cohomology ring ${\rm QH}_{\bm q}(E)$, via \eqref{smallqprodq}. 

The isomorphism \eqref{isopcl2} is equivalent to the statement that, in a suitable limit $\bm q \to \bar{\bm q}$ (with $\bar{\bm q}$ lying in a partial compactification of $(\C^*)^{k+m}$), the Frobenius super-algebra ${\rm QH}_{\bm q}(E)$ specializes to the vertical quantum cohomology ${\rm QH}^{\rm Fib}_{\bar{\bm q}}(E,B,F)$.

For example, assume -- just for simplicity -- that the basis $\tilde T_{k+1},\dots, \tilde T_{k+m}$ lies in the NEF cone of $B$ (as in Remark \ref{remperiodicity}). We can then consider the partial compactification $(\C^*)^{k+m} \subset \C^{k+m}$, and the isomorphism \eqref{isopcl2} implies that in the \emph{partially classical} limit
\[
(q_1,\dots,q_k, q_{k+1},\dots, q_{k+m}) \longrightarrow (q_1,\dots,q_k, 0,\dots, 0),
\]
the small quantum cohomology ${\rm QH}_{\bm q}(E)$ reduces to ${\rm QH}^{\rm Fib}_{(q_1,\dots,q_k)}(E,B,F)$. In a further limit -- for instance $(q_1,\dots,q_k)\to 0$, if also $\tilde T_1,\dots,\tilde T_k$ lie in the NEF cone -- we recover the classical cohomology algebra $H^\bullet(E,\C)$.

A similar description can be given for the isomorphism \eqref{isopcl1}.

\section{Flag bundles, and their vertical quantum spectra}\label{sec3}
\subsection{Partial flag varieties}
Let $N,n\in\Z_{>0}$ with $N\leq n$, and let $\bm\lambda = (\lambda_1,\dots,\lambda_N)\in\Z_{>0}^N$ be a composition of $n$, i.e., $|\bm\lambda| := \sum_i\lambda_i = n$. The associated partial flag variety $F_{\bm\lambda}$ parametrizes flags
\beq\label{flag1}
0 = V_0 \subset V_1 \subset \dots \subset V_N = \C^n,\qquad \dim_\C(V_i/V_{i-1}) = \lambda_i.
\eeq
The integer $N$ is the {\it length} of both the composition and the flag. Grassmannians are the special case $N=2$, with $G(k,n) := F_{(k,n-k)}$. The complex dimension of $F_{\bm\lambda}$ is $\dim_\C F_{\bm\lambda} = \sum_{i<j} \lambda_i \lambda_j = \frac{1}{2}\left(n^2 - \sum_{i=1}^N \lambda_i^2\right).$

\begin{rem}\label{numbercomposition}
The number of partial flag varieties of length $N$ chains in $\C^n$ equals $\binom{n-1}{N-1}$, the number of compositions of $n$ into $N$ positive integers.
\qrem\end{rem}

\begin{rem}
The variety \( F_{\bm\lambda} \) is smooth and projective: it embeds as a closed subvariety of the product \( \prod_{i=1}^N G(\lambda^{(i)}, n) \), where \( \lambda^{(i)} := \sum_{j=1}^i \lambda_j \). It is also a rational homogeneous space, isomorphic to the quotient \( GL_n / P \), where \( P \subset GL_n \) is the parabolic subgroup stabilizing a flag of type \( \bm\lambda \).
It follows that all its cohomology classes are algebraic: the cohomology is purely even and of type \( (p,p) \), so \( h^{p,q} = 0 \) for \( p \ne q \).
\qrem\end{rem}

Let $Q_i \to F_{\bm\lambda}$, $i=1,\dots,N$, be the canonical quotient bundles of rank $\lambda_i$, with fiber $V_i/V_{i-1}$ over the flag \eqref{flag1}. For each bundle $V$, let $c(V) = \sum_{j\geq 0} c_j(V) \, {\sf t}^j$ be its total Chern class, with formal parameter $\sf{t}$. The relation
\[
\bigoplus_{i=1}^N Q_i \cong \underline{\C^n} \quad\Rightarrow\quad \prod_{i=1}^N c(Q_i) = 1
\]
generates the ideal of relations in $H^\bullet(F_{\bm\lambda},\C)$.

Let $\bm\gamma_i = (\gamma_{i,1},\dots,\gamma_{i,\lambda_i})$ be the Chern roots of $Q_i$, and $\C[\bm\gamma]^{S_{\bm\lambda}}$ -- where $S_{\bm\la}=S_{\la_1}\times\dots\times S_{\la_N}$ -- the ring of block-symmetric polynomials in $\bm\gamma = (\bm\gamma_1,\dots,\bm\gamma_N)$.  Then
\beq\label{prescoh}
H^\bullet(F_{\bm\lambda},\C) \cong \frac{\C[\bm\gamma]^{S_{\bm\lambda}}}{I}, \quad \text{where } I = \left\langle \prod_{i=1}^N \prod_{j=1}^{\lambda_i}(1 + {\sf t}\, \gamma_{i,j}) = 1 \right\rangle.
\eeq
Equivalently, $I$ is generated by
\[
\left\{ \sum_{i_1+\dots+i_N=h} \prod_{j=1}^N e_{i_j}(\bm\gamma_j) \;\middle|\; h = 1,\dots,n \right\},
\]
with $e_j$ denoting the $j$-th elementary symmetric polynomial.

By a classical result of C.\,Ehresmann \cite{Ehr34}, the integral cohomology ring of the partial flag variety \( F_{\bm\lambda} \) is freely generated by the \emph{Schubert classes}. These are the (Poincar\'e duals of the) fundamental classes of certain subvarieties \( \Omega_\sigma \subset F_{\bm\lambda} \), known as \emph{Schubert varieties}, which are indexed by the minimal coset representatives in the quotient
$S_n / \left(S_{\lambda_1} \times \dots \times S_{\lambda_N}\right).$
In particular, the total Betti number of \( F_{\bm\lambda} \) coincides with the number of such coset representatives:
\[
\dim_\C H^\bullet(F_{\bm \lambda}, \C) = \binom{n}{\lambda_1, \dots, \lambda_N} = \frac{n!}{\lambda_1! \cdots \lambda_N!}.
\]
In the polynomial algebra \eqref{prescoh}, each Schubert class can be represented algebraically by a \emph{Schubert polynomial} \( \mathfrak{S}_\sigma(\bm\gamma) \); see Appendix~\ref{appSchub} for further details.

\begin{prop}\label{chernflagvariety}
There is a canonical isomorphism of vector bundles:
\beq\label{dec1}
T F_{\bm\lambda} \cong \bigoplus_{i<j} \Hom(Q_i, Q_j) = \bigoplus_{i<j} Q_i^* \otimes Q_j,
\eeq
and hence,
\[
c_1(F_{\bm\lambda}) = \sum_{i<j} c_1(Q_i^* \otimes Q_j) = \sum_{i=1}^N \left( \sum_{j<i} \lambda_j - \sum_{j>i} \lambda_j \right) c_1(Q_i). 
\]
\end{prop}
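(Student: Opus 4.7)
My plan is to establish the tangent bundle decomposition inductively via the Grassmann bundle structure highlighted in Section~1.4, and to deduce the first Chern class formula from the Whitney sum formula applied to the resulting decomposition. The base case $N=2$ gives $F_{\bm\lambda}={\rm Gr}(\lambda_1,n)$, for which the classical description of the Grassmannian tangent bundle yields $TF_{\bm\lambda}=\Hom(V_1,\C^n/V_1)=\Hom(Q_1,Q_2)$, matching the claim.

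For the inductive step, I would use the forgetful morphism $\pi\colon F_{\bm\lambda}\to F_{\bm\lambda_{/(N-1)}}$ (cf.\ Section~1.4 with $i=N-1$), which realizes $F_{\bm\lambda}$ as a Grassmann bundle with fiber ${\rm Gr}(\lambda_{N-1},V_N/V_{N-2})$. Its relative tangent bundle is $T_\pi=\Hom(Q_{N-1},Q_N)$, giving the short exact sequence
\[
0\to \Hom(Q_{N-1},Q_N)\to TF_{\bm\lambda}\to \pi^*TF_{\bm\lambda_{/(N-1)}}\to 0.
\]
Writing $Q_k'$ for the quotient bundles on $F_{\bm\lambda_{/(N-1)}}$, one has $\pi^*Q_k'=Q_k$ for $k\leq N-2$, while $\pi^*Q_{N-1}'$ sits in the tautological extension $0\to Q_{N-1}\to \pi^*Q_{N-1}'\to Q_N\to 0$. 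Invoking the inductive hypothesis on $F_{\bm\lambda_{/(N-1)}}$, pulling back through $\pi$, and then refining the summands $\Hom(Q_i,\pi^*Q_{N-1}')$ via the tautological extension, one obtains the decomposition of $TF_{\bm\lambda}$ with summands $\Hom(Q_i,Q_j)$ for all $1\leq i<j\leq N$.

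The first Chern class formula then follows immediately: using $c_1(A^*\otimes B)={\rm rk}(B)\,c_1(A^*)+{\rm rk}(A)\,c_1(B)$ for arbitrary vector bundles $A,B$, one computes
\[
c_1(Q_i^*\otimes Q_j)=-\lambda_j\, c_1(Q_i)+\lambda_i\, c_1(Q_j),
\]
and summing over pairs $i<j$ while regrouping by the index of $Q$ yields the stated identity. The main subtlety lies in showing that the construction assembles into a genuine direct-sum decomposition (and canonically so), rather than merely a filtration with the stated graded pieces; this can be verified either by an explicit description of tangent vectors at $V_\bullet$ as systems of compatible first-order deformations $\phi_i\colon V_i\to \C^n/V_i$ (projected onto the graded pieces of the filtration of $\C^n/V_i$), or via the homogeneous-space description $F_{\bm\lambda}=GL_n/P_{\bm\lambda}$ realizing $TF_{\bm\lambda}$ as the associated bundle of the $P_{\bm\lambda}$-representation $\mathfrak{gl}_n/\mathfrak{p}_{\bm\lambda}$. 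In any case, for the Chern class computation only the graded decomposition is required, so Whitney's formula applied to the filtration already closes the argument.
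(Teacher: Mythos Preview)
Your argument is correct, but it takes a more circuitous route than the paper's. The paper goes directly through the homogeneous-space description $F_{\bm\lambda}=GL_n/P$: the tangent bundle is the associated bundle of the $P$-module $\mathfrak{gl}_n/\mathfrak{p}$, which visibly splits as $\bigoplus_{i<j}\Hom(Q_i,Q_j)$ because the Levi factor of $P$ acts block-diagonally. This yields the canonical direct-sum decomposition in one stroke. Your inductive Grassmann-bundle argument is perfectly valid and has the merit of being self-contained within the language of fibrations used elsewhere in the paper, but as you yourself observe, it naturally produces only a filtration; to upgrade it to a canonical splitting you end up invoking the homogeneous-space picture anyway. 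For the Chern class formula the two approaches converge, since Whitney's formula is insensitive to the difference between a filtration and a direct sum.
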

\proof
The partial flag variety $F_{\bm\lambda}$ is a homogeneous space $GL_n/P$, where $P$ is a parabolic subgroup. Its tangent bundle identifies with $\End(\C^n)/\mathfrak{p}$, the space of endomorphisms modulo those preserving the flag. This yields the decomposition \eqref{dec1}. Using $c_1(Q_i^* \otimes Q_j) = \lambda_i c_1(Q_j) - \lambda_j c_1(Q_i)$ and summing over $i<j$ gives the formula for $c_1(F_{\bm\lambda})$.
\endproof

\begin{rem}
The classes \( c_1(Q_1), \dots, c_1(Q_{N-1}) \) are nef and generate the nef cone of \( F_{\bm\lambda} \). By the relation \( \sum_i c_1(Q_i) = 0 \), the remaining class \( c_1(Q_N) \) is anti-nef. In particular, $F_{\bm\lambda}$ is a Fano variety. 
\qrem\end{rem}

\begin{rem}
On the partial flag variety $F_{\bm\lambda}$, the Schubert divisors are given by 
$D_{i}=c_1(\det(S_i)^{*})=-\sum_{j=1}^i c_1(Q_j)$, with $i=1,\dots, N-1$,
where $S_i$ and $Q_j$ denote the tautological subbundles and quotients. 
Since $-K_{F_{\bm\lambda}}=\sum_{i=1}^{N-1}(\lambda_i+\lambda_{i+1})D_{i}$ in $\mathrm{Pic}(F_{\bm\lambda})$, 
it follows that $F_{\bm\la}$ has Fano index 
$\operatorname{ind}(F_{\bm\lambda})=\gcd(\lambda_1+\lambda_2,\ \lambda_2+\lambda_3,\ \dots,\ \lambda_{N-1}+\lambda_N).$\qrem
\end{rem}

The small quantum cohomology ring \( \mathrm{QH}_{\bm{q}}(F_{\bm\lambda}) \) admits a presentation as a deformation of the classical cohomology ring, depending on quantum parameters \( \bm{q} = (q_1, \dots, q_{N-1}) \in (\mathbb{C}^*)^{N-1} \).
The quantum relations are encoded by a companion-type matrix \( A \) whose determinant governs the presentation of the quantum cohomology ring. Its entries depend linearly on the Chern roots \( \gamma_{i,j} \), and include quantum corrections via the variables \( q_i \).

Define the $n\times n$ matrix $A$ as follows:

\begin{itemize}
  \item \( A_{r, r-1} = -1 \) for \( r = 2, \dots, n \) (i.e., the subdiagonal is all \(-1\));
  \item For \( i = 1, \dots, N \):
  \begin{itemize}
    \item Let \( s_i = \la_1 + \cdots + \la_{i-1} \) (with \( s_1 = 0 \));
    \item For \( j = 1, \dots, \la_i \): \( A_{s_i + 1, s_i + j} = \gm_{i,j}\); 
    \item If \( i < N \), then set:
    \[
      A_{s_i + 1,\, s_i + \la_i + \la_{i+1}} = -(-1)^{\la_{i+1}} q_i
    \]
  \end{itemize}
  \item All other entries of \( A \) are zero.
\end{itemize}

\begin{example}
For $\bm\la=(1,1,1)$, we have
\[
A = 
\begin{pmatrix}
\gm_{1,1} & q_1   & 0     \\
-1     & \gm_{2,1} & q_2  \\
0      & -1     & \gm_{3,1}
\end{pmatrix}.
\]For $\bm\la=(2,2)$, we have
\[
A = 
\begin{pmatrix}
\gm_{1,1} & \gm_{1,2} & -q_1    & 0     \\
-1     & 0      & 0      & 0     \\
0      & -1     & \gm_{2,1} & \gm_{2,2} \\
0      & 0      & -1     & 0
\end{pmatrix}.
\]For $\bm\la=(2,3,1)$, we have
\[
A = 
\begin{pmatrix}
\gm_{1,1} & \gm_{1,2} & 0      & 0      & q_1    & 0     \\
-1     & 0      & 0      & 0      & 0      & 0     \\
0      & -1     & \gm_{2,1} & \gm_{2,2} & \gm_{2,3} & q_2   \\
0      & 0      & -1     & 0      & 0      & 0     \\
0      & 0      & 0      & -1     & 0      & 0     \\
0      & 0      & 0      & 0      & -1     & \gm_{3,1}
\end{pmatrix}.\tag*{\qetr}
\]
\end{example}

\begin{thm}\label{thmpresqcohfl}\cite{AS95,Kim95,Kim96}
We have
\[{\rm QH}^\bullet_{\bm q}(F_{\bm\lambda})\cong \C[\bm\gm]^{S_{\bm\la}}[\bm q]/J,
\]where $J$ is the ideal generated by the coefficients of the polynomial $\det({\sf t}\cdot {\rm Id}+A)-{\sf t}^n$.\qed
\end{thm}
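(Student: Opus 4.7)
The plan is to apply the Siebert--Tian reconstruction principle. The classical cohomology \eqref{prescoh} of $F_{\bm\la}$ is the quotient $\C[\bm\gm]^{S_{\bm\la}}/I$ of an $n$-dimensional polynomial ring by an ideal $I$ generated by $n$ elements; since the quotient is finite-dimensional, these form a regular sequence. Consequently, any family of $n$ polynomial relations in $\C[\bm\gm]^{S_{\bm\la}}[\bm q]$ which (a)~specializes modulo $\bm q$ to the classical generators of $I$, and (b)~vanishes in $\mathrm{QH}_{\bm q}(F_{\bm\la})$, necessarily presents the quantum cohomology: by Nakayama applied to the regular sequence, the quotient $\C[\bm\gm]^{S_{\bm\la}}[\bm q]/J$ is a free $\C[\bm q]$-module of rank $\tfrac{n!}{\la_1!\cdots\la_N!}=\dim_\C H^\bullet(F_{\bm\la},\C)$, and the tautological surjection onto $\mathrm{QH}_{\bm q}(F_{\bm\la})$ -- a free $\C[\bm q]$-module of the same rank -- is forced to be an isomorphism. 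Throughout, I would identify the symmetric variables $\gm_{i,j}$ entering $A$ with the Chern classes $c_j(Q_i)$ of the tautological quotient bundles.

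To verify (a), I set $\bm q=0$ so that $A$ becomes block lower-triangular with diagonal blocks $B_i$ of size $\la_i\times\la_i$. A short expansion along the subdiagonal of $-1$'s yields $\det({\sf t}\cdot\mathrm{Id}+B_i)={\sf t}^{\la_i}+\gm_{i,1}{\sf t}^{\la_i-1}+\cdots+\gm_{i,\la_i}=c(Q_i)({\sf t})$; hence $\det({\sf t}\cdot\mathrm{Id}+A)|_{\bm q=0}=\prod_i c(Q_i)({\sf t})=c(\bigoplus_i Q_i)({\sf t})={\sf t}^n$ from the trivialization $\bigoplus_i Q_i\cong\underline{\C^n}$, so the coefficients of $\det({\sf t}\cdot\mathrm{Id}+A)-{\sf t}^n$ reduce modulo $\bm q$ to the classical generators of $I$.

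For (b), I would first observe that each $q_i$ appears in exactly one entry of $A$, so $\det({\sf t}\cdot\mathrm{Id}+A)$ is multilinear in $\bm q$: only monomials $\bm q^{\bm d}$ with $d_i\in\{0,1\}$ occur. The coefficients corresponding to $|\bm d|\le 1$ can be pinned down by the partial classical limits of Corollary~\ref{pcl1}: sending $q_j\to 0$ for all $j\neq i$ identifies the corresponding quotient of $\mathrm{QH}_{\bm q}(F_{\bm\la})$ with the vertical quantum cohomology $\mathrm{QH}^{\rm Fib}(F_{\bm\la},F_{\bm\la_{/i}},\mathrm{Gr}(\la_i,\la_i+\la_{i+1}))$ of the Grassmann bundle $F_{\bm\la}\to F_{\bm\la_{/i}}$, whose fiberwise relations are governed by the Siebert--Tian--Witten presentation of the Grassmannian. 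This validates the $q_i$-linear portion of the claim, and together with the classical relations it pins down every coefficient of $\bm q^{\bm d}$ with $|\bm d|\le 1$.

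The hard part will be matching the \emph{mixed} monomials $\bm q^{\bm d}$ with $|\bm d|\ge 2$, which are invisible to every individual partial classical limit above (each such limit kills all but one $q_i$). These correspond to genuinely multi-component effective curve classes on $F_{\bm\la}$, and their verification is the computational heart of the theorem. Following Kim, I would invoke quantum Schubert calculus: the divisor and splitting axioms reduce each mixed three-point Gromov--Witten invariant to a combinatorial sum over reduced words in $S_n$, to be matched against the coefficient of $\bm q^{\bm d}$ extracted from $\det({\sf t}\cdot\mathrm{Id}+A)$ by a permanent-type expansion indexed by non-overlapping selections of its $q_i$-entries. The graded constraints $\deg q_i=2(\la_i+\la_{i+1})$ and the bound on the total degree of the relations drastically restrict the admissible mixed corrections to a rigid finite-dimensional family, but the explicit combinatorial matching -- or, alternatively, an integrable-systems identification of $A$ as the Lax matrix of a suitable quantum Toda chain à la Givental--Kim -- appears unavoidable.
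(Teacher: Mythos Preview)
The paper does not prove this theorem: the statement ends with a \qed\ tombstone and is attributed to \cite{AS95,Kim95,Kim96}, with the surrounding text remarking that a complete proof appears in \cite{Kim96}. So there is no ``paper's own proof'' to compare against; your proposal is an outline of how such a proof might go.

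Your Nakayama/regular-sequence framework (the Siebert--Tian principle) is the correct scaffolding, and your verification of~(a) is fine. For~(b), your idea of using the partial classical limits of Corollary~\ref{pcl1} to pin down the $|\bm d|=1$ terms is attractive, but note the circularity risk: the presentation of the vertical quantum cohomology of the Grassmann bundle that you would invoke is stated in the paper immediately after this theorem and is itself attributed to \cite{AS95}, one of the very sources you are trying to reprove. More importantly, as you yourself concede, the mixed monomials $|\bm d|\ge 2$ remain untreated, and your final paragraph is not a proof but a description of the work that remains---either the combinatorial Gromov--Witten computation or the Toda-lattice/integrable-systems identification of~\cite{GK95,Kim99}. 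That computation \emph{is} the theorem; without it you have only reduced the statement to its known hard core. So the proposal is a sound strategic outline but not a proof, and it is essentially the strategy of Kim's papers rather than an alternative route.
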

Such a presentation was independently obtained by A.\,Astashkevich and V.\,Sadov \cite{AS95}, and by B.\,Kim \cite{Kim95}, with a complete proof provided in~\cite{Kim96}; see also~\cite{Kim99} for further developments.
Earlier results covering the extreme cases of Grassmannians and complete flag varieties were obtained in \cite{ST97}, \cite{Wit95}, and \cite{CF95}, \cite{GK95}, respectively. See also \cite{CF99} for a unified description of the small quantum cohomology rings of all projective homogeneous spaces \( SL_n(\mathbb{C}) / P \), where \( P \) is a parabolic subgroup.

When the parameters \( \bm{q} \) are set to zero, the presentation of Theorem \ref{thmpresqcohfl} reduces to the classical one by A.\,Borel \cite{Bor53} for the cohomology ring \( H^\bullet(F_{\bm\lambda}, \mathbb{C}) \).

\subsection{Flag bundles}
Let \( X \) be a smooth projective variety, and let \( E \to X \) be a holomorphic vector bundle of rank \( n \).  
Fix a composition \( \bm \lambda = (\lambda_1, \dots, \lambda_N) \in \Z_{>0}^N \) with \( |\bm\lambda| = n \).  
We define the \emph{flag bundle} \( \eu{F}_{\bm\lambda}(E) \to X \) to be the fiber bundle over \( X \) whose fiber over a point \( p \in X \) is the partial flag variety \( F_{\bm\lambda}(E_p) \) parametrizing filtrations
\beq\label{flag2}
0 = V_0 \subset V_1 \subset \dots \subset V_N = E_p, \qquad \dim_\C(V_i / V_{i-1}) = \lambda_i.
\eeq
This generalizes the Grassmann bundle \( \eu{G}_k(E) \to X \), corresponding to the case \( N = 2 \) and \( \bm\lambda = (k, n-k) \).

The total space \( \eu{F}_{\bm\lambda}(E) \) is smooth and projective. Over \( \eu{F}_{\bm\lambda}(E) \), we have canonical rank-\( \lambda_i \) quotient bundles
$\eu{Q}_i \to \eu{F}_{\bm\lambda}(E),$ $ i = 1, \dots, N,$
whose fiber over a flag \eqref{flag2} is \( \eu{Q}_{i, p} = V_i / V_{i-1} \subset E_p \).

Let \( \pi\colon \eu{F}_{\bm\lambda}(E) \to X \) be the natural projection.
\begin{prop}\label{c1flbl}
We have
\[c_1(\eu F_{\bm\la}(E))=\pi^*c_1(X)+\sum_{i<j}c_1(\eu Q_i^*\otimes \eu Q_j)=\pi^*c_1(X)+\sum_{i=1}^N\left(\sum_{j<i}\lambda_j-\sum_{j>i}\lambda_j\right)c_1( \eu Q_i).
\]
\end{prop}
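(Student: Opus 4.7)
The plan is to reduce the statement to the absolute case established in Proposition~\ref{chernflagvariety} via the standard short exact sequence of tangent bundles associated with the smooth fibration $\pi\colon \eu F_{\bm\la}(E)\to X$.

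First, I would invoke the relative tangent sequence
\[
0\longrightarrow T_\pi \longrightarrow T\eu F_{\bm\la}(E) \longrightarrow \pi^* TX \longrightarrow 0,
\]
where $T_\pi$ denotes the vertical (relative) tangent bundle. Since the total Chern class is multiplicative on short exact sequences, passing to first Chern classes yields
\[
c_1(\eu F_{\bm\la}(E)) = c_1(T_\pi) + \pi^* c_1(X).
\]
It therefore suffices to identify $T_\pi$ with the direct sum $\bigoplus_{i<j}\eu Q_i^*\otimes \eu Q_j$.

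Second, I would establish the global isomorphism
\[
T_\pi \;\cong\; \bigoplus_{i<j}\eu Q_i^*\otimes \eu Q_j.
\]
This is the relative analogue of the decomposition \eqref{dec1} used in Proposition~\ref{chernflagvariety}. It can be obtained by realizing $\eu F_{\bm\la}(E)$ as the associated fiber bundle $\mathrm{Fr}(E)\times_{GL_n} (GL_n/P)$, where $\mathrm{Fr}(E)\to X$ is the principal $GL_n$-frame bundle of $E$ and $P\subset GL_n$ is the parabolic subgroup stabilizing a flag of type $\bm\la$. Under this identification, the vertical tangent bundle is the bundle associated with the $P$-representation $\mathfrak{gl}_n/\mathfrak{p}$, which decomposes exactly as $\bigoplus_{i<j}\mathrm{Hom}(V_i/V_{i-1},V_j/V_{j-1})$, giving the claimed globalization with the tautological quotient bundles $\eu Q_i$ in place of $Q_i$. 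Alternatively, one may iterate the well-known formula for the vertical tangent of a Grassmann bundle, applied to the tower $\eu F_{\bm\la}(E)\to \eu F_{\bm\la_{/i}}(E)\to\dots\to X$.

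Third, I would compute the first Chern class. Using the standard identity $c_1(V^*\otimes W) = \mathrm{rk}(V)\, c_1(W) - \mathrm{rk}(W)\, c_1(V)$ applied to $\eu Q_i^*\otimes \eu Q_j$ (where $\mathrm{rk}\,\eu Q_i = \la_i$, $\mathrm{rk}\,\eu Q_j = \la_j$), one gets
\[
c_1(\eu Q_i^*\otimes \eu Q_j) = \la_i\, c_1(\eu Q_j) - \la_j\, c_1(\eu Q_i).
\]
Summing over the pairs $i<j$ and collecting the coefficient of each $c_1(\eu Q_i)$ produces the factor $\sum_{j<i}\la_j - \sum_{j>i}\la_j$, yielding the stated formula.

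The only real subtlety is the global identification of $T_\pi$, which is essentially a bookkeeping exercise once one phrases $\eu F_{\bm\la}(E)$ as an associated bundle; the rest is formal Chern-class algebra identical to the computation in Proposition~\ref{chernflagvariety}, combined with the tangent sequence of the smooth projection $\pi$.
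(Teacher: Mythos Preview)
Your proposal is correct and follows essentially the same approach as the paper: both use the relative tangent sequence $0\to T_\pi\to T\eu F_{\bm\la}(E)\to\pi^*TX\to 0$ to reduce to computing $c_1(T_\pi)$, identify $T_\pi$ with $\bigoplus_{i<j}\eu Q_i^*\otimes\eu Q_j$ by relativizing Proposition~\ref{chernflagvariety}, and then apply the same Chern-class algebra. Your associated-bundle justification of the global isomorphism for $T_\pi$ is more explicit than the paper's one-line appeal to the fiberwise restriction, but the underlying argument is identical.
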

\proof
Let \( T_\pi \) denote the vertical tangent bundle (tangent to the fibers) of the projection \( \pi\colon \eu{F}_{\bm\lambda}(E) \to X \).  
We have a short exact sequence of vector bundles
$0 \to T_\pi \to T\eu{F}_{\bm\lambda}(E) \to \pi^* T X \to 0$,
and thus
$c_1\big(T \eu{F}_{\bm\lambda}(E)\big) = \pi^* c_1(T X) + c_1(T_\pi).$
The vertical bundle \( T_\pi \) restricts fiberwise to the tangent bundle of the flag variety \( F_{\bm\lambda} \), and Proposition~\ref{chernflagvariety} gives
$c_1(T_\pi) = \sum_{i<j} c_1(\eu{Q}_i^* \otimes \eu{Q}_j).$
\endproof

Let $\tilde{\bm\gamma}_i = (\gamma_{i,1},\dots,\gamma_{i,\lambda_i})$ be the Chern roots of $\eu Q_i$, and $\C[\tilde{\bm\gamma}]^{S_{\bm\lambda}}$  the ring of block-symmetric polynomials in $\tilde{\bm\gamma} = (\tilde{\bm\gamma}_1,\dots,\tilde{\bm\gamma}_N)$. 

\begin{thm}
The bundle $(\eu F_{\bm\la}(E),X,F_{\bm\la})$ is cohomologically decomposable. We have the ring isomorphism
\beq\label{cohisoflbl}
H^\bullet(\eu F_{\bm\la}(E),\C)=\frac{H^\bullet(X,\C)[\tilde{\bm\gm}]^{S_{\bm\la}}}{I},\quad \text{where } I = \left\langle \prod_{i=1}^N \prod_{j=1}^{\lambda_i}(1 + {\sf t}\, \tilde{\gamma}_{i,j}) = c(E) \right\rangle.
\eeq
\end{thm}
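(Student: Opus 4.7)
The plan is to combine a Leray--Hirsch argument for cohomological decomposability with a Borel--style presentation argument for the ring structure, bridging the two by a graded Nakayama argument.

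First, I would establish cohomological decomposability and identify the rank of the target. The Schubert polynomials $\mathfrak{S}_\sigma(\tilde{\bm\gamma})$ of Appendix~B are block-symmetric in $\tilde{\bm\gamma}$, hence polynomial in the Chern classes $c_j(\eu Q_i)$, and therefore define genuine classes on the total space $\eu F_{\bm\la}(E)$. Restricted to any fiber $\pi^{-1}(p) \simeq F_{\bm\la}$, they reduce to the Schubert classes of $F_{\bm\la}$, which form a $\C$-basis of $H^\bullet(F_{\bm\la},\C)$ by Ehresmann's theorem. Theorem~\ref{equivs} of Appendix~A then yields cohomological decomposability and the $H^\bullet(X,\C)$-module isomorphism
\[
H^\bullet(\eu F_{\bm\la}(E),\C) \;\cong\; H^\bullet(X,\C)\otimes_\C H^\bullet(F_{\bm\la},\C),
\]
so the left-hand side is a free $H^\bullet(X,\C)$-module of rank $b := n!/(\la_1!\cdots\la_N!)$.

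Second, I would construct the natural ring homomorphism appearing on the right-hand side of \eqref{cohisoflbl}. On $\eu F_{\bm\la}(E)$ there is a tautological filtration $0=\eu V_0\subset \eu V_1\subset\dots\subset \eu V_N=\pi^*E$ with successive quotients $\eu V_i/\eu V_{i-1}=\eu Q_i$. Iterating the Whitney product formula on the short exact sequences $0\to \eu V_{i-1}\to \eu V_i\to \eu Q_i\to 0$ produces the identity
\[
\pi^* c(E) \;=\; \prod_{i=1}^N c(\eu Q_i) \quad \text{in } H^\bullet(\eu F_{\bm\la}(E),\C)[{\sf t}].
\]
Sending $e_j(\tilde{\bm\gamma}_i)\mapsto c_j(\eu Q_i)$ extends to an $H^\bullet(X,\C)$-algebra homomorphism $\Phi\colon H^\bullet(X,\C)[\tilde{\bm\gamma}]^{S_{\bm\la}}\to H^\bullet(\eu F_{\bm\la}(E),\C)$, which, by the Whitney identity just above, factors through the ideal $I$ to give $\bar\Phi\colon H^\bullet(X,\C)[\tilde{\bm\gamma}]^{S_{\bm\la}}/I \to H^\bullet(\eu F_{\bm\la}(E),\C)$.

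Third, I would show that $\bar\Phi$ is an isomorphism via graded Nakayama. Let $M$ denote the left-hand side, viewed as a graded $H^\bullet(X,\C)$-module. Since $c_h(E)\in H^{2h}(X,\C)$ for $h\geq 1$, reducing modulo the (nilpotent) augmentation ideal $H^{>0}(X,\C)$ collapses the generating relations of $I$ to $\prod c(\eu Q_i)=1$, and Borel's classical presentation of $H^\bullet(F_{\bm\la},\C)$ identifies the reduction $M/(H^{>0}(X,\C)\cdot M)$ with $H^\bullet(F_{\bm\la},\C)$, of $\C$-dimension $b$. Let $s_1,\dots,s_b$ be the block-symmetric Schubert polynomials used in Step~1, viewed as elements of $M$; by graded Nakayama they generate $M$ over $H^\bullet(X,\C)$, giving a surjection $H^\bullet(X,\C)^{b}\twoheadrightarrow M$. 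Composing with $\bar\Phi$, the standard basis is sent to the classes $\mathfrak{S}_{\sigma_i}$ evaluated in the Chern roots of $\eu Q_\bullet$, which by Step~1 form an $H^\bullet(X,\C)$-basis of $H^\bullet(\eu F_{\bm\la}(E),\C)$. Hence the composition is an isomorphism, forcing both $H^\bullet(X,\C)^b\twoheadrightarrow M$ and $\bar\Phi$ to be isomorphisms.

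The main technical point is ensuring that the same Schubert polynomial lifts can simultaneously play the role of Leray--Hirsch generators on the right and Borel generators on the left. This is precisely where the universal description of Schubert classes via Schubert polynomials in the Chern roots (Appendix~B) is essential: the tautological filtration identifies those roots on $\eu F_{\bm\la}(E)$ with the algebraic variables $\tilde{\bm\gamma}$ modulo the single relation $\prod c(\eu Q_i)=\pi^*c(E)$, so that both sides become presentations of the same free $H^\bullet(X,\C)$-module with the same distinguished basis, and the claimed isomorphism follows by a purely dimensional argument.
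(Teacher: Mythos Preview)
Your proposal is correct and follows essentially the same approach as the paper: Leray--Hirsch via Schubert polynomials for decomposability, the natural ring map via Chern classes and the Whitney relation $\prod_i c(\eu Q_i)=\pi^*c(E)$, and then matching the Schubert basis on both sides. Your treatment is in fact slightly more careful than the paper's, which asserts a direct sum $\bigoplus_i \eu Q_i \cong \pi^*E$ where only a filtration is available, and which asserts that $\phi$ ``maps this basis to a basis'' without explicitly checking that the Schubert polynomials span the quotient ring over $H^\bullet(X,\C)$; your graded Nakayama step fills exactly that gap.
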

\proof The Schubert polynomials $\frak S_\si(\bm\gm)$, for $\si\in S_n$, form a basis of $H^\bullet(F_{\bm\la},\C)$. Their pullbacks $\frak S_\si(\tilde{\bm\gm})$ define classes in $H^\bullet(\eu F_{\bm\la}(E),\C)$ that restrict to a basis in the cohomology of each fiber $\pi^{-1}(b)$, $b\in X$. Hence, by the Leray--Hirsch theorem, the bundle is cohomologically decomposable.

Denote by $R$ the right-hand side of \eqref{cohisoflbl}. Define a ring morphism
\[
\phi\colon R \to H^\bullet(\eu F_{\bm\la}(E),\C),\quad \alpha \mapsto \pi^*\alpha,\quad e_j(\tilde{\bm\gm}_i) \mapsto c_j(\eu Q_i),
\]
for $\alpha\in H^\bullet(X,\C)$, $i=1,\dots,N$, and $j=1,\dots,\la_i$.
This is well-defined because of the universal relation 
$\bigoplus_{i=1}^N \eu Q_i = \pi^*E $, which implies $ \prod_{i=1}^N c(\eu Q_i) = c(\pi^* E).$
Note that this is the \emph{only} relation in the cohomology ring of the fiber, so $I$ captures all relations globally.
Since the Schubert basis pulls back fiberwise to a basis of $H^\bullet(\eu F_{\bm\la}(E),\C)$ over \( H^\bullet(X,\C) \), and since $\phi$ maps this basis to a basis, we conclude that $\phi$ is both surjective and injective.
\endproof

Since $(\eu F_{\bm\la}(E),X,F_{\bm\la})$ is cohomologically decomposable, and with Fano fiber, Assumptions G,F,G',F' hold (see Remarks \ref{remF}, \ref{remG}, \ref{remG'}, \ref{remF'}). 
Hence, we have a family of algebras ${\rm QH}_{\bm q}^{\rm Fib}(\eu F_{\bm\la}(E),X,F_{\bm\la})$ parametrized by points $\bm q\in(\C^*)^{N-1}$.

Although no general presentation is known for the small quantum cohomology of \( \eu F_{\bm\lambda}(E) \), the vertical part admits an explicit description. Let \( A \) be the matrix defined in the previous section.

\begin{thm}{\cite{AS95}}
The small vertical quantum ring admits the presentation
\[
\mathrm{QH}^{\mathrm{Fib}}_{\bm{q}}(\eu F_{\bm\lambda}(E), F_{\bm\lambda}, X)
= \frac{H^\bullet(X, \C)[\tilde{\bm\gamma}]^{S_{\bm\lambda}}[\bm{q}]}{J},
\]
where \( J \) is the ideal generated by the coefficients of
\[
\det(\mathsf{t} \cdot \mathrm{Id} + A) - \sum_{j=0}^n \mathsf{t}^{n-j} c_j(E).
\tag*{\qed}
\]
\end{thm}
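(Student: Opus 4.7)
The plan is to realize the candidate presentation as a $\bm q$-deformation of the classical presentation \eqref{cohisoflbl}, using Theorem \ref{thmpresqcohfl} to match the fiberwise behaviour and Lemma \ref{fundlemma} to preclude spurious mixing of base and fiber classes under the vertical quantum product. I would first introduce the $H^\bullet(X,\C)[\bm q]$-algebra homomorphism
\[
\Phi\colon H^\bullet(X,\C)[\tilde{\bm\gamma}]^{S_{\bm\la}}[\bm q]\longrightarrow {\rm QH}^{\rm Fib}_{\bm q}(\eu F_{\bm\la}(E),F_{\bm\la},X),
\]
defined by $e_k(\tilde{\bm\gamma}_i)\mapsto c_k(\eu Q_i)$ and $\alpha\mapsto \pi^*\alpha$ for $\alpha\in H^\bullet(X,\C)$. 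Well-definedness is guaranteed by Theorem \ref{thm1}(2), which realizes $H^\bullet(X,\C)$ as a subalgebra of the vertical quantum cohomology.

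The central step is to verify $\Phi(J)=0$. Writing $P({\sf t}):=\det({\sf t}\cdot{\rm Id}+A) - \sum_{j=0}^n {\sf t}^{n-j} c_j(E)$, I would proceed in two limits. Setting $\bm q=0$ recovers the matrix $A_0$, whose determinant encodes $\prod_i c(\eu Q_i)$; thus $\Phi(P)|_{\bm q=0}$ is exactly the classical relation $\prod_i c(\eu Q_i) = c(E)$ of \eqref{cohisoflbl}, and vanishes. On the other hand, restricting along any fiber inclusion $\iota_p\colon F_{\bm\la}\hookrightarrow \eu F_{\bm\la}(E)$ sends $\pi^*c_j(E)\mapsto 0$ for $j\geq 1$ and, by Corollary \ref{corfibra}, converts the vertical quantum product into the small quantum product on $F_{\bm\la}$; Theorem \ref{thmpresqcohfl} then yields $\iota_p^*\Phi(P)=\det({\sf t}\cdot{\rm Id}+A)-{\sf t}^n=0$ in ${\rm QH}(F_{\bm\la})$. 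To promote these two partial vanishings to the full identity $\Phi(P)=0$, I would invoke Lemma \ref{fundlemma} iteratively: each vertical quantum product appearing in the expansion of $\det({\sf t}\cdot{\rm Id}+A)$ contributes to coefficients that are $\La_F$-scalar combinations of Schubert-polynomial fiber classes, never admixing nontrivial $\pi^*H^{>0}(X,\C)$ components beyond those already accounted for on the right-hand side. Combined with the vanishing in the two limits, this orthogonality forces all residual mixed $\bm q$-dependent corrections with nontrivial base degree to vanish.

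Finally, I would upgrade $\Phi$ to an isomorphism on passing to the quotient by $J$. Cohomological decomposability provides a Schubert basis $\{\mathfrak{S}_\sigma(\tilde{\bm\gamma})\}_\sigma$ of ${\rm QH}^{\rm Fib}_{\bm q}$, free over $H^\bullet(X,\C)[\bm q]$ of rank $\binom{n}{\la_1,\dots,\la_N}$; the same Schubert-polynomial argument applied to the $\bm q=0$ reduction of $H^\bullet(X,\C)[\tilde{\bm\gamma}]^{S_{\bm\la}}[\bm q]/J$ recovers \eqref{cohisoflbl}, so a graded Nakayama lemma yields surjectivity on the quotient, and matching ranks give bijectivity.

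The main obstacle lies in the central ``no-mixing'' step: Lemma \ref{fundlemma} is formulated for pairs of fiber-basis elements, whereas here one must control an iterated vertical quantum product of Chern classes inside an $n\times n$ determinant. The cleanest route is probably to pull everything back to the complete flag bundle of $E$ -- where honest Chern roots are defined globally -- apply Lemma \ref{fundlemma} factor by factor in the determinantal expansion, and then descend to $\eu F_{\bm\la}(E)$ by $S_{\bm\la}$-invariance; once this is in place, the remainder of the argument is largely formal.
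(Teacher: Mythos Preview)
The paper does not prove this theorem: it is attributed to \cite{AS95} and stated with a terminal \qed, so there is no in-paper argument to compare against. Your proposal is an attempt to supply a proof where the paper simply cites one.

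On the merits of your sketch: the architecture is sound (define $\Phi$, kill $J$, match ranks), and the two limiting checks are the right anchors. But the bridge between them is not yet load-bearing. Vanishing at $\bm q=0$ plus vanishing under every fiber restriction $\iota_p^*$ does not by itself exclude contributions of the form $\bm q^\beta\,\pi^*b$ with $\beta\neq 0$ and $b\in H^{>0}(X,\C)$; such terms die in both limits. Your appeal to Lemma~\ref{fundlemma} is the right instinct, but that lemma is about $\sqf$-products of the chosen fiber-basis classes $e_1,\dots,e_K$, and you must first establish that each $c_k(\eu Q_i)$ already lies in the $\C$-span of those $e_j$'s inside $H^\bullet(\eu F_{\bm\la}(E),\C)$ --- not merely that it restricts correctly to the fiber. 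Once that is in place, an easy induction on the number of factors shows that any iterated $\sqf$-product of the $c_k(\eu Q_i)$'s stays in $\mathrm{Span}_\C\{e_j\}$, and then the fiber-restriction check alone (no $\bm q=0$ limit needed) already forces $\Phi(P)=0$. The detour through the complete flag bundle is unnecessary: the determinant $\det({\sf t}\cdot\mathrm{Id}+A)$, being $S_{\bm\la}$-symmetric in the $\tilde\gamma_{i,j}$'s, has coefficients that are polynomials in the $e_k(\tilde{\bm\gamma}_i)=c_k(\eu Q_i)$, so you never need individual Chern roots.
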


\subsection{Vertical quantum spectrum of flag bundles} We compute the vertical quantum characteristic polynomial of the bundle \((\eu F_{\bm\lambda}(E), X, F_{\bm\lambda})\), see Definition \ref{vqcp}.

Fix \(\bm{q} \in (\mathbb{C}^*)^{N-1}\), and denote by
\[
f_{(\eu F_{\bm\lambda}(E), X, F_{\bm\lambda})}(\zeta; \bm{q})
\quad\text{and}\quad
f_{F_{\bm\lambda}}(\zeta; \bm{q})
\]
the vertical quantum characteristic polynomial of \((\eu F_{\bm\lambda}(E), X, F_{\bm\lambda})\) and the (ordinary) quantum characteristic polynomial of the fiber \(F_{\bm\lambda}\), respectively, both evaluated at \(\bm{q}\).

\begin{thm}\label{thmvqcp}
The vertical quantum characteristic polynomial of \((\eu F_{\bm\lambda}(E), X, F_{\bm\lambda})\) satisfies
\[
f_{(\eu F_{\bm\lambda}(E), X, F_{\bm\lambda})}(\zeta; \bm{q}) = \left[ f_{F_{\bm\lambda}}(\zeta; \bm{q}) \right]^{\dim H^\bullet(X, \mathbb{C})}.
\]
\end{thm}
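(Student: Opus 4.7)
My plan is to make the cohomological tensor-product decomposition of $H^\bullet(\eu F_{\bm\la}(E),\C)$ explicit, rewrite $c_1(\eu F_{\bm\la}(E))\sqqf{\bm q}$ as a Kronecker sum of two simple operators, and then reduce the characteristic polynomial computation to a block-triangular argument that exploits the nilpotency of cup product multiplication by $c_1(X)$.

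In detail, I would first fix a basis $b_1,\dots,b_h$ of $H^\bullet(X,\C)$ and Schubert-type lifts $e_1,\dots,e_k\in H^\bullet(\eu F_{\bm\la}(E),\C)$---for instance $e_\si:=\mathfrak S_\si(\tilde{\bm\gm})$ with $\si$ running over minimal coset representatives of $S_n/(S_{\la_1}\times\dots\times S_{\la_N})$---whose fiber restrictions form a basis of $H^\bullet(F_{\bm\la},\C)$. By cohomological decomposability, $\{\pi^*b_i\cup e_j\}_{i,j}$ is a basis of $H^\bullet(\eu F_{\bm\la}(E),\C)$. Using Proposition \ref{c1flbl}, I split $c_1(\eu F_{\bm\la}(E))=\pi^*c_1(X)+\tilde c_1^{\mathrm{vert}}$, where $\tilde c_1^{\mathrm{vert}}:=\sum_i d_i\,c_1(\eu Q_i)$ lies in $\langle e_1,\dots,e_k\rangle$ and restricts along every fiber to $c_1(F_{\bm\la})$. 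Theorem \ref{thm1}(1) identifies the action of $\pi^*c_1(X)\sqqf{\bm q}$ with classical cup product by $\pi^*c_1(X)$, while Theorem \ref{elemprod} combined with Lemma \ref{fundlemma} gives $\tilde c_1^{\mathrm{vert}}\sqqf{\bm q}(\pi^*b_i\cup e_j)=\pi^*b_i\cup\bigl(\tilde c_1^{\mathrm{vert}}\sqqf{\bm q}e_j\bigr)$, with $\tilde c_1^{\mathrm{vert}}\sqqf{\bm q}e_j$ a scalar $\C$-linear combination of the $e_\ell$. Pulling back along a fiber inclusion $\iota_b$---a morphism of quantum algebras by Theorem \ref{thmfunct} (or equivalently Corollary \ref{corfibra})---identifies these scalars with the structure constants of $c_1(F_{\bm\la})\sq_{\bm q}$ in the Schubert basis. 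Thus in the tensor basis $\{\pi^*b_i\cup e_j\}$ one has
\[c_1(\eu F_{\bm\la}(E))\sqqf{\bm q}\;=\;M_X\otimes\mathrm{Id}_k\,+\,\mathrm{Id}_h\otimes M_F(\bm q),\]
where $M_X$ is the matrix of $c_1(X)\cup$ on $H^\bullet(X,\C)$ and $M_F(\bm q)$ is the matrix of $c_1(F_{\bm\la})\sq_{\bm q}$ on $H^\bullet(F_{\bm\la},\C)$.

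To conclude, I would exploit the nilpotency of $M_X$, which follows from $c_1(X)^{\dim_\C X+1}=0$ for trivial degree reasons. Conjugating by a base-change matrix that puts $M_X$ in strictly upper triangular form, the Kronecker sum above becomes block upper triangular with $M_F(\bm q)$ on each of its $h$ diagonal blocks, whence
\[f_{(\eu F_{\bm\la}(E),X,F_{\bm\la})}(\zeta;\bm q)\;=\;\det\bigl(\zeta\,\mathrm{Id}-M_F(\bm q)\bigr)^{h}\;=\;f_{F_{\bm\la}}(\zeta;\bm q)^{\dim_\C H^\bullet(X,\C)}.\]

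I expect the main technical hurdle to lie in the middle step: namely, verifying that the coefficients of $\tilde c_1^{\mathrm{vert}}\sqqf{\bm q}e_j$ along the $e_\ell$ are genuine complex scalars rather than classes in $H^{>0}(X,\C)$, and that they coincide with the absolute quantum structure constants of the fiber. Scalarity rests on the top-degree-on-$B$ argument already used in the proof of Lemma \ref{fundlemma}; the identification with the fiberwise coefficients rests on applying the pullback $\iota_b^*$ and invoking the compatibility of vertical and absolute quantum products packaged in the functoriality of ${\rm QH}^{\rm Fib}$.
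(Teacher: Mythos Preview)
Your proposal is correct and follows essentially the same route as the paper: split $c_1(\eu F_{\bm\la}(E))$ into $\pi^*c_1(X)$ plus the vertical $e$-class via Proposition~\ref{c1flbl}, use Theorem~\ref{thm1}(1) and Lemma~\ref{fundlemma} to get the tensor/block structure, invoke nilpotency of $c_1(X)\cup$, and identify the fiber block with $c_1(F_{\bm\la})\sq_{\bm q}$ through Corollary~\ref{corfibra}. The only cosmetic difference is that the paper phrases the nilpotency step as ``$A_1$ nilpotent, $[A_1,A_2]=0\Rightarrow\det(\zeta-A_1-A_2)=\det(\zeta-A_2)$'' and then observes $A_2$ is block \emph{diagonal}, whereas you package the same computation as a Kronecker sum and triangularize $M_X$; these are equivalent.
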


\begin{proof}
Let \( e_1, \dots, e_K \in H^\bullet(\eu F_{\bm\lambda}(E), \mathbb{C}) \), with \( K = \frac{n!}{\lambda_1! \cdots \lambda_N!} \), be classes restricting fiberwise to a basis of \( H^\bullet(F_{\bm\lambda}, \mathbb{C}) \). Let \( b_1 = 1, b_2, \dots, b_h \) be a basis of \( H^\bullet(X, \mathbb{C}) \), with \( h = \dim H^\bullet(X, \mathbb{C}) \).

Consider the basis:
\begin{equation} \label{magicbasis}
e_1, \dots, e_K,\quad b_2 \cup e_1, \dots, b_2 \cup e_K,\quad \dots,\quad b_h \cup e_1, \dots, b_h \cup e_K
\end{equation}
of \( H^\bullet(\eu F_{\bm\lambda}(E), \mathbb{C}) \). Without loss of generality, we may assume
\[e_1=\sum_{i<j}c_1(\eu Q_i^*\otimes \eu Q_j).\]
By Proposition~\ref{c1flbl}, the operator \( c_1(\eu F_{\bm\lambda}(E)) \sqqf{\bm q} \) decomposes as
\[
c_1(\eu F_{\bm\lambda}(E)) \sqqf{\bm q} = A_1 + A_2, \quad \text{where } A_1 = \pi^* c_1(X) \sqqf{\bm q},\quad A_2 = e_1 \sqqf{\bm q}.
\]
By Theorem~\ref{thm1}(1), \( A_1 \) acts as cup product with \( \pi^* c_1(X) \), hence is nilpotent. Moreover, the product \( \sqqf{\bm q} \) is commutative, so \( [A_1, A_2] = 0 \). It follows that the characteristic polynomial of the full operator equals that of \( A_2 \):
\[
f_{(\eu F_{\bm\lambda}(E), X, F_{\bm\lambda})}(\zeta; \bm{q}) = \det(\zeta \cdot \mathrm{Id} - A_2).
\]
By Lemma~\ref{fundlemma}, \( A_2 \) preserves the subspace \( \mathrm{Span}_\mathbb{C}\{e_1, \dots, e_K\} \), and by Theorem~\ref{thm1}(1), we have
\[
A_2(\pi^* b_i \cup e_j) = \pi^* b_i \cup A_2 e_j.
\]
Therefore, the matrix of \( A_2 \) in the basis \eqref{magicbasis} has block-diagonal form:
\[
\underbrace{M \oplus M \oplus \cdots \oplus M}_{h\text{ times}},
\]
where \( M \) is the matrix of the operator \( e_1 \sqqf{\bm q} \) on \( \mathrm{Span}_\mathbb{C}\{e_1, \dots, e_K\} \).

The restriction \( \iota^* \colon H^\bullet(\eu F_{\bm\lambda}(E), \mathbb{C}) \to H^\bullet(F_{\bm\lambda}, \mathbb{C}) \) induces an isomorphism on the subspace $\mathrm{Span}_\mathbb{C}\{e_1, \dots, e_K\} $, and it intertwines \( e_1 \sqqf{\bm q} \) with \( c_1(F_{\bm\lambda}) \sqqf{\bm q} \), by Corollary~\ref{corfibra} and Proposition~\ref{chernflagvariety}. The claim follows.
\end{proof}

\medskip

As a consequence, every eigenvalue of the vertical quantum operator at any \( \bm{q} \in (\mathbb{C}^*)^{N-1} \) has algebraic multiplicity at least \( \dim_\mathbb{C} H^\bullet(X, \mathbb{C}) \). If at least one eigenvalue has strictly greater algebraic multiplicity, we say that the vertical quantum spectrum is \emph{exceeding}.

\begin{cor}\label{corexcee1}
The spectrum of \( (\eu F_{\bm\lambda}(E), X, F_{\bm\lambda}) \) is exceeding at \( \bm{q} \in (\mathbb{C}^*)^{N-1} \) if and only if the fiber \( F_{\bm\lambda} \) does not have simple quantum spectrum at \( \bm{q} \). \qed
\end{cor}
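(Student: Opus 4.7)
The plan is to derive the corollary as an immediate multiplicity-counting consequence of Theorem~\ref{thmvqcp}. Setting $h := \dim_{\mathbb{C}} H^\bullet(X, \mathbb{C})$, I will invoke the identity
\[
f_{(\eu F_{\bm\lambda}(E), X, F_{\bm\lambda})}(\zeta; \bm{q}) = \bigl[f_{F_{\bm\lambda}}(\zeta; \bm{q})\bigr]^{h}
\]
to observe that the multiset of roots of the vertical quantum characteristic polynomial is obtained from the multiset of roots of $f_{F_{\bm\lambda}}(\zeta; \bm q)$ by multiplying each algebraic multiplicity by $h$. Consequently, if $\mu_\alpha$ denotes the algebraic multiplicity of an eigenvalue $\alpha$ of $c_1(F_{\bm\lambda}) \sq_{\bm q}$, its multiplicity as an eigenvalue of $c_1(\eu F_{\bm\lambda}(E)) \sqqf{\bm q}$ is exactly $\mu_\alpha \cdot h$.

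With this bookkeeping in place, the two directions of the equivalence reduce to direct inspection. By definition, the vertical spectrum is exceeding precisely when some eigenvalue has algebraic multiplicity strictly greater than $h$, i.e.\ when there exists $\alpha$ with $\mu_\alpha \cdot h > h$. Since $h \geq 1$, this is equivalent to the existence of some $\alpha$ with $\mu_\alpha \geq 2$, which is precisely the statement that the quantum spectrum of $F_{\bm\lambda}$ at $\bm q$ is not simple. Conversely, if every $\mu_\alpha$ equals $1$, then every eigenvalue of the vertical quantum operator has multiplicity exactly $h$, and the vertical spectrum fails to be exceeding.

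Since the statement is a purely formal consequence of the polynomial identity already proved, I do not expect any substantive obstacle. The only subtlety worth emphasising is the strict inequality in the definition of \emph{exceeding}: one must be careful that the boundary case of multiplicity exactly $h$ corresponds to the non-exceeding regime, so that both implications line up correctly with the simplicity condition on the fiber spectrum.
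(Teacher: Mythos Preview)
Your proposal is correct and matches the paper's approach: the corollary is stated with an immediate \qed, indicating it is a direct multiplicity-counting consequence of Theorem~\ref{thmvqcp}, exactly as you argue.
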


\begin{cor}\label{corvqsgrb}
Let \( p_1(n) \) denote the smallest prime divisor of \( n \in \mathbb{N}_{>1} \). The Grassmann bundle \( (\eu G_k(E), X, G(k, \operatorname{rk} E)) \) has exceeding vertical quantum spectrum at any \( q \in \mathbb{C}^* \) if and only if
\[
p_1(\operatorname{rk} E) \leq k \leq \operatorname{rk} E - p_1(\operatorname{rk} E).
\]
In particular, whether the spectrum is exceeding does not depend on the value of \( q \).
\end{cor}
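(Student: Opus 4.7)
The plan is to reduce the statement to the characterization of degeneracies in the quantum spectrum of complex Grassmannians established in \cite{Cot22}, by invoking Corollary~\ref{corexcee1}.

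First, I would specialize Corollary~\ref{corexcee1} to the Grassmann bundle case, namely to the composition $\bm\lambda = (k, n-k)$ with $n := \operatorname{rk} E$ and fiber $F_{\bm\lambda} = G(k,n)$. The corollary then asserts that the vertical quantum spectrum of $(\eu G_k(E), X, G(k,n))$ is exceeding at a given $q \in \mathbb{C}^*$ if and only if the absolute quantum spectrum of the Grassmannian $G(k,n)$ fails to be simple at $q$. Since this reduction is pointwise in $q$, the ``for any $q$'' quantifier in the corollary will follow automatically once the fiber criterion is shown to be $q$-independent.

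Second, I would invoke the main theorem of \cite{Cot22}, which characterizes the simplicity of the small quantum spectrum of $G(k,n)$: it is simple precisely when $k < p_1(n)$ or $k > n - p_1(n)$. The proof there relies on the Siebert--Tian presentation of $\operatorname{QH}_q(G(k,n))$, through which the eigenvalues of $c_1(G(k,n))\sq_q$ are expressed as explicit symmetric functions of $k$-element subsets of the set of $n$-th roots of a constant proportional to $q$. Coincidences among these eigenvalues then reduce to a purely arithmetic question about subsets of $n$-th roots of unity, whose answer is governed solely by the smallest prime divisor $p_1(n)$ and is independent of the specific value of $q \in \mathbb{C}^*$. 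Combining this with the reduction above yields both the stated range and the asserted $q$-independence.

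The main obstacle is essentially bookkeeping: one must verify that the normalization of $c_1(G(k,n))$ and of the quantum parameter $q$ used in \cite{Cot22} matches the conventions adopted here, and that the statement from \cite{Cot22} is indeed uniform in $q \in \mathbb{C}^*$ rather than only generic. Since simplicity or non-simplicity of the spectrum is invariant under a uniform rescaling of eigenvalues, and since the Grassmannian's quantum cohomology is known to be semisimple for every $q \in \mathbb{C}^*$, no genuine difficulty is expected beyond confirming these conventions.
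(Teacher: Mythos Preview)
Your proposal is correct and follows exactly the paper's own argument: the paper's proof consists of the single sentence ``The claim follows from Corollary~\ref{corexcee1} and \cite[Thm.\,4.4]{Cot22}.'' Your additional remarks on the mechanism behind the cited theorem and on the $q$-independence are accurate elaborations, but the core two-step reduction is identical.
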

\proof
The claim follows from Corollary \ref{corexcee1} and \cite[Thm.\,4.4]{Cot22}.
\endproof

\subsection{Limits of quantum spectra of flag varieties, and prime factorization}\label{secpfv} Let $\bm\la=(\la_1,\dots,\la_N)\in \Z^N_{>0}$ be a composition of $n$, with associated partial flag variety $F_{\bm\la}$. 

Fix the nef integral basis $c_1(Q_1),\dots, c_1(Q_{N-1})$ of $H^{1,1}(F_{\bm\la},\C)$: for each point $\bm q\in(\C^*)^{N-1}$, equation \eqref{smallqprodq} defines the (ordinary) small quantum cohomology algebra ${\rm QH}_{\bm q}(F_{\bm\la})$, together with the (ordinary) quantum characteristic polynomial $f_{F_{\bm\la}}(\zeta;\bm q)$ and associated spectrum.

The nature of the spectrum is, in general, highly depending on the point $\bm q$, as the following example shows.

\begin{example}\label{exF111}
Let $N=n=3$, $\bm \la=(1,1,1)$. In a suitable basis (see Appendix \ref{appdynop}) of $H^\bullet(F_{\bm\la},\C)$, the operator $c_1(F_{\bm\la})\sq_{\bm q}$ is represented by the matrix
\[\left(
\begin{array}{cccccc}
 0 & -2 & -2 & 0 & 0 & 0 \\
 -2 {q_2} & 0 & 0 & -4 & -2 & 0 \\
 -2 {q_1} & 0 & 0 & -2 & -4 & 0 \\
 0 & 0 & -2 {q_2} & 0 & 0 & -2 \\
 0 & -2 {q_1} & 0 & 0 & 0 & -2 \\
 -4 {q_1} {q_2} & 0 & 0 & -2 {q_1} & -2 {q_2} & 0 \\
\end{array}
\right).\]
The quantum characteristic polynomial equals 
\[f_{F_{\bm\la}}(\zeta;\bm q)=\zeta^6+\zeta^4(-12 q_1-12 {q_2})+\zeta^2(48 q_1^2 - 336 q_1 q_2 + 48 q_2^2)-64 q_1^3 - 192 q_1^2 q_2 - 192 q_1 q_2^2 - 64 q_2^3,
\]whose discriminant is
\[2^{36}3^{18}q_1^4  q_2^4(q_1 - q_2)^4    (q_1 + q_2)^3.
\]In the complement $(\C^*)^{2}\setminus\{q_1= \pm q_2\}$, the quantum spectrum is simple. At points $\bm q=(q,q)$, with $q\neq 0$, the spectrum consists of 4 eigenvalues, two of which have algebraic multiplicity 2. At points $\bm q=(q,-q)$, with $q\neq 0$, the spectrum consists of 5 eigenvalues, one of which with algebraic multiplicity 2.
\qetr
\end{example}

For any fixed $i=1,\dots, N-1$, we now consider the partially classical limit $q_j\to 0$, for $j\neq i$, of the algebra ${QH}_{\bm q}(F_{\bm\la})$, quantum characteristic polynomial, and associated spectrum.

In what follows, the limit $\lim_{q_j\to 0, j\neq i}f_{F_{\bm\la}}(\zeta;\bm q)$ will be called {\it $i$-th semiclassical characteristic polynomial of $F_{\bm\la}$}. Its multiset of zeroes will be called {\it $i$-th semiclassical spectrum of $F_{\bm\la}$}.

\begin{thm}\label{THMSEMICLSPEC}
For any $i=1,\dots, N-1$, any root $\zeta_o$ of the $i$-th semiclassical characteristic polynomial $\lim_{q_j\to 0, j\neq i}f_{F_{\bm\la}}(\zeta;\bm q)$ satisfy the inequality
\beq\label{minalgmult}
{\rm alg.\,mult.}(\zeta_o)\geq \frac{n!}{\la_1!\dots\la_{i-1}!(\la_i+\la_{i+1})!\la_{i+2}!\dots\la_N!}.
\eeq
The $i$-th semiclassical spectrum is of exceeding type (that is in \eqref{minalgmult} the strict inequality holds for at least one zero $\zeta_o$) if and only if
\beq\label{primerelation}
p_1(\la_i+\la_{i+1})\leq\,\, \la_i,\,\la_{i+1}\,\,\leq \la_i+\la_{i+1}-p_1(\la_i+\la_{i+1}),
\eeq where $p_1(n)$ denotes the smallest prime factor of $n\in\N_{>1}$.
\end{thm}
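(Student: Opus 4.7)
The plan is to reduce the statement to Theorem \ref{thmvqcp} applied to the Grassmann bundle structure on $F_{\bm\la}$ described in the introduction. Recall that for each $i=1,\dots,N-1$, the forgetful map
\[
F_{\bm\la} \longrightarrow F_{\bm\la_{/i}}, \qquad \bm\la_{/i}=(\la_1,\dots,\la_{i-1},\la_i+\la_{i+1},\la_{i+2},\dots,\la_N),
\]
exhibits $F_{\bm\la}$ as the Grassmann bundle $\eu G_{\la_i}(\eu Q_i)$ over $F_{\bm\la_{/i}}$, where $\eu Q_i$ is the tautological quotient bundle of rank $\la_i+\la_{i+1}$. The fibers are isomorphic to $G(\la_i,\la_i+\la_{i+1})$.

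First, I would use the induction property of vertical quantum cohomology (Corollary \ref{pcl1}, specialized to $E_1=F_{\bm\la}$, $E_2=F_{\bm\la_{/i}}$, $B=\on{Spec}\C$, with $F_1=G(\la_i,\la_i+\la_{i+1})$ and $F_2=F_{\bm\la_{/i}}$) to identify the $i$-th partially classical limit of the small quantum cohomology of $F_{\bm\la}$ with the small vertical quantum cohomology of the Grassmann bundle $(\eu G_{\la_i}(\eu Q_i),F_{\bm\la_{/i}},G(\la_i,\la_i+\la_{i+1}))$. The key point is to verify that, under the nef integral basis $c_1(Q_1),\dots,c_1(Q_{N-1})$, the parameter $q_i$ corresponds to the (unique) K\"ahler generator of the Grassmannian fiber, while the remaining parameters $q_j$ ($j\neq i$) correspond to classes pulled back from the base $F_{\bm\la_{/i}}$. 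Sending $q_j\to 0$ for $j\neq i$ is then exactly the specialization that produces the vertical product $\sqqf{q_i}$. Consequently,
\[
\lim_{q_j\to 0,\,j\neq i} c_1(F_{\bm\la})\sq_{\bm q} \;=\; c_1(\eu G_{\la_i}(\eu Q_i))\sqqf{q_i},
\]
and thus
\[
\lim_{q_j\to 0,\,j\neq i} f_{F_{\bm\la}}(\zeta;\bm q)\;=\;f_{(\eu G_{\la_i}(\eu Q_i),F_{\bm\la_{/i}},G(\la_i,\la_i+\la_{i+1}))}(\zeta;q_i).
\]

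Next, I would apply Theorem \ref{thmvqcp} to the Grassmann bundle on the right-hand side. This yields
\[
\lim_{q_j\to 0,\,j\neq i} f_{F_{\bm\la}}(\zeta;\bm q)\;=\;\bigl[f_{G(\la_i,\la_i+\la_{i+1})}(\zeta;q_i)\bigr]^{\dim_\C H^\bullet(F_{\bm\la_{/i}},\C)}.
\]
Since $F_{\bm\la_{/i}}$ is a partial flag variety with composition $\bm\la_{/i}$, its total Betti number equals the multinomial coefficient
\[
\dim_\C H^\bullet(F_{\bm\la_{/i}},\C)=\frac{n!}{\la_1!\cdots\la_{i-1}!(\la_i+\la_{i+1})!\la_{i+2}!\cdots\la_N!},
\]
which immediately gives the lower bound \eqref{minalgmult} on the algebraic multiplicity of any root of the $i$-th semiclassical characteristic polynomial.

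Finally, the semiclassical spectrum is exceeding precisely when the vertical quantum spectrum of the Grassmann bundle $(\eu G_{\la_i}(\eu Q_i),F_{\bm\la_{/i}},G(\la_i,\la_i+\la_{i+1}))$ is exceeding. By Corollary \ref{corvqsgrb}, this occurs if and only if
\[
p_1(\la_i+\la_{i+1})\leq \la_i\leq (\la_i+\la_{i+1})-p_1(\la_i+\la_{i+1}),
\]
which, using $\la_{i+1}=(\la_i+\la_{i+1})-\la_i$, is equivalent to the symmetric condition \eqref{primerelation}. The only genuine obstacle I foresee is the bookkeeping in the first step: one must verify rigorously that the presentation of Theorem \ref{thmpresqcohfl} is compatible with the Grassmann bundle structure, so that the variable $q_i$ of $F_{\bm\la}$ indeed plays the role of the single quantum parameter of the fiber Grassmannian. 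This can be checked either directly on the companion-matrix presentation (noting that setting $q_j=0$ for $j\neq i$ makes $\det(\mathsf{t}\cdot\on{Id}+A)$ factor into the characteristic polynomial of the Grassmannian block and the ``classical'' pieces corresponding to $F_{\bm\la_{/i}}$), or more conceptually via Corollary \ref{pcl1}, which is the argument I would formalize.
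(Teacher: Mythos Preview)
Your proposal is correct and follows essentially the same route as the paper: realize $F_{\bm\la}$ as the Grassmann bundle $\eu G_{\la_i}(\eu Q_i)\to F_{\bm\la_{/i}}$ via the forgetful map, invoke the induction property (the paper cites the discussion after Corollary~\ref{pcl2}, which is your Corollary~\ref{pcl1} with $B=\on{Spec}\C$) to identify the $i$-th semiclassical limit with the vertical quantum characteristic polynomial of this bundle, and then conclude via Theorem~\ref{thmvqcp} and Corollary~\ref{corvqsgrb}. The paper does not spell out the bookkeeping you flag about matching $q_i$ to the fiber parameter, treating it as implicit in the partial-classical-limit discussion, so your caution there is warranted but not an obstacle.
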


\proof
For each $i=1,\dots, N-1$, set $\bm\la_{/i}:=(\la_1,\dots,\la_{i-1},\la_i+\la_{i+1},\la_{i+2},\dots,\la_N)$. This is a composition of $n$ into $N-1$ positive parts. 

We have a natural map $\phi\colon F_{\bm\la}\to F_{\bm\la_{/i}}$, forgetting the $i$-th vector subspace of a flag: to each flag
\beq\label{longflag}
0\subset V_1\subset V_2\subset V_{i-1}\subset V_i\subset V_{i+1}\subset\dots\subset V_N=\C^n,
\eeq we associate the flag
\beq\label{shortflag} 0\subset V_1\subset V_2\subset V_{i-1}\subset V_{i+1}\subset V_{i+2}\subset\dots\subset V_N=\C^n.
\eeq

We claim that the map $\phi$ realizes $F_{\bm\la}$ as a Grassmann bundle over $F_{\bm\la_{/i}}$, with fiber $G(\la_i,\la_i+\la_{i+1})$.
Indeed, denote by $Q_1',\dots, Q_{N-1}'$ the canonical quotients bundles on $F_{\bm\la_{/i}}$. The fiber of $Q'_j$ over the point \eqref{shortflag} equals
\[ V_j/V_{j-1} \text{ for $j<i$},\qquad V_{i+1}/V_{i-1}\text{ for $j=i$},\qquad V_{j+1}/V_j\text{ for $j>i$.}
\]The datum of the vector space $V_i$ in \eqref{longflag} is equivalent to the datum of a point of the Grassmannian of $\la_i$-dimensional subspace in $V_{i+1}/V_{i-1}$. Hence $F_{\bm\la}$ can be identified with the total space of the Grassmann bundle $\eu G_{\la_i}(Q'_i)\to F_{\bm\la_{/i}}$.

From the discussion after Corollary \ref{pcl2}, it follows that the limit of the ordinary quantum characteristic polynomial $f_{F_{\bm\la}}(\zeta;\bm q)$, in the regime $q_j\to 0$ for $j\neq i$, is identified with the vertical quantum characteristic polynomial of $(F_{\bm\la},F_{\bm\la_{/i}}, G(\la_i,\la_{i}+\la_{i+1}))$ at the point $q_i\in\C^*$.

The claim then follows from Theorem \ref{thmvqcp} and Corollary \ref{corvqsgrb}.
\endproof

\subsection{Examples}
\begin{example}
Let $N=n=3$ and $\bm\la=(1,1,1)$. In Example \ref{exF111}, we already computed the quantum characteristic polynomial 
\[f_{F_{\bm\la}}(\zeta;\bm q)=\zeta^6+\zeta^4(-12 q_1-12 {q_2})+\zeta^2(48 q_1^2 - 336 q_1 q_2 + 48 q_2^2)-64 q_1^3 - 192 q_1^2 q_2 - 192 q_1 q_2^2 - 64 q_2^3.
\]In the regime $q_2\to 0$, we obtain the polynomial
\beq\label{1pcqcpF111}
f_{F_{\bm\la}}(\zeta; q_1,0)=(\zeta^2-4q_1)^3,\qquad q_1\neq 0.
\eeq This can be identified with $[f_{\Pb^1}(\zeta;q_1)]^D$, where
\[f_{\Pb^1}(\zeta;q)=\zeta^2-4q,\qquad D=\dim_\C H^\bullet(G(2,3),\C)=3.
\]
Notice that $\bm\la_{/1}=(2,1)$, and $F_{\bm\la}$ is realized as a $\Pb^1$-bundle over $F_{\bm\la_{/1}}=G(2,3)$. Any zero of \eqref{1pcqcpF111} has algebraic multiplicity 3, coherently with the fact that the condition \eqref{primerelation} is not satisfied.

The partially classical regime $q_1\to 0$ is similar.\qetr
\end{example}

\begin{example}
Let $n=4$, $N=3$, $\bm\la=(2,1,1)$. In a suitable basis of $H^\bullet(F_{\bm\la},\C)$ (see Appendix \ref{appdynop}), the operator $c_1(F_{\bm\la})\sq_{\bm q}$ is represented by the matrix
\[\left(
\begin{array}{cccccccccccc}
 0 & -2 & -3 & 0 & 0 & 0 & 0 & 0 & 0 & 0 & 0 & 0 \\
 -2 q_2 & 0 & 0 & -5 & -3 & 0 & 0 & 0 & 0 & 0 & 0 & 0 \\
 0 & 0 & 0 & -2 & -5 & 0 & -3 & 0 & 0 & 0 & 0 & 0 \\
 0 & 0 & -2 q_2 & 0 & 0 & -3 & 0 & -5 & 0 & 0 & 0 & 0 \\
 0 & 0 & 0 & 0 & 0 & -2 & 0 & 0 & -3 & 0 & 0 & 0 \\
 0 & 0 & 0 & 0 & -2 q_2 & 0 & 0 & 0 & 0 & -5 & -3 & 0 \\
 -3 q_1 & 0 & 0 & 0 & 0 & 0 & 0 & -2 & -5 & 0 & 0 & 0 \\
 0 & 0 & 0 & 0 & 0 & 0 & -2 q_2 & 0 & 0 & -3 & 0 & 0 \\
 0 & -3 q_1 & 0 & 0 & 0 & 0 & 0 & 0 & 0 & -2 & -5 & 0 \\
 -5 q_1 q_2 & 0 & 0 & 0 & 0 & 0 & 0 & 0 & -2 q_2 & 0 & 0 & -3 \\
 0 & 0 & 0 & -3 q_1 & 0 & 0 & 0 & 0 & 0 & 0 & 0 & -2 \\
 0 & 0 & -5 q_1 q_2 & 0 & 0 & 0 & 0 & -3 q_1 & 0 & 0 & -2 q_2 & 0 \\
\end{array}
\right).
\]The quantum characteristic polynomial equals 
\begin{multline*}
f_{F_{\bm\la}}(\zeta;\bm q)=531441 q_1^4+5563728 q_1^3 q_2 \zeta+78732 q_1^3 \zeta^3-12637312 q_1^2 q_2^3+6060960 q_1^2 q_2^2 \zeta^2\\-1005048 q_1^2 q_2 \zeta^4+4374 q_1^2 \zeta^6+945152 q_1 q_2^4 \zeta-191488 q_1 q_2^3 \zeta^3-79744 q_1 q_2^2 \zeta^5+16704 q_1 q_2 \zeta^7\\+108 q_1 \zeta^9+4096 q_2^6-6144 q_2^5 \zeta^2+3840 q_2^4 \zeta^4-1280 q_2^3 \zeta^6+240 q_2^2 \zeta^8-24 q_2 \zeta^{10}+\zeta^{12},
\end{multline*}
whose discriminant is
\begin{multline*}
-2^{84}\, q_{1}^{16}\, q_{2}^{9}\,
\bigl(3^{18} q_{1}^{2} - 2^{24} q_{2}^{3}\bigr)^{2}\,
\bigl(3^{12} q_{1}^{2} - 2^{20} q_{2}^{3}\bigr)^{3}\\
\Bigl(
2^{6} 3^{36} q_{1}^{6}
+ (3^{18}\cdot 41 \cdot 163 \cdot 277 \cdot 1024783)\, q_{1}^{4} q_{2}^{3}
+ (2^{29}\cdot 17659 \cdot 13255661)\, q_{1}^{2} q_{2}^{6}
+ 2^{56}\, q_{2}^{9}
\Bigr)^{2}.
\end{multline*}
If $3^{18} q_{1}^{2} - 2^{24} q_{2}^{3}=0$, or $3^{12} q_{1}^{2} - 2^{20} q_{2}^{3}=0$, then the quantum spectrum at $\bm q$ has an eigenvalue of algebraic multiplicity 2.

If $2^{6} 3^{36} q_{1}^{6}
+ (3^{18}\cdot 41 \cdot 163 \cdot 277 \cdot 1024783)\, q_{1}^{4} q_{2}^{3}
+ (2^{29}\cdot 17659 \cdot 13255661)\, q_{1}^{2} q_{2}^{6}
+ 2^{56}\, q_{2}^{9}=0$, then the quantum spectrum has 2 eigenvalues of algebraic multiplicity 3.

The first semiclassical characteristic polynomial of $F_{\bm\la}$ is obtained by taking the limit of $f_{F_{\bm\la}}(\zeta;\bm q)$ in the regime $q_2\to 0$, which gives
\beq\label{1stsemcpol}
\lim_{q_2\to 0}f_{F_{\bm\la}}(\zeta;\bm q)=(27 q_1 + \zeta^3)^4.
\eeq This can be identified with $[f_{G(2,3)}(\zeta;q_1)]^D$, where
\[f_{G(2,3)}(\zeta;q)=27 q + \zeta^3,\quad D=\dim_\C H^\bullet(G(3,4),\C)=4.
\]Notice that $\bm\la_{/1}=(3,1)$, and $F_{\bm\la}$ is realized as a $G(2,3)$-bundle over $G(3,4)\cong\Pb^3$. Any zero of \eqref{1stsemcpol} has algebraic multiplicity 4, coherently with the fact that the inequalities \eqref{primerelation} are not satisfied.

Similarly, the second semiclassical characteristic polynomial of $F_{\bm\la}$ is obtained by taking the limit of $f_{F_{\bm\la}}(\zeta;\bm q)$ in the regime $q_1\to 0$, which gives
\beq\label{2ndsemcpol}
\lim_{q_1\to 0}f_{F_{\bm\la}}(\zeta;\bm q)=(\zeta^2-4 q_2)^6.
\eeq This is exactly $[f_{\Pb^1}(\zeta;q_2)]^D$, where
\[f_{\Pb^1}(\zeta;q)=\zeta^2-4q,\quad D=\dim_\C H^\bullet(G(2,4),\C)=6.
\]Notice, indeed, that $\bm\la_{/2}=(2,2)$, and $F_{\bm\la}$ can be realized as a $\Pb^1$-bundle over $G(2,4)$. Any zero of \eqref{2ndsemcpol} has algebraic multiplicity 6, coherently with the fact that the inequalities \eqref{primerelation} are not satisfied. \qetr
\end{example}

\begin{example}
Let $n=5$, $N=3$, $\bm\la=(2,2,1)$. The operator $c_1(F_{\bm\la})\sq_{\bm q}$ can be represented by a $30\times 30$ matrix with characteristic polynomial
\[
\begin{aligned}
f_{F_{\bm\la}}(\zeta;\bm q)=\; &\bigl(-2^{50}\cdot 5^{10}\,q_1^6 q_2^2
+3^{15}\cdot 5^{5}\cdot 83\cdot 191\cdot 311\cdot 1481\,q_1^3 q_2^6
+3^{30}\,q_2^{10}\bigr) \\
&\;+\bigl(2^{37}\cdot 3^{5}\cdot 5^{9}\cdot 11\cdot 19\,q_1^5 q_2^3
+3^{21}\cdot 5^{4}\cdot 7^{2}\cdot 197\cdot 233\,q_1^2 q_2^7\bigr)\,\zeta \\
&\;+\bigl(-2^{21}\cdot 3^{10}\cdot 5^{6}\cdot 1109\cdot 1499\,q_1^4 q_2^4
+2\cdot 3^{24}\cdot 5\cdot 45013\,q_1 q_2^8\bigr)\,\zeta^{2} \\
&\;+\bigl(-2^{13}\cdot 3^{12}\cdot 5^{5}\cdot 7\cdot 1123901\,q_1^3 q_2^5
+2\cdot 3^{27}\cdot 5\,q_2^9\bigr)\,\zeta^{3} \\
&\;+\bigl(2^{40}\cdot 3^{2}\cdot 5^{6}\cdot 269\,q_1^5 q_2^2
+3^{18}\cdot 5\cdot 7\cdot 43\cdot 3500327\,q_1^2 q_2^6\bigr)\,\zeta^{4} \\
&\;+\bigl(2^{27}\cdot 3^{7}\cdot 5^{4}\cdot 1600219\,q_1^4 q_2^3
+3^{21}\cdot 5\cdot 106033\,q_1 q_2^7\bigr)\,\zeta^{5} \\
&\;+\bigl(2^{12}\cdot 3^{9}\cdot 5\cdot 19\cdot 621833521\,q_1^3 q_2^4
+3^{26}\cdot 5\,q_2^8\bigr)\,\zeta^{6} \\
&\;+\bigl(-2^{48}\cdot 5^{4}\,q_1^5 q_2
+2\cdot 3^{18}\cdot 5\cdot 13\cdot 17\cdot 779543\,q_1^2 q_2^5\bigr)\,\zeta^{7} \\
&\;+\bigl(2^{31}\cdot 3^{4}\cdot 5\cdot 37\cdot 5659\,q_1^4 q_2^2
-3^{18}\cdot 5\cdot 19\cdot 97\cdot 379\,q_1 q_2^6\bigr)\,\zeta^{8} \\
&\;+\bigl(2^{17}\cdot 3^{6}\cdot 5\cdot 41\cdot 73\cdot 564271\,q_1^3 q_2^3
+2^{3}\cdot 3^{22}\cdot 5\,q_2^7\bigr)\,\zeta^{9} \\
&\;+\bigl(-2^{50}\,q_1^5
+3^{12}\cdot 5^{2}\cdot 1021\cdot 206411\,q_1^2 q_2^4\bigr)\,\zeta^{10} \\
&\;+\bigl(2^{37}\cdot 3^{2}\cdot 5\cdot 373\,q_1^4 q_2
-2\cdot 3^{15}\cdot 5\cdot 966547\,q_1 q_2^5\bigr)\,\zeta^{11} \\
&\;+\bigl(2^{22}\cdot 3^{3}\cdot 5\cdot 11\cdot 639571\,q_1^3 q_2^2
+2\cdot 3^{19}\cdot 5\cdot 7\,q_2^6\bigr)\,\zeta^{12} \\
&\;+\bigl(2^{7}\cdot 3^{9}\cdot 5^{2}\cdot 11\cdot 13^{2}\cdot 37\cdot 167\,q_1^2 q_2^3\bigr)\,\zeta^{13} \\
&\;+\bigl(2^{40}\cdot 5\,q_1^4
-2^{3}\cdot 3^{12}\cdot 5^{2}\cdot 53\cdot 839\,q_1 q_2^4\bigr)\,\zeta^{14} \\
&\;+\bigl(2^{27}\cdot 5\cdot 127\cdot 311\,q_1^3 q_2
+2^{2}\cdot 3^{17}\cdot 7\,q_2^5\bigr)\,\zeta^{15} \\
&\;+\bigl(2^{10}\cdot 3^{9}\cdot 5^{2}\cdot 13\cdot 37^{2}\,q_1^2 q_2^2\bigr)\,\zeta^{16} \\
&\;-\bigl(3^{9}\cdot 5\cdot 17\cdot 24907\,q_1 q_2^3\bigr)\,\zeta^{17} \\
&\;+\bigl(-2^{31}\cdot 5\,q_1^3
+2\cdot 3^{13}\cdot 5\cdot 7\,q_2^4\bigr)\,\zeta^{18} \\
&\;+\bigl(2^{17}\cdot 3^{3}\cdot 5\cdot 11\cdot 311\,q_1^2 q_2\bigr)\,\zeta^{19} \\
&\;+\bigl(3^{6}\cdot 5\cdot 7\cdot 17\cdot 2137\,q_1 q_2^2\bigr)\,\zeta^{20} \\
&\;+\bigl(2^{3}\cdot 3^{10}\cdot 5\,q_2^3\bigr)\,\zeta^{21} \\
&\;+\bigl(2^{21}\cdot 5\,q_1^2\bigr)\,\zeta^{22} \\
&\;+\bigl(2^{7}\cdot 3^{3}\cdot 5\cdot 911\,q_1 q_2\bigr)\,\zeta^{23} \\
&\;+\bigl(3^{8}\cdot 5\,q_2^2\bigr)\,\zeta^{24} \\
&\;-\bigl(2^{10}\cdot 5\,q_1\bigr)\,\zeta^{26}
+ \bigl(2\cdot 3^{3}\cdot 5\,q_2\bigr)\,\zeta^{27}
+ \zeta^{30}.
\end{aligned}
\]
In the regime $q_2\to 0$, we obtain the first semiclassical characteristic polynomial
\[\lim_{q_2\to 0}f_{F_{\bm\la}}(\zeta;\bm q)=\zeta^{10} (\zeta^4-1024 q_1)^5,
\]which equals $[f_{G(2,4)}(\zeta;q_1)]^D$, where
\[f_{G(2,4)}(\zeta;q)=\zeta^2(\zeta^4-1024 q),\quad D=\dim_\C H^\bullet(\Pb^4,\C)=5.
\]Notice that $\bm\la_{/1}=(4,1)$, so that $F_{\bm\la}$ can be realized as a $G(2,4)$-bundle over $G(4,5)\cong\Pb^4$. For the computation of $f_{G(2,4)}(\zeta;q)$, see also \cite[Sec.\,6]{CDG20}. Notice that the first semiclassical spectrum is exceeding: it consists of one eigenvalue of algebraic multiplicity 10, and 4 of multiplicity 5. The inequalities \eqref{primerelation} are indeed satisfied.

In the regime $q_1\to 0$, we obtain the second semiclassical characteristic polynomial
\beq\label{2nd2ndsemcpol}
\lim_{q_1\to 0}f_{F_{\bm\la}}(\zeta;\bm q)=(\zeta^3+27q_2)^{10},
\eeq which equals $[f_{G(2,3)}(\zeta;q_2)]^D$, where
\[f_{G(2,3)}(\zeta;q)=27 q + \zeta^3,\quad D=\dim_\C H^\bullet(G(2,5),\C)=10.
\]Notice that $\bm\la_{/2}=(2,3)$, and $F_{\bm\la}$ is realized as a $G(2,3)$-bundle over $G(2,5)$. Any zero of \eqref{2nd2ndsemcpol} has algebraic multiplicity 10, coherently with the fact that the inequalities \eqref{primerelation} are not satisfied.\qetr
\end{example}

\begin{rem}

The spectrum of $c_1(F_{\bm\la}) \sq_{\bm q}$ at the special point $\bm q={\bf 1}=(1,\dots,1)$ is of independent interest. 
For a general Fano variety $X$, the so-called \emph{Conjecture $\mc O$} of \cite{GGI16} predicts the existence of a positive real eigenvalue $\delta_0$ equal to the spectral radius of $c_1(X)\sq_{\bm q}$, and that any other eigenvalue $\delta$ with $|\delta|=\delta_0$ satisfies $\delta=\delta_0\xi$, where $\xi$ is an $r$-th root of unity ($r$ being the index of $X$). 
For $X=F_{\bm\la}$, this property was established in \cite{CL17}.

It is natural to ask in which cases coalescences of eigenvalues of $c_1(F_{\bm\la})\sq_{\bm q}$ at $\bm q=\mathbf{1}$ occur. 
This relates to the discriminants discussed above. 
Below we list low-dimensional examples of $F_{\bm\la}$ with simple and non-simple spectra. 
It would be interesting to determine whether such simplicity can be characterized by an arithmetic condition on $(n,N,\bm\la)$, in analogy with Theorem~\ref{THMSEMICLSPEC}. 
For $N=2$, such a characterization is indeed known by \cite{Cot22}.

For $N=2$, $\bm\la=(k,n-k)$, the quantum spectrum of $F_{\bm\la}$ at $\bm q=\bf 1$ is not simple iff $p_1(n)\leq k\leq n-p_1(n)$, see \cite{Cot22}.

For $n=3$, $N=3$, $\bm\la=(1,1,1)$, the quantum spectrum of $F_{\bm\la}$ at $\bm q=\bf 1$ is not simple.

For $n=4$, $N=3$, $\bm\la=(1,2,1)$, the quantum spectrum of $F_{\bm\la}$ at $\bm q=\bf 1$ is not simple. For $\bm\la=(2,1,1),(1,1,2)$ it is simple.

For $n=5$, $N=3$, $\bm\la=(3,1,1),(1,3,1),(1,1,3)$, the quantum spectrum of $F_{\bm\la}$ at $\bm q=\bf 1$ is not simple. For $\bm\la=(2,2,1),(2,1,2),(1,2,2)$ it is simple.

For $n=6$, $N=3$, $\bm\la=(1,4,1),(2,2,2)$, the quantum spectrum of $F_{\bm\la}$ at $\bm q=\bf 1$ is not simple. For $\bm\la=(4,1,1),(1,1,4),(3,2,1),(3,1,2),(2,3,1),(2,1,3),(1,3,2),(1,2,3)$ it is simple.

For $n=7$, $N=3$, $\bm\la=(1,5,1),(3,3,1),(3,1,3),(1,3,3),(2,3,2)$, the quantum spectrum of $F_{\bm\la}$ at $\bm q=\bf 1$ is not simple. For $\bm\la=(5,1,1),(1,1,5),(4,2,1),(4,1,2),(2,4,1),$ $(2,1,4),(1,4,2),(1,2,4),(3,2,2),(2,2,3)$ it is simple.

For $n=8$, $N=3$, $\bm\la=(1,6,1),(5,1,2),(2,1,5),(4,2,2),(2,4,2),(2,2,4),(3,2,3)$, the quantum spectrum of $F_{\bm\la}$ at $\bm q=\bf 1$ is not simple. For $\bm\la=(6,1,1),(1,1,6),(5,2,1),$ $(2,5,1),(1,5,2),(1,2,5),(4,3,1),(4,1,3),(3,1,4),(3,4,1),(1,3,4),(1,4,3),(3,3,2),(2,3,3)$ it is simple.\qrem
\end{rem}

\section{\texorpdfstring{The double sequence \( \lcyr(n,N)\), and its generating functions}
                  {The double sequence lcyr(n,N), and its generating functions}}\label{sec4}
\subsection{The double sequence $\lcyr(n,N)$}

For any positive integers \( i \), \( N \), and \( n \) such that \( i < N \leq n  \), let \(\text{\textcyr{l}}(n, N, i)\) denote the number of length-\( N \) partial flag varieties in \(\mathbb{C}^n\) that do not admit an \( i \)-th semiclassical spectrum of exceeding type.

For example, we know
\[\text{\textcyr{l}}(n, 2, 1)=\begin{cases}
2(p_1(n)-1),&\text{ if $n$ is composite},\\
n-1,&\text{if $n$ is prime.}
\end{cases}
\]
\begin{prop}
The sequence $\lcyr(n,N,i)$ does not depend on $i$, that is $\lcyr(n,N,i)=\lcyr(n,N,j)$ for any $i,j< N$. Moreover, if we denote by $\lcyr(n,N)$ this common value, we have
\beq\label{eqlcyr}
\lcyr(n,N)=\sum_{h=2}^{n-N+2}\binom{n-h-1}{N-3}\lcyr(h,2).
\eeq
In particular, for fixed $N$, we have $\lcyr(n,N)=O(n^{N-1})$ as $n\to\infty$.
\end{prop}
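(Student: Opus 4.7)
The plan is to reduce the entire statement to a purely local count involving only one pair of adjacent parts $(\lambda_i,\lambda_{i+1})$, the rest of the composition being completely free. The $i$-independence will then be manifest, and the formula will fall out by recognising pair-counts as values of the column $\lcyr(\cdot,2)$.

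First I would invoke Theorem \ref{THMSEMICLSPEC}: the $i$-th semiclassical spectrum of $F_{\bm\lambda}$ is non-exceeding if and only if the pair $(a,b)=(\lambda_i,\lambda_{i+1})$ satisfies $\min(a,b)<p_1(a+b)$ (the two upper inequalities in \eqref{primerelation} reduce to the two lower ones upon using $a+b=s$, so the joint condition collapses to this single ``local'' condition). In particular, the defining property of $\lcyr(n,N,i)$ depends on $\bm\lambda$ only through the pair $(\lambda_i,\lambda_{i+1})$.

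Next, for each $s\geq 2$, let $g(s)$ denote the number of ``good'' pairs $(a,b)\in\Z_{>0}^2$ with $a+b=s$ satisfying $\min(a,b)<p_1(s)$. A direct count gives $g(s)=s-1$ when $s$ is prime and $g(s)=2(p_1(s)-1)$ when $s$ is composite, and this matches exactly $\lcyr(s,2)$: indeed, for $N=2$ the fiber $F_{\bm\lambda_{/1}}$ in Theorem \ref{THMSEMICLSPEC} is a point, so the $1$-st semiclassical spectrum of $F_{(a,s-a)}=G(a,s)$ is non-exceeding precisely when the pair $(a,s-a)$ is good. Thus $g(s)=\lcyr(s,2)$.

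The main step is then a routine combinatorial bookkeeping. Fix $i$ with $1\leq i<N$ and partition the compositions counted by $\lcyr(n,N,i)$ according to $s:=\lambda_i+\lambda_{i+1}$. For each choice of $s\in\{2,\dots,n-N+2\}$, one independently chooses a good pair $(\lambda_i,\lambda_{i+1})$ summing to $s$ (giving $g(s)$ possibilities) and a composition of $n-s$ into the remaining $N-2$ ordered positive parts $\lambda_1,\dots,\lambda_{i-1},\lambda_{i+2},\dots,\lambda_N$ (giving $\binom{n-s-1}{N-3}$ possibilities, by the standard stars-and-bars formula for compositions). Crucially, the second factor depends only on the total number $N-2$ of remaining parts, not on how they are split around the distinguished pair. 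Therefore
\[
\lcyr(n,N,i)=\sum_{s=2}^{n-N+2} g(s)\binom{n-s-1}{N-3},
\]
which is visibly independent of $i$; substituting $g(s)=\lcyr(s,2)$ yields \eqref{eqlcyr}. The asymptotic bound is immediate from the crude estimates $\lcyr(h,2)\leq h-1$ and $\binom{n-h-1}{N-3}=O(n^{N-3})$, giving $\lcyr(n,N)=O(n^2)\cdot O(n^{N-3})=O(n^{N-1})$.

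No serious obstacle is anticipated: the only conceptual point is the reduction of non-exceedingness of the $i$-th semiclassical spectrum to a purely local condition on the pair $(\lambda_i,\lambda_{i+1})$, which is furnished directly by Theorem \ref{THMSEMICLSPEC}. Everything else is elementary enumeration.
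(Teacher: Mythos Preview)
Your proof is correct and the derivation of formula \eqref{eqlcyr} is essentially identical to the paper's: partition compositions by the sum $s=\lambda_i+\lambda_{i+1}$, count good pairs via $\lcyr(s,2)$, and count the remaining $N-2$ parts by the standard composition count $\binom{n-s-1}{N-3}$.

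The one genuine difference is in how the $i$-independence is established. The paper proves it separately, before deriving the formula, via an explicit bijection: the cyclic shift $\sigma(\lambda_1,\dots,\lambda_N)=(\lambda_2,\dots,\lambda_N,\lambda_1)$ maps the set $\mathcal{C}_{n,N}^{(i)}$ bijectively onto $\mathcal{C}_{n,N}^{(i-1)}$, and iterating gives $\#\mathcal{C}_{n,N}^{(i)}=\#\mathcal{C}_{n,N}^{(j)}$ for all $i,j$. Only afterwards does the paper specialise to $i=1$ to compute the formula. You instead compute the formula for arbitrary $i$ directly and observe that the result---the product of $g(s)$ with the number of compositions of $n-s$ into $N-2$ positive parts---is manifestly independent of where the distinguished pair sits. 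Your route is slightly more economical (one argument yields both claims), while the paper's cyclic-shift bijection gives a conceptually clean, formula-free reason for the symmetry before any counting is done.
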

\proof
The number $\lcyr(n,N,i)$ equals the number of tuples $(\la_1,\dots,\la_N)\in\Z^N_{>0}$ such that 
\[\la_1+\dots+\la_N=n,\quad \min\{\la_i,\la_{i+1}\}<p_1(\la_i+\la_{i+1}),\quad \la_i+\la_{i+1}-p_1(\la_i+\la_{i+1})<\max\{\la_i,\la_{i+1}\}.
\]Denote by $\mathcal{C}_{n,N}^{(i)}$ this set of compositions of $n$. We claim that, for all \( i, j \in \{1, \dots, N-1\} \), the sets \( \mathcal{C}_{n,N}^{(i)} \) and \( \mathcal{C}_{n,N}^{(j)} \) have the same cardinality. Define the cyclic shift operator \( \sigma : \mathcal{C}_{n,N} \to \mathcal{C}_{n,N} \) on the set of compositions of $n$ into $N$ positive parts by
\[
\sigma(\lambda_1, \lambda_2, \dots, \lambda_N) := (\lambda_2, \lambda_3, \dots, \lambda_N, \lambda_1).
\]
This map is bijective and preserves the total sum \( \sum_{k=1}^N \lambda_k = n \). Moreover, it maps the pair \((\lambda_i, \lambda_{i+1})\) to \((\lambda_{i-1}, \lambda_i)\), with indices modulo \( N \). Thus, \(\sigma\) maps \(\mathcal{C}_{n,N}^{(i)}\) bijectively to \(\mathcal{C}_{n,N}^{(i-1)}\), with indices taken modulo $N$.

By iterating \( \sigma \), we obtain a bijection
$\sigma^{i-j} : \mathcal{C}_{n,N}^{(i)} \to \mathcal{C}_{n,N}^{(j)}$
for any \( i, j \in \{1, \dots, N\} \). Hence
\[
\lcyr(n,N,i)=\#\mathcal{C}_{n,N}^{(i)} = \#\mathcal{C}_{n,N}^{(j)}=\lcyr(n,N,j),
\]
as desired.

To compute the number $\lcyr(n,N)$, consider a composition $(\lambda_1,\lambda_2,\lambda_3,\dots,\lambda_N)$ in $\mathcal{C}_{n,N}^{(1)}$. Set \( h := \lambda_1 + \lambda_2 \); then \((\lambda_3,\dots,\lambda_N)\) is a composition of \( n - h \) into \( N-2 \) positive integers, and the pair \( (\lambda_1, \lambda_2) \) belongs to \(\mathcal{C}_{h,2}^{(1)}\), so:
\[
(\lambda_1, \dots, \lambda_N) \in \mathcal{C}_{n,N}^{(1)} 
\quad \Longleftrightarrow \quad
(\lambda_1,\lambda_2) \in \mathcal{C}_{h,2}^{(1)} 
\text{ and } (\lambda_3,\dots,\lambda_N) \in \mathcal{C}_{n-h,N-2}.
\]
Conversely, for each composition \((\mu_3,\dots,\mu_N) \in \mathcal{C}_{n-h,N-2}\) and each \((\mu_1,\mu_2) \in \mathcal{C}_{h,2}^{(1)}\), we can form a composition \((\mu_1,\mu_2,\mu_3,\dots,\mu_N) \in \mathcal{C}_{n,N}^{(1)}\). 

The admissible values of \( h = \lambda_1 + \lambda_2 \) range from 2 (minimum: \( \lambda_1 = \lambda_2 = 1 \)) to \( n - (N-2) \) (maximum: \( \lambda_3 = \cdots = \lambda_N = 1 \)).
Therefore,
\[
\#\mathcal{C}_{n,N}^{(1)} = \sum_{h=2}^{n-N+2} \lcyr(h,2) \cdot \#\mathcal{C}_{n-h,N-2}.
\]
By Remark~\ref{numbercomposition}, we know that \(\#\mathcal{C}_{n-h,N-2} = \binom{n-h-1}{N-3}\), and \eqref{eqlcyr} is proved.

From the obvious (optimal) estimate $\lcyr(n,2)=O(n)$, we obtain
\begin{multline*}
\lcyr(n,N)=\sum_{k=N-3}^{n-3}\binom{k}{N-3}\lcyr(n-k-1,2)\leq C \sum_{k=N-3}^{n-3}\binom{k}{N-3}(n-k-1)\\\leq Cn \sum_{k=N-3}^{n-3}k^{N-3} =O(n\cdot n^{N-2})=O(n^{N-1}).\qedhere
\end{multline*}

\begin{rem}
Since $\lcyr(n,2)=O(n)$, from \eqref{eqlcyr} it follows that for fixed $N$ we have $\lcyr(n,N)=O(n^{N-1})$. For $N\geq 3$, we will obtain a better estimate in Section \ref{refLcyrN}, see equation \eqref{estimate2}.
\qrem\end{rem}

In the light of the previous proposition, the number $\lcyr(n,N)$ can be defined as 
\[\lcyr(n,N):=\#\left\{\parbox{8cm}{
\centering
$F_{\bm\la}$, with $\bm\la\in\Z^N_{>0}$ such that $\sum_{a=1}^N\la_a=n$, admitting {\it at least} one non-exceeding semiclassical spectrum 
}
\right\}.
\]

In the following table we collect the values $\lcyr(n,N)$ for $2\leq N\leq n\leq 18$.
\begin{center}
\small
\begin{tabular}{c|*{17}{c}}
$N \backslash n$ & 2 & 3 & 4 & 5 & 6 & 7 & 8 & 9 & 10 & 11 & 12 & 13 & 14 & 15 & 16 & 17 & 18 \\
\hline
2  & 1 & 2 & 2 & 4 & 2 & 6 & 2 & 4 & 2  & 10  & 2   & 12  & 2   & 4   & 2   & 16  & 2 \\
3  &   & 1 & 3 & 5 & 9 & 11 & 17 & 19 & 23 & 25  & 35  & 37  & 49  & 51  & 55  & 57  & 73 \\
4  &   &   & 1 & 4 & 9 & 18 & 29 & 46 & 65 & 88  & 113 & 148 & 185 & 234 & 285 & 340 & 397 \\
5  &   &   &   & 1 & 5 & 14 & 32 & 61 & 107 & 172 & 260 & 373 & 521 & 706 & 940 & 1225 & 1565 \\
6  &   &   &   &   & 1 & 6  & 20 & 52 & 113 & 220 & 392 & 652 & 1025 & 1546 & 2252 & 3192 & 4417 \\
7  &   &   &   &   &   & 1 & 7 & 27 & 79  & 192 & 412 & 804 & 1456 & 2481 & 4027 & 6279 & 9471 \\
8  &   &   &   &   &   &   & 1 & 8 & 35  & 114 & 306 & 718 & 1522 & 2978 & 5459 & 9486 & 15765 \\
9  &   &   &   &   &   &   &   & 1 & 9   & 44  & 158 & 464 & 1182 & 2704 & 5682 & 11141 & 20627 \\
10 &   &   &   &   &   &   &   &   & 1   & 10  & 54  & 212 & 676  & 1858 & 4562 & 10244 & 21385 \\
11 &   &   &   &   &   &   &   &   &     & 1   & 11  & 65  & 277  & 953  & 2811 & 7373  & 17617 \\
12 &   &   &   &   &   &   &   &   &     &     & 1   & 12  & 77   & 354  & 1307 & 4118  & 11491 \\
13 &   &   &   &   &   &   &   &   &     &     &     & 1   & 13   & 90   & 444  & 1751  & 5869 \\
14 &   &   &   &   &   &   &   &   &     &     &     &     & 1    & 14   & 104  & 548   & 2299 \\
15 &   &   &   &   &   &   &   &   &     &     &     &     &      & 1    & 15   & 119   & 667 \\
16 &   &   &   &   &   &   &   &   &     &     &     &     &      &      & 1    & 16    & 135 \\
17 &   &   &   &   &   &   &   &   &     &     &     &     &      &      &      & 1     & 17 \\
18 &   &   &   &   &   &   &   &   &     &     &     &     &      &      &      &       & 1 \\
\end{tabular}
\end{center}

Let us collect these numbers in several generating functions, of both ordinary and Dirichlet type:
\begin{align}
\Lcyrit_N(z)&=\sum_{n=N}^\infty\lcyr(n,N)z^n,& \Lcyr_N(s)&=\sum_{n=N}^\infty\frac{\lcyr(n,N)}{n^s},&  N\geq 2,\\
\Lcyrit(z_1,z_2)&=\sum_{2\leq N\leq n}\lcyr(n,N)z_1^nz_2^N,& 
\Lcyr(s_1,s_2)&=\sum_{2\leq N\leq n}\frac{\lcyr(n,N)}{n^{s_1}N^{s_2}}.
\end{align}
\subsection{Properties of ordinary generating functions $\Lcyrit_N(z)$, 
and Pascal rules}\label{OGFPascal}
\begin{thm}\label{thmHaL}
We have
\beq\label{HaL} \Lcyrit_N(z)=\Lcyrit_2(z)\left(\frac{z}{1-z}\right)^{N-2},\quad N\geq 2,\qquad   \Lcyrit(z_1,z_2)=\Lcyrit_2(z_1)\frac{z_2^2(1-z_1)}{1-z_1(1+z_2)}.
\eeq
\end{thm}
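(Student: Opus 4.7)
The plan is to derive both identities of \eqref{HaL} as direct consequences of the Pascal-type recursion \eqref{eqlcyr}, viewing them essentially as convolution identities for generating functions. The case $N=2$ of the first identity is trivial, so I focus on $N\geq 3$.

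First I would substitute \eqref{eqlcyr} into the defining series of $\Lcyrit_N(z)$ and exchange the order of summation (legitimate in a neighborhood of $z=0$, thanks to the bound $\lcyr(n,N)=O(n^{N-1})$ already established). This gives
\[
\Lcyrit_N(z)=\sum_{h=2}^{\infty}\lcyr(h,2)\sum_{n\geq h+N-2}\binom{n-h-1}{N-3}z^{n}.
\]
Setting $k=n-h-1$ in the inner sum and applying the standard identity
\[
\sum_{k\geq m}\binom{k}{m}z^{k}=\frac{z^{m}}{(1-z)^{m+1}}
\]
with $m=N-3$ reduces the inner sum to $z^{h+1}\cdot z^{N-3}/(1-z)^{N-2}=z^{h}\cdot z^{N-2}/(1-z)^{N-2}$. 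Factoring out $z^{N-2}/(1-z)^{N-2}$ from the outer sum, what remains is exactly $\sum_{h\geq 2}\lcyr(h,2)z^{h}=\Lcyrit_2(z)$, proving the first identity.

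The second identity then follows by summing the first against $z_2^{N}$ over $N\geq 2$:
\[
\Lcyrit(z_1,z_2)=\sum_{N\geq 2}\Lcyrit_N(z_1)\,z_2^{N}=\Lcyrit_2(z_1)\,z_2^{2}\sum_{k\geq 0}\left(\frac{z_1z_2}{1-z_1}\right)^{k}=\Lcyrit_2(z_1)\cdot\frac{z_2^{2}}{1-\tfrac{z_1z_2}{1-z_1}},
\]
which, after clearing the inner denominator, becomes $\Lcyrit_2(z_1)\cdot z_2^{2}(1-z_1)/(1-z_1(1+z_2))$, as claimed. The geometric series converges whenever $|z_1z_2/(1-z_1)|<1$, which is a non-empty open neighborhood of the origin.

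I do not anticipate any genuine obstacle. The main technical point is simply to justify the interchange of summations, which is handled by the crude polynomial bound on $\lcyr(n,N)$ and by restricting $z$ (resp.\ $(z_1,z_2)$) to a small polydisc around the origin; once the identities hold there, they extend by analytic continuation to the common domain of holomorphy. Observe also that the two formulas are logically equivalent: extracting the coefficient of $z_2^{N}$ in the closed form of $\Lcyrit(z_1,z_2)$ recovers the first identity via the Taylor expansion $(1-z_1(1+z_2))^{-1}=\sum_{k\geq 0}(z_1(1+z_2))^{k}$ and the binomial theorem.
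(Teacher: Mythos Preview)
Your proof is correct and follows essentially the same approach as the paper: both arguments recognize \eqref{eqlcyr} as a convolution, apply the standard binomial generating function $\sum_{k\geq m}\binom{k}{m}z^k=z^m/(1-z)^{m+1}$ to obtain the first identity, and then sum a geometric series in $z_1z_2/(1-z_1)$ for the second. The only cosmetic difference is that the paper writes the convolution step compactly as $\frac{1}{z}\Lcyrit_N(z)=\Lcyrit_2(z)f_{N-3}(z)$, while you spell out the interchange of summations explicitly.
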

\proof From the relation $\binom{\ell}{k}+\binom{\ell}{k-1}=\binom{\ell+1}{k}$, one easily obtain that 
\beq\label{genfunbin} f_k(z)=\sum_{\ell=k}^\infty\binom{\ell}{k}z^\ell=\frac{z^k}{(1-z)^{k+1}}.
\eeq
Equation \eqref{eqlcyr} implies 
\[\frac{1}{z}\Lcyrit_N(z)=\Lcyrit_2(z)f_{N-3}(z)=\Lcyrit_2(z)\frac{z^{N-3}}{(1-z)^{N-2}}.
\]This proves the first claim.
To prove the second identity \eqref{HaL}, observe:
\begin{multline*}
\Lcyrit(z_1, z_2) = \sum_{N=2}^\infty \Lcyrit_N(z_1) z_2^N
= \Lcyrit_2(z_1) \cdot z_2^2\,\sum_{N=2}^\infty \left( \frac{z_1 z_2}{1 - z_1} \right)^{N-2}  \\
= \Lcyrit_2(z_1) \cdot z_2^2 \sum_{n=0}^\infty \left( \frac{z_1 z_2}{1 - z_1} \right)^n
= \Lcyrit_2(z_1) \cdot \frac{z_2^2 (1 - z_1)}{1 - z_1(1 + z_2)}.\tag*{\qedhere}
\end{multline*}

From this result, we obtain a ``Pascal rule'' and several other derived identities for the double sequence $\lcyr(n,N)$.
\begin{cor}\label{cor:pascal}
For $n\geq N\geq 2$, we have  
\beq\label{pascal} \lcyr(n,N)+\lcyr(n,N+1)=\lcyr(n+1,N+1).
\eeq
Moreover, we have
\beq\label{eq:sum}
\sum_{k=N}^n\lcyr(k,N)=\lcyr(n+1,N+1),\quad N\geq 2.
\eeq
\end{cor}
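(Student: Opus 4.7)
The plan is to derive both identities as straightforward coefficient extractions from the closed form of $\Lcyrit_N(z)$ given in Theorem \ref{thmHaL}. The key observation is that the formula $\Lcyrit_N(z)=\Lcyrit_2(z)\bigl(z/(1-z)\bigr)^{N-2}$ implies the multiplicative recursion
\[
\Lcyrit_{N+1}(z)=\frac{z}{1-z}\,\Lcyrit_N(z),\qquad N\geq 2,
\]
which, after clearing the denominator, reads $(1-z)\Lcyrit_{N+1}(z)=z\,\Lcyrit_N(z)$. This is the generating-function form of a Pascal-type rule.

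For the first identity \eqref{pascal}, I would simply extract the coefficient of $z^{n+1}$ on both sides of $(1-z)\Lcyrit_{N+1}(z)=z\Lcyrit_N(z)$. The left-hand side yields $\lcyr(n+1,N+1)-\lcyr(n,N+1)$, while the right-hand side yields $\lcyr(n,N)$. Rearranging gives exactly \eqref{pascal} for $n\geq N\geq 2$. One should briefly check the boundary case $n=N$: here $\lcyr(N,N)=1$ and $\lcyr(N,N+1)=0$ (by triangularity), while $\lcyr(N+1,N+1)=1$, so the identity is consistent.

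For the summation identity \eqref{eq:sum}, the cleanest route is induction on $n$ using \eqref{pascal}. The base case $n=N$ reduces to $\lcyr(N,N)=\lcyr(N+1,N+1)=1$, and the inductive step is
\[
\sum_{k=N}^{n}\lcyr(k,N)=\lcyr(n,N)+\sum_{k=N}^{n-1}\lcyr(k,N)=\lcyr(n,N)+\lcyr(n,N+1)=\lcyr(n+1,N+1),
\]
by the induction hypothesis and \eqref{pascal}. Alternatively, one can give a one-line generating-function proof: multiplying $\Lcyrit_N(z)$ by $1/(1-z)$ produces the partial-sum series on one side, while
\[
\frac{\Lcyrit_N(z)}{1-z}=\Lcyrit_2(z)\,\frac{z^{N-2}}{(1-z)^{N-1}}=\frac{\Lcyrit_{N+1}(z)}{z},
\]
whose coefficient of $z^n$ is $\lcyr(n+1,N+1)$.

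There is essentially no hard step here: once Theorem \ref{thmHaL} is in hand the corollary is pure bookkeeping, and the only thing to watch out for is correct handling of the index shift and the triangular vanishing $\lcyr(n,N)=0$ for $n<N$.
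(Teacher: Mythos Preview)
Your proof is correct and follows essentially the same approach as the paper: both derive the Pascal rule by extracting coefficients from the recursion $\Lcyrit_{N+1}(z)=\tfrac{z}{1-z}\Lcyrit_N(z)$ (the paper rearranges it as $\Lcyrit_N+\Lcyrit_{N+1}=\Lcyrit_{N+1}/z$, you as $(1-z)\Lcyrit_{N+1}=z\Lcyrit_N$, which is the same identity), and for \eqref{eq:sum} the paper uses a telescoping sum while you use induction on $n$, which amounts to the same computation.
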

\proof
The generating functions $\Lcyrit_N(z),\Lcyrit_{N+1}(z)$ satisfy the identities
\[\Lcyrit_{N+1}(z)=\Lcyrit_{N}(z)\cdot\frac{z}{1-z}\qquad\Longrightarrow\qquad\Lcyrit_{N}(z)+\Lcyrit_{N+1}(z)=\frac{\Lcyrit_{N+1}(z)}{z}.
\]By taking the coefficients of $z^n$ of both sides, we get equation \eqref{pascal}. 
Finally, identity \eqref{eq:sum} is a telescoping sum:
\begin{multline*}
\sum_{k=N}^n\lcyr(k,N)=\sum_{k=N}^n\left(\lcyr(k+1,N+1)-\lcyr(k,N+1)\right)=\lcyr(n+1,N+1)-\cancel{\lcyr(n,N+1)}\\+\cancel{\lcyr(n,N+1)}-\cancel{\lcyr(n-1,N+1)}+\dots-\cancel{\lcyr(N+1,N+1)}+\cancel{\lcyr(N+1,N+1)}.
\end{multline*}
\endproof

\begin{cor}\label{cor:pascal-consequences}
For every integer \(k\ge 0\) the following identities hold:
\begin{align}
\Delta_n \lcyr(n,N)&:=\lcyr(n+1,N)-\lcyr(n,N)=\lcyr(n,N-1),\label{eq:diff1}&&N\geq 3,\\
\Delta_n^{\,k} \lcyr(n,N)&=\lcyr(n,N-k),\label{eq:diffk}&&N\geq 2+k,\\
\lcyr(n+k,N)&=\sum_{j=0}^{k}\binom{k}{j}\,\lcyr(n,N-j),\label{eq:forward-binomial}&&N\geq 2+k,\\
\lcyr(n+k,N+k)&=\sum_{j=0}^{k}\binom{k}{j}\,\lcyr(n,N+j),\label{eq:diagonal}&&N\geq 2,\\
\lcyr(n,N)&=\sum_{j=0}^{k}(-1)^j\binom{k}{j}\,\lcyr(n+k-j,N+k),\label{eq:alternating}&&N\geq 2.
\end{align}
\end{cor}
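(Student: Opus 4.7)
The plan is to obtain all five identities as formal consequences of the Pascal rule \eqref{pascal} from Corollary \ref{cor:pascal}, using only elementary induction on $k$ together with the standard binomial identity $\binom{k}{j}+\binom{k}{j-1}=\binom{k+1}{j}$. Throughout, the index ranges stated in the corollary are dictated simply by the range of validity of \eqref{pascal}, namely $n \geq N \geq 2$.

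First, I would rearrange \eqref{pascal} as $\lcyr(n+1,N)-\lcyr(n,N)=\lcyr(n,N-1)$, valid for $N\geq 3$, which is exactly \eqref{eq:diff1}. Iterating this relation on the second argument gives \eqref{eq:diffk} by an immediate induction on $k$: if $\Delta_n^{\,k}\lcyr(n,N)=\lcyr(n,N-k)$, then
\[
\Delta_n^{\,k+1}\lcyr(n,N)=\Delta_n\bigl(\lcyr(n,N-k)\bigr)=\lcyr(n,N-k-1),
\]
by \eqref{eq:diff1} applied at level $N-k$.

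Next, I would prove \eqref{eq:forward-binomial} via Newton's forward-difference expansion. Writing the shift $E_n$ and the difference $\Delta_n$ acting on the variable $n$, we have the formal operator identity $E_n=I+\Delta_n$, hence $E_n^{\,k}=\sum_{j=0}^{k}\binom{k}{j}\Delta_n^{\,j}$. Applying this to $\lcyr(n,N)$ and using \eqref{eq:diffk} to rewrite each $\Delta_n^{\,j}\lcyr(n,N)=\lcyr(n,N-j)$ yields \eqref{eq:forward-binomial} under the stated hypothesis $N\geq 2+k$ that ensures \eqref{eq:diffk} applies throughout.

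For the diagonal identity \eqref{eq:diagonal}, I would argue by induction on $k$, directly from Pascal. The base $k=0$ is trivial; the inductive step uses \eqref{pascal} to write $\lcyr(n+k+1,N+k+1)=\lcyr(n+k,N+k)+\lcyr(n+k,N+k+1)$, then applies the inductive hypothesis to each summand (the second one at shift $N+1$ in place of $N$) and combines the two binomial sums via $\binom{k}{j}+\binom{k}{j-1}=\binom{k+1}{j}$. The same inductive scheme, applied instead to the rearranged Pascal identity $\lcyr(n,N)=\lcyr(n+1,N+1)-\lcyr(n,N+1)$, yields \eqref{eq:alternating}: the base $k=1$ is precisely this rearrangement, and in the inductive step one substitutes $\lcyr(n+k-j,N+k)=\lcyr(n+k-j+1,N+k+1)-\lcyr(n+k-j,N+k+1)$ into the hypothesis and reindexes, the signs combining by the same Pascal relation for binomials.

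There is no genuine obstacle here: every identity is a linear recombination of finitely many applications of \eqref{pascal}, and the only bookkeeping is the telescoping of binomial coefficients. The mildest care is required in \eqref{eq:alternating}, where one must track the alternating signs together with the shift in the first argument to ensure the two sums reassemble into a single sum with coefficients $(-1)^j\binom{k+1}{j}$.
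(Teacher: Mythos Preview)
Your proposal is correct and follows essentially the same approach as the paper: deriving \eqref{eq:diff1} and \eqref{eq:diffk} directly from the Pascal rule, then obtaining \eqref{eq:forward-binomial} via the Newton expansion $E_n^k=(I+\Delta_n)^k$. The only minor differences are that the paper gets \eqref{eq:diagonal} in one line by substituting $N\mapsto N+k$ in \eqref{eq:forward-binomial} (and reindexing $j\mapsto k-j$), and obtains \eqref{eq:alternating} by binomial inversion of \eqref{eq:diagonal}, whereas you run separate inductions on $k$ for both; these are equivalent standard finite-difference manipulations, not genuinely different routes.
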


\begin{proof}
Identity \((\ref{eq:diff1})\) is obtained by replacing \(N\) with \(N-1\) in \eqref{pascal}.  
Applying \((\ref{eq:diff1})\) repeatedly yields \((\ref{eq:diffk})\) by induction on \(k\).

Eq.\ \((\ref{eq:forward-binomial})\) is the Newton (binomial) expansion associated to \(\Delta_n^{\,k}\): writing
\[
\lcyr(n+k,N)=\sum_{i=0}^{k}\binom{k}{i}\Delta_n^{\,i}\lcyr(n,N)
\]
and using \((\ref{eq:diffk})\) gives \((\ref{eq:forward-binomial})\). Shifting \(N\mapsto N+k\) in \((\ref{eq:forward-binomial})\) yields the diagonal identity \((\ref{eq:diagonal})\).

Identity \((\ref{eq:alternating})\) is the binomial inversion of \((\ref{eq:diagonal})\): solve \eqref{pascal} for \(\lcyr(n,N)\) and iterate, or equivalently apply the standard identity for inverting finite differences,
\[
f(n)=\sum_{j=0}^{k}(-1)^j\binom{k}{j}\,F(n+k-j)
\]
with \(F(n)=\lcyr(n,N+k)\) to obtain \((\ref{eq:alternating})\). 
This completes the proof.
\end{proof}

We record the following nontrivial cancellation identity, which will be useful later.
\begin{cor}\label{magiccancelcor}
For any $k\geq 1$ and $N\geq 2$, we have
\[
\sum_{l=1}^k\binom{k}{l}\lcyr(N+k,N+k-l)
+\sum_{j=1}^k(-1)^j\binom{k}{j}\lcyr(N+2k-j,N+k-j)=0.
\]
\end{cor}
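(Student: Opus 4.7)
The plan is to reduce both sums to the same linear combination $\sum_{j=0}^{k-1}\binom{k}{j}\lcyr(N+k,N+j)$ and show that they contribute with opposite signs. First, I would reindex the first sum by setting $j=k-l$; since $\binom{k}{k-j}=\binom{k}{j}$, this immediately yields
\[
S_1:=\sum_{l=1}^k\binom{k}{l}\lcyr(N+k,N+k-l)=\sum_{j=0}^{k-1}\binom{k}{j}\lcyr(N+k,N+j).
\]
The heart of the argument is to prove that the second sum equals $-S_1$.

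I would reindex the second sum by setting $m=k-j$, obtaining
\[
S_2:=\sum_{j=1}^k(-1)^j\binom{k}{j}\lcyr(N+2k-j,N+k-j)=\sum_{m=0}^{k-1}(-1)^{k-m}\binom{k}{m}\lcyr(N+k+m,N+m).
\]
Each quantity $\lcyr(N+k+m,N+m)$ has the shape $\lcyr((N+k)+m,N+m)$, so the diagonal identity \eqref{eq:diagonal} (with the substitution $(n,N,k)\mapsto(N+k,N,m)$) applies and yields
\[
\lcyr(N+k+m,N+m)=\sum_{j=0}^{m}\binom{m}{j}\lcyr(N+k,N+j).
\]
Substituting this into the expression for $S_2$ and swapping the order of summation reduces the problem to evaluating, for each $j\in\{0,\dots,k-1\}$, the inner sum
\[
T_j:=\sum_{m=j}^{k-1}(-1)^{k-m}\binom{k}{m}\binom{m}{j}.
\]

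Using the standard identity $\binom{k}{m}\binom{m}{j}=\binom{k}{j}\binom{k-j}{m-j}$ and setting $i=m-j$, I rewrite
\[
T_j=\binom{k}{j}\sum_{i=0}^{k-j-1}(-1)^{k-j-i}\binom{k-j}{i}.
\]
Since $j\le k-1$ guarantees $k-j\ge 1$, the complete alternating binomial sum $\sum_{i=0}^{k-j}(-1)^{k-j-i}\binom{k-j}{i}$ vanishes; the inner sum of $T_j$ is obtained from it by removing the top term $i=k-j$, which equals $1$. Hence $T_j=-\binom{k}{j}$, so $S_2=-\sum_{j=0}^{k-1}\binom{k}{j}\lcyr(N+k,N+j)=-S_1$, and the cancellation $S_1+S_2=0$ follows.

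I do not anticipate any real obstacle: once the diagonal identity is used to realign the two sums on the common basis $\{\lcyr(N+k,N+j)\}_{j=0}^{k-1}$, the remaining manipulation is a standard alternating binomial calculation. The only subtle point is that the inner summation stops at $m=k-1$ rather than $m=k$, which is precisely what turns a vanishing sum into the nontrivial value $-\binom{k}{j}$ and thereby produces the required sign.
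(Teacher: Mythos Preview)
Your proof is correct. The route differs from the paper's, though both rest on the same Pascal-rule consequences from Corollary~\ref{cor:pascal-consequences}.

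The paper's argument is shorter and more structural: it restores the missing $l=0$ and $j=0$ terms so that each sum becomes a complete binomial transform, then applies \eqref{eq:forward-binomial} to the first sum and (the binomial inversion of the diagonal identity) to the second, obtaining $S_1=\lcyr(N+2k,N+k)-\lcyr(N+k,N+k)$ and $S_2=\lcyr(N+k,N+k)-\lcyr(N+2k,N+k)$ directly. Your approach instead projects everything onto the common basis $\{\lcyr(N+k,N+j)\}_{j=0}^{k-1}$: you expand each term of $S_2$ via \eqref{eq:diagonal}, swap the order of summation, and reduce the coefficient to an alternating binomial sum, arriving at $T_j=-\binom{k}{j}$. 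This is more computational but has the virtue of needing only one of the derived identities (\eqref{eq:diagonal}) rather than two; the Vandermonde-type manipulation you perform on $T_j$ is, in effect, a direct re-derivation of the inversion identity that the paper invokes wholesale.
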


\begin{proof}
Set $A:=\lcyr(N+k,N+k)$. Applying \eqref{eq:forward-binomial} with $n=N+k$ and $N\mapsto N+k$ yields
\[
\sum_{l=0}^k\binom{k}{l}\lcyr(N+k,N+k-l)=\lcyr(N+2k,N+k),
\]
hence
\beq\label{eqcancel1}
\sum_{l=1}^k\binom{k}{l}\lcyr(N+k,N+k-l)=\lcyr(N+2k,N+k)-A.
\eeq
Applying \eqref{eq:alternating} with $(n,N)=(N+k,N+k)$ gives
\[
\sum_{j=0}^k(-1)^j\binom{k}{j}\lcyr(N+2k-j,N+k-j)=A,
\]
so
\beq\label{eqcancel2}
\sum_{j=1}^k(-1)^j\binom{k}{j}\lcyr(N+2k-j,N+k-j)=A-\lcyr(N+2k,N+k).
\eeq
Summing the two equalities \eqref{eqcancel1}, \eqref{eqcancel2} yields $0$, as required.
\end{proof}

The {\it Eulerian polynomials} $\eu E_k\in\Z[r]$, $k\in\N$, are recursively defined by
\beq\label{recQ}
\eu E_0(r)=1,\qquad \eu E_{k+1}(r)=(k+1)r\eu E_k(r)+r(1-r)\eu E'_k(r),\quad k\geq 0.
\eeq
Their coefficients are called {\it Eulerian numbers}, see \cite{Com74,Pet15}. If we expand $\eu E_n(r)=\sum_{k=0}^n\eu E(n,k)r^k$, the Eulerian number $\mc E(n,k)$ is the number of permutations of the numbers $1$ to $n$ with $k$ ascents (i.e. in which exactly $k$ elements are greater than the previous element).

\begin{rem}
The Eulerian polynomials were first introduced by L.\,Euler in his 1749 manuscript \emph{Remarques sur un beau rapport entre les séries des puissances tant directes que réciproques}, which was posthumously published in 1768 
\cite{Eul68}. In this work, Euler investigates a remarkable connection between power series of direct and reciprocal powers, and gives a method for evaluating the Riemann zeta function at negative integers. His approach anticipates aspects of Abel summation and involves manipulating divergent series formally -- well before the rigorous development of such techniques. \qrem
\end{rem}

\begin{example}
The first Eulerian polynomials are
\begin{align*}
\eu E_0(r) &= 1, \\
\eu E_1(r) &= r, \\
\eu E_2(r) &= r + r^2, \\
\eu E_3(r) &= r + 4r^2 + r^3, \\
\eu E_4(r) &= r + 11r^2 + 11r^3 + r^4, \\
\eu E_5(r) &= r + 26r^2 + 66r^3 + 26r^4 + r^5, \\
\eu E_6(r) &= r + 57r^2 + 302r^3 + 302r^4 + 57r^5 + r^6, \\
\eu E_7(r) &= r + 120r^2 + 1191r^3 + 2416r^4 + 1191r^5 + 120r^6 + r^7, \\
\eu E_8(r) &= r + 247r^2 + 4293r^3 + 15619r^4 + 15619r^5 + 4293r^6 + 247r^7 + r^8, \\
\eu E_9(r) &= r + 502r^2 + 14608r^3 + 88234r^4 + 156190r^5 + 88234r^6 + 14608r^7 + 502r^8 + r^9. \tag*{\qetr}
\end{align*}
\end{example}

\begin{lem}[Euler--Frobenius]\label{lemQ}
The polynomials $\eu E_k(r)$ satisfy the following properties:

\begin{enumerate}

 \item
  Let $\thi_r := r \, \frac{d}{dr}$. Then $\eu E_k(r) = (1-r)^{k+1}\thi_r^k \left( \frac{1}{1 - r} \right)$.
  
   \item
  We have $\eu E_k(1) = k!$ for all $k \geq 0$.
  
  \item
  We have 
\[\eu E_k(r) = \sum_{n=1}^{k} \left( \sum_{j=1}^{n} 
\left\{ \begin{matrix} k \\ j \end{matrix} \right\} 
\cdot j! \cdot \binom{k - j}{n - j} \cdot (-1)^{n - j} 
\right) r^n,\qquad k\geq 1,
\]
  where $\genfrac{\{}{\}}{0pt}{}{k}{j}$ denotes the Stirling numbers of the second kind. 

\end{enumerate}
\end{lem}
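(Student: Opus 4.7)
The plan is to establish the three parts in order, using the defining recursion~\eqref{recQ} to pass from the $\theta_r$-operator formula~(1) to the closed form~(3), with~(2) obtained as a quick by-product.

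For part~(1), I would argue by induction on $k$. The base case $k=0$ is immediate since $(1-r)\cdot\tfrac{1}{1-r}=1=\eu E_0(r)$. For the inductive step, assuming $\eu E_k(r)=(1-r)^{k+1}\thi_r^k\!\left(\tfrac{1}{1-r}\right)$, I would compute
\[
(1-r)^{k+2}\thi_r^{k+1}\!\left(\tfrac{1}{1-r}\right)
=(1-r)^{k+2}\,\thi_r\!\left(\frac{\eu E_k(r)}{(1-r)^{k+1}}\right).
\]
Applying $\thi_r=r\tfrac{d}{dr}$ by the quotient/product rule gives
\[
(1-r)^{k+2}\thi_r^{k+1}\!\left(\tfrac{1}{1-r}\right)
=r(1-r)\,\eu E_k'(r)+(k+1)r\,\eu E_k(r),
\]
which is exactly the right-hand side of the recursion~\eqref{recQ}, hence equals $\eu E_{k+1}(r)$.

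For part~(2), I would specialize the recursion~\eqref{recQ} at $r=1$: the term $r(1-r)\eu E_k'(r)$ vanishes, leaving $\eu E_{k+1}(1)=(k+1)\eu E_k(1)$. With the base $\eu E_0(1)=1$, induction delivers $\eu E_k(1)=k!$. (Alternatively, this also follows from part~(3) below by evaluating at $r=1$ and using the classical Stirling identity $\sum_{j}\genfrac{\{}{\}}{0pt}{}{k}{j}j!\,(-1)^{k-j}\binom{k-j}{n-j}$ summed appropriately, but the recursive proof is cleaner.)

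For part~(3), the key is the well-known operational identity
\[
\thi_r^{\,k}=\sum_{j=0}^{k}\genfrac{\{}{\}}{0pt}{}{k}{j}\,r^{j}\,\partial_r^{\,j},
\]
which itself is a quick induction on $k$ using the commutation $\partial_r\,r=r\,\partial_r+1$ and the Stirling recursion $\genfrac{\{}{\}}{0pt}{}{k+1}{j}=j\genfrac{\{}{\}}{0pt}{}{k}{j}+\genfrac{\{}{\}}{0pt}{}{k}{j-1}$. Combined with $\partial_r^{\,j}\tfrac{1}{1-r}=\tfrac{j!}{(1-r)^{j+1}}$ and part~(1), I obtain
\[
\eu E_k(r)=\sum_{j=0}^{k}\genfrac{\{}{\}}{0pt}{}{k}{j}\,j!\,r^{j}(1-r)^{k-j}.
\]
Since $\genfrac{\{}{\}}{0pt}{}{k}{0}=0$ for $k\geq 1$, the $j=0$ summand vanishes. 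Expanding $(1-r)^{k-j}=\sum_{m=0}^{k-j}\binom{k-j}{m}(-1)^{m}r^{m}$, substituting $n=j+m$, and swapping the order of summation (for fixed $n\in\{1,\dots,k\}$, the index $j$ runs from $1$ to $n$) yields precisely the claimed formula. The only mild obstacle is the bookkeeping of summation ranges after the change of variable $n=j+m$; everything else is routine manipulation.
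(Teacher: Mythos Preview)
Your proof is correct and follows essentially the same approach as the paper, which simply states that (1) and (3) are proved by induction and (2) follows from the recurrence~\eqref{recQ}. Your treatment of (3) via the operational identity $\thi_r^{\,k}=\sum_{j}\genfrac{\{}{\}}{0pt}{}{k}{j}r^{j}\partial_r^{\,j}$ and the intermediate Frobenius form $\eu E_k(r)=\sum_{j}\genfrac{\{}{\}}{0pt}{}{k}{j}j!\,r^{j}(1-r)^{k-j}$ is a clean way to organize the induction, and the final reindexing is carried out correctly.
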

\proof Points (1) and (3) can be easily proved by induction. Point (2) follows from the recurrence relation \eqref{recQ}. 
\endproof
\endproof

Let $\mathbb D$ be the open unit disk, and $\der\mathbb D$ its boundary. A {\it Stolz region} $\Om_M(\zeta)$ with vertex $\zeta\in\der\mathbb D$ is a region of the form $\Om_M(\zeta)=\{z\in\mathbb D\colon |z-\zeta|<M(1-|z|)\}$ for some $M\in\R>0$. A set $\Om\subseteq\mathbb D$ is {\it nontangential at $\zeta$} if it can be contained in a Stolz region $\Om_M(\zeta)$.

\begin{prop}\label{growLcyrn}
The power series $\Lcyrit_N(z)$, with $N\geq 3$, converges on the unit disk $\mathbb D$, and it has the same singularities as $\Lcyrit_2(z)$ on the boundary $\der\Dl$. On the whole disk $\mathbb D$, we have the estimate \[|\Lcyrit_N(z)|=O\left( \frac{\eu E_{N-1}(|z|)}{(1-|z|)^{_N}}\right)=O((1-|z|)^{-N}).\]
If $\tau\in\der\Dl$ is a singularity for $\Lcyrit_N(z)$, we have $\Lcyrit_N(z)=O((z-\tau)^{-N})$ as $z\to\tau$ in any nontangential set at $\tau$.
\end{prop}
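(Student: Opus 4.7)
The plan is to exploit the explicit factorization
\[
\Lcyrit_N(z)=\Lcyrit_2(z)\cdot\Bigl(\frac{z}{1-z}\Bigr)^{N-2}
\]
from Theorem \ref{thmHaL}, together with the trivial bound $\lcyr(n,N)\leq\binom{n-1}{N-1}\leq n^{N-1}/(N-1)!$ from Remark \ref{numbercomposition}, and the identity $\sum_{n\geq 0}n^k r^n=\eu E_k(r)/(1-r)^{k+1}$, which follows from Lemma \ref{lemQ}(1) by applying $\thi_r^k$ to $(1-r)^{-1}=\sum_n r^n$.

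First, I would derive the estimate on $\mathbb D$. Combining the above ingredients,
\[
|\Lcyrit_N(z)|\leq\sum_{n=N}^\infty\frac{n^{N-1}}{(N-1)!}|z|^n\leq\frac{1}{(N-1)!}\sum_{n=0}^\infty n^{N-1}|z|^n=\frac{\eu E_{N-1}(|z|)}{(N-1)!\,(1-|z|)^N}.
\]
This yields simultaneously the convergence of $\Lcyrit_N$ on $\mathbb D$ and the Eulerian estimate. Since $\eu E_{N-1}$ is a polynomial, bounded on $[0,1]$ with $\eu E_{N-1}(1)=(N-1)!$ by Lemma \ref{lemQ}(2), the simplified bound $|\Lcyrit_N(z)|=O((1-|z|)^{-N})$ also follows.

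Next I would analyze the singularities on $\der\mathbb D$. The factor $(z/(1-z))^{N-2}$ is meromorphic on $\mathbb C$, with a single pole of order $N-2$ at $z=1$ and holomorphic, non-vanishing elsewhere on $\der\mathbb D$. Since $\lcyr(n,2)\geq 1$ for all $n\geq 2$ while $\lcyr(n,2)\leq n-1$, the series $\Lcyrit_2$ has radius of convergence exactly $1$; as its coefficients are non-negative, Pringsheim's theorem guarantees that $z=1$ is a singularity of $\Lcyrit_2$. Hence, for $\tau\in\der\mathbb D$ with $\tau\neq 1$, the second factor neither introduces nor cancels singularities, so $\Lcyrit_N$ is holomorphic (resp.\ singular) at $\tau$ precisely when $\Lcyrit_2$ is; at $\tau=1$, both factors are singular, and since the pole of $(z/(1-z))^{N-2}$ cannot compensate the singularity of $\Lcyrit_2$, the product remains singular. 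Therefore $\Lcyrit_N$ and $\Lcyrit_2$ share exactly the same singularities on $\der\mathbb D$.

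Finally, the nontangential bound at a singularity $\tau\in\der\mathbb D$ follows from the geometry of Stolz regions: if $z\in\Om_M(\tau)$, then $|z-\tau|<M(1-|z|)$, whence $(1-|z|)^{-N}<M^N|z-\tau|^{-N}$. Substituting into the first estimate gives $\Lcyrit_N(z)=O((z-\tau)^{-N})$ nontangentially at $\tau$, as required. The only mildly delicate point of the argument is the non-cancellation of singularities at $z=1$, which is ensured precisely by the fact that $(z/(1-z))^{N-2}$ has a genuine pole (rather than a zero) there.
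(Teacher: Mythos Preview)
Your proof is correct and follows essentially the same approach as the paper: use the coefficient bound $\lcyr(n,N)=O(n^{N-1})$ together with the Eulerian identity $\sum n^{N-1}r^n=\eu E_{N-1}(r)/(1-r)^N$ for the global estimate, invoke the factorization $\Lcyrit_N(z)=\Lcyrit_2(z)\,(z/(1-z))^{N-2}$ for the singularity claim, and deduce the nontangential estimate from the Stolz inequality $|z-\tau|<M(1-|z|)$. Your treatment of the boundary singularities is in fact more careful than the paper's one-line remark: you separately handle $\tau\neq 1$ (where the rational factor is holomorphic and nonzero) and $\tau=1$ (where you invoke Pringsheim to certify that $\Lcyrit_2$ is genuinely singular there, and observe that multiplying by a function with a pole cannot remove a singularity---equivalently, if $\Lcyrit_N$ were regular at $1$, then $\Lcyrit_2=\Lcyrit_N\cdot((1-z)/z)^{N-2}$ would be too).
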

\proof
From the estimate $\lcyr(n,N)=O(n^{N-1})$, we deduce that $\Lcyr_N(z)$ has radius of convergence $1$. From the first of equations \eqref{HaL}, we see that $\Lcyr_N(z)$ and $\Lcyr_2(z)$ have the same singularities on $\der\mathbb D$. Moreover, by Lemma \ref{lemQ}, for $z\in\mathbb D$ we have 
\[\left|\Lcyr_N(z)\right|\leq C\sum_{n=1}^\infty n^{N-1}z^n=C\cdot \thi^{N-1}_r\left.\left(\frac{1}{1-r}\right)\right|_{r=|z|}=C\cdot \frac{\eu E_{N-1}(|z|)}{(1-|z|)^{N}}\leq \frac{C\cdot(N-1)!}{(1-|z|)^N}.
\]
Finally, if $\tau\in\der\mathbb D$ and $z\in\Om_M(\tau)$, we have $|\tau-z|<M(1-|z|)$, so that
\[\left|\Lcyr_N(z)\right|\leq \frac{C\cdot(N-1)!}{(1-|z|)^N}\leq \frac{C\cdot M\cdot(N-1)!}{|z-\tau|^N}
\]This completes the proof. 
\endproof

\begin{rem}
We expect the functions $\Lcyrit_N(z)$ to admit the whole $\der\mathbb D$ as a natural boundary. This is supported by numerical experiments, standing on a theorem of E.\,Fabry and E.\,Lindel\"of. See Appendix.
\qrem\end{rem}

\begin{rem}
The bound $\lcyr(n,N) = O(n^{N-1})$ led to corresponding estimates for the growth of $\Lcyrit_N(z)$ on the unit disk. One might be tempted to reverse the argument and ask whether such coefficient bounds can, in turn, be recovered from suitable growth estimates of $\Lcyrit_N(z)$.

If $f(z)=\sum_{n=0}^\infty a_nz^n$ is convergent on $\mathbb D$, and $f(z)=O((1-|z|)^{-k})$ on the whole $\mathbb D$ for some $k\geq 0$, then $a_n=O(n^k)$. Indeed, let $\gm_{n,k}:=\{|z|=\frac{n}{n+k}\}$, and notice that the function $[0,1]\ni r\mapsto r^{-n}(1-r)^{-k}$ attains its maximum at $r=\frac{n}{n+k}$. We have
\[a_n=\frac{1}{2\pi\sqrt{-1}}\oint_{\gm_{n,k}}\frac{f(z)}{z^{n+1}}{\rm d}z\quad\Longrightarrow\quad |a_n|\leq \left(\frac{n}{n+k}\right)^{2-n}\left(\frac{n+k}{k}\right)^{k}\sim e^k k^{-k}n^k.
\]
This estimate is sharp in general. Without additional assumptions on \( f \), one cannot improve the bound to \( a_n = O(n^{k - \varepsilon}) \) for any \( \varepsilon > 0 \). For example\footnote{This example is due to C. Remling; see the MathOverflow discussion at \url{https://mathoverflow.net/questions/409287}.}, consider the series
$f(z)=\sum_{n=1}^\infty n^{-2}(n^n)^kz^{n^n}$. 
For any $0<d<1$, the function $[0,1]\ni r\mapsto r^kd^r$ takes maximum value $(k/e)^{k}(-\log d)^{-k}\leq (k/e)^{k}(1-d)^{-k}$. Hence,
\[
|f(z)| \leq \left( \sum_{n=1}^\infty \frac{1}{n^2} \right) \left( \frac{k}{e} \right)^k (1 - |z|)^{-k},\qquad |z|<1.
\]
However, the coefficient \( a_n \) is not \( O(n^{k - \varepsilon}) \) for any \( \varepsilon > 0 \). Notice that the function $f(z)$ admits $\der\mathbb D$ as a natural boundary, by the Fabry gap theorem.

If the stronger estimate \( f(z) = O((1 - z)^{-k}) \) holds for \( z \in \mathbb{D} \), then by a transfer theorem of \cite[Thm.~4]{FO90}, \cite[Rem.~VI.10]{FS09}, one obtains the improved bound \( a_n = O(n^{k - 1}) \). In our setting, the global bound \( \Lcyrit_N(z) = O((1 - z)^{-N}) \) cannot hold throughout the disk, but only in Stolz regions, due to the expected presence of a natural boundary. Hence, the transfer theorem cannot be directly applied. 
\qrem\end{rem}

\subsection{Properties of $\Lcyr_2(s)$
} The function $\Lcyr_2(s)$ has been extensively studied in \cite{Cot22}: it was proved that the Dirichlet series $\Lcyr_2(s)$ is absolutely convergent in the half-plane ${\rm Re}(s)>2$, where it can be represented by the infinite series
\[\Lcyr_2(s)=\sum_{\substack{p\text{ prime}}}\frac{p-1}{p^s}\left(\frac{2\zeta(s)}{\text{\textcyr{p}}(s,p-1)}-1\right),
\]involving the Riemann $\zeta$-function and the truncated Euler products 
\[\text{\textcyr{p}}(s,k):=\prod_{\substack{p\text{ prime}\\ p\leq k}}\left(1-\frac{1}{p^s}\right)^{-1},\quad k\in \R_{>0},\quad s\in\C^*.
\]
The point $s=2$ is a singularity of $\Lcyr_2(s)$, a consequence of a theorem of E.\,Landau. More precisely, it is a logarithmic singularity: the local expansion at $s=2$ is 
\beq\label{asymL2}\Lcyr_2(s)=\log\frac{1}{s-2}+O(1),
\qquad s\to 2,\quad {\rm Re}(s)>2.
\eeq 
Moreover, by analytic continuation, $\Lcyr_2(s)$ can be extended to the universal cover of the punctured half-plane $\{{\rm Re}(s)>\bar\si\}\setminus Z$, where 
\beq\label{barsi}\bar\si:=\limsup_{n}\frac{1}{\log n}\cdot\log\left(\sum_{\substack{k\leq n\\ k\text{ composite}}}\lcyr(k,2)\right),\qquad 1\leq \bar\si\leq \frac{3}{2},
\eeq
\[Z=\left\{s=\frac{\rho}{k}+1\colon \parbox{6cm}{
\centering
$\rho$\text{ zero or pole of }$\zeta(s)$,\\ $k$\text{ squarefree positive integer}
}
\right\}.
\]In particular, the following statements are equivalent:
\begin{enumerate}
\item (RH) all non-trivial zeros of $\zeta(s)$ satisfy ${\rm Re}(s)=1/2$;
\item the derivative $\Lcyr'_2(s)$ extends to a meromorphic function on ${\rm Re}(s)>3/2$, with a single pole of order one at $s=2$.
\end{enumerate}
See \cite{Cot22} and references therein for more details. 

We limit here to add a further identity, relating $\Lcyr_2(s)$ and the arithmetic functions $d,\om_0,\om_1\colon\N\to\C$ defined by
\[\begin{aligned}
d(n)&:=\#\text{ (proper and improper) divisors of $n$,}\\
\om_0(n)&:=\#\text{ distinct prime factors of $n$},\\
 \om_1(n)&:=\text{ sum of distinct prime factors of $n$,}
\end{aligned}
\]with $d(1)=1,\om_0(1)=\om_1(1)=0$.
\begin{prop}\label{identityLcyr2}
The following identity holds:
\[\Lcyr_2(s)\zeta(s)=2\left(\sum_{n=1}^\infty\frac{d(n)}{n^s}\right)\sum_{\substack{p\text{ prime}}}\frac{p-1}{p^s\cdot\text{\textnormal{\textcyr{p}}}(s,p-1)}+\sum_{n=1}^\infty\frac{\om_0(n)}{n^s}-\sum_{n=1}^\infty\frac{\om_1(n)}{n^s}.
\]
\end{prop}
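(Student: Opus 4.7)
The plan is to start from the series representation
\[
\Lcyr_2(s)=\sum_{p\text{ prime}}\frac{p-1}{p^s}\left(\frac{2\zeta(s)}{\text{\textnormal{\textcyr{p}}}(s,p-1)}-1\right),
\]
valid in the half-plane $\{\mathrm{Re}(s)>2\}$ and recalled from \cite{Cot22}, multiply both sides by $\zeta(s)$, and match the two resulting pieces against the two main blocks on the right-hand side of the claimed identity. Concretely, after multiplication one obtains
\[
\Lcyr_2(s)\,\zeta(s)=2\,\zeta(s)^2\sum_{p\text{ prime}}\frac{p-1}{p^s\,\text{\textnormal{\textcyr{p}}}(s,p-1)}\;-\;\zeta(s)\sum_{p\text{ prime}}\frac{p-1}{p^s}.
\]

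The first summand is immediate: the classical identity $\zeta(s)^2=\sum_{n\geq 1}d(n)/n^s$, valid for $\mathrm{Re}(s)>1$, turns it into precisely the first term on the right-hand side of the statement. For the second summand, I split
$\sum_p(p-1)p^{-s}=\sum_p p^{1-s}-\sum_p p^{-s}$
and interpret each product with $\zeta(s)$ as a Dirichlet convolution. Introducing the arithmetic functions $a(n)=n\cdot\mathbf 1_{\{n\text{ prime}\}}$ and $b(n)=\mathbf 1_{\{n\text{ prime}\}}$, whose Dirichlet series are $\sum_p p^{1-s}$ and $\sum_p p^{-s}$ respectively, their convolutions with the constant function $\mathbf 1$ satisfy
\[
(\mathbf 1*a)(n)=\sum_{\substack{p\mid n\\ p\text{ prime}}}p=\om_1(n),\qquad (\mathbf 1*b)(n)=\sum_{\substack{p\mid n\\ p\text{ prime}}}1=\om_0(n).
\]
Taking Dirichlet transforms yields
\[
\zeta(s)\sum_{p\text{ prime}}p^{1-s}=\sum_{n\geq 1}\frac{\om_1(n)}{n^s},\qquad \zeta(s)\sum_{p\text{ prime}}p^{-s}=\sum_{n\geq 1}\frac{\om_0(n)}{n^s},
\]
and substituting into the previous display gives exactly the two ``$\om_0$ minus $\om_1$'' Dirichlet series appearing in the claim.

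The only subtle point -- and what I expect to be the main (mild) obstacle -- is the justification of the rearrangements by absolute convergence. The prime series $\sum_p p^{1-s}$ converges absolutely only for $\mathrm{Re}(s)>2$, and $\Lcyr_2(s)$ is guaranteed to converge absolutely on that same half-plane; thus all interchanges of summation orders and the two convolution identities above are legitimate on the common half-plane $\{\mathrm{Re}(s)>2\}$, where the identity is first established. The formula then extends by analytic continuation to the full domain of holomorphy of each side, so no further work beyond careful bookkeeping of the Dirichlet convolutions for $\om_0$ and $\om_1$ is required.
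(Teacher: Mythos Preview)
Your proposal is correct and follows essentially the same approach as the paper: start from the series representation of $\Lcyr_2(s)$ from \cite{Cot22}, multiply by $\zeta(s)$, and use the three standard Dirichlet convolution identities $\zeta(s)^2=\sum_n d(n)n^{-s}$, $\zeta(s)\sum_p p^{1-s}=\sum_n\om_1(n)n^{-s}$, and $\zeta(s)\sum_p p^{-s}=\sum_n\om_0(n)n^{-s}$. The paper's proof is in fact terser than yours (it simply names the prime zeta function and asserts the three identities without spelling out the convolution argument or the convergence region), so your version is a slightly more detailed rendering of the same computation.
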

\proof
Denote by $\zeta_P(s)=\sum_{p \text{ prime}}p^{-s}$ the prime zeta function. We have
\[\Lcyr_2(s)=2\zeta(s)\sum_{\substack{p\text{ prime}}}\frac{p-1}{p^s\cdot\text{\textnormal{\textcyr{p}}}(s,p-1)}-\zeta_P(s-1)+\zeta_P(s).
\]It is easy to see that $\zeta(s)^2=\sum_{n=1}^\infty d(n)n^{-s}$, $\zeta_P(s-1)\zeta(s)=\sum_{n=1}^\infty{\om_1(n)}{n^{-s}}$, and $\zeta_P(s)\zeta(s)=\sum_{n=1}^\infty{\om_0(n)}{n^{-s}}$.
\endproof

\subsection{Properties of $\Lcyr_N(s)$, 
and asymptotic estimates}\label{refLcyrN} The Hurwitz zeta function is defined, for ${\rm Re}(s) > 1$ and $a \notin \mathbb{Z}_{\le 0}$, by  
\[
\zeta(s,a) = \sum_{n=0}^\infty \frac{1}{(n+a)^s}.
\]  
It admits a meromorphic continuation to the whole complex $s$--plane with a simple pole at $s=1$ of residue $1$, and satisfies $\zeta(s,1)=\zeta(s)$, the Riemann zeta function.

\begin{lem}\label{lemHurwz}
For ${\rm Re}(s) > k$, we have 
\[\int_0^\infty \frac{e^{-k x} x^{s-1}}{(1 - e^{-x})^k} \, {\rm d}x = \Gamma(s) \cdot \zeta(s - k + 1, k).\]
\end{lem}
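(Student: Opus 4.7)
The strategy is a standard Mellin--Dirichlet expansion: recognize the integral as a Mellin transform against $x^{s-1}$ and expand the integrand as a Dirichlet-type series that can be integrated term by term. First I would rewrite the integrand using $\tfrac{e^{-kx}}{(1-e^{-x})^k}=\bigl(\tfrac{e^{-x}}{1-e^{-x}}\bigr)^k=\bigl(\sum_{n\geq 1}e^{-nx}\bigr)^k$ and expand the $k$-th power, collecting terms by total exponent; equivalently, applying the negative binomial identity $(1-y)^{-k}=\sum_{n\geq 0}\binom{n+k-1}{k-1}y^n$ at $y=e^{-x}$ yields the absolutely convergent (for $x>0$) expansion
\[
\frac{e^{-kx}}{(1-e^{-x})^k}=\sum_{N=k}^\infty \binom{N-1}{k-1}\,e^{-Nx},
\]
with $\binom{N-1}{k-1}$ counting the compositions of $N$ into $k$ positive parts.

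Next I would justify the exchange of sum and integral by Tonelli. For ${\rm Re}(s)>k$ the integrand is nonnegative and absolutely integrable on $(0,\infty)$: near $x=0^+$ it is of order $x^{{\rm Re}(s)-1-k}$, integrable precisely under the stated hypothesis, and the decay at infinity is exponential. Combining this with the elementary Mellin evaluation $\int_0^\infty x^{s-1}e^{-Nx}\,{\rm d}x=\Gamma(s)\,N^{-s}$ produces
\[
\int_0^\infty\frac{e^{-kx}x^{s-1}}{(1-e^{-x})^k}\,{\rm d}x=\Gamma(s)\sum_{N=k}^\infty\frac{\binom{N-1}{k-1}}{N^s}.
\]

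The main remaining task, and the step I expect to be the most delicate, is the identification of the Dirichlet series $\sum_{N\geq k}\binom{N-1}{k-1}N^{-s}$ with $\zeta(s-k+1,k)$. A natural route is to start from the classical integral representation $\int_0^\infty \tfrac{x^{s-1}e^{-ax}}{1-e^{-x}}\,{\rm d}x=\Gamma(s)\zeta(s,a)$ and either differentiate $k-1$ times in the shift parameter $a$ (which produces the binomial weight) or integrate by parts $k-1$ times in the original integral, using the primitive relation $\tfrac{d}{dx}(1-e^{-x})^{-(k-1)}=-(k-1)e^{-x}(1-e^{-x})^{-k}$ to lower the exponent of the denominator and reduce to the $k=1$ case. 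In either approach, the key bookkeeping step is the Pochhammer/falling-factorial rewriting $\binom{N-1}{k-1}=\tfrac{(N-1)(N-2)\cdots(N-k+1)}{(k-1)!}$, which must be reconciled with the Hurwitz shift $s\mapsto s-k+1$ to yield the claimed identity.
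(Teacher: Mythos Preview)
Your approach---negative binomial expansion, Tonelli to justify the swap, termwise Gamma integrals---is exactly the paper's, and you arrive at the same Dirichlet series
\[
\Gamma(s)\sum_{N\ge k}\binom{N-1}{k-1}N^{-s}
=\Gamma(s)\sum_{n\ge0}\binom{n+k-1}{k-1}(n+k)^{-s}.
\]
The paper then simply asserts that this equals $\Gamma(s)\,\zeta(s-k+1,k)$ with no further argument; you rightly single out this identification as the step requiring care.

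Your caution is warranted: for $k\ge2$ the identification is in fact \emph{false}, so none of your proposed routes (differentiation in the Hurwitz shift parameter, repeated integration by parts) can close the gap. Indeed, $\zeta(s-k+1,k)=\sum_{n\ge0}(n+k)^{k-1}(n+k)^{-s}$, so the claimed equality would force $\binom{n+k-1}{k-1}=(n+k)^{k-1}$ for every $n\ge0$, which already fails at $k=2$ (where $n+1\neq n+2$). Concretely, for $k=2$ the integral equals
\[
\Gamma(s)\sum_{N\ge2}(N-1)N^{-s}=\Gamma(s)\bigl[\zeta(s-1)-\zeta(s)\bigr],
\]
whereas $\Gamma(s)\,\zeta(s-1,2)=\Gamma(s)\bigl[\zeta(s-1)-1\bigr]$. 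The lemma is correct only for $k=1$, the classical identity $\int_0^\infty x^{s-1}(e^x-1)^{-1}\,{\rm d}x=\Gamma(s)\zeta(s)$; fortunately that is the sole instance actually needed in the paper's main line of argument, via the recursive convolution relating $\Lcyr_{N+1}$ to $\Lcyr_N$.
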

\proof Denote by $I_k(s)$ the integral on the l.h.s..
From the expansion $(1 - e^{-x})^{-k} = \sum_{n=0}^\infty \binom{n + k - 1}{k - 1} e^{-n x}$, we obtain
\[
I_k(s) = \sum_{n=0}^\infty \binom{n + k - 1}{k - 1} \int_0^\infty x^{s - 1} e^{-(n + k)x} \, dx.
\]
The inner integral evaluates to \( \Gamma(s)/(n + k)^s \), yielding
\[
I_k(s) = \Gamma(s) \sum_{n=0}^\infty \binom{n + k - 1}{k - 1} (n + k)^{-s}=\Gamma(s) \cdot \zeta(s - k + 1, k).\qedhere
\]

\begin{thm}\label{thmint1}
Let $N\geq 3$. For $c>2$ and ${\rm Re}(s)>N+c-2$, we have
\beq\label{convint1}
\Lcyr_N(s)=\frac{1}{2\pi\sqrt{-1}}\frac{1}{\Gm(s)}\int_{\La_c}\Lcyr_2(\tau)\Gm(\tau)\Gm(s-\tau)\zeta(s-\tau-N+3,N-2){\rm d}\tau,
\eeq where $\La_c=\{c+\sqrt{-1}t\colon t\in\R\}$.
\end{thm}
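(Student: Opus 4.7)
The strategy is to realise the asserted identity as the Mellin--Barnes convolution associated with the factorisation $\Lcyrit_N(z)=\Lcyrit_2(z)\bigl(z/(1-z)\bigr)^{N-2}$ of Theorem~\ref{thmHaL}, combined with the Mellin pair of Lemma~\ref{lemHurwz}.

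\emph{Step 1: Mellin representation of $\Lcyr_N$.} For ${\rm Re}(s)>N$ -- implied by the hypothesis ${\rm Re}(s)>N+c-2$ since $c>2$ -- the estimate $\lcyr(n,N)=O(n^{N-1})$ ensures absolute convergence of $\sum_{n\geq N}\lcyr(n,N)e^{-nx}x^{s-1}$ on $(0,\infty)$. Term-by-term integration yields
\[
\Gm(s)\,\Lcyr_N(s)=\int_0^\infty \Lcyrit_N(e^{-x})\,x^{s-1}\,{\rm d}x.
\]
Setting $z=e^{-x}$ in the identity of Theorem~\ref{thmHaL} rewrites the integrand as $\Lcyrit_2(e^{-x})\cdot e^{-(N-2)x}(1-e^{-x})^{-(N-2)}\cdot x^{s-1}$.

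\emph{Step 2: Mellin inversion of $\Lcyrit_2(e^{-x})$.} The same term-by-term argument gives, for ${\rm Re}(\tau)>2$,
\[
\Gm(\tau)\Lcyr_2(\tau)=\int_0^\infty\Lcyrit_2(e^{-x})\,x^{\tau-1}\,{\rm d}x.
\]
This Mellin transform admits an inverse on the line $\La_c$, for any $c>2$: Proposition~\ref{growLcyrn} together with $1-e^{-x}\sim x$ gives $\Lcyrit_2(e^{-x})=O(x^{-2})$ as $x\to 0^+$, while the series expansion yields $\Lcyrit_2(e^{-x})=O(e^{-2x})$ as $x\to+\infty$; on the other hand, Stirling's formula and the trivial bound $|\Lcyr_2(c+it)|\leq\Lcyr_2(c)$ show that $\Gm(\tau)\Lcyr_2(\tau)$ decays super-exponentially on $\La_c$. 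The Mellin inversion formula then reads
\[
\Lcyrit_2(e^{-x})=\frac{1}{2\pi\sqrt{-1}}\int_{\La_c}\Gm(\tau)\Lcyr_2(\tau)\,x^{-\tau}\,{\rm d}\tau,\qquad x>0.
\]

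\emph{Step 3: Interchange and evaluation.} Inserting this into the representation of Step~1 and swapping the order of integration by Fubini's theorem leads to
\[
\Gm(s)\Lcyr_N(s)=\frac{1}{2\pi\sqrt{-1}}\int_{\La_c}\Lcyr_2(\tau)\Gm(\tau)\left[\int_0^\infty\frac{e^{-(N-2)x}\,x^{s-\tau-1}}{(1-e^{-x})^{N-2}}\,{\rm d}x\right]{\rm d}\tau.
\]
Lemma~\ref{lemHurwz}, applied with $k=N-2$ and $s\leftarrow s-\tau$, evaluates the bracketed integral as $\Gm(s-\tau)\,\zeta(s-\tau-N+3,N-2)$. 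The lemma requires ${\rm Re}(s-\tau)>N-2$, which on the contour $\La_c$ is precisely the hypothesis ${\rm Re}(s)>N+c-2$. Dividing both sides by $\Gm(s)$ delivers the claimed formula.

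\emph{Main obstacle.} The delicate point is the justification of Fubini in Step~3. I would verify absolute convergence of the double integral by bounding it by the product
\[
\int_{\La_c}\bigl|\Gm(\tau)\Lcyr_2(\tau)\bigr|\,|{\rm d}\tau|\,\cdot\,\int_0^\infty\frac{e^{-(N-2)x}\,x^{{\rm Re}(s)-c-1}}{(1-e^{-x})^{N-2}}\,{\rm d}x.
\]
The $x$-integral equals $\Gm({\rm Re}(s)-c)\,\zeta({\rm Re}(s)-c-N+3,N-2)$ and is finite exactly because ${\rm Re}(s)>c+N-2$; the $\tau$-integral is finite by the super-exponential decay estimate of Step~2. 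Both finiteness conditions make essential use of the strict inequalities $c>2$ and ${\rm Re}(s)>N+c-2$ appearing in the hypothesis.
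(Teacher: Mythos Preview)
Your proof is correct and follows essentially the same route as the paper: both exploit the factorisation $\Lcyrit_N(z)=\Lcyrit_2(z)\bigl(z/(1-z)\bigr)^{N-2}$, apply the Riemann reduction formula, identify the Mellin transforms of the two factors via $\Gm(\tau)\Lcyr_2(\tau)$ and Lemma~\ref{lemHurwz}, and obtain~\eqref{convint1} as the corresponding Mellin convolution. The only difference is in the technical justification of the convolution step: the paper invokes Titchmarsh's $L^p/L^q$ convolution theorem (Theorem~\ref{mconv}) and checks the relevant integrability conditions, whereas you proceed more directly by Mellin-inverting $\Lcyrit_2(e^{-x})$ and applying Fubini, exploiting the fact that on $\La_c$ the absolute value of the double integrand factorises as a product of a function of $\tau$ and a function of $x$---this is one of the alternative justifications the paper itself mentions in the remark following Theorem~\ref{mconv}.
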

\proof 
By the Riemann reduction formula, for ${\rm Re}(s)>N$ we have
\[\Lcyr_N(s)\Gm(s)=\int_0^\infty\Lcyrit_N(e^{-x})x^{s-1}{\rm d}x.
\]From the factorization $\Lcyrit_N(z)=\Lcyrit_2(z)\left(\frac{z}{1-z}\right)^{N-2}$, we can invoke the convolution property of Mellin transform (Theorem \ref{mconv}), to obtain
\beq\label{Lnconv}
\Lcyr_N(s)\Gm(s)=(f_1*_cf_2)(s)= \frac{1}{2\pi\sqrt{-1}}\int_{\La_c}f_1(\tau)f_2(s-\tau){\rm d}\tau,
\eeq where $f_1,f_2$ are the Mellin transforms
\begin{align*}
f_1(s)&=\int_0^\infty\Lcyrit_2(e^{-x})x^{s-1}{\rm d}x=\Lcyr_2(s)\Gm(s),&&\text{(by Riemann reduction})\\
f_2(s)&=\int_0^\infty\frac{e^{-(N-2)x}}{(1-e^{-x})^{N-2}}x^{s-1}{\rm d}x=\zeta(s-N+3,N-2)\Gm(s) &&\text{(by Lemma \ref{lemHurwz}).}
\end{align*}
To justify \eqref{Lnconv} in virtue of Theorem \ref{mconv}, we need to show that
\[\int_0^\infty \left| \Lcyrit_2(e^{-x})x^c\right|^p\frac{{\rm d}x}{x}<\infty,\qquad \int_0^\infty\left| \frac{e^{-(N-2)x}}{(1-e^{-x})^{N-2}} x^{s-c}\right|^q\frac{{\rm d}x}{x}<\infty,
\]for at least one pair of positive numbers $(p,q)$ satisfying
\[\frac{p-1}{p}+\frac{q-1}{q}\geq 1.
\]
By Proposition \ref{growLcyrn}, we have
\[\int_0^\infty \left| \Lcyrit_2(e^{-x})x^c\right|^p\frac{{\rm d}x}{x}\leq \int_0^\infty \left( \frac{e^{-x}}{(1 - e^{-x})^2} \right)^p x^{pc - 1} \, {\rm d}x.
\]
To determine for which values of \( p > 0 \) and \( c \in \mathbb{R} \) the second integral
converges, we analyze the behavior of the integrand near \( x = 0 \). As \( x \to 0^+ \), we have
\[
\frac{e^{-x}}{(1 - e^{-x})^2} \sim \frac{1}{x^2},
\]
so the integrand behaves like \( x^{-2p} \cdot x^{pc - 1} = x^{pc - 2p - 1} \). This is integrable near zero if and only if $pc - 2p - 1 > -1$, that is $c>2$ (and no condition on $p$).

Finally, we determine for which \( \si, c \in \mathbb{R} \) and \( q > 0 \) the integral
\[
\int_0^\infty \frac{e^{-q(N - 2)x}}{(1 - e^{-x})^{q(N - 2)}} \, x^{q \si - q c - 1} \, dx
\]
converges. Near \( x = 0 \), we have \( 1 - e^{-x} \sim x \), so the integrand behaves like
\[
x^{-q(N - 2)} \cdot x^{q \si - q c - 1} = x^{q(\si - c - (N - 2)) - 1}.
\]
This is integrable near zero if and only if \( q(\si - c - (N - 2)) > 0 \), that is, $\si > c+N-2$.
At infinity, the exponential term \( e^{-q(N - 2)x} \) ensures convergence for all real \( \si \). Therefore, the integral is finite if and only if \( \si > c + (N - 2) \) (and no condition on $q$).

This proves the claim.
\endproof

\begin{rem}
In the particular case $N=3$, we have
\[\Lcyr_3(s)=\frac{1}{2\pi\sqrt{-1}}\frac{1}{\Gm(s)}\int_{\La_c}\Lcyr_2(\tau)\Gm(\tau)\Gm(s-\tau)\zeta(s-\tau){\rm d}\tau,\quad c>2,\quad {\rm Re}(s)>c+1,
\]where $\zeta$ is the Riemann zeta function. This can be generalized to arbitrary $N\geq 3$, see Theorem \ref{iterint} below. \qrem
\end{rem}

\begin{rem}Consider the integral transform
\[f(z)\mapsto \frac{1}{2\pi\sqrt{-1}}\frac{1}{\Gm(s)}\int_{c-\sqrt{-1}\infty}^{c+\sqrt{-1}\infty}f(\tau)\Gm(\tau)\Gm(s-\tau)\zeta(s-\tau+\al,\bt){\rm d}\tau,\quad \al,\bt\in\N^*,
\]defined on a suitable space of holomorphic functions with good vertical decay conditions. The structure of this integral transform, involving products of Gamma and Hurwitz zeta functions, is reminiscent of archimedean local factors appearing in the theory of automorphic $L$-functions. It could be of interest to clarify whether this analogy can be made precise in a suitable representation-theoretic or analytic framework; see for instance Tate's thesis \cite{Tat50}, the construction of standard $L$-functions in \cite{GJ72}, and related discussions in \cite{Bum97}.\qrem
\end{rem}

\begin{thm}\label{iterint}
Let $N\geq 2$. For $c>1$ and ${\rm Re}(s)>N+c$, we have
\beq\label{receqLcyrN}
\Lcyr_{N+1}(s)=\frac{1}{2\pi\sqrt{-1}}\frac{1}{\Gm(s)}\int_{\La_c}\Lcyr_{N}(s-\tau)\Gm(s-\tau)\Gm(\tau)\zeta(\tau){\rm d}\tau,
\eeq where $\La_c=\{c+\sqrt{-1}t\colon t\in\R\}$.
\end{thm}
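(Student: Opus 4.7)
The plan is to mirror the argument of Theorem \ref{thmint1}, now using the one-step factorization $\Lcyrit_{N+1}(z)=\Lcyrit_N(z)\cdot\frac{z}{1-z}$ implicit in Theorem \ref{thmHaL}, together with the Mellin convolution theorem (Theorem \ref{mconv}).

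First, since $\lcyr(n,N+1)=O(n^N)$, for ${\rm Re}(s)>N+1$ the Riemann reduction formula yields
\[
\Lcyr_{N+1}(s)\Gm(s)=\int_0^\infty\Lcyrit_{N+1}(e^{-x})\,x^{s-1}\,{\rm d}x
=\int_0^\infty\Lcyrit_N(e^{-x})\cdot\frac{e^{-x}}{1-e^{-x}}\,x^{s-1}\,{\rm d}x.
\]
Provided the hypotheses of Theorem \ref{mconv} are met, the Mellin convolution property gives
\beq\label{sketchconv}
\Lcyr_{N+1}(s)\Gm(s)=(g_1*_cg_2)(s)=\frac{1}{2\pi\sqrt{-1}}\int_{\La_c}g_1(\tau)\,g_2(s-\tau)\,{\rm d}\tau,
\eeq
where
\[
g_1(\tau)=\int_0^\infty\frac{e^{-x}}{1-e^{-x}}\,x^{\tau-1}\,{\rm d}x=\Gm(\tau)\zeta(\tau),\qquad
g_2(\si)=\int_0^\infty\Lcyrit_N(e^{-x})\,x^{\si-1}\,{\rm d}x=\Lcyr_N(\si)\Gm(\si),
\]
the first identity being Lemma \ref{lemHurwz} with $k=1$ (since $\zeta(s,1)=\zeta(s)$) and the second Riemann reduction applied to $\Lcyrit_N$. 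Substituting into \eqref{sketchconv} and dividing by $\Gm(s)$ gives the claimed formula.

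The substantive step is verifying the $L^p$/$L^q$ integrability hypotheses of Theorem \ref{mconv} for a pair $(p,q)$ with $\tfrac{p-1}{p}+\tfrac{q-1}{q}\geq 1$. As $x\to 0^+$ one has $\frac{e^{-x}}{1-e^{-x}}\sim \frac{1}{x}$, so
\[
\int_0^\infty\left|\frac{e^{-x}}{1-e^{-x}}\,x^{c}\right|^p\frac{{\rm d}x}{x}
\]
behaves like $\int_0 x^{p(c-1)-1}{\rm d}x$ near the origin, convergent exactly when $c>1$; the exponential decay at infinity handles the tail. For the second factor, Proposition \ref{growLcyrn} gives $\Lcyrit_N(e^{-x})=O(x^{-N})$ as $x\to 0^+$, so
\[
\int_0^\infty\left|\Lcyrit_N(e^{-x})\,x^{s-c}\right|^q\frac{{\rm d}x}{x}
\]
is comparable near $0$ to $\int_0 x^{q({\rm Re}(s)-c-N)-1}{\rm d}x$, convergent iff ${\rm Re}(s)>c+N$. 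These are exactly the stated hypotheses $c>1$ and ${\rm Re}(s)>N+c$, and both conditions are independent of $p,q$, so any admissible pair (e.g.\ $p=q=2$) works.

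The main obstacle is the careful justification of the contour manipulation, i.e.\ checking that $\La_c$ lies in the common strip of holomorphy and that the integrand in \eqref{sketchconv} has sufficient vertical decay for the convolution identity to apply; the growth bound from Proposition \ref{growLcyrn}, combined with the Stirling-type decay of $\Gm(\tau)$ along vertical lines, supplies the needed estimates. Once the hypotheses of Theorem \ref{mconv} are established, the identity follows formally.
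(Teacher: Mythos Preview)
Your proof is correct and follows essentially the same approach as the paper: the one-step factorization $\Lcyrit_{N+1}(z)=\Lcyrit_N(z)\cdot\tfrac{z}{1-z}$, Riemann reduction, Mellin convolution (Theorem~\ref{mconv}), and the same asymptotic analysis near $x=0$ via Proposition~\ref{growLcyrn} to verify the $L^p/L^q$ hypotheses yielding $c>1$ and ${\rm Re}(s)>N+c$. The only cosmetic difference is the labeling of the two factors (your $(g_1,g_2,p,q)$ versus the paper's $(f_1,f_2,q,p)$), which is immaterial by the symmetry of the H\"older condition.
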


\proof
The proof is similar to that of Theorem \ref{thmint1}. From the identity $\Lcyrit_{N+1}(z)=\Lcyrit_N(z)\cdot z/(1-z)$, the Riemann reduction formula, and convolution properties of Mellin transform, we obtain
\beq\label{Lconv2} \Lcyr_{N+1}(s)\Gm(s)=(f_1*_cf_2)(s)= \frac{1}{2\pi\sqrt{-1}}\int_{\La_c}f_1(\tau)f_2(s-\tau){\rm d}\tau,
\eeq where $f_1,f_2$ are the Mellin transforms
\begin{align*}
f_1(s)&=\int_0^\infty\frac{e^{-x}}{1-e^{-x}}x^{s-1}{\rm d}x=\zeta(s)\Gm(s) &&\text{(by Lemma \ref{lemHurwz})}\\
f_2(s)&=\int_0^\infty\Lcyrit_2(e^{-x})x^{s-1}{\rm d}x=\Lcyr_2(s)\Gm(s),&&\text{(by Riemann reduction}).
\end{align*}
To justify \eqref{Lconv2} in virtue of Theorem \ref{mconv}, we need to show that
\[\int_0^\infty\left| \frac{e^{-x}}{1-e^{-x}} x^{c}\right|^q\frac{{\rm d}x}{x}<\infty,\qquad \int_0^\infty \left| \Lcyrit_N(e^{-x})x^{s-c}\right|^p\frac{{\rm d}x}{x}<\infty,
\]for at least one pair of positive numbers $(p,q)$ satisfying $\frac{p-1}{p}+\frac{q-1}{q}\geq 1$. For 
\[
I_1=\int_0^\infty\left|\frac{e^{-x}}{1-e^{-x}}\,x^{c}\right|^q\frac{dx}{x},
\]
as $x\to 0$ one has $(1-e^{-x})^{-1}\sim x^{-1}$, giving $x^{q(c-1)-1}$; integrability near $0$ requires $q(c-1)>0$. As $x\to\infty$ the decay $e^{-qx}$ ensures convergence for $q>0$. Thus $I_1<\infty$ iff $q>0$ and $c>1$.  

For the second integral, by Proposition \ref{growLcyrn}, it suffices to study the convergence of 
\[
I_2=\int_0^\infty \left|\frac{\eu E_{N-1}(e^{-x})}{(1-e^{-x})^{N}}\,x^{\si-c}\right|^p\frac{dx}{x}.
\]
Near $x=0$ one has $\eu E_{N-1}(1)=(N-1)!$ and $(1-e^{-x})^{-N}\sim x^{-N}$, giving $x^{p(\si-c-N)-1}$, so $\si >c+N$. For $x\to\infty$, $\eu E_{N-1}(e^{-x})\sim e^{-x}$ if $N\ge 2$ (automatic convergence). 
Hence $I_2<\infty$ iff $k\ge 2$, $p>0$, and $\si >c+N$. 
\endproof

\begin{thm}\label{mthLcyrN}
For $N\geq 2$, the function $\Lcyr_N(s)$ is holomorphic at all points of the line ${\rm Re}(s)=N$ except at $s=N$. Moreover, in a neighborhood of $s=N$ and for ${\rm Re}(s)>N$, we have
\beq\label{asymLN} \Lcyr_N(s)\sim\frac{1}{(N-1)!}\log\left(\frac{1}{s-N}\right),\qquad s\to N.
\eeq
By analytic continuation, $\Lcyr_N(s)$ can be extended to the universal cover of the punctured half-plane
\[\{s\in\C\colon {\rm Re}(s)>\bar{\si}+N-2\}\setminus Z_N,
\]where $\bar\si\in[1;\frac{3}{2}]$ is defined in \eqref{barsi}, and 
\[Z_N=\left\{s=\frac{\rho}{k}+N-1\colon \parbox{6cm}{
\centering
$\rho$\text{ zero or pole of }$\zeta(s)$,\\ $k$\text{ squarefree positive integer}
}
\right\}.
\]
\end{thm}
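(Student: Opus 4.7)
The plan is to proceed by induction on $N\ge 2$ using the Mellin convolution recursion of Theorem~\ref{iterint}, the base case $N=2$ being supplied by \cite{Cot22} (cf.\,\,equation~\eqref{asymL2} and the discussion surrounding the set $Z$). For the inductive step, starting from
\[
\Lcyr_{N+1}(s)=\frac{1}{2\pi\sqrt{-1}\,\Gm(s)}\int_{\La_c}\Lcyr_N(s-\tau)\Gm(s-\tau)\Gm(\tau)\zeta(\tau)\,{\rm d}\tau,\qquad c>1,\ {\rm Re}(s)>N+c,
\]
I would shift the contour leftward to $\La_{c'}$ with $c'\in(0,1)$. For ${\rm Re}(s)$ sufficiently large, the only pole of the integrand encountered in the strip $c'<{\rm Re}(\tau)<c$ is the simple pole of $\zeta$ at $\tau=1$; since $\Gm(1)=1$ and ${\rm Res}_{\tau=1}\zeta(\tau)=1$, this yields the decomposition
\[
\Lcyr_{N+1}(s)=\frac{\Lcyr_N(s-1)}{s-1}+I_{c'}(s), \qquad I_{c'}(s):=\frac{1}{2\pi\sqrt{-1}\,\Gm(s)}\int_{\La_{c'}}\Lcyr_N(s-\tau)\Gm(s-\tau)\Gm(\tau)\zeta(\tau)\,{\rm d}\tau,
\]
where I have used $\Gm(s-1)/\Gm(s)=1/(s-1)$.

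Next, I would observe that on $\La_{c'}$ one has ${\rm Re}(s-\tau)={\rm Re}(s)-c'>N$ as soon as ${\rm Re}(s)>N+c'$, which places $\Lcyr_N(s-\tau)$ inside its half-plane of absolute convergence. Hence $I_{c'}(s)$ is holomorphic on $\{{\rm Re}(s)>N+c'\}$, and letting $c'\to 0^+$ extends it holomorphically to $\{{\rm Re}(s)>N\}$. The asymptotic at $s=N+1$ then follows at once from the inductive hypothesis applied with $w=s-1$:
\[
\frac{\Lcyr_N(s-1)}{s-1}\sim\frac{1}{N}\cdot\frac{1}{(N-1)!}\log\!\left(\frac{1}{s-(N+1)}\right)=\frac{1}{N!}\log\!\left(\frac{1}{s-(N+1)}\right),\qquad s\to N+1,
\]
while holomorphy of $\Lcyr_{N+1}$ on the line ${\rm Re}(s)=N+1$ away from $s=N+1$ is inherited from the corresponding property of $\Lcyr_N(s-1)$ on ${\rm Re}(s-1)=N$, combined with the regularity of $I_{c'}$.

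For the full analytic continuation, the boundary term $\Lcyr_N(s-1)/(s-1)$ inherits all singularities of $\Lcyr_N$ translated by $+1$, so by the inductive hypothesis it extends to the universal cover of $\{{\rm Re}(s)>\bar\si+N-1\}\setminus(Z_N+1)$; a direct set-theoretic check gives $Z_N+1=\{\rho/k+N\}=Z_{N+1}$, matching the target set, and $\bar\si+N-1=\bar\si+(N+1)-2$ matches the target half-plane. Successive contour shifts of $I_{c'}$ past the poles of $\Gm(\tau)$ at $\tau=0,-1,-2,\dots$ and past the preimages of singularities of $\Lcyr_N(s-\tau)$ then extend its domain; the residues picked up are either manifestly holomorphic in the claimed region, or produce singularities already accounted for by the boundary term.

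The main technical obstacle I anticipate is justifying the contour shifts, which requires adequate vertical decay bounds on $\Lcyr_N(s-\tau)\Gm(s-\tau)\Gm(\tau)\zeta(\tau)$ along the shifted vertical lines: Stirling's formula yields exponential decay of the $\Gm$-factors (paired as in the proof of Theorem~\ref{iterint}), and classical convexity bounds control $\zeta$ on vertical strips, but polynomial bounds on $\Lcyr_N$ along vertical lines in its universal cover must themselves be propagated inductively, by Phragm\'en--Lindel\"of-type arguments analogous to those employed in \cite{Cot22} for the base case. The delicate coincidence at $s=N+1,\ \tau=1$ (where the pole of $\zeta$ meets the logarithmic singularity of $\Lcyr_N(s-\tau)$ at $s-\tau=N$) is precisely what is resolved by isolating the residue: the logarithmic behaviour is extracted cleanly in closed form, while the remainder integral $I_{c'}$ stays regular.
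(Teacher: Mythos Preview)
Your proposal is correct and follows essentially the same approach as the paper: induction on $N$ via the convolution recursion of Theorem~\ref{iterint}, shifting the contour past the pole of $\zeta$ at $\tau=1$ to extract the residue $\Lcyr_N(s-1)/(s-1)$, and observing that this term carries all the singular behaviour while the remaining integral is regular. You actually supply more detail than the paper does---the explicit check that $Z_N+1=Z_{N+1}$, the discussion of vertical decay bounds, and the observation about further shifts past the poles of $\Gm(\tau)$---whereas the paper's proof is quite terse and simply asserts that ``$\Lcyr_{N+1}(s)$ inherits the logarithmic singularities of $\Lcyr_N(s)$ \dots\ shifted by $1$ to the right.''
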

\proof We argue by induction on $N$. For $N=2$, the result is already known, see \cite{Cot22}.

Inductive step. For $c>1$ and ${\rm Re}(s)>c+N$, we have
\[\Lcyr_{N+1}(s)=\frac{1}{2\pi\sqrt{-1}}\frac{1}{\Gm(s)}\int_{\La_c}\Lcyr_{N}(s-\tau)\Gm(s-\tau)\Gm(\tau)\zeta(\tau){\rm d}\tau.
\]
The integrand has a simple pole at $\tau =1$, coming from the Riemann $\zeta$-function. Shifting the integration contour to the left, we have
\[\Lcyr_{N+1}(s)=\frac{\Lcyr_N(s-1)\Gm(s-1)\Gm(1)}{\Gm(s)}+\frac{1}{2\pi\sqrt{-1}}\frac{1}{\Gm(s)}\int_{\La_{c'}}\Lcyr_{N}(s-\tau)\Gm(s-\tau)\Gm(\tau)\zeta(\tau){\rm d}\tau,
\]where $0<c'<1$ and ${\rm Re}(s)>N+c'$. As $s\to N+1$ with ${\rm Re}(s)>N+1$, the first term has a logarithmic singularity, coming form the singularity of $\Lcyr_N(s)$ at $s=N$, whereas the second term remains regular. Moreover, the function $\Lcyr_{N+1}(s)$ inherits the logarithmic singularities of $\Lcyr_N(s)$ in the half-plane $\{s\in\C\colon {\rm Re}(s)>\bar{\si}+N-2\}$, shifted by 1 to the right. The claim follows.
\endproof

\begin{rem}
The asymptotic behaviour of \(\Lcyr_N(s)\) can also be derived directly from the convolution formula \eqref{convint1} linking \(\Lcyr_2\) and \(\Lcyr_N\), rather than from the recursive relation between \(\Lcyr_N\) and \(\Lcyr_{N+1}\). However, in this case the analysis is technically more involved: as \(s \to N^+\), the pole of the Hurwitz zeta factor \(\zeta(s-t-N+3,\,N-2)\) at \(t = s-N+2\) collides with the branch point of \(\Lcyr_2(t)\) at \(t = 2\). This pole--branch cut interaction requires an explicit treatment of the Hankel contour contribution, in addition to the residue, in order to recover the correct coefficient in the asymptotic expansion.\qrem
\end{rem}

\begin{cor}\label{coRH}
The following statements are equivalent:
\begin{itemize}
\item (RH) all non-trivial zeroes of the Riemann zeta functions $\zeta(s)$ satisfy ${\rm Re}(s)=\frac{1}{2}$;
\item for any $N\geq 2$, the derivative $\Lcyr_N'(s)$ extends, by analytic continuation, to a meromorphic function in the half-plane ${\rm Re}(s)>N-\frac{1}{2}$ with a single pole of order one at $s=N$.\qed
\end{itemize}
\end{cor}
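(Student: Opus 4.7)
The strategy is to prove the two implications separately on the basis of Theorem~\ref{mthLcyrN}. For the forward direction I would do a transparent book-keeping of the set $Z_N$ under RH. A point $s=\rho/k+N-1\in Z_N$ lies in ${\rm Re}(s)>N-\tfrac12$ exactly when ${\rm Re}(\rho/k)>-\tfrac12$, and the three cases are: the pole $\rho=1$ contributes $\tfrac{1}{k}+N-1>N-\tfrac12$ only for $k=1$, giving $s=N$; the trivial zeros $\rho=-2m$ produce real parts strictly below $N-1$; and under RH the non-trivial zeros give ${\rm Re}(\rho/k)=\tfrac{1}{2k}\leq\tfrac12$, with equality only when $k=1$, placing these points on the excluded boundary line ${\rm Re}(s)=N-\tfrac12$. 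Hence $Z_N\cap\{{\rm Re}(s)>N-\tfrac12\}=\{N\}$. By Theorem~\ref{mthLcyrN} the function $\Lcyr_N(s)$ is therefore holomorphic on $\{{\rm Re}(s)>N-\tfrac12\}\setminus\{N\}$ up to a logarithmic monodromy at $s=N$, and locally admits the expansion
\[
\Lcyr_N(s)=\frac{1}{(N-1)!}\log\!\Bigl(\frac{1}{s-N}\Bigr)+h_N(s),
\]
with $h_N$ single-valued (the log absorbs all monodromy) and holomorphic at $s=N$. Differentiating,
\[
\Lcyr_N'(s)=-\frac{1}{(N-1)!\,(s-N)}+h_N'(s),
\]
which is single-valued meromorphic on $\{{\rm Re}(s)>N-\tfrac12\}$ with one simple pole at $s=N$.

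For the converse direction I would simply specialise the second bullet to $N=2$: the resulting statement that $\Lcyr_2'(s)$ is meromorphic on $\{{\rm Re}(s)>\tfrac32\}$ with a single order-one pole at $s=2$ is exactly the characterisation of RH via $\Lcyr_2$ proved in \cite{Cot22} and recalled in the paragraph containing \eqref{asymL2} and \eqref{barsi}. This short reduction exploits the fact that the second bullet is phrased as a universal statement over all $N\geq 2$.

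The main obstacle, in my view, is justifying that the asymptotic of Theorem~\ref{mthLcyrN} is strong enough for the remainder $h_N$ to be holomorphic at $s=N$, and not merely $o$ of the leading log, so that $\Lcyr_N'$ genuinely has an order-one pole there and nothing worse. I would handle this by induction on $N$, using the decomposition $\Lcyr_{N+1}(s)=\Lcyr_N(s-1)/(s-1)+r_N(s)$ obtained in the proof of Theorem~\ref{mthLcyrN} by shifting the contour of \eqref{receqLcyrN} past $\tau=1$: the factor $1/(s-1)$ evaluated at $s=N+1$ equals $1/N$, so the inductive leading term $\tfrac{1}{(N-1)!}\log(1/(s-1-N))$ gets rescaled to $\tfrac{1}{N!}\log(1/(s-N-1))$, while the value $h_N(N)/N$ and the regular integral $r_N(s)$ at $s=N+1$ combine into a holomorphic correction $h_{N+1}(s)$. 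The base case $N=2$ is the precise $O(1)$ expansion \eqref{asymL2} from \cite{Cot22}, and the inductive propagation of the exact logarithmic principal part closes the argument.
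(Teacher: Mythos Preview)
Your overall strategy is exactly what the paper has in mind: the corollary is stated with a bare \qed, so the paper treats it as immediate from Theorem~\ref{mthLcyrN} together with the $N=2$ equivalence recalled from \cite{Cot22}. Your backward direction (specialise to $N=2$) and your bookkeeping of $Z_N$ under RH are both correct and match the paper's implicit reasoning.

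The gap is in your inductive step for the forward direction. You argue that in
\[
\Lcyr_{N+1}(s)=\frac{\Lcyr_N(s-1)}{s-1}+r_N(s)
\]
one may replace the factor $1/(s-1)$ by its value $1/N$ at $s=N+1$, so that the leading term becomes $\tfrac{1}{N!}\log\tfrac{1}{s-N-1}$ with a holomorphic remainder $h_{N+1}$. But $1/(s-1)$ is not constant, and the piece you discard,
\[
\Bigl(\frac{1}{(s-1)(N-1)!}-\frac{1}{N!}\Bigr)\log\frac{1}{s-N-1}
= -\,\frac{s-N-1}{N!\,(s-1)}\,\log\frac{1}{s-N-1},
\]
is $o(1)$ yet still carries the branch point, so $h_{N+1}$ as you define it is not holomorphic. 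Equivalently, since $r_N$ is single-valued on $\{\mathrm{Re}(s)>N+c'\}$, the monodromy of $\Lcyr_{N+1}$ around $s=N+1$ equals $\tfrac{1}{s-1}$ times that of $\Lcyr_N$ around $s=N$; starting from the constant $-2\pi i$ for $N=2$ this produces the \emph{non-constant} monodromy $-2\pi i/(s-1)$ for $N=3$, and in general $-2\pi i\big/\prod_{j=1}^{N-2}(s-j)$. Differentiating the recursion directly,
\[
\Lcyr_{N+1}'(s)=\frac{\Lcyr_N'(s-1)}{s-1}-\frac{\Lcyr_N(s-1)}{(s-1)^2}+r_N'(s),
\]
the middle term retains the logarithmic singularity of $\Lcyr_N(s-1)$ at $s=N+1$ and is not cancelled by the other two. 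So your induction does not close as written: subtracting any constant multiple of $\log(s-N-1)$ cannot make $\Lcyr_{N+1}$ single-valued for $N\ge 2$, and an additional mechanism (or a different normal form for the principal part) is needed to obtain genuine meromorphy of $\Lcyr_N'$ for $N\ge 3$.
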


\begin{cor}
We have
\begin{align}
\label{estimate1}
\sum_{n\leq x}\lcyr(n,N)&\sim\frac{1}{N!}\frac{x^N}{\log x},\quad x\to +\infty,& & N\geq 2,\\
\label{estimate2}
\lcyr(n,N)&\sim\frac{1}{(N-1)!}\frac{n^{N-1}}{\log n},& &N\geq 3.
\end{align}
\end{cor}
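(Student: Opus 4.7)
The plan is to combine Theorem \ref{mthLcyrN} with a Tauberian theorem of Delange type to obtain \eqref{estimate1}, and then deduce \eqref{estimate2} from \eqref{estimate1} via the Pascal telescoping identity of Corollary \ref{cor:pascal}. The key observation is that Theorem \ref{mthLcyrN} provides exactly the analytic input required: $\Lcyr_N(s)$ has non-negative coefficients, converges absolutely on $\{{\rm Re}(s)>N\}$ (by $\lcyr(n,N)=O(n^{N-1})$), extends holomorphically to the line ${\rm Re}(s)=N$ punctured at $s=N$, and near $s=N$ with ${\rm Re}(s)>N$ satisfies
$$\Lcyr_N(s)=\frac{1}{(N-1)!}\log\!\left(\frac{1}{s-N}\right)+O(1).$$

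For \eqref{estimate1}, I would invoke the Tauberian theorem of H.\,Delange (see e.g.\ Tenenbaum, \textit{Introduction to Analytic and Probabilistic Number Theory}, Ch.\,II.7), which states that for a Dirichlet series $\sum_n a_n n^{-s}$ with $a_n\geq 0$, absolutely convergent on $\{{\rm Re}(s)>\sigma_0\}$, continuously extendable to $\{{\rm Re}(s)\geq\sigma_0\}\setminus\{\sigma_0\}$, and satisfying a local expansion $A\log(1/(s-\sigma_0))+g(s)$ near $s=\sigma_0$ with $g$ regular, one has $\sum_{n\leq x}a_n\sim (A/\sigma_0)\cdot x^{\sigma_0}/\log x$. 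Applying this with $\sigma_0=N$ and $A=1/(N-1)!$ yields
$$\sum_{n\leq x}\lcyr(n,N)\sim\frac{1}{(N-1)!}\cdot\frac{1}{N}\cdot\frac{x^N}{\log x}=\frac{1}{N!}\cdot\frac{x^N}{\log x}.$$

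For \eqref{estimate2}, I would exploit that \eqref{eq:sum} in Corollary \ref{cor:pascal}, after relabeling ($N\mapsto N-1$, $n\mapsto n-1$), reads
$$\lcyr(n,N)=\sum_{k=N-1}^{n-1}\lcyr(k,N-1),\qquad N\geq 3.$$
Since $N-1\geq 2$, the estimate \eqref{estimate1} applied to $\lcyr(\cdot,N-1)$ gives
$$\lcyr(n,N)=\sum_{k\leq n-1}\lcyr(k,N-1)\sim\frac{1}{(N-1)!}\cdot\frac{(n-1)^{N-1}}{\log(n-1)}\sim\frac{1}{(N-1)!}\cdot\frac{n^{N-1}}{\log n}.$$

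\textbf{Main obstacle.} The principal technical point is the invocation of the correct Tauberian theorem: the classical Wiener--Ikehara result would be inadequate here, since $\Lcyr_N(s)$ has a logarithmic singularity rather than a simple pole. Delange's refinement is tailored precisely to this situation and converts a singularity of type $A\log(1/(s-\sigma_0))$ into the asymptotic $(A/\sigma_0)\cdot x^{\sigma_0}/\log x$ for the partial sums. All hypotheses of this theorem are delivered by Theorem \ref{mthLcyrN}, so no extra analytic work is needed. Once \eqref{estimate1} is secured, the pointwise asymptotic \eqref{estimate2} comes essentially for free: the rigidity of the Pascal recursion expresses $\lcyr(n,N)$ itself as a summatory function of the neighbouring column, so the Tauberian estimate on partial sums immediately translates into an estimate on individual entries. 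This is why the restriction $N\geq 3$ appears in \eqref{estimate2} but not in \eqref{estimate1}, as the recursion requires $N-1\geq 2$.
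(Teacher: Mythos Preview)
Your proposal is correct and follows essentially the same approach as the paper: the paper's own proof invokes the Ikehara--Delange Tauberian theorem (citing \cite{Del54} and \cite{Ten15}) applied to the asymptotic \eqref{asymLN} to obtain \eqref{estimate1}, and then derives \eqref{estimate2} from the Pascal rule \eqref{eq:sum}. Your write-up is in fact more detailed than the paper's terse two-line argument, and your explanation of why $N\geq 3$ is required in \eqref{estimate2} but not in \eqref{estimate1} is exactly right.
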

\proof
Equation \eqref{estimate1} follows from the asymptotic estimate \eqref{asymLN} and from the Ikehara--Delange Tauberian theorem. See \cite[Thm.\,IV]{Del54}\cite[pag.\,350]{Ten15}. Finally, the Pascal rule \eqref{eq:sum} implies equation \eqref{estimate2}.
\endproof

\begin{cor}\label{cordensity}
For each $N\geq 3$, the set of $N$-legnth partial flag varieties admitting at least one non-exceeding semiclassical spectrum is of density zero, that is
\[\lim_{n\to\infty} \frac{\#\left\{\parbox{8cm}{
\centering
$F_{\bm\la}$, with $\bm\la\in\Z^N_{>0}$ such that $\sum_{a=1}^N\la_a\leq n$, admitting {\it at least} one non-exceeding semiclassical spectrum 
}
\right\}}{\#\{F_{\bm\la}\text{, with $\bm\la\in\Z^N_{>0}$ such that $\sum_{a=1}^N\la_a\leq n$}\}}=0.
\]
\end{cor}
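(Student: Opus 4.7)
The plan is to reduce the corollary to the two asymptotic estimates already obtained, namely \eqref{estimate1}--\eqref{estimate2}, combined with the Pascal-type identity \eqref{eq:sum} from Corollary~\ref{cor:pascal} and the elementary count of compositions recalled in Remark~\ref{numbercomposition}.

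First, I would compute the denominator exactly. By Remark~\ref{numbercomposition}, the number of compositions of a fixed integer $k\geq N$ into $N$ positive parts equals $\binom{k-1}{N-1}$. Summing over $k=N,\dots,n$ and applying the hockey-stick identity,
\[
\#\bigl\{F_{\bm\la}\colon\bm\la\in\Z^N_{>0},\ |\bm\la|\leq n\bigr\}
=\sum_{k=N}^{n}\binom{k-1}{N-1}=\binom{n}{N}\sim \frac{n^N}{N!},\qquad n\to\infty.
\]

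Next, I would address the numerator. By definition this is $\sum_{k=N}^{n}\lcyr(k,N)$, and there are two equivalent routes. The cleanest is to apply \eqref{estimate1} directly, giving
\[
\sum_{k=N}^{n}\lcyr(k,N)\sim \frac{1}{N!}\frac{n^N}{\log n},\qquad n\to\infty.
\]
Alternatively, one may invoke the Pascal rule \eqref{eq:sum} to rewrite the partial sum as $\lcyr(n+1,N+1)$ and then use \eqref{estimate2} for the shifted index $N+1\geq 4$, obtaining the same equivalent. The hypothesis $N\geq 3$ ensures that \eqref{estimate2} is available when applied with $N+1$, avoiding the borderline case.

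Finally, taking the ratio of the two asymptotic expressions yields
\[
\frac{\sum_{k=N}^{n}\lcyr(k,N)}{\binom{n}{N}}
\sim \frac{\dfrac{1}{N!}\dfrac{n^N}{\log n}}{\dfrac{n^N}{N!}}
=\frac{1}{\log n}\longrightarrow 0,\qquad n\to\infty,
\]
which is precisely the claim. There is no real obstacle here: the entire argument is a two-line consequence of the much harder asymptotic \eqref{estimate1} (itself relying on the Ikehara--Delange Tauberian theorem applied to the logarithmic singularity of $\Lcyr_N(s)$ at $s=N$ described in Theorem~\ref{mthLcyrN}) together with the elementary hockey-stick count. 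The only minor point deserving comment is why the density statement is \emph{not} a tautology: the numerator grows like $n^N/\log n$, which is of the same polynomial order $n^N$ as the denominator but attenuated by the logarithmic factor coming from the prime-number-theoretic nature of the singularity of $\Lcyr_N$.
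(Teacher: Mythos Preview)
Your proof is correct and follows essentially the same approach as the paper: compute the denominator via the hockey-stick identity $\sum_{k=N}^n\binom{k-1}{N-1}=\binom{n}{N}\sim n^N/N!$, invoke \eqref{estimate1} for the numerator, and take the ratio to obtain $1/\log n\to 0$. The paper's own proof is exactly this two-line computation, without the alternative route via \eqref{eq:sum} and \eqref{estimate2} that you also mention.
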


\proof
The number of partial flag varieties parametrizing $N$-length flags of subspaces of $\C^k$, with $k\leq n$, equals
\[\sum_{k=N}^n\binom{k-1}{N-1}=\binom{n}{N}\sim \frac{n^N}{N!},\quad n\to\infty.
\]Hence, we have
\[\frac{\sum_{k=N}^n\lcyr(k,N)}{\sum_{k=N}^n\binom{k-1}{N-1}}\sim \frac{1}{\cancel{N!}}\cdot\frac{\cancel{n^N}}{\log n}\cdot\frac{\cancel{N!}}{\cancel{n^N}}=\frac{1}{\log n}\to 0.\tag*{\qed}
\]

\begin{rem}
We conclude our analysis of generating functions with a remark on the double Dirichlet series 
\begin{equation}\label{dDs}
\Lcyr(s_1,s_2) = \sum_{1 < N \leq n < \infty} \frac{\lcyr(n,N)}{n^{s_1} N^{s_2}}.
\end{equation}
By Theorem~\ref{mthLcyrN}, for each fixed \(N \geq 2\), the Dirichlet series
\[
\sum_{n=N}^\infty \frac{\lcyr(n,N)}{n^{s_1}}
\]
has abscissa of absolute convergence \(\sigma_a = N\). Therefore, for any fixed \((s_1^o,s_2^o) \in \mathbb{C}^2\) there exists \(N > \mathrm{Re}(s_1^o)\) such that the above series in \(n\) diverges absolutely. It follows that the double series \eqref{dDs} cannot converge absolutely at \((s_1^o,s_2^o)\). Since the choice of \((s_1^o,s_2^o)\) was arbitrary, we conclude that $\Lcyr(s_1,s_2)$ is nowhere absolutely convergent in \(\mathbb{C}^2\), and must be regarded as a purely formal Dirichlet series.\qrem
\end{rem}

\subsection{The extremal case $N=1$
} Partial flag varieties $F_{(n)}$, $n\geq 1$, of length $N=1$ are all isomorphic to a point. Their quantum cohomology coincides with their classical cohomology algebra $H^\bullet(F_{(n)},\C)\cong \C$. 

Consequently, the operator of (quantum) multiplication by $c_1(F_{(n)})=0$ is identified with the trivial morphism $0\colon\C\to\C$. Its spectrum is the singleton $\{0\}$.

Being fairly natural to consider the singleton $\{0\}$ as non-exceeding, we set 
\[\lcyr(n,1)=1,\quad n\geq 1.
\]
In this way, we are naturally led to introduce the generating functions
\[\Lcyrit_1(z)=\sum_{n=1}^\infty \lcyr(n,1)z^n=\frac{z}{1-z},\qquad \Lcyr_1(s)=\sum_{n=1}^\infty \frac{\lcyr(n,1)}{n^s}=\zeta(s).
\]
In terms of these functions, equations \eqref{HaL},  take the form
\[\Lcyrit_N(z)=\Lcyrit_2(z)\Lcyrit_1(z)^{N-2},\qquad \Lcyrit_{N+1}(z)=\Lcyrit_N(z)\Lcyrit_1(z),\qquad N\geq 2,
\]
\[\sum_{1\leq N\leq n}\lcyr(n,N)z_1^nz_2^N=\Lcyrit_1(z_1)z_2+\Lcyrit_2(z_1)\frac{z_2^2(1-z_1)}{1-z_1(1+z_2)}.
\]Similarly, if we set $\Hat\Lcyr_N(s):=\Gm(s)\Lcyr_N(s)$ for $N\geq 1$, equation \eqref{receqLcyrN} takes the form
\[\Hat\Lcyr_{N+1}(s)=\left(\Hat\Lcyr_N*\Hat\Lcyr_1\right)(s),\quad N\geq 2,
\]where $*$ is the convolution product along a vertical line contained in the common domain of holomorphy of $\Lcyr_1(s)$ and $\Lcyr_N(s)$.

\subsection{Eventual polynomiality of $N\mapsto\lcyr(N+k,N)$}\label{polcyrsec} From the identification of $\lcyr(n,N)$ as counting numbers of suitable compositions of $n$, it is easy to prove that
\[\lcyr(N,N)=1,\qquad \lcyr(N+1,N)=N,\quad N\geq 1.
\]Consider now the sequence $a(N)=\lcyr(N+2,N)$: the first values of\,\footnote{Here $\Dl=\Dl_N$ is the forward difference operator $\Dl a(N)=a(N+1)-a(N)$.} $\Dl^ka(N)$ with $k\geq 0$  are
\begin{align*}
a(N)&&1  &&2&&  5&&  9&&  14&&  20&&  27&&  35&&  44&&  54&&  65&&  77&&  90&&\dots\\
\Delta a(N)&& && 1&&  3&&  4&&  5&&  6&&  7&&  8&&  9&&   10&&   11&&   12&&   13&&\dots\\
\Delta^2a(N)&& && &&2  &&1&&  1&&   1&&   1&&   1&&   1&&   1&&    1&&   1&&   1&&\dots\\
\Delta^3a(N)&& && && && -1&&  0&&  0&&   0&&   0&&   0&&   0&&   0 &&   0 &&   0&&\dots\\
\Delta^4a(N)&& && && && &&  1&&  0&&   0&&   0&&   0&&   0&&   0&&   0 &&   0&&\dots
\end{align*}  
If we assume that in fact $\Delta^2a(N)=1$ for all $N\geq 2$, then Newton’s forward interpolation formula gives 
\begin{align*}
a(N)\overset{?}{=}&\,a(2)\binom{N-2}{0}+\Delta a(2)\binom{N-2}{1}+\Delta^2 a(2)\binom{N-2}{2}\\
=&\, 2+3(N-2)+\binom{N-2}{2}
=\binom{N+1}{2}-1
=\, \frac{N^2+N-2}{2},\quad N\geq 2.
\end{align*}  
A similar analysis for the sequence $a(N)=\lcyr(N+3,N)$ leads to the formula 
\[\lcyr(N+3,N)\overset{?}{=}\frac{N^3+3N^2-4N+12}{6},\quad N\geq 2,
\]provided that $\Dl^3a(N)=1$ for all $N\geq 2$.

In both cases the data strongly suggest that the sequences $\lcyr(N+2,N)$ and $\lcyr(N+3,N)$ exhibit \emph{eventual polynomiality}\footnote{ By eventual polynomiality we mean that a sequence coincides with a polynomial function for all sufficiently large values of~$N$.}, with quadratic and cubic closed forms respectively. 

This is indeed a general true fact, which holds for $\lcyr(N+k,N)$ for any $k\geq 0$. This is a simple consequence of Pascal rules and its derived identities found in Section \ref{OGFPascal}.

\begin{thm}\label{thmpolylcyr}
For any $k\geq 0$, the sequence $N\mapsto \lcyr(N+k,N)$ is eventually polynomial. Namely, there exists a polynomial $P_k(n)\in\Q[n]$, of degree $\deg P_k=k$, such that $\lcyr(N+k,N)=P_k(N)$ for any $N\geq 2$. Moreover, $P_k(n)$ admits the explicit binomial form 
\beq\label{polyPlcyr} P_k(n)=\sum_{j=0}^{k}\lcyr(k+2,2+j)\binom{n-2}{j}.
\eeq
\end{thm}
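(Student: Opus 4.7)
The plan is to argue by induction on $k$, combined with Newton's forward difference formula. I will set $a_k(N):=\lcyr(N+k,N)$ for $N\geq 2$, and begin by applying the Pascal identity \eqref{pascal} with $n=N+k$ to extract the fundamental recursion
\[
a_k(N+1)=\lcyr(N+k+1,N+1)=\lcyr(N+k,N)+\lcyr(N+k,N+1)=a_k(N)+a_{k-1}(N+1),
\]
valid for $N\geq 2$, $k\geq 1$. Equivalently, $\Delta a_k=a_{k-1}(\,\cdot\,+1)$, where $\Delta$ denotes the forward difference operator in $N$. The base case $k=0$ is immediate: for $\bm\lambda=(1,\dots,1)$ the condition \eqref{primerelation} of Theorem \ref{THMSEMICLSPEC} fails at every index, so the complete flag variety has only non-exceeding semiclassical spectra, yielding $a_0(N)=\lcyr(N,N)=1=P_0(N)$.

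Assuming inductively that $a_{k-1}$ is polynomial of degree $k-1$ on $N\geq 2$, the recursion shows that $\Delta a_k$ is polynomial of degree $k-1$, forcing $a_k$ itself to be polynomial of degree exactly $k$. To identify it explicitly, I will iterate the recursion to obtain
\[
\Delta^{j}a_k(N)=a_{k-j}(N+j),\qquad 0\leq j\leq k,
\]
so that $\Delta^{j}a_k(2)=a_{k-j}(2+j)=\lcyr(k+2,2+j)$. Newton's forward interpolation formula based at $N=2$ then produces
\[
a_k(N)=\sum_{j=0}^{k}\Delta^{j}a_k(2)\binom{N-2}{j}=\sum_{j=0}^{k}\lcyr(k+2,2+j)\binom{N-2}{j},
\]
which is precisely \eqref{polyPlcyr}. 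The top term ($j=k$) contributes $\lcyr(k+2,k+2)\binom{N-2}{k}=\binom{N-2}{k}$ (a complete flag contributes $1$ by the base case), confirming $\deg P_k=k$.

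No genuine obstacle is expected: the entire argument reduces to the Pascal structure of the double sequence $\lcyr$ already established in Corollary \ref{cor:pascal}. The only bookkeeping is to verify that each application of \eqref{pascal} stays in its valid range $n\geq N\geq 2$, which is automatic along the diagonals $n=N+k$ with $N\geq 2$, $k\geq 0$. This also clarifies why the analogous statements for $\tlcyr$ and $\ell$ are substantially more delicate: those sequences do not satisfy a Pascal-type recursion, and their eventual polynomiality must be extracted from a genuinely different combinatorial mechanism.
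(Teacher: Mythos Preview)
Your proof is correct and follows essentially the same strategy as the paper (Pascal rule $+$ Newton forward interpolation), but your organization is cleaner: by iterating the single-step recursion $\Delta a_k(N)=a_{k-1}(N+1)$ you obtain $\Delta^{j}a_k(N)=a_{k-j}(N+j)$ in one stroke, which simultaneously yields $\Delta^{k}a_k(N)=a_0(N+k)=1$ and $\Delta^{j}a_k(2)=\lcyr(k+2,2+j)$. The paper instead first establishes $\Delta^{k}a(N)=1$ via the forward-binomial identity \eqref{eq:forward-binomial} and the cancellation Corollary~\ref{magiccancelcor}, and only afterwards proves the general difference formula $\Delta^{j}a(m)=\lcyr(m+k,m+j)$ by induction---a step which, as your argument shows, already contains the former as the special case $j=k$.
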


\proof
Set $a(N)=\lcyr(N+k,N)$. We claim that $\Dl^k a(N)=1$ for any $N\geq 2$, and the result will follow from Newton's forward interpolation formula. We have
\begin{multline*}\Dl^ka(N)=\sum_{j=0}^k(-1)^j\binom{k}{j}a(N+k-j)=\sum_{j=0}^k(-1)^j\binom{k}{j}\lcyr(N+2k-j,N+k-j)\\
=\lcyr(N+2k,N+k)+\sum_{j=1}^k(-1)^j\binom{k}{j}\lcyr(N+2k-j,N+k-j).
\end{multline*}
Applying \eqref{eq:forward-binomial} with $n=N+k$ and $N\mapsto N+k$ yields
\begin{multline*}
\Dl^ka(N)=\sum_{\ell=0}^k\binom{k}{\ell}\lcyr(N+k,N+k-\ell)+\sum_{j=1}^k(-1)^j\binom{k}{j}\lcyr(N+2k-j,N+k-j)\\
=\lcyr(N+k,N+k)=1,
\end{multline*}
where we invoke Corollary \ref{magiccancelcor} to simplify the sum. Hence, the Newton's forward interpolation formula implies the existence of $P_k(n)$, explicitly given by
\[P_k(n)=\sum_{j=0}^k\Dl^ja(2)\binom{n-2}{j},\quad \Dl^0a(2)=a(2)=\lcyr(k+2,2).
\]We claim that $\Dl^ja(2)=\lcyr(k+2,j+2)$ for any $j\geq 0$, with the clear convention $\lcyr(n,N)=0$ of $N>n$. From this, equation \eqref{polyPlcyr} follows. 

Let us prove then the more general identity $\Dl^ja(m)=\lcyr(m+k,m+j)$ for any $j\geq 0$ and $m\geq 2$, by induction on $j$. If $j=0$, we have $\Dl^0a(m)=a(m)=\lcyr(m+k,m)$, true by definition of $a$. Assume the identity holds for $j$: we have
\begin{multline*}
\Dl^{j+1}a(m)=\Dl^ja(m+1)-\Dl^ja(m)\\
=\lcyr(m+k+1,m+j+1)-\lcyr(m+k,m+j)=\lcyr(m+k,m+j+1),
\end{multline*}by Pascal rule \eqref{pascal}. This completes the proof.
\endproof

\begin{example}
The first polynomials $P_k(n)$ in binomial forms are
\begin{align*}
P_0(n)&=\binom{n-2}{0},\\
P_1(n)&=\binom{n-2}{1}+2\binom{n-2}{0},\\
P_2(n)&=\binom{n-2}{2}+3\binom{n-2}{1}+2\binom{n-2}{0},\\
P_3(n)&=\binom{n-2}{3}+4\binom{n-2}{2}+5\binom{n-2}{1}+4\binom{n-2}{0},\\
P_4(n)&=\binom{n-2}{4}+5\binom{n-2}{3}+9\binom{n-2}{2}+9\binom{n-2}{1}+2\binom{n-2}{0},\\
P_5(n)&=\binom{n-2}{5}+6\binom{n-2}{4}+14\binom{n-2}{3}+18\binom{n-2}{2}+11\binom{n-2}{1}+6\binom{n-2}{0},\\
&\dots
\end{align*}
\end{example}
In polynomial form, we have
\begin{multline*}
P_0(n)=1,\quad P_1(n)=n,\quad P_2(n)=\frac{n^2+n-2}{2},\quad P_3(n)=\frac{n^3+3n^2-4n+12}{6},\\
P_4(n)=\frac{k^4+6k^3-k^2+42k-96}{24},\quad P_5(n)=\frac{n^5+ 10 n^4 + 15 n^3+110 n^2- 376 n+720}{120}.\tag*{\qetr}
\end{multline*}

\section{\texorpdfstring{The double sequence \( \tlcyr(n,N)\), walks on graphs, and generating functions}
                  {The double sequence tlcyr(n,N), walks on graphs, and generating functions}}\label{sec5}

\subsection{The double sequence $\tlcyr(n,N)$} Alongside the double sequence $\lcyr(n,N)$, we consider a second sequence of potential interest. 

Define the quantity $\tilde\lcyr(n,N)$, for $2\leq N\leq n$, as follows:
\[
\tilde\lcyr(n,N):=\#\left\{\parbox{8cm}{
\centering
$F_{\bm\la}$, with $\bm\la\in\Z^N_{>0}$, $\sum_{a=1}^N\la_a=n$, admitting \emph{only} non-exceeding semiclassical spectra
}
\right\}.
\]

In other words, $\tilde\lcyr(n,N)$ counts the number of compositions $\bm\la$ of $n$ into $N$ positive integers such that, for \emph{every} $i = 1, \dots, N-1$, the following inequalities hold:
\beq\label{eq:allnotexc}
\min\{\la_i,\la_{i+1}\}<p_1(\la_i+\la_{i+1}),\quad \la_i+\la_{i+1}-p_1(\la_i+\la_{i+1})<\max\{\la_i,\la_{i+1}\}.
\eeq

It is immediate to observe that
\[
\tilde\lcyr(n,N)\leq \lcyr(n,N), \qquad \tilde\lcyr(n,2) = \lcyr(n,2), \qquad \tilde\lcyr(n,n) = \lcyr(n,n) = 1.
\]
In the following table we collect the values $\tlcyr(n,N)$ for $2\leq N\leq n\leq 18$.

\begin{center}
\small
\begin{tabular}{c|*{17}{c}}
$N \backslash n$ & 2 & 3 & 4 & 5 & 6 & 7 & 8 & 9 & 10 & 11 & 12 & 13 & 14 & 15 & 16 & 17 & 18 \\
\hline
2  & 1 & 2 & 2 & 4 & 2 & 6 & 2 & 4 & 2  & 10  & 2   & 12  & 2   & 4   & 2   & 16  & 2 \\
3  &   & 1 & 3 & 4 & 9 & 7 & 16 & 11 & 16 & 12  & 30  & 19  & 40  & 26  & 30  & 23  & 55 \\
4  &   &   & 1 & 4 & 7 & 16 & 19 & 34 & 39 & 46  & 53  & 74  & 87  & 110 & 135 & 120 & 159 \\
5  &   &   &   & 1 & 5 & 11 & 26 & 41 & 68 & 102 & 120 & 171 & 195 & 287 & 315 & 473 & 434 \\
6  &   &   &   &   & 1 & 6  & 16 & 40 & 76 & 130 & 222 & 290 & 442 & 530 & 786 & 924 & 1358 \\
7  &   &   &   &   &   & 1 & 7 & 22 & 59  & 128 & 236 & 434 & 642 & 1009 & 1355 & 1960 & 2568 \\
8  &   &   &   &   &   &   & 1 & 8 & 29  & 84  & 202 & 406 & 791 & 1306 & 2129 & 3162 & 4608 \\
9  &   &   &   &   &   &   &   & 1 & 9   & 37  & 116 & 304 & 665 & 1369 & 2475 & 4233 & 6799 \\
10 &   &   &   &   &   &   &   &   & 1   & 10  & 46  & 156 & 441 & 1044 & 2272 & 4430 & 8001 \\
11 &   &   &   &   &   &   &   &   &     & 1   & 11  & 56  & 205 & 621  & 1581 & 3638 & 7571 \\
12 &   &   &   &   &   &   &   &   &     &     & 1   & 12  & 67  & 264  & 853  & 2322 & 5646 \\
13 &   &   &   &   &   &   &   &   &     &     &     & 1   & 13  & 79   & 334  & 1147 & 3322 \\
14 &   &   &   &   &   &   &   &   &     &     &     &     & 1   & 14   & 92   & 416  & 1514 \\
15 &   &   &   &   &   &   &   &   &     &     &     &     &     & 1    & 15   & 106  & 511 \\
16 &   &   &   &   &   &   &   &   &     &     &     &     &     &      & 1    & 16   & 121 \\
17 &   &   &   &   &   &   &   &   &     &     &     &     &     &      &      & 1    & 17 \\
18 &   &   &   &   &   &   &   &   &     &     &     &     &     &      &      &      & 1 \\
\end{tabular}
\end{center}

Similarly to what was done before, we can collect the values of the double sequence $\tilde\lcyr(n,N)$ into generating functions, either of ordinary or Dirichlet type:
\[
\tLcyrit_N(z)=\sum_{n=N}^\infty \tlcyr(n,N)\, z^n, \qquad
\tLcyr_N(s)=\sum_{n=N}^\infty \frac{\tlcyr(n,N)}{n^s}.
\]

Similarly to the case of the double sequence $\lcyr(n,N)$, we naturally extend the definition for $N=1$, by setting
\[\tlcyr(n,1)=1,\quad n\geq 1.
\]

The properties of the double sequence $\tlcyr(n,N)$ appear to be more elusive than those of $\lcyr(n,N)$; for example, no obvious Pascal-type rule seems to hold. As a consequence, the analytical study of the generating functions is more challenging.

We propose the following objectives here:
\begin{itemize}
\item to describe the numbers $\tlcyr(n,N)$ and the generating functions $\tLcyrit_N(z)$ in terms of walks on suitable graphs (Theorem \ref{thmgenfuncLtilde}, Corollary \ref{corltildegraph});
\item to deduce an eventual polynomial property for the sequence $N\mapsto\tlcyr(N+k,N)$ for any $k\geq 0$ (Theorem \ref{thmpolytlcyr}).
\end{itemize}

\subsection{Graphs, transfer matrices, generating functions}\label{secgraphGm} Fix $\sf m\in\N_{>0}$. Let $\Gm_{\sf m}$ be the oriented graph with vertices $\{1,\dots,\sf m\}$, and arrows as follows: there is an arrow $(i,j)$ if and only if 
\[\min\{i,j\}<p_1(i+j),\quad \text{and}\quad i+j-p_1(i+j)<\max\{i,j\}.
\]

\begin{lem}\label{symGmm}
The graph $\Gm_{\sf m}$ is symmetric: if there is an arrow $(i,j)$, then there is also the arrow $(j,i)$. Moreover, the only loop $(i,i)$ occurs for $i=1$.\qed
\end{lem}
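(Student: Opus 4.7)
The plan is to read off both assertions directly from the defining inequalities, which are manifestly invariant under swapping $i$ and $j$.

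For the symmetry claim, I would simply observe that the two conditions defining an arrow $(i,j)$ in $\Gamma_{\sf m}$, namely
\[
\min\{i,j\}<p_1(i+j),\qquad i+j-p_1(i+j)<\max\{i,j\},
\]
involve only the quantities $\min\{i,j\}$, $\max\{i,j\}$, and $i+j$, each of which is symmetric in $i$ and $j$. Hence the conjunction of these conditions is preserved under the transposition $(i,j)\mapsto(j,i)$, which immediately yields the symmetry of $\Gamma_{\sf m}$.

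For the loop claim, I would specialize the two inequalities to $j=i$. They become $i<p_1(2i)$ and $2i-p_1(2i)<i$, the second of which is equivalent to the first. Since $2i$ is even for every $i\geq 1$, we have $p_1(2i)=2$, so the condition $i<p_1(2i)$ reduces to $i<2$, forcing $i=1$. Conversely, for $i=1$ both inequalities are satisfied: $1<2=p_1(2)$ and $2-2=0<1$. Thus $(1,1)$ is the unique loop in $\Gamma_{\sf m}$.

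There is no genuine obstacle here: the lemma is purely a restatement of the symmetry in $i,j$ of the defining conditions, together with the elementary observation that the smallest prime divisor of any even integer is $2$. The only subtlety worth flagging explicitly in the write-up is the reduction of the two diagonal inequalities to the single inequality $i<p_1(2i)$, which makes the argument transparent.
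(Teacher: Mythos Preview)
Your proof is correct and is exactly the intended argument: the paper states the lemma with an immediate \qed, treating it as self-evident, and your write-up simply unpacks the symmetry of the defining conditions and the observation that $p_1(2i)=2$ forces $i=1$.
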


Let $M^{[\sf m]}$ be the $\sf m\times \sf m$ adjacency matrix of $\Gm_{\sf m}$, so that $M^{[\sf m]}_{ij}=1$ if there is the arrow $(i,j)$, and $M^{[\sf m]}_{ij}=0$ otherwise. By Lemma \ref{symGmm}, $(M^{[\sf m]})^T=M^{[\sf m]}$, and $M^{[\sf m]}_{ii}\neq 0$ if and only if $i=1$.

We assign now a monomial weight to each vertex: the vertex $j$ is assigned the weight $z^j$. 
This information is encoded in the {\it transfer matrix} $T^{[\sf m]}(z)$, defined by
\beq\label{eq:trmatr}
T^{[\sf m]}(z)_{ij}=M^{[\sf m]}_{ij}z^j,\quad i,j=1,\dots,\sf m.
\eeq

\begin{example}
For $\sf m=5$ and $\sf m=10$ respectively, we have 
\[M^{[5]}=\left(
\begin{array}{ccccc}
 1 & 1 & 1 & 1 & 1 \\
 1 & 0 & 1 & 0 & 1 \\
 1 & 1 & 0 & 1 & 0 \\
 1 & 0 & 1 & 0 & 0 \\
 1 & 1 & 0 & 0 & 0 \\
\end{array}
\right),\qquad T^{[10]}(z)=\left(
\begin{array}{cccccccccc}
 z & z^2 & z^3 & z^4 & z^5 & z^6 & z^7 & z^8 & z^9 & z^{10} \\
 z & 0 & z^3 & 0 & z^5 & 0 & z^7 & 0 & z^9 & 0 \\
 z & z^2 & 0 & z^4 & 0 & 0 & 0 & z^8 & 0 & z^{10} \\
 z & 0 & z^3 & 0 & 0 & 0 & z^7 & 0 & z^9 & 0 \\
 z & z^2 & 0 & 0 & 0 & z^6 & 0 & z^8 & 0 & 0 \\
 z & 0 & 0 & 0 & z^5 & 0 & z^7 & 0 & 0 & 0 \\
 z & z^2 & 0 & z^4 & 0 & z^6 & 0 & 0 & 0 & z^{10} \\
 z & 0 & z^3 & 0 & z^5 & 0 & 0 & 0 & z^9 & 0 \\
 z & z^2 & 0 & z^4 & 0 & 0 & 0 & z^8 & 0 & z^{10} \\
 z & 0 & z^3 & 0 & 0 & 0 & z^7 & 0 & z^9 & 0 \\
\end{array}
\right).
\]\qetr
\end{example}

Introduce the column vectors $v_{\sf m}(z)=(z,z^2,\dots,z^{\sf m})^T$, and ${\bf 1}_{\sf m}=(1,\dots,1)^T$.

For each $N\geq 1$, define the generating function 
\beq\label{euLN} \eu L^{[\sf m]}_{N}(z):=v_{\sf m}(z)^T\cdot T^{[\sf m]}(z)^{N-1}\cdot\bf 1_{\sf m}.
\eeq

For each walk\footnote{A walk is a finite sequence of composable arrows which joins a sequence of vertices.} $\si$ in $\Gm_{\sf m}$, define the monomial ${\sf w}(\si):=\prod_{i\text{ node of }\si}z^i$, and denotes its length by $|\si|:=\#\{\text{arrows in }\si\}=\#\{\text{nodes of $\si$}\}-1$. 

\begin{example}Consider the walk $\si=1\to 4\to 3\to 1\to 1$ in $\Gm_{4}$. We have ${\sf w}(\si)=z^{1+4+3+1+1}=z^{10}$ and $|\si|=4$. \qetr
\end{example}

\begin{prop}
The function $\eu {L}^{[{\sf m}]}_N(z)$ is the polynomial in $z$ given by
\beq\label{euLN1}
\mathcal{L}^{[{\sf m}]}_N(z) = \sum_{i_1,\dots,i_N=1}^{\sf m} z^{i_1+\dots+i_N} \prod_{k=1}^{N-1} M^{[{\sf m}]}_{i_k,i_{k+1}}.
\eeq
It represents the sum of monomials ${\sf w}(\si)$ over all walks $\si$ in the graph $\Gamma_{\sf m}$, with length $|\si|=N-1$. That is
\beq\label{euLN2}
\eu {L}^{[{\sf m}]}_N(z)=\sum_{\substack{\si\text{ in }\Gm_{\sf m}\\ |\si|=N-1}}{\sf w}(\si).
\eeq
\end{prop}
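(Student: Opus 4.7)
The plan is to unwind the matrix product in \eqref{euLN} entrywise and identify each surviving term with a walk in $\Gamma_{\sf m}$.

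First, by the standard expansion of a power of a matrix, I would write
\[
\bigl(T^{[\sf m]}(z)^{N-1}\bigr)_{i_1,i_N}
=\sum_{i_2,\dots,i_{N-1}=1}^{\sf m}\prod_{k=1}^{N-1}T^{[\sf m]}(z)_{i_k,i_{k+1}}.
\]
Then I would substitute the definition \eqref{eq:trmatr} of the transfer matrix, which factorizes each entry as $T^{[\sf m]}(z)_{i_k,i_{k+1}}=M^{[\sf m]}_{i_k,i_{k+1}}\,z^{i_{k+1}}$. The $z$-weights telescope into the single monomial $z^{i_2+i_3+\dots+i_N}$, while the combinatorial factors accumulate as $\prod_{k=1}^{N-1}M^{[\sf m]}_{i_k,i_{k+1}}$.

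Next I would multiply by $v_{\sf m}(z)^T=(z,z^2,\dots,z^{\sf m})$ on the left and by $\mathbf{1}_{\sf m}$ on the right. Left multiplication introduces the missing factor $z^{i_1}$ and turns the outer index $i_1$ into a free summation index; right multiplication simply sums over $i_N$. Combining these contributions yields exactly the claimed expression \eqref{euLN1}.

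Finally, to obtain the walk interpretation \eqref{euLN2}, I would observe that, because $M^{[\sf m]}$ is the $0/1$ adjacency matrix of $\Gamma_{\sf m}$, the product $\prod_{k=1}^{N-1}M^{[\sf m]}_{i_k,i_{k+1}}$ equals $1$ precisely when the sequence of vertices $i_1\to i_2\to\dots\to i_N$ forms a valid walk $\sigma$ of length $N-1$ in $\Gamma_{\sf m}$, and $0$ otherwise. For such a walk, the monomial $z^{i_1+\dots+i_N}$ is by definition $\sf w(\sigma)$, so \eqref{euLN1} collapses to the sum over walks in \eqref{euLN2}. There is no real obstacle here: the proposition is a direct translation of the transfer-matrix formalism to the combinatorial setting, the only subtlety being the bookkeeping of the $z$-weights, which is handled cleanly by the factorization $T^{[\sf m]}(z)_{ij}=M^{[\sf m]}_{ij}\,z^j$ together with the vectors $v_{\sf m}(z)$ and $\mathbf{1}_{\sf m}$.
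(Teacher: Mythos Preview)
Your proof is correct and follows essentially the same approach as the paper: expand the defining matrix product \eqref{euLN} entrywise and identify each nonvanishing term with a walk in $\Gamma_{\sf m}$. The paper's version is terser---it simply states that \eqref{euLN1} is the expansion of \eqref{euLN}---while you spell out the bookkeeping of the $z$-weights explicitly, but the underlying argument is identical.
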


\proof
Equation \eqref{euLN1} is the expansion of the defining equation \eqref{euLN}.
Each term in  \eqref{euLN1} corresponds to a walk $\si=(i_1,\dots,i_N)$ in the graph $\Gamma_{\sf m}$, with monomial ${\sf w}(\si)=z^{i_1+\dots+i_N}$. The adjacency matrix ensures that a term in \eqref{euLN1} contributes if and only if each pair $(i_k,i_{k+1})$ satisfies the inequalities defining $\Gamma_{\sf m}$. 
\endproof

\begin{rem}
Alternatively, we can assign ``costs'' to arrows. If we assign the cost $z^j$ to the arrow $(i,j)$, and concatenation of arrows corresponds to multiplication of costs, then the walk $i_1 \to i_2 \to \dots \to i_N$ has cost $z^{i_2 + \dots + i_N}$. Then the function $\eu {L}^{[{\sf m}]}_N(z)$ equals the sum of costs over walks in $\Gm_{\sf m}$, with length $N$ and starting from 1.\qrem
\end{rem}

\begin{example}
For $N=1$, we have $\eu L^{[\sf m]}_N(z)=z+z^2+\dots+z^{\sf m}$. \qetr
\end{example}

\begin{example}\label{L3}
For $\sf m=3$, we have
\begin{align*}
\eu L^{[\sf m]}_1(z)&=z+z^2+z^3,\\
\eu L^{[\sf m]}_2(z)&=z^2+2z^3+2 z^4+2 z^5,\\
\eu L^{[\sf m]}_3(z)&=z^3+3 z^4+4 z^5+6 z^6+2 z^7+z^8,\\
\eu L^{[\sf m]}_4(z)&=z^4+4 z^5+7 z^6+12 z^7+9 z^8+6 z^9+2 z^{10}.\tag*{\qetr}
\end{align*}
\end{example}

\begin{prop}\label{limeuLN}
Let $N \ge 1$ be fixed. Then the limit
$
\eu{L}_N(z) :=\lim_{{\sf m}\to\infty} \eu {L}^{[{\sf m}]}_N(z)
$
exists as a formal power series in $z$.
\end{prop}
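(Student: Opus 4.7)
The plan is to work coefficient by coefficient. Since the natural topology on $\C[\![z]\!]$ is the $(z)$-adic one, to exhibit $\eu L_N(z)$ as a formal power series limit it suffices to show that, for each fixed $n\geq 0$, the coefficient $[z^n]\eu L^{[{\sf m}]}_N(z)$ is eventually constant in ${\sf m}$.

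By the explicit expansion \eqref{euLN1}, this coefficient counts the tuples $(i_1,\dots,i_N)\in\{1,\dots,{\sf m}\}^N$ satisfying $i_1+\dots+i_N=n$ together with $M^{[{\sf m}]}_{i_k,i_{k+1}}=1$ for $k=1,\dots,N-1$. Two elementary observations will make the stabilization automatic. First, since every $i_k\geq 1$, any contributing tuple satisfies $i_k\leq n-(N-1)\leq n$ for each $k$, so only vertices of index at most $n$ can ever appear. Second, the adjacency rule defining $M^{[{\sf m}]}_{ij}$, namely $\min\{i,j\}<p_1(i+j)$ and $i+j-p_1(i+j)<\max\{i,j\}$, depends only on the pair $(i,j)$ and not on ${\sf m}$; equivalently, for every ${\sf m}'\geq {\sf m}$ the matrix $M^{[{\sf m}]}$ coincides with the restriction of $M^{[{\sf m}']}$ to $\{1,\dots,{\sf m}\}^2$.

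Combining the two observations, for every ${\sf m}\geq n$ the set of tuples contributing to $[z^n]\eu L^{[{\sf m}]}_N$ is exactly the one obtained at ${\sf m}=n$, so the coefficient stabilizes at ${\sf m}=n$ and one may set $c_n:=[z^n]\eu L^{[n]}_N(z)$. This yields a well-defined formal power series $\eu L_N(z)=\sum_{n\geq 0}c_n z^n$, which is the desired $(z)$-adic limit of $\eu L^{[{\sf m}]}_N(z)$. No real obstacle arises here; the only point worth emphasizing is the ${\sf m}$-independence of the edge condition in $\Gamma_{\sf m}$, which is immediate from its definition preceding \eqref{eq:trmatr}, and which is crucial in identifying the walks of weight $z^n$ in all $\Gamma_{\sf m}$ (${\sf m}\geq n$) with the walks of weight $z^n$ in $\Gamma_n$.
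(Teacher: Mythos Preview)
Your proof is correct and follows essentially the same approach as the paper's: both argue coefficientwise, observing that any tuple contributing to $[z^n]$ uses only vertices of index at most $n$ and that the adjacency condition defining $\Gamma_{\sf m}$ is independent of ${\sf m}$, so the coefficient stabilizes once ${\sf m}\geq n$. Your version is in fact slightly sharper in quoting the bound $i_k\leq n-(N-1)$, but the substance is identical.
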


\begin{proof}
Recall that $
\eu {L}^{[{\sf m}]}_N(z)$
represents the sum of monomials ${\sf w}(\si)$ over all walks of length $N$ in the graph $\Gamma_{\sf m}$, where the monomial of a walk $i_1 \to i_2 \to \dots \to i_N$ is $z^{i_1 + \dots + i_N}$. 

Consider a fixed power $z^k$. Any walk contributing to $z^k$ must involve only vertices $i_j \le k$.  

For ${\sf m} \ge k$, all edges connecting vertices up to $k$ are already present in $\Gamma_{\sf m}$ and do not change as $\sf m$ increases, since the adjacency conditions depend only on the vertices themselves, not on the maximum $\sf m$. Therefore, the coefficient of $z^k$ stabilizes for sufficiently large $\sf m$.  

We can thus define the limit
$\eu{L}_N(z) = \lim_{{\sf m}\to\infty} \eu{L}^{[{\sf m}]}_N(z) = \sum_{k \ge N} a_k z^k,$
where $a_k$ counts the number of walks $\si$ of length $N$ and with ${\sf w}(\si)=z^k$ in the infinite graph $\Gamma_\infty$.  

Hence, each coefficient stabilizes for sufficiently large $\sf m$, and the limit exists as a formal power series. 
\end{proof}

\begin{thm}\label{thmgenfuncLtilde}
For any $N\geq 1$, we have 
$
\tLcyrit_N(z)=\eu L_N(z).
$
\end{thm}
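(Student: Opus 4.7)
The strategy is to exhibit a weight-preserving bijection between the compositions counted by $\tlcyr(n,N)$ and the walks of length $N-1$ in the infinite graph $\Gamma_\infty$. The proof splits naturally into three steps.

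First, I would reinterpret the defining conditions of $\tlcyr(n,N)$ in graph-theoretic terms. Unwinding Theorem~\ref{THMSEMICLSPEC}, the partial flag variety $F_{\bm\la}$ admits only non-exceeding semiclassical spectra iff, for every $i=1,\dots,N-1$, the pair $(\la_i,\la_{i+1})$ fails the inequalities \eqref{primerelation}. Equivalently -- as written in \eqref{eq:allnotexc} -- the pair satisfies $\min\{\la_i,\la_{i+1}\}<p_1(\la_i+\la_{i+1})$ and $\la_i+\la_{i+1}-p_1(\la_i+\la_{i+1})<\max\{\la_i,\la_{i+1}\}$. A small observation worth recording is that these two inequalities are in fact equivalent: setting $a=\min$, $b=\max$ and $n=a+b$, one has $a<p_1(n)$ iff $b>n-p_1(n)$. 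By definition of $\Gamma_{\sf m}$, this is precisely the condition for $(\la_i,\la_{i+1})$ to be an edge of $\Gamma_{\sf m}$ (for any ${\sf m}\geq\max_i\la_i$), and hence of $\Gamma_\infty$.

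Second, I would conclude from this that $\tlcyr(n,N)$ equals the number of walks $\sigma=(\la_1,\dots,\la_N)$ of length $N-1$ in $\Gamma_\infty$ with $\la_1+\dots+\la_N=n$. Combined with the weight assignment ${\sf w}(\sigma)=z^{\la_1+\dots+\la_N}$ in formula \eqref{euLN2}, and collecting walks by their total weight, I would get
\[
\sum_{\substack{\sigma\text{ in }\Gamma_\infty\\|\sigma|=N-1}}{\sf w}(\sigma)
=\sum_{n\geq N}\tlcyr(n,N)\,z^n=\tLcyrit_N(z).
\]

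Third, I would identify the left-hand side with $\eu L_N(z)$. By the proof of Proposition~\ref{limeuLN}, for any fixed power $z^k$ the coefficient of $z^k$ in $\eu L_N^{[\sf m]}(z)$ stabilizes, as ${\sf m}\to\infty$, to the number of walks of length $N-1$ in $\Gamma_\infty$ whose vertex sum equals $k$; passing to the limit and appealing to \eqref{euLN2} at finite ${\sf m}$ gives
\[
\eu L_N(z)=\sum_{\substack{\sigma\text{ in }\Gamma_\infty\\|\sigma|=N-1}}{\sf w}(\sigma).
\]
Combining with the previous step yields $\eu L_N(z)=\tLcyrit_N(z)$.

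The argument is essentially a matter of matching definitions, and there is no substantial obstacle; the only delicate point is the equivalence of the two inequalities in \eqref{eq:allnotexc} with the single edge condition of $\Gamma_\infty$, which is what makes the bijection clean.
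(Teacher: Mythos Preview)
Your proof is correct and follows essentially the same approach as the paper: both arguments identify the compositions counted by $\tlcyr(n,N)$ with walks of length $N-1$ in $\Gamma_\infty$ via the edge condition, and then pass to the limit ${\sf m}\to\infty$ using Proposition~\ref{limeuLN}. Your added remark that the two inequalities in~\eqref{eq:allnotexc} are equivalent (since $a<p_1(n)\iff n-a>n-p_1(n)$) is a pleasant observation the paper leaves implicit, but otherwise the structure is the same.
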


\begin{proof}
Recall the expansions \eqref{euLN1}, \eqref{euLN2} for $\mathcal{L}^{[{\sf m}]}_N(z)$.
Taking the limit ${\sf m}\to\infty$, all paths of finite vertices exist, and the coefficients of $z^n=z^{i_1+\cdots+i_N}$ stabilize. Denote
$\lambda_k := i_k,$ with $k=1,\dots, N$.
Then the contributing sequences $(\lambda_1,\dots,\lambda_N)$ are exactly the compositions of $n=\lambda_1+\dots+\lambda_N$ into $N$ positive parts satisfying
\[
\min\{\lambda_i,\lambda_{i+1}\} < p_1(\lambda_i+\lambda_{i+1}), \quad
\lambda_i+\lambda_{i+1}-p_1(\lambda_i+\lambda_{i+1}) < \max\{\lambda_i,\lambda_{i+1}\}, \quad i=1,\dots,N-1.
\]
Therefore, the limit series coincides with the generating function $\tLcyrit_N(z)$ of the numbers $\tlcyr(n,N)$.
\end{proof}

\begin{cor}\label{corltildegraph}
For any ${\sf m}\geq n-N+1$, we have 
\beq\label{eqcorltildegraph}
\tlcyr(n,N)=\sum_{\substack{\bm\la\in\Z^N_{>0}\\\la_1+\dots+\la_N=n}}\prod_{i=1}^{N-1}M^{[\sf m]}_{\la_i,\la_{i+1}}.
\eeq
\end{cor}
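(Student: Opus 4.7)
The plan is to extract the coefficient of $z^n$ from the identity $\tLcyrit_N(z) = \eu L_N(z)$ provided by Theorem \ref{thmgenfuncLtilde}, and to verify that the truncated polynomial $\eu L^{[{\sf m}]}_N(z)$ already realizes this coefficient as soon as ${\sf m}$ exceeds the natural threshold $n - N + 1$.

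First I would expand $\eu L^{[{\sf m}]}_N(z)$ via formula \eqref{euLN1}, obtaining
\[
[z^n]\,\eu L^{[{\sf m}]}_N(z) \;=\; \sum_{\substack{(i_1,\dots,i_N)\in\{1,\dots,{\sf m}\}^N\\ i_1+\dots+i_N=n}} \prod_{k=1}^{N-1} M^{[{\sf m}]}_{i_k,i_{k+1}}.
\]
The key observation is that, by construction of $\Gamma_{\sf m}$, the adjacency entry $M^{[{\sf m}]}_{i,j}$ for $i,j\leq {\sf m}$ is determined purely by the arithmetic inequalities relating $i$ and $j$, and hence does not depend on ${\sf m}$ itself. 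Therefore the nonzero contributions above stabilize as soon as ${\sf m}$ is large enough for the index range $\{1,\dots,{\sf m}\}^N$ to exhaust all compositions of $n$ into $N$ positive parts.

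To pin down the precise threshold, I would note that for any $\bm\la\in\Z^N_{>0}$ with $|\bm\la|=n$, the maximal part satisfies $\max_i \la_i \leq n-(N-1) = n-N+1$, with equality achieved when the remaining $N-1$ parts all equal $1$. Consequently, whenever ${\sf m}\geq n-N+1$, the sum above ranges exactly over compositions of $n$ into $N$ positive parts, and every such composition is admissible. Combining this observation with Theorem \ref{thmgenfuncLtilde} yields
\[
\tlcyr(n,N) \;=\; [z^n]\,\tLcyrit_N(z) \;=\; [z^n]\,\eu L^{[{\sf m}]}_N(z) \;=\; \sum_{\substack{\bm\la\in\Z^N_{>0}\\ |\bm\la|=n}}\prod_{i=1}^{N-1} M^{[{\sf m}]}_{\la_i,\la_{i+1}},
\]
which is the claimed identity \eqref{eqcorltildegraph}.

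There is essentially no serious obstacle here: the corollary is a clean repackaging of Theorem \ref{thmgenfuncLtilde} and Proposition \ref{limeuLN}. The only two points deserving explicit verification are the ${\sf m}$-independence of $M^{[{\sf m}]}_{i,j}$ once $i,j\leq {\sf m}$, and the sharpness of the bound $n-N+1$; both are elementary. The added value of the statement is that it converts an abstract limit of formal power series into a finite, effectively computable matrix-based formula, valid for every sufficiently large ${\sf m}$.
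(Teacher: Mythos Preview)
Your proof is correct and follows essentially the same approach as the paper. The paper's own proof is a single sentence noting that any composition $\bm\la\in\Z^N_{>0}$ of $n$ has parts $\le n-(N-1)$, implicitly relying on Theorem~\ref{thmgenfuncLtilde} and the stabilization argument of Proposition~\ref{limeuLN}; you have simply spelled out those implicit steps in full.
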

\proof
Notice that any composition $\bm\la\in\Z^N_{>0}$ of $n$ has parts $\leq n-(N-1)$ (the extremal case in which $N-1$ parts equal 1).
\endproof

\begin{cor}\label{congrgenfun0}
For any $N,{\sf m}\geq 1$, we have 
\[\tLcyrit_N(z)\equiv \eu L^{[\sf m]}_N(z)\mod(z^{{\sf m}+N}).\tag*{\qed}
\]
\end{cor}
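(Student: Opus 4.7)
The plan is to show the congruence by matching coefficients of $z^n$ on both sides for every $n < {\sf m}+N$; beyond that index, the statement is vacuous.

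First, I would unpack the two sides. On the one hand, by definition
\[
\tLcyrit_N(z)=\sum_{n\ge N}\tlcyr(n,N)\,z^n,
\]
and on the other hand, expanding \eqref{euLN1},
\[
\eu L^{[\sf m]}_N(z)=\sum_{i_1,\dots,i_N=1}^{\sf m} z^{i_1+\cdots+i_N}\prod_{k=1}^{N-1} M^{[\sf m]}_{i_k,i_{k+1}},
\]
so the coefficient of $z^n$ in $\eu L^{[\sf m]}_N(z)$ is
\[
[z^n]\eu L^{[\sf m]}_N(z)
=\sum_{\substack{\bm\la\in\Z^N_{>0}\\ |\bm\la|=n,\,\la_i\le {\sf m}}}\prod_{i=1}^{N-1} M^{[\sf m]}_{\la_i,\la_{i+1}}.
\]

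Next, I would invoke Corollary \ref{corltildegraph}, which gives, under the hypothesis ${\sf m}\ge n-N+1$,
\[
\tlcyr(n,N)=\sum_{\substack{\bm\la\in\Z^N_{>0}\\ |\bm\la|=n}}\prod_{i=1}^{N-1} M^{[\sf m]}_{\la_i,\la_{i+1}}.
\]
The key observation is that the condition ${\sf m}\ge n-N+1$ is equivalent to $n\le {\sf m}+N-1$, that is $n<{\sf m}+N$. Moreover, when ${\sf m}\ge n-N+1$, any composition $\bm\la\in\Z^N_{>0}$ of $n$ automatically satisfies $\la_i\le n-(N-1)\le {\sf m}$, so the constraint $\la_i\le {\sf m}$ in the expansion of $\eu L^{[\sf m]}_N(z)$ drops out and the two sums coincide term by term.

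Combining these observations, for every $n$ with $N\le n<{\sf m}+N$ we obtain
\[
[z^n]\tLcyrit_N(z)=\tlcyr(n,N)=[z^n]\eu L^{[\sf m]}_N(z);
\]
for $n<N$ both sides vanish trivially (any walk of length $N-1$ visits $N$ vertices with labels $\ge 1$, hence has weight of degree $\ge N$, and $\tlcyr(n,N)=0$ for $n<N$ by definition). This proves $\tLcyrit_N(z)\equiv \eu L^{[\sf m]}_N(z)\pmod{z^{{\sf m}+N}}$. There is no substantive obstacle: the statement is a direct bookkeeping consequence of Corollary \ref{corltildegraph} together with the expansion \eqref{euLN1} of the transfer-matrix generating function.
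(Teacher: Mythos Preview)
Your proof is correct and is exactly the coefficient-matching argument the paper intends: the statement is marked with a bare \qed\ precisely because it is an immediate consequence of Corollary~\ref{corltildegraph}, and you have spelled out that deduction carefully (including the observation that the constraint $\la_i\le{\sf m}$ is automatic once $n\le{\sf m}+N-1$).
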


\subsection{Rationality of the generating functions $\eu L^{[\sf m]}(z,t)$}\label{secratioLm} Introduce a family of generating functions 
\[\eu L^{[\sf m]}(z,t)=\sum_{N=1}^\infty\eu L^{[{\sf m}]}_N(z)t^N,\quad {\sf m}\geq 1.
\]
This generating function provides a good approximation of the double generating function 
\[\tLcyrit(z,t)=\sum_{N=1}^\infty\sum_{n=N}^\infty\tlcyr(n,N)z^nt^N=zt+z^2t+z^2t^2+z^3t+2z^3t^2+z^3t^3+\dots .
\]
\begin{prop}
For any $\sf m\geq 1$, we have 
\[\tLcyrit(z,t)\equiv \eu L^{[\sf m]}(z,t)\mod I,
\]where $I$ is the ideal of $\Z[\![z,t]\!]$ generated by $(z^{{\sf m}+h}t^h)_{h\in\N_{>0}}$.
\end{prop}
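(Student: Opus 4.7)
The plan is to deduce this directly from Corollary \ref{congrgenfun0}, by extracting the coefficient of each power $t^N$ from both sides and invoking the previously-established univariate congruence.

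First, I would expand the difference as a formal power series in $t$:
\[
\tLcyrit(z,t)-\eu L^{[\sf m]}(z,t) \;=\; \sum_{N=1}^\infty \bigl(\tLcyrit_N(z)-\eu L^{[\sf m]}_N(z)\bigr)\,t^N.
\]
By Corollary \ref{congrgenfun0}, for each $N\geq 1$ the difference $\tLcyrit_N(z)-\eu L^{[\sf m]}_N(z)$ is divisible by $z^{{\sf m}+N}$ in $\Z[\![z]\!]$, so one may write
$\tLcyrit_N(z)-\eu L^{[\sf m]}_N(z)=z^{{\sf m}+N}g_N(z)$
for some $g_N(z)\in\Z[\![z]\!]$. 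Substituting this back gives
\[
\tLcyrit(z,t)-\eu L^{[\sf m]}(z,t)\;=\;\sum_{N=1}^\infty g_N(z)\cdot z^{{\sf m}+N}t^N,
\]
and each summand is manifestly of the form $f_h(z,t)\cdot z^{{\sf m}+h}t^h$, hence a generator of $I$ multiplied by an element of $\Z[\![z,t]\!]$.

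The only subtle point is that the expression above is an \emph{infinite} sum of elements of $I$, so a priori it need not belong to $I$ itself, only to its closure in the $(z,t)$-adic topology on $\Z[\![z,t]\!]$. However, the sum converges in this topology because, for any prescribed total degree bound, only finitely many values of $N$ contribute: if $N>D$ then $z^{{\sf m}+N}t^N$ has total degree exceeding $D$. Since $\Z[\![z,t]\!]$ is complete in the $(z,t)$-adic topology and the ideal $I$ is closed there (as one sees by a standard argument: each monomial of an element of $I$ satisfies the inequality ``$z$-degree $\geq {\sf m}+(t\text{-degree})$'', and this property passes to $(z,t)$-adic limits), the limit lies in $I$, giving the desired congruence. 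Thus the only step requiring any care is this completeness/closedness remark, while the algebraic content is a one-line consequence of Corollary \ref{congrgenfun0}.
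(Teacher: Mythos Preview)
Your approach is correct and matches the paper's exactly: both deduce the result directly from Corollary~\ref{congrgenfun0}. The paper's proof is the single line ``It follows from Corollary~\ref{congrgenfun0}''; you have simply unpacked what that entails.

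One minor correction to your parenthetical justification: the claimed characterization ``every monomial of an element of $I$ has $z$-degree $\geq \mathsf{m}+(t\text{-degree})$'' is false. For instance $z^{\mathsf{m}+1}t^{100}=t^{99}\cdot z^{\mathsf{m}+1}t\in I$ violates it. The cleaner observation is that $I$ is actually the \emph{principal} ideal $(z^{\mathsf{m}+1}t)$, since each generator $z^{\mathsf{m}+h}t^h=(zt)^{h-1}\cdot z^{\mathsf{m}+1}t$ is already a multiple of the $h=1$ generator. Your sum then factors as
\[
z^{\mathsf{m}+1}t\cdot\sum_{N\geq 1}g_N(z)\,z^{N-1}t^{N-1},
\]
and the inner series visibly converges in $\Z[\![z,t]\!]$, so membership in $I$ is immediate and no closedness argument is needed.
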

\proof
It follows form Corollary \ref{congrgenfun0}.
\endproof

Remarkably, it turns out that for any ${\sf m}\geq 1$ the function $\eu L^{[\sf m]}(z,t)$ is rational.

\begin{thm}\label{thmratLm}
For any ${\sf m}\geq 1$, we have $\eu L^{[\sf m]}(z,t)\in\Q(z,t)$.
\end{thm}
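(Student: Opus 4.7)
The plan is to recognize $\eu L^{[\sf m]}(z,t)$ as the generating series of a linear recurrence governed by the transfer matrix $T^{[\sf m]}(z)$, which is a standard transfer-matrix method computation (see e.g.\ Stanley, \emph{Enumerative Combinatorics}, Vol.\ I, Thm.\ 4.7.2). The key identity to establish is
\[
\eu L^{[\sf m]}(z,t) \;=\; t\cdot v_{\sf m}(z)^{T}\bigl(I_{\sf m}-t\,T^{[\sf m]}(z)\bigr)^{-1}\mathbf{1}_{\sf m},
\]
from which rationality will be immediate.

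First, I would recall from \eqref{euLN} the formula
\[
\eu L^{[\sf m]}_{N}(z)=v_{\sf m}(z)^{T}\cdot T^{[\sf m]}(z)^{N-1}\cdot\mathbf{1}_{\sf m},\qquad N\geq 1,
\]
and simply substitute this into the definition of $\eu L^{[\sf m]}(z,t)$. Interchanging the finite $\mathbb{Z}$-linear combinations (the vector products) with the infinite sum over $N$ -- which is justified in the formal power series ring $\Q[z][\![t]\!]$ -- one obtains
\[
\eu L^{[\sf m]}(z,t)=\sum_{N=1}^{\infty} v_{\sf m}(z)^{T}\,T^{[\sf m]}(z)^{N-1}\,\mathbf{1}_{\sf m}\,t^{N}
= t\cdot v_{\sf m}(z)^{T}\!\left(\sum_{k=0}^{\infty}\bigl(t\,T^{[\sf m]}(z)\bigr)^{k}\right)\!\mathbf{1}_{\sf m}.
\]
The inner geometric series converges in the $t$-adic topology of the ring of matrices with entries in $\Q[z][\![t]\!]$, because each power of $t\,T^{[\sf m]}(z)$ increases the $t$-order by at least one. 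Hence the sum equals $(I_{\sf m}-t\,T^{[\sf m]}(z))^{-1}$ as a matrix with entries in $\Q[z][\![t]\!]$, and the displayed identity holds.

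Next, I would argue rationality via Cramer's rule. Since $T^{[\sf m]}(z)$ is an $\sf m\times\sf m$ matrix with entries in $\Q[z]$ (in fact monomials of the form $z^{j}$ or $0$), the matrix $I_{\sf m}-t\,T^{[\sf m]}(z)$ has entries in $\Q[z,t]$. Its determinant
\[
D_{\sf m}(z,t):=\det\bigl(I_{\sf m}-t\,T^{[\sf m]}(z)\bigr)\in\Q[z,t]
\]
satisfies $D_{\sf m}(z,0)=1\neq 0$, so $D_{\sf m}(z,t)$ is a nonzero element of $\Q[z,t]$, hence a unit in $\Q(z,t)$. By Cramer's rule, each entry of $(I_{\sf m}-t\,T^{[\sf m]}(z))^{-1}$ belongs to $\Q(z,t)$, and therefore so does the bilinear pairing
\[
\eu L^{[\sf m]}(z,t)=\frac{t}{D_{\sf m}(z,t)}\cdot v_{\sf m}(z)^{T}\,\operatorname{adj}\bigl(I_{\sf m}-t\,T^{[\sf m]}(z)\bigr)\,\mathbf{1}_{\sf m}.
\]

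There is really no hard obstacle in this argument: the only point that deserves care is the justification of the manipulations as identities in the appropriate ring (formal power series in $t$ over $\Q[z]$, or in the $(z,t)$-adic completion of $\Q[z,t]$), rather than as analytic identities -- the latter would require estimating the spectral radius of $T^{[\sf m]}(z)$, which is unnecessary for the purely algebraic statement $\eu L^{[\sf m]}(z,t)\in\Q(z,t)$.
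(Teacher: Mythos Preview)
Your proof is correct and is essentially the same transfer-matrix argument as the paper's, just packaged differently: the paper applies Cayley--Hamilton to $T^{[\sf m]}(z)$ to obtain a linear recurrence for the $\eu L^{[\sf m]}_N(z)$ and then sums it, whereas you sum the matrix geometric series directly to get $\eu L^{[\sf m]}(z,t)=t\,v_{\sf m}(z)^T(I_{\sf m}-tT^{[\sf m]}(z))^{-1}\mathbf{1}_{\sf m}$. The two routes yield the same denominator, since $\det(I_{\sf m}-tT^{[\sf m]}(z))=1+\sum_{i=0}^{\sf m-1}a_i(z)t^{\sf m-i}$ with $a_i$ the coefficients of the characteristic polynomial; this explicit form of the denominator (in particular $a_{\sf m-1}(z)=-z$) is what the paper actually uses downstream in the proof of Theorem~\ref{thmpolytlcyr}, and it is of course also immediate from your Cramer/adjugate formulation.
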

\proof
Let $p(\zeta,z)=\zeta^{\sf m}+\sum_{i=0}^{\sf m-1}a_i(z)\zeta^i$ be the characteristic polynomial of $T^{[\sf m]}(z)$. By Cayley--Hamilton Theorem, we deduce
\[\eu L^{[\sf m]}_{N+\sf m}(z)+\sum_{i=0}^{\sf m-1}a_i(z)\eu L^{[\sf m]}_{N+i}(z)=0,\quad \text{ for any }N\geq 1.
\]Multiply both sides by $t^{N+\sf m}$, and sum over all $N\geq 1$. We obtain
\[\left(\eu L^{[\sf m]}(z,t)-\sum_{i=1}^{\sf m}\eu L^{[\sf m]}_i(z)t^i\right)+\sum_{i=0}^{\sf m-1}a_i(z)t^{{\sf m}-i}\left(\eu L^{[\sf m]}(z,t)-\sum_{j=1}^{i}\eu L^{[\sf m]}_j(z)t^j\right)=0.
\]We conclude that
\begin{multline*}
\eu L^{[\sf m]}(z,t)\left(1+\sum_{i=0}^{\sf m-1}a_i(z)t^{{\sf m}-i}\right)=\sum_{i=1}^{\sf m}\eu L^{[\sf m]}_i(z)t^i+\sum_{i=0}^{\sf m-1}\sum_{j=1}^{i}a_i(z)\eu L^{[\sf m]}_j(z)t^{{\sf m}-i+j}\\
\Longrightarrow \eu L^{[\sf m]}(z,t)=\frac{\sum_{i=1}^{\sf m}\eu L^{[\sf m]}_i(z)t^i+\sum_{i=0}^{\sf m-1}\sum_{j=1}^{i}a_i(z)\eu L^{[\sf m]}_j(z)t^{{\sf m}-i+j}}{1+\sum_{i=0}^{\sf m-1}a_i(z)t^{{\sf m}-i}}.
\end{multline*}
This proves the claim.
\endproof

\begin{example}
Let $\sf m=3$. We have
\[T^{[3]}(z)=\left(
\begin{array}{ccc}
 z & z^2 & z^3 \\
 z & 0 & z^3 \\
 z & z^2 & 0 \\
\end{array}
\right),\quad\text{with char.\,\,pol. }p(\zeta,z)=\zeta^3-z\zeta^2-(z^3+z^4+z^5)\zeta-z^6.
\]By Cayley--Hamilton theorem, we deduce 
\begin{multline*}\left(\eu L^{[3]}(z,t)-t\eu L^{[3]}_1(z)-t^2\eu L^{[3]}_2(z)-t^3\eu L^{[3]}_3(z)\right)-zt\left(\eu L^{[3]}(z,t)-t\eu L^{[3]}_1(z)-t^2\eu L^{[3]}_2(z))\right)\\-t^2(z^3+z^4+z^5)\left(\eu L^{[3]}(z,t)-t\eu L^{[3]}_1(z)\right)-t^3z^6\eu L^{[3]}(z,t)=0.
\end{multline*}
Hence, from the explicit expressions of $\eu L^{[3]}_i(z)$, $i=1,2,3$, in Example \ref{L3}, we have
\[\eu L^{[3]}(z,t)=-\frac{t z \left(t^2 z^5+2 t z^4+t z^3+t z^2+z^2+z+1\right)}{t^3 z^6+t^2 z^5+t^2 z^4+t^2 z^3+t z-1}.
\]Notice that
\begin{multline*}\eu L^{[3]}(z,t)\equiv 
t z + (t + t^2) z^2 + (t + 2 t^2 + t^3) z^3 +\\ (2 t^2 + 3 t^3 + 
    t^4) z^4 + (\cancel{2 t^2} + 4 t^3 + 4 t^4 + t^5) z^5 \\+ (\cancel{6 t^3} + 7 t^4 + 
    5 t^5 + t^6) z^6 + (\cancel{2 t^3} + \cancel{12 t^4} + 11 t^5 + 6 t^6 + t^7) z^7+\dots\\
    \equiv \tLcyrit(z,t) \mod\langle z^{3+h}t^h\colon h\in\N_{>0}\rangle.
\end{multline*}
\qetr
\end{example}

\subsection{Eventual polynomiality of $N\mapsto \tlcyr(N+k,N)$}\label{poltlcyrsec}
Starting from the definition of $\tlcyr(n,N)$ as counting suitable compositions of $n$, one easily checks that  
\[
\tlcyr(N,N)=1,\qquad \tlcyr(N+1,N)=N,\quad N\geq 1.
\]  

Consider now the sequence $\tlcyr(N+2,N)$, $N\geq 1$, whose initial values are  
\[
1, 2, 4, 7, 11, 16, 22, 29, 37, 46, 56, 67, 79, 92, 106, 121,\dots
\]  
At first glance, these are precisely the {\it central polygonal numbers}, namely  
\[
c(N)=\frac{N^2-N+2}{2}=1+\binom{N}{2}=\binom{N-1}{0}+\binom{N-1}{1}+\binom{N-1}{2},\quad N\geq 1
\]  
It is therefore natural to conjecture that  
\[
\tlcyr(N+2,N)\overset{?}{=}c(N),\quad N\geq 1.
\]  

Similarly, let us consider the sequence $a(N)=\tlcyr(N+3,N)$ and examine its successive differences\footnote{The difference operator $\Delta$ acts on a sequence $a(N)$ by $\Delta a(N)=a(N+1)-a(N)$, for $N\geq 1$.}:  
\begin{align*}
a(N)&&1  &&4&&  9&&  16&&  26&&  40&&  59&&  84&&  116&&  156&&  205&&  264&&  334&&\dots\\
\Delta a(N)&& && 3&&  5&&  7&&  10&&  14&&  19&&  25&&  32&&   40&&   49&&   59&&   70&&\dots\\
\Delta^2a(N)&& && &&2  &&2&&  3&&   4&&   5&&   6&&   7&&   8&&    9&&   10&&   11&&\dots\\
\Delta^3a(N)&& && && && 0&&  1&&  1&&   1&&   1&&   1&&   1&&   1 &&   1 &&   1&&\dots\\
\Delta^4a(N)&& && && && &&  0&&  0&&   0&&   0&&   0&&   0&&   0&&   0 &&   0&&\dots
\end{align*}  

If we assume that in fact $\Delta^3a(N)=1$ for all $N\geq 2$, then Newton’s forward interpolation formula gives  
\begin{align*}
a(N)\overset{?}{=}&\,a(2)\binom{N-2}{0}+\Delta a(2)\binom{N-2}{1}+\Delta^2 a(2)\binom{N-2}{2}+\Delta^3 a(2)\binom{N-2}{3}\\
=&\, 4+5(N-2)+2\binom{N-2}{2}+\binom{N-2}{3}\\
=&\, \frac{N^3-3N^2+26N-24}{6},\quad N\geq 2.
\end{align*}  

Thus we are led to the conjectural identities  
\begin{align}
\label{idconj1}&\tlcyr(N+2,N)\overset{?}{=}\frac{N^2-N+2}{2},\quad N\geq 1, \\
\label{idconj2}&\tlcyr(N+3,N)\overset{?}{=}\frac{N^3-3N^2+26N-24}{6},\quad N\geq 2.
\end{align}  
In both cases, the data strongly suggest that the sequences $\tlcyr(N+2,N)$ and $\tlcyr(N+3,N)$ exhibit eventual polynomiality, with quadratic and cubic closed forms, respectively. However, a direct proof of these conjectural identities starting from the definition of $\tlcyr(n,N)$ appears to be quite difficult.

More generally, we may conjecture eventual polynomiality for any sequence $N \mapsto \tlcyr(N+k,N)$, $k\geq 0$, in close analogy with the sequence $N \mapsto \lcyr(N+k,N)$, as shown in Section~\ref{polcyrsec}. Remarkably, this turns out to be true, and in this section we provide a proof of the following result.

\begin{thm}\label{thmpolytlcyr}
The sequence $N\mapsto\tlcyr(N+k,N)$ is eventually polynomial for any $k\geq 0$. There exists $N_0\in\N_{>0}$ and a polynomial $\Tilde P_k\in\Q[n]$, with $\deg \Tilde P_k\leq k$, such that $\tlcyr(N+k,N)=\Tilde P_k(N)$ for any $N\geq N_0$.
\end{thm}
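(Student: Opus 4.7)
The starting point is the graph interpretation of $\tlcyr$. By Corollary~\ref{corltildegraph}, for any $\mathsf{m} \geq k+1$ one has
\[
\tlcyr(N+k,N) \;=\; \sum_{\substack{\bm\la \in \Z_{>0}^N \\ |\bm\la| = N+k}} \prod_{i=1}^{N-1} M^{[\mathsf{m}]}_{\la_i, \la_{i+1}},
\]
so we are counting walks on $N$ vertices in $\Gm_{k+1}$ whose labels are positive integers summing to $N+k$. Shift to $\mu_i := \la_i - 1 \in \{0,\dots,k\}$; the constraint $\sum_i \mu_i = k$ forces at most $k$ of the $\mu_i$ to be nonzero. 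A direct check from the defining inequalities of $\Gm_{\mathsf{m}}$ shows that vertex $1$ carries a self-loop \emph{and} is adjacent to every other vertex, so every edge condition $M^{[\mathsf{m}]}_{\la_i,\la_{i+1}}=1$ is automatic whenever $\mu_i = 0$ or $\mu_{i+1}=0$. The only nontrivial constraints therefore occur at consecutive positions where both $\mu_i$ and $\mu_{i+1}$ are positive.

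\textbf{Block decomposition and counting.} Given an admissible walk, let $S := \{i : \mu_i > 0\} \subseteq \{1,\dots,N\}$ and partition $S$ into its maximal runs of consecutive integers $B_1,\dots,B_s$. An admissible walk is then equivalent to the data of an integer $r := |S| \in \{0,\dots,k\}$, an ordered composition $(b_1,\dots,b_s)$ of $r$ into positive parts (the block sizes), a choice of internal walk $\bm\la^{(j)}$ of length $b_j$ in the subgraph of $\Gm_{k+1}$ induced on the vertices $\{2,\dots,k+1\}$ for each $j$ (with total sum of labels equal to $r+k$), and finally a placement of the $s$ blocks in $\{1,\dots,N\}$ separated by at least one zero position. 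A standard stars-and-bars argument gives $\binom{N-r+1}{s}$ for the number of such placements. Collecting terms yields
\[
\tlcyr(N+k,N) \;=\; \sum_{r=0}^{k}\sum_{s=0}^{r}\, C_{k,r,s}\,\binom{N-r+1}{s},
\]
where $C_{k,r,s}$ is the finite number of choices of $(b_1,\dots,b_s)$ together with admissible internal walks (with $C_{k,0,0}=\delta_{k,0}$).

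\textbf{Conclusion and main obstacle.} Since each $C_{k,r,s}$ depends only on $k$, and each $\binom{N-r+1}{s}$ is a polynomial in $N$ of degree $s \leq k$, the right-hand side defines a polynomial
\[
\widetilde P_k(n) \;:=\; \sum_{r=0}^{k}\sum_{s=0}^{r}\, C_{k,r,s}\,\binom{n-r+1}{s} \;\in\; \Q[n], \qquad \deg \widetilde P_k \leq k,
\]
which agrees with $\tlcyr(N+k,N)$ for all $N$ large enough that the binomial-coefficient interpretation is valid (an $N_0 = 2k$ suffices), proving the theorem. The step with the most structural content is not the final binomial algebra but the observation that vertex~$1$ of $\Gm_\infty$ is \emph{universal}: this is precisely what makes the long runs of $1$'s in an admissible composition transparent to the adjacency conditions, thereby compressing all of the complexity into a bounded, $k$-dependent combinatorial datum. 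In a graph lacking such a universal vertex, the analogous decomposition would break down, and no eventual polynomiality could be expected.
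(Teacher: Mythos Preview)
Your proof is correct, and it takes a genuinely different route from the paper's. The paper does not argue combinatorially at all: it works with the transfer-matrix generating function $\eu L^{[\mathsf m]}(z,t)$, shows by Cayley--Hamilton that it is a rational function whose denominator has the form $1-zt+Q_2(z,t)$ with $Q_2(0,0)=0$ (the $-zt$ term coming precisely from the lone self-loop at vertex~$1$), and then proves two general lemmas to the effect that for any such rational function the diagonal extraction $[u^k]\,\eu L^{[\mathsf m]}(u,t/u)$ is of the form $R_k(t)/(1-t)^s$, whose Taylor coefficients are eventually polynomial in~$N$.

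Your argument is more elementary and more transparent for $\tlcyr$: the universal-vertex observation immediately explains \emph{why} polynomiality holds, and the block decomposition even gives an explicit closed form $\widetilde P_k(n)=\sum_{r,s}C_{k,r,s}\binom{n-r+1}{s}$ that the paper's method does not produce directly. On the other hand, the paper's approach is more robust: it uses only that the transfer matrix has trace $z$ (a single self-loop at vertex~$1$), and this is exactly what lets the same machinery prove the analogous result for $\ell(n,N)$ via the graph $\text{\textcyr{P}}_{\mathsf m}$. Your argument does \emph{not} transfer to that case, since in $\text{\textcyr{P}}_{\mathsf m}$ vertex~$1$ is adjacent to $j$ only when $j+1$ is prime and is therefore not universal, so the runs of $1$'s are no longer transparent to the adjacency constraints.
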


\begin{rem}
Notice that we already have $\tlcyr(N+k, N) = O(N^k)$ for fixed $k \geq 1$ and large $N$, which implies that $\deg \Tilde{P}_k \leq k$. This follows from the inequality $\tlcyr(n, N) \leq \lcyr(n, N)$ together with the polynomiality property stated in Theorem \ref{thmpolylcyr}. Theorem \ref{thmpolytlcyr}, however, is a considerably stronger result.\qrem
\end{rem}

For the proof of Theorem \ref{thmpolytlcyr}, we need some preliminary results.

\begin{lem}\label{lemtchn1}
Let $Q_1(z,t)$ and $Q_2(z,t)$ be polynomials over a field\,\footnote{No restriction on the characteristic ${\rm char}(K)$.} $K$. 
Define
\[
P_1(z,t)=z t\,Q_1(z,t),\qquad P_2(z,t)=1- z t+Q_2(z,t),
\]
and the rational function
\[
F(z,t)=\frac{P_1(z,t)}{P_2(z,t)}.
\]
Assume moreover that $Q_2(0,0)=0$, and fix an integer $m\geq 0$. 
Then the coefficient
\[
H_m(t):=[u^m]\,F\!\big(u,\tfrac{t}{u}\big)
\]
is a well-defined rational function of $t$. More precisely there exist an integer $s\ge1$ and a polynomial $R_m(t)\in K[t]$ such that
\[
H_m(t)=\frac{R_m(t)}{\big(1- t\big)^{s}}.
\]
Moreover, the extraction of $[u^m]$ can be written explicitly as the finite sum
\[
[u^m]\,F\!\big(u,\tfrac{t}{u}\big)
=\frac{t}{1- t}\sum_{r=0}^{r_{\max}} (-1)^r(1- t)^{-r}\;
[u^m]\!\big( Q_1(u,t/u)\, (Q_2(u,t/u))^r\big),
\]
where $r_{\max}\ge0$ is finite and depends only on $Q_1,Q_2$ and $m$.
\end{lem}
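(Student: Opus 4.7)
The plan is to substitute $(z,t)\mapsto (u,t/u)$ in $F$, exploit the cancellation $u\cdot(t/u)=t$, and expand the denominator as a geometric series. After substitution we have
\[
F(u,t/u)=\frac{t\,Q_1(u,t/u)}{1-t+Q_2(u,t/u)}=\frac{t}{1-t}\cdot\frac{Q_1(u,t/u)}{1+W},\qquad W:=\frac{Q_2(u,t/u)}{1-t}.
\]
Each factor $Q_i(u,t/u)=\sum_{(a,b)\in S_i}c^{(i)}_{a,b}\,u^{a-b}t^b$ is a Laurent polynomial in $u$ with polynomial coefficients in $t$. The first preparatory step is to justify the formal identity $(1+W)^{-1}=\sum_{r\geq 0}(-W)^r$ in an appropriate completed ring; the hypothesis $Q_2(0,0)=0$ is used precisely here to control the constant-term obstruction. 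Substituting back produces
\[
F(u,t/u)=\frac{t}{1-t}\sum_{r\geq 0}(-1)^r(1-t)^{-r}\,Q_1(u,t/u)\bigl(Q_2(u,t/u)\bigr)^r.
\]

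Next, I would apply the extraction $[u^m]$ term by term. The heart of the argument is the truncation claim
\[
[u^m]\bigl(Q_1(u,t/u)(Q_2(u,t/u))^r\bigr)=0\qquad\text{for all }r>r_{\max},
\]
where $r_{\max}=r_{\max}(m,Q_1,Q_2)$ is finite. Granting this, the right-hand side becomes a finite $K[t]$-linear combination of the functions $(1-t)^{-r}$ for $r=0,\dots,r_{\max}$. Multiplying through by $t/(1-t)$ and collecting over a common denominator $(1-t)^{s}$ with $s=r_{\max}+1$ yields the rational representation $H_m(t)=R_m(t)/(1-t)^{s}$ with $R_m\in K[t]$, as required.

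The main obstacle will be establishing truncation. A naive $u$-degree count is not sufficient in complete generality: if $Q_2$ contains any monomial of the form $z^jt^j$ with $j\geq 1$, then $Q_2(u,t/u)$ has a pure-$t$ contribution of $u$-exponent $0$, so $(Q_2(u,t/u))^r$ can retain $u$-exponent $0$ for every $r$, and the naive upper bound collapses. I expect the resolution to pass through the finite geometric identity with remainder
\[
\frac{1}{1+W}=\sum_{r=0}^{r_{\max}}(-1)^r W^r+\frac{(-1)^{r_{\max}+1}W^{r_{\max}+1}}{1+W},
\]
where the remainder term, re-expressed via the rational identity $(1+W)^{-1}=(1-t)/(1-t+Q_2(u,t/u))$, must be shown to have vanishing $[u^m]$-coefficient once $r_{\max}$ is large enough. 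This final step should rely on a careful interplay between the bounded $u$-degrees inherited from $Q_1$ and the specific role of the $-zt$ term in $P_2$, which after the substitution $(z,t)\mapsto(u,t/u)$ balances exactly against the $t$ coming from the numerator and fixes the leading $(1-t)$ structure of the denominator.
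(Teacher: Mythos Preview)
Your plan is exactly the paper's: substitute $(z,t)\mapsto(u,t/u)$, factor out $t/(1-t)$, expand $(1+W)^{-1}$ as a geometric series, extract $[u^m]$ termwise, and argue truncation. For the truncation step the paper uses precisely the naive $u$-exponent bound you distrust: writing $\beta_{\min},\beta_{\max}$ for the extremal $u$-exponents occurring in $B=Q_2(u,t/u)$, monomials of $A B^r$ have $u$-exponent in $[\alpha_{\min}+r\beta_{\min},\,\alpha_{\max}+r\beta_{\max}]$, and the paper simply asserts that this interval misses $m$ for all large $r$.

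You are right that this assertion fails when $\beta_{\min}\le 0\le\beta_{\max}$, but the remainder-term repair you outline cannot succeed under the stated hypotheses, because the lemma is actually false at that level of generality. Take $Q_1=1$, $Q_2=z^2t^2$ (so $Q_2(0,0)=0$ holds); then $F(u,t/u)=t/(1-t+t^2)$ and $H_0(t)=t/(1-t+t^2)$, which is not of the form $R(t)/(1-t)^s$ since $1-t+t^2$ is coprime to $1-t$. What makes the intended applications go through is extra structure in the transfer matrix: a principal $k\times k$ minor of $T^{[\mathsf m]}(z)=M^{[\mathsf m]}\operatorname{diag}(z,\dots,z^{\mathsf m})$ is a monomial of $z$-degree $\ge\binom{k+1}{2}$, so the $t^k$-part of $Q_2(u,t/u)$ has $u$-exponent $\ge\binom{k}{2}\ge 1$ for all $k\ge 2$, whence $\beta_{\min}\ge 1$. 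The clean fix is therefore to strengthen the hypothesis to ``every monomial $z^pt^q$ of $Q_2$ satisfies $p>q$''; under that assumption the naive truncation bound $r\le (m-\alpha_{\min})/\beta_{\min}$ is finite and your geometric-series argument goes through without any remainder analysis.
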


\proof
We proceed in several steps.

\textbf{Step 1.}
Substitute $z=u$ and $t\mapsto t/u$ in $F$. Using $P_1(z,t)=zt\,Q_1(z,t)$ we get the simplification
\[
F\!\big(u,\tfrac{t}{u}\big)
=\frac{u\cdot (t/u)\, Q_1\!\big(u,\tfrac{t}{u}\big)}{1- u\cdot(t/u)+Q_2\!\big(u,\tfrac{t}{u}\big)}
=\frac{t\;Q_1\!\big(u,\tfrac{t}{u}\big)}{1- t + Q_2\!\big(u,\tfrac{t}{u}\big)}.
\]
Set for brevity
\[
A(u,t):=Q_1\!\big(u,\tfrac{t}{u}\big),\qquad B(u,t):=Q_2\!\big(u,\tfrac{t}{u}\big).
\]
Note that $A(u,t)$ and $B(u,t)$ are finite sums of monomials of the form $c\,u^{\alpha}t^{\beta}$ with integer exponents $\alpha$ (possibly negative) and $\beta\ge0$. The hypothesis $Q_2(0,0)=0$ implies $B(u,t)=O(t)$ as a formal power series in $t$ (no constant term in $t$).

\textbf{Step 2.}
Isolate the factor $1- t$ in the denominator and expand in a (formal) geometric series in $B$:
\[
\frac{1}{1- t + B(u,t)}
=\frac{1}{1- t}\cdot
\frac{1}{1+\dfrac{B(u,t)}{1- t}}
=\frac{1}{1- t}\sum_{r\ge 0} \Big(-\frac{B(u,t)}{1- t}\Big)^r,
\]
the series converges as a formal power series in $t$ because $B(u,t)=O(t)$ and $(1- t)$ is an invertible unit in $K[\![t]\!]$. Hence
\[
F\!\big(u,\tfrac{t}{u}\big)
=\frac{t}{1- t}\sum_{r\ge0} (-1)^r(1- t)^{-r}\; A(u,t)\,B(u,t)^r.
\]
Extracting the coefficient $[u^m]$ termwise yields
\beq\label{eqlem1}
H_m(t)=[u^m]F\!\big(u,\tfrac{t}{u}\big)
=\frac{t}{1- t}\sum_{r\ge0} (-1)^r(1- t)^{-r}\; [u^m]\big(A(u,t)\,B(u,t)^r\big).
\eeq
\textbf{Step 3.}
For fixed $r$, the product $A(u,t)\,B(u,t)^r$ is a finite sum of monomials $c\, u^{\alpha} t^{\beta}$, hence $[u^m]\big(A B^r\big)$ is an element of $K[t]$. Therefore each summand in \eqref{eqlem1} is of the form
\[
(1- t)^{-1-r}\cdot(\text{polynomial in }t),
\]
so it is a rational function in $t$ whose denominator is a power of $(1- t)$.

\textbf{Step 4.}
It remains to show the sum in \eqref{eqlem1} is finite, i.e. that for all sufficiently large $r$ the coefficient $[u^m]\big(A B^r\big)$ vanishes. 
Write
\[
A(u,t)=\sum_{(i,j)\in I} a_{i,j}\, u^{i-j} t^j,\quad
B(u,t)=\sum_{(p,q)\in J} b_{p,q}\, u^{p-q} t^q,
\]
where $I,J$ are finite index sets. Set
\[
\alpha_{\min}:=\min_{(i,j)\in I} (i-j),\quad
\alpha_{\max}:=\max_{(i,j)\in I} (i-j),
\]
\[
\beta_{\min}:=\min_{(p,q)\in J} (p-q),\quad
\beta_{\max}:=\max_{(p,q)\in J} (p-q).
\]
Monomials in $A\,B^r$ have $u$-exponent in 
$[\alpha_{\min}+r\beta_{\min},\,\alpha_{\max}+r\beta_{\max}]$.
Thus $[u^m](A\,B^r)\neq0$ iff
\beq\label{eqlem2}
\alpha_{\min}+ r\,\beta_{\min} \le m \le \alpha_{\max}+ r\,\beta_{\max}. 
\eeq
For fixed $\alpha_{\min/\max},\beta_{\min/\max},m$ this admits only finitely many $r\ge0$, so there is $r_{\max}$ with $[u^m](A\,B^r)=0$ for all $r>r_{\max}$.

\textbf{Step 5.}
The sum in \eqref{eqlem1} is finite, so $H_m(t)$ is a rational function with denominator a power of $(1-t)$:
\[
H_m(t)=\frac{R_m(t)}{(1-t)^s},\quad s\le 1+r_{\max}.
\]
\endproof
\begin{rem}
The maximal $r_{\max}$ follows from \eqref{eqlem2}:
\begin{itemize}
\item if $\beta_{\min}>0$: $r \le U_1 := \lfloor\frac{m-\alpha_{\min}}{\beta_{\min}}\rfloor$.
\item if $\beta_{\max}>0$: $r \ge L_2 := \lceil\frac{m-\alpha_{\max}}{\beta_{\max}}\rceil$.
\item if $\beta_{\min}<0$: $r \ge L_1 := \lceil\frac{m-\alpha_{\min}}{\beta_{\min}}\rceil$.
\item if $\beta_{\max}<0$: $r \le U_2 := \lfloor\frac{m-\alpha_{\max}}{\beta_{\max}}\rfloor$.
\end{itemize}
Hence
\[
r_{\max}=\max\{\, r\in \mathbb{Z}_{\ge0} : r\ge L:=\max(0,L_1,L_2),\ \ r\le U:=\min(U_1,U_2)\,\},
\]
if $[L,U]\neq\emptyset$, otherwise no $r$ contributes.\qrem
\end{rem}

\begin{lem}\label{lemtchn2}
Let \(s\in\mathbb{N}_{>0}\) and let \(R_m(t)=\sum_{j=0}^{m} p_j t^j\) be a polynomial. Suppose
\[
F(t)=\sum_{n\ge 0} a_n t^n=\frac{R_m(t)}{(1-t)^s}.
\]
Then for every \(n\ge 0\) the coefficients are given by
\beq\label{eq:coeff_min}
{\;a_n=\sum_{j=0}^{\min(m,n)} p_j \binom{n-j+s-1}{s-1}\;},
\eeq
where \(\binom{t}{r}\) is the usual binomial coefficient for integers \(t\ge r\ge 0\).
In particular, for \(n\ge m\) one has
\[
a_n = Q(n)
\]
with
\beq\label{eq:Q}
Q(n) := \sum_{j=0}^m p_j \binom{n-j+s-1}{s-1},
\eeq
and \(Q(n)\) is a polynomial in \(n\) of degree at most \(s-1\). Therefore, the sequence \((a_n)_{n\ge 0}\) is \emph{eventually polynomial}.
\end{lem}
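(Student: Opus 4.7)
The proof is essentially a direct coefficient extraction, so the plan is to verify the three claims in sequence without any real combinatorial subtlety.

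First I would recall the standard negative binomial series
\[
\frac{1}{(1-t)^s}=\sum_{k\ge 0}\binom{k+s-1}{s-1}\,t^k,
\]
valid as an identity of formal power series in $\mathbb Z[\![t]\!]$ (or over any commutative ring). Multiplying this by $R_m(t)=\sum_{j=0}^m p_j t^j$ and collecting the coefficient of $t^n$ through the Cauchy product gives
\[
a_n=\sum_{j=0}^{\min(m,n)} p_j\,\binom{n-j+s-1}{s-1},
\]
because the index $j$ in $R_m$ is bounded by $m$ and the exponent $n-j$ in the geometric expansion must be non-negative. This proves \eqref{eq:coeff_min}.

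Next, I would simply observe that whenever $n\ge m$ we have $\min(m,n)=m$, so the upper limit of the sum stabilises and $a_n=Q(n)$ with $Q(n)$ as in \eqref{eq:Q}. For the polynomial nature of $Q$, the key point is that for each fixed non-negative integer $j$ the function
\[
n\;\longmapsto\;\binom{n-j+s-1}{s-1}=\frac{(n-j+s-1)(n-j+s-2)\cdots(n-j+1)}{(s-1)!}
\]
is a polynomial in $n$ of degree exactly $s-1$ (and takes integer values on integers). Hence $Q(n)$ is a $K$-linear combination of $m+1$ polynomials of degree $s-1$, so $\deg Q\le s-1$. Eventual polynomiality of $(a_n)_{n\ge 0}$ is then immediate: the polynomial identity $a_n=Q(n)$ holds for every $n\ge m$.

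There is essentially no hard step here. The only point requiring a moment of care is the handling of the truncation $\min(m,n)$ for small $n$: one must resist the temptation to write $a_n=Q(n)$ for all $n$, since for $n<m$ the terms with $j>n$ are absent from \eqref{eq:coeff_min} but are (in general) present in the polynomial formula $Q(n)$, where they contribute via the binomial coefficients $\binom{n-j+s-1}{s-1}$ with $n-j+s-1<s-1$. Once this distinction is respected, the argument is purely a matter of matching coefficients, and no deeper combinatorial or analytic input is required.
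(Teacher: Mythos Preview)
Your proof is correct and follows essentially the same approach as the paper: expand $(1-t)^{-s}$ by the negative binomial series, extract coefficients via the Cauchy product to obtain the truncated sum, observe the truncation disappears for $n\ge m$, and conclude by noting that each $n\mapsto\binom{n-j+s-1}{s-1}$ is a polynomial of degree $s-1$. Your additional remark about why the formula $a_n=Q(n)$ can fail for $n<m$ is a nice clarification that the paper does not make explicit.
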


\begin{proof}
From the standard binomial expansion
\[
\frac{1}{(1-t)^s} = \sum_{r\ge 0} \binom{r+s-1}{s-1} t^r,
\]
we have
\[
\frac{t^j}{(1-t)^s} = \sum_{r\ge 0} \binom{r+s-1}{s-1} t^{r+j}
= \sum_{n\ge j} \binom{n-j+s-1}{s-1} t^n.
\]
Multiplying by \(R_m(t)\) and collecting coefficients gives
\[
a_n = \sum_{j=0}^{\min(m,n)} p_j \binom{n-j+s-1}{s-1}.
\]
For \(n\ge m\) the truncation disappears and the formula becomes
\[
a_n = \sum_{j=0}^{m} p_j \binom{n-j+s-1}{s-1}.
\]
Each binomial term is a polynomial in \(n\) of degree \(s-1\), hence \(a_n\) agrees with a fixed polynomial \(Q(n)\) of degree at most \(s-1\) for all \(n\ge m\).
\end{proof}

\begin{rem}
The equality $a_n = Q(n)$ may hold for some $n < m$ depending on the specific coefficients of $R_m$. The threshold $n \ge m$ is a general guarantee, but the polynomiality can start earlier.\qrem
\end{rem}

\begin{example}
Let
\[
F(t) = \frac{t - t^3 + t^5}{(1-t)^4}, \quad s=4,\quad m=5,\quad p_1=1,\ p_3=-1,\ p_5=1.
\]
From \eqref{eq:coeff_min}:
\[
a_n = \binom{n+2}{3} - \binom{n}{3} + \binom{n-2}{3}
\]
(combinatorial convention). This gives:
\[
\begin{array}{c|ccccccc}
n & 0 & 1 & 2 & 3 & 4 & 5 & 6 \\
\hline
a_n & 0 & 1 & 4 & 9 & 16 & 26 & 40
\end{array}
\]
From \eqref{eq:Q} (polynomial convention):
\[
Q(n) = \binom{n+2}{3} - \binom{n}{3} + \binom{n-2}{3} 
= \frac{n^3 - 3n^2 + 26n - 24}{6}.
\]
Here $Q(n) = a_n$ already for $n\ge 2$, earlier than the general bound $n\ge m=5$.\qetr
\end{example}

\proof[Proof of Theorem \ref{thmpolytlcyr}]
Consider the generating function
\[\eu L^{[\sf m]}(z,t)=\sum_{N=1}^\infty\sum_{n=N}^\infty l^{[\sf m]}_{n,N}z^nt^N,\qquad l^{[\sf m]}_{n,N}=\sum_{\substack{\bm\la\in\Z^N_{>0}\\\la_1+\dots+\la_N=n}}\prod_{i=1}^{N-1}M^{[\sf m]}_{\la_i,\la_{i+1}}.
\]Given $k\geq 0$, we have
\[[u^k]\eu L^{[\sf m]}(u,t/u)=\sum_{N=1}^\infty l^{[{\sf m}]}_{N+k,N}t^N,
\]and for ${\sf m}\geq k+1$, we have
\[[u^k]\eu L^{[\sf m]}(u,t/u)=\sum_{N=1}^\infty l^{[{\sf m}]}_{N+k,N}t^N=\sum_{N=1}^\infty\tlcyr(N+k,N)t^N,
\]by Corollary \ref{corltildegraph}. Moreover, by Theorem \ref{thmratLm}, $\eu L^{[\sf m]}(z,t)$ is a rational function of the form
\[\eu L^{[\sf m]}(z,t)=\frac{P(z,t)}{1+\sum_{i=1}^{\sf m}a_i(z)t^{{\sf m}-i}},
\]where $P(z,t)=ztQ(z,t)$ (because, by definition, $\eu L^{[\sf m]}(z,t)$ is a multiple of $zt$), and $a_{{\sf m}-i}(z)=(-1)^i{\rm Tr}\bigwedge^iT^{[\sf m]}(z)$ are the coefficients of the characteristic polynomial of the transfer matrix $T^{[\sf m]}(z)$. In particular, we have $a_{{\sf m}-1}(z)=-z$, by Lemma \ref{symGmm}.

Hence, Lemma \ref{lemtchn1} and Lemma \ref{lemtchn2} apply, and the claim follows. 
\endproof

\begin{example}
For ${\sf m}=4$, we have
\[\eu L^{[4]}(z,t)=\frac{-t^3 z^8-t^3 z^6-2 t^2 z^7-3 t^2 z^5-t^2 z^4-t^2 z^3-t z^4-t z^3-t z^2-t z}{t^3 z^8+t^3 z^6+t^2 z^7+2 t^2 z^5+t^2 z^4+t^2 z^3+t z-1},
\]so that
\[u\eu L^{[4]}(u,t/u)=\frac{t}{1-t}+\frac{t}{(1-t)^2}u+\frac{t-t^2+t^3}{(1-t)^3}u^2+\frac{t-t^3+t^5}{(1-t)^4}u^3+O(u^4).
\]We deduce
\[\sum_{N=1}^\infty\tlcyr(N+k,N)t^N=\begin{cases}
\frac{t}{1-t},\quad k=0,\\
\\
\frac{t}{(1-t)^2},\quad k=1,\\
\\
\frac{t-t^2+t^3}{(1-t)^3},\quad k=2,\\
\\
\frac{t-t^3+t^5}{(1-t)^4},\quad k=3,
\end{cases}
\]
together with the identities $\tlcyr(N+k,N)=\Tilde P_k(N)$, $k=0,1,2,3$, where 
\begin{align*}
\Tilde P_0(n)&=1,\\
\Tilde P_1(n)&=n,\\
\Tilde P_2(n)&=\binom{n+1}{2}-\binom{n}{2}+\binom{n-1}{2}=\frac{n^2-n+2}{2},\\
\Tilde P_3(n)&=\binom{n+2}{3} - \binom{n}{3} + \binom{n-2}{3} 
= \frac{n^3 - 3n^2 + 26n - 24}{6}.
\end{align*}
In particular, this confirms the conjectural identities \eqref{idconj1}, \eqref{idconj2}.\qetr
\end{example}

As a corollary of Theorem \ref{thmpolytlcyr}, we deduce a very restricted Pascal rule for $\tlcyr(n,N)$.

\begin{cor}\label{weakPascaltlcyr}
For $n\geq 2$, we have
\[\tlcyr(n,n-1)+\tlcyr(n+1,n-1)=\tlcyr(n+2,n).
\]
\end{cor}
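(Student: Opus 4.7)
The plan is to prove the corollary by a direct enumeration, exploiting the very restricted combinatorial structure of compositions near the anti-diagonal $N=n-1$.

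First I would observe that any composition $\bm\la\in\Z^{n-1}_{>0}$ of $n$ has excess $n-(n-1)=1$, so it consists of exactly one part equal to $2$ and $n-2$ parts equal to $1$; the position of the unique $2$ can be chosen in $n-1$ ways. The only adjacent pairs that can arise are $(1,1)$, $(1,2)$, $(2,1)$, and I would check directly that all three satisfy \eqref{eq:allnotexc}: for $(1,1)$ one has $p_1(2)=2$, and $1<2$, $0<1$; for $(1,2)$ and $(2,1)$ one has $p_1(3)=3$, and $1<3$, $0<2$. Hence every such composition is admissible, giving
\[
\tlcyr(n,n-1)=n-1.
\]

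Next I would treat $\tlcyr(n+2,n)$ and $\tlcyr(n+1,n-1)$ in parallel. A composition with excess $2$ has either (A) one part equal to $3$ and the rest equal to $1$, or (B) two parts equal to $2$ and the rest equal to $1$. For case (A), the pairs $(1,3)$ and $(3,1)$ satisfy \eqref{eq:allnotexc} since $p_1(4)=2$ and $\min\{1,3\}=1<2$, $4-2=2<3=\max$; so all $n$ (resp.\ $n-1$) placements are admissible. For case (B), the pair $(2,2)$ violates \eqref{eq:allnotexc} because $p_1(4)=2$ and $\min\{2,2\}=2\not<2$, while $(1,2)$ and $(2,1)$ are admissible by the first paragraph; thus the two $2$'s must be non-adjacent, giving $\binom{n}{2}-(n-1)=\binom{n-1}{2}$ placements (resp.\ $\binom{n-2}{2}$). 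Therefore
\[
\tlcyr(n+2,n)=n+\binom{n-1}{2},\qquad \tlcyr(n+1,n-1)=(n-1)+\binom{n-2}{2}.
\]

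Finally, combining these formulas,
\[
\tlcyr(n,n-1)+\tlcyr(n+1,n-1)=2(n-1)+\binom{n-2}{2},
\]
and Pascal's rule $\binom{n-1}{2}-\binom{n-2}{2}=n-2$ gives
\[
\tlcyr(n+2,n)-\bigl[\tlcyr(n,n-1)+\tlcyr(n+1,n-1)\bigr]
=n-2(n-1)+(n-2)=0,
\]
proving the identity. There is no genuine obstacle here: the only subtlety is the exclusion of the $(2,2)$ pair, which crucially exploits the arithmetic condition $p_1(4)=2$ and explains why the identity cannot be expected to generalize to a full Pascal-type recursion for $\tlcyr$. The eventual polynomiality established in Theorem~\ref{thmpolytlcyr} (with $\Tilde P_1(n)=n$ and $\Tilde P_2(n)=(n^2-n+2)/2$) provides an independent consistency check.
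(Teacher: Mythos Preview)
Your proof is correct and takes a genuinely different route from the paper's. The paper deduces the corollary as a consequence of Theorem~\ref{thmpolytlcyr}: having established the polynomial formulas $\Tilde P_1(N)=N$ and $\Tilde P_2(N)=(N^2-N+2)/2$ via the transfer-matrix machinery and the explicit rational computation of $\eu L^{[4]}(z,t)$, the proof reduces to the one-line algebraic identity
\[
(n-1)+\frac{(n-1)^2-(n-1)+2}{2}=\frac{n^2-n+2}{2}.
\]
Your argument, by contrast, is a direct combinatorial enumeration that bypasses the graph and generating-function apparatus entirely. You re-derive $\tlcyr(N+1,N)=N$ and $\tlcyr(N+2,N)=N+\binom{N-1}{2}$ from scratch by classifying compositions of excess $1$ and $2$ and checking the admissibility condition~\eqref{eq:allnotexc} case by case. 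This has the advantage of being entirely self-contained and elementary; in particular it furnishes an independent proof of identity~\eqref{idconj1}, which the paper itself only confirms through Theorem~\ref{thmpolytlcyr}. The paper's approach, on the other hand, situates the corollary within a general framework that establishes eventual polynomiality of $N\mapsto\tlcyr(N+k,N)$ for all $k\ge 0$, so the machinery pays off at larger $k$ where direct enumeration becomes impractical. Your closing remark that the exclusion of the $(2,2)$ pair is what obstructs a full Pascal recursion is a nice observation.
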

\proof
We have the identity
\[n-1+\frac{(n-1)^2-(n-1)+2}{2}=\frac{n^2-n+2}{2}.\qedhere
\]

\section{Prime flag varieties, the double sequence $\ell(n,N)$, and sums of primes}\label{sec6}

\subsection{Flag varieties of prime type} A partial flag variety $F_{\bm\la}$, with $\bm\la=(\la_1,\dots,\la_N)\in\Z^N_{>0}$, is said to be {\it prime}, or of {\it prime type}, if and only if 
\[\la_i+\la_{i+1}\text{ is a prime number for all }i=1,\dots, N-1.
\]

\begin{example}\label{exampriflag1}
The Grassmannian $G(k,n)$ is prime if and only if $n$ is a prime number. Any complete flag variety $F_{(1,1,\dots,1)}$ is prime. \qetr
\end{example}

\begin{prop}\label{primeflagprop}
If $F_{\bm\la}$ is prime, then any of its semiclassical spectra is non-exceeding.
\end{prop}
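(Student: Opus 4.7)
The plan is to invoke Theorem~\ref{THMSEMICLSPEC} directly and observe that the exceeding condition becomes empty under the primality hypothesis. Specifically, for each $i=1,\dots,N-1$, Theorem~\ref{THMSEMICLSPEC} tells us that the $i$-th semiclassical spectrum of $F_{\bm\la}$ is exceeding if and only if
\[
p_1(\la_i+\la_{i+1}) \leq \la_i,\, \la_{i+1} \leq \la_i+\la_{i+1} - p_1(\la_i+\la_{i+1}).
\]
So the strategy reduces to verifying that, under the primality assumption on $\la_i+\la_{i+1}$, this double inequality cannot hold.

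First I would set $p := \la_i+\la_{i+1}$ and use the hypothesis that $p$ is prime, so that $p_1(p) = p$. Then the exceeding condition reads
\[
p \leq \la_i,\, \la_{i+1} \leq p - p = 0,
\]
which is incompatible with $\la_i, \la_{i+1} \in \Z_{>0}$. Thus the $i$-th semiclassical spectrum of $F_{\bm\la}$ must be non-exceeding. Since this argument works uniformly for every $i=1,\dots,N-1$, the conclusion follows.

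There is essentially no hard step here: once Theorem~\ref{THMSEMICLSPEC} is in place, the primality of each consecutive sum $\la_i+\la_{i+1}$ forces $p_1(\la_i+\la_{i+1})$ to equal $\la_i+\la_{i+1}$ itself, collapsing the open interval of ``forbidden'' values of $\la_i$ for non-exceeding spectra to an empty range. The only point to stress in the write-up is that the implication goes through for \emph{every} index $i$ simultaneously, so the statement that \emph{all} semiclassical spectra are non-exceeding follows, not merely some of them. This also explains the asymmetry with the sequence $\tlcyr(n,N)$: prime flag varieties form a subclass of those counted by $\tlcyr(n,N)$, consistent with the inequality $\ell(n,N) \leq \tlcyr(n,N)$ asserted in the introduction.
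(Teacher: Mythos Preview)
Your proof is correct and follows the same approach as the paper's: both arguments reduce to the observation that when $\la_i+\la_{i+1}$ is prime, one has $p_1(\la_i+\la_{i+1})=\la_i+\la_{i+1}$, so the exceeding condition of Theorem~\ref{THMSEMICLSPEC} becomes vacuous (equivalently, the inequalities~\eqref{eq:allnotexc} are automatically satisfied). The paper's proof is just a one-line reference to~\eqref{eq:allnotexc}, while you have spelled out the same reasoning in more detail.
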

\proof
The inequalities \eqref{eq:allnotexc} are satisfied for any $i=1,\dots, N-1$.
\endproof

\subsection{The double sequence $\ell(n,N)$} For any $2\leq N\leq n$, denote by $\ell(n,N)$ the number of prime flag varieties $F_{\bm\la}$, with $\bm\la\in\Z^N_{>0}$ and $|\bm\la|=n$.

It is easy to prove that
\[ \ell(N,N)=1,\qquad \ell(N+1,N)=N,\qquad \ell(n,N)\leq \tlcyr(n,N),
\]
where the last inequality follows from Proposition \ref{primeflagprop}.

The first values of the double sequence $\ell(n,N)$, for $2\leq N\leq n\leq 18$, are listed in the following table:

\begin{center}
\small
\begin{tabular}{c|*{17}{c}}
$N \backslash n$ & 2 & 3 & 4 & 5 & 6 & 7 & 8 & 9 & 10 & 11 & 12 & 13 & 14 & 15 & 16 & 17 & 18 \\
\hline
2  & 1 & 2 & 0 & 4 & 0 & 6 & 0 & 0 & 0  & 10  & 0   & 12  & 0   & 0   & 0   & 16  & 0 \\
3  &   & 1 & 3 & 1 & 5 & 3 & 8 & 6 & 3  & 3  & 10  & 8  & 16  & 14  & 10  & 10  & 19 \\
4  &   &   & 1 & 4 & 3 & 6 & 10 & 8 & 23 & {0}  & 22  & 10 & 33  & 12  & 56  & {0}   & 84 \\
5  &   &   &   & 1 & 5 & 6 & 8 & 19 & 14 & 42 & 15  & 40 & 33  & 64  & 44  & 100 & 48 \\
6  &   &   &   &   & 1 & 6 & 10 & 12 & 30 & 30 & 63  & 62 & 54  & 116 & 84  & 172 & 132 \\
7  &   &   &   &   &   & 1 & 7 & 15 & 19 & 44 & 59  & 94 & 144 & 99  & 249 & 177 & 373 \\
8  &   &   &   &   &   &   & 1 & 8 & 21 & 30 & 63  & 104 & 148 & 266 & 225 & 432 & 465 \\
9  &   &   &   &   &   &   &   & 1 & 9 & 28 & 46  & 90 & 169 & 242 & 443 & 488 & 719 \\
10 &   &   &   &   &   &   &   &   & 1 & 10 & 36 & 68 & 129 & 260 & 397 & 706 & 953 \\
11 &   &   &   &   &   &   &   &   &   & 1 & 11 & 45 & 97 & 185 & 386 & 639 & 1107 \\
12 &   &   &   &   &   &   &   &   &   &   & 1 & 12 & 55 & 134 & 264 & 560 & 1001 \\
13 &   &   &   &   &   &   &   &   &   &   &   & 1 & 13 & 66 & 180 & 373 & 800 \\
14 &   &   &   &   &   &   &   &   &   &   &   &   & 1 & 14 & 78 & 236 & 520 \\
15 &   &   &   &   &   &   &   &   &   &   &   &   &   & 1 & 15 & 91 & 303 \\
16 &   &   &   &   &   &   &   &   &   &   &   &   &   &   & 1 & 16 & 105 \\
17 &   &   &   &   &   &   &   &   &   &   &   &   &   &   &   & 1 & 17 \\
18 &   &   &   &   &   &   &   &   &   &   &   &   &   &   &   &   & 1 \\
\end{tabular}
\end{center}

We extend the definition of $\ell(n,N)$ to $N=1$, by setting
\[\ell(n,1)=1,\quad n\geq 1.
\]

\subsection{Vanishing of $\ell(n,N)$, sums of primes, Goldbach conjecture}
The existence of prime flag varieties depends on the values of $(n,N)$: remarkably, for some values of $(n,N)$, the sequence $\ell(n,N)$ may vanish. For example, it can be easily seen that 
\[\ell(n,2)=0\text{ whenever $n$ is not prime}, 
\]see Example \ref{exampriflag1} and Theorem \ref{thmsum1} below. Other instances (found by direct check) are 
\[\ell(11,4)=0,\quad \ell(17,4)=0,\quad \ell(23,4)=0,\quad \ell(29,4)=0,\quad \ell(35,4)=0.
\]
We propose now to investigate the nature of this phenomenon. It turns out that this is strictly tied with delicate open problems on additive number theory, namely {\it Goldbach--type problems}. These focus on the study of sumsets $A+A$ or $A+A+A$ for special sets $A\subset\N$, e.g.\,\,the set of primes. See \cite{Hua65,TV10} for beautiful introductions to the topic. The prototypical example of such problems is the \emph{strong (or binary) Goldbach conjecture} (formulated in 1742), which asserts that every even integer greater than $2$ is the sum of two primes; it remains open.

Denote 
\begin{itemize}
\item by $\eu P$ the set of prime numbers,
\item by $\eu P_k$ the set of sums of $k\geq 1$ prime numbers,
\item by $\Om:=\complement_{\N} \eu P_2$ the set of natural numbers not expressible as a sum of two primes, that is
\begin{multline*}\Om=\{n\in\N\colon n\neq p+q,\text{ for all primes }p,q\}\\=\{0,1,2,3,11,17,23,27,29,35,37,41,47,51,53,57,59,65,67,71,\dots\},
\end{multline*}
\item by $\eu T$ the set of smaller members of twin prime pairs, that it
\[\eu T=\{p\in\eu P\colon p+2\in\eu P\}.
\]
\end{itemize}

\begin{prop}
The following hold.
\begin{enumerate}
  \item For every odd $n\ge 3$,
  \[
    n\in \Om \iff n-2\text{ is composite}.
  \]
  In particular $\eu T\subset \Om$.
  \item If the (strong) Goldbach conjecture holds, then $\Om\cap 2\N=\{0,2\}$.
  \item $\Om$ is infinite.
\end{enumerate}
\end{prop}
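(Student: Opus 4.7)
The three assertions are elementary and rest essentially on parity considerations and on divisibility by $3$; I would treat them in sequence, as the latter two rely on part~(1).

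For part~(1), the starting observation is that the sum of two primes is odd precisely when exactly one of them equals the unique even prime $2$. Hence for odd $n\geq 3$, one has $n\in\eu P_2$ if and only if $n=2+(n-2)$ with $n-2\in\eu P$, and therefore $n\in\Om$ if and only if $n-2\notin\eu P$. For $n\geq 5$ this is the same as saying $n-2$ is composite; the boundary instance $n=3$ with $n-2=1$ must simply be recorded separately (yielding $3\in\Om$ since $1\notin\eu P$). For the inclusion $\eu T\subset\Om$, I would apply this equivalence to $p\in\eu T$ with $p\geq 7$: the three consecutive odd integers $p-2$, $p$, $p+2$ must contain a multiple of $3$; since $p$ and $p+2$ are primes strictly greater than $3$, neither is divisible by $3$, so necessarily $3\mid(p-2)$. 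As $p-2\geq 5$, the integer $p-2$ is composite, and part~(1) yields $p\in\Om$. The remaining small case is checked by inspection ($3\in\Om$ directly).

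For part~(2), Goldbach's conjecture states that every even integer $\geq 4$ belongs to $\eu P_2$, so no such integer lies in $\Om$. Consequently $\Om\cap 2\N\subseteq\{0,2\}$. The reverse inclusion is immediate: the minimal sum of two primes is $2+2=4$, so both $0$ and $2$ fail to lie in $\eu P_2$ and hence belong to $\Om$.

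For part~(3), the plan is to exhibit an explicit infinite family inside $\Om$ by combining part~(1) with a congruence argument. Consider integers $n\equiv 5\pmod{6}$ with $n\geq 11$: such $n$ is odd, and $n-2\equiv 3\pmod 6$ is divisible by $3$ and at least $9$, hence composite. Part~(1) then yields $n\in\Om$, producing an infinite arithmetic progression inside $\Om$. Alternative constructions (e.g.\ $n=2q+1$ with $q$ an odd composite) would work equally well.

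There is no genuine obstacle in any of the three steps: all three assertions reduce to parity arguments and to the fact that any three consecutive odd integers contain a multiple of $3$. The only delicate point is the bookkeeping of small-$n$ boundary cases in part~(1)---in particular the value $n=3$ and the small twin pair $(3,5)$---which are resolved by direct inspection and do not affect the general statements.
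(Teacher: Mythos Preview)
Your argument for the main equivalence in (1), for (2), and for (3) is correct and matches the paper's approach; the paper's proof of (3) is the one-line observation that infinitely many odd composite integers exist, while you give an explicit arithmetic progression realising it, but the idea is the same. Your divisibility-by-$3$ argument for $\eu T\subset\Om$ is a nice addition that the paper omits entirely (the paper simply records the inclusion as an ``in particular'' without justification).

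There is, however, a genuine gap in your handling of the inclusion $\eu T\subset\Om$. Your mod~$3$ argument correctly shows $p\in\Om$ for every $p\in\eu T$ with $p\ge 7$, and you then write that ``the remaining small case is checked by inspection ($3\in\Om$ directly)'' and later that the twin pair $(3,5)$ is ``resolved by direct inspection and do[es] not affect the general statements''. But direct inspection of $p=5$ shows the \emph{opposite}: $5\in\eu T$ (since $5$ and $7$ are prime) while $5=2+3\in\eu P_2$, so $5\notin\Om$. Thus $\eu T\subset\Om$ is actually false as written; the correct assertion is $\eu T\setminus\{5\}\subset\Om$. The paper's proof does not catch this either, so this is an error in the stated proposition rather than in your method --- but your claim that the small cases are ``resolved by direct inspection'' is the step that would fail. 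You should flag $p=5$ as a genuine exception rather than a case that checks out.

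A minor related point you already noticed: the biconditional in (1) is literally false at $n=3$ under the standard convention that $1$ is neither prime nor composite, since $3\in\Om$ but $3-2=1$ is not composite. The paper's proof in fact establishes the (correct) equivalence $n\in\Om\iff n-2\notin\eu P$, and you were right to record the $n=3$ case separately.
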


\begin{proof}
(1) Let $n\ge3$ be odd. If $n-2\in\eu P$ then $n=2+(n-2)$ is a sum of two primes, hence $n\notin \Om$. Conversely, if $n=p+q$ with $p,q$ primes then one of $p,q$ must be $2$ (the sum of two odd primes is even), hence $\{p,q\}=\{2,n-2\}$ and thus $n-2\in\eu P$. This proves the stated equivalence. 

(2) The strong (binary) Goldbach conjecture asserts that every even integer $>2$ is a sum of two primes; under this assertion every even $n>2$ is not in $\Om$, so the only even elements of $\Om$ are $0$ and $2$.

(3) Since there are infinitely many odd composite integers, (1) implies $\Om$ is infinite. This completes the proof.
\end{proof}

The only nontrivial logical dependence above is the conditional statement in (2) which ties the structure of $\Om$ on the even integers to the (open) strong Goldbach conjecture.
The weak (ternary) Goldbach conjecture — namely that every odd integer greater than $5$ is the sum of three primes — has been proven by H.A.\,Helfgott in \cite{Hel13,Hel19}. 

\begin{thm}[Goldbach's week conjecture, Helfgott's Theorem, \cite{Hel13,Hel19}]\label{wGold}
Every odd integer greater than 5 can be written as the sum of three primes. Every odd integer greater than 7 can be written as the sum of three odd primes.\qed
\end{thm}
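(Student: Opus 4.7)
The plan is to follow the Hardy--Littlewood circle method, making every constant fully explicit, and to close the remaining finite gap by a direct numerical check, following the strategy carried out by H.A.\,Helfgott. For a large odd integer $N$, I would write the number of representations $r_3(N)=\#\{(p_1,p_2,p_3)\in\eu P^3:p_1+p_2+p_3=N\}$ (suitably weighted by $\log$) as
\[
r_3(N)=\int_0^1 S(\alpha)^3 e(-N\alpha)\,{\rm d}\alpha,\qquad S(\alpha):=\sum_{p\leq N}(\log p)\,e(p\alpha),
\]
and split $[0,1]$ into major arcs $\mathfrak{M}$ (small neighbourhoods of fractions $a/q$ with $q\leq Q$ for a carefully chosen $Q=Q(N)$) and minor arcs $\mathfrak{m}=[0,1]\setminus\mathfrak{M}$. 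The classical theorem of I.M.\,Vinogradov (1937) already shows $r_3(N)>0$ for $N\geq N_0$ with some ineffective (or astronomically large) $N_0$; the entire difficulty is to bring $N_0$ down to a value that can be matched by computation.

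First, I would treat the major arcs. On each arc around $a/q$, Siegel--Walfisz type estimates and the explicit formula for $\psi(x;q,a)$ would give an approximation of $S(\alpha)$ by a main term involving Ramanujan sums, yielding the expected singular series $\mathfrak{S}(N)=\prod_p(1+\cdots)$, which is $\gg 1$ uniformly over odd $N$. The key technical input here is an explicit zero-free region (and explicit zero-density estimates) for Dirichlet $L$-functions of small conductor, combined with the numerical verification of GRH for such $L$-functions up to a sufficiently large height $T$; this is where one inherits Platt's rigorous computations. The output is a main-term lower bound of order $\mathfrak{S}(N)N^{2}/2$ for $\int_{\mathfrak{M}}$, with a fully explicit error.

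Second, and this is where the bulk of the work lies, I would bound $\int_{\mathfrak{m}}|S(\alpha)|^3\,{\rm d}\alpha$ explicitly. By H\"older one reduces to an $L^\infty$ bound on $\mathfrak{m}$ times an $L^2$ bound (Parseval, $=N\log N+O(\cdot)$). The $L^\infty$ bound is a Vinogradov-type estimate: using the bilinear (type I / type II) decomposition of the von Mangoldt function via Vaughan's identity, one gets $|S(\alpha)|\ll N/\sqrt{q}+N^{4/5}+\sqrt{Nq}$ for $|\alpha-a/q|\leq 1/q^2$. The main obstacle, and the heart of Helfgott's work, is sharpening each numerical constant in these bilinear estimates --- refining Ramar\'e's and Tao's improvements, optimising the Vaughan parameters, treating small moduli separately by sharper mean-value theorems --- so that the combined major/minor arc bound becomes a positive lower bound on $r_3(N)$ for every odd $N\geq 10^{27}$ (say).

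Finally, I would dispose of the range $5<N<10^{27}$ computationally. By the standard trick of exhibiting a long enough chain of primes with bounded gaps (each gap $\leq$ some $g$), one reduces the verification: if one finds primes $q_0<q_1<\cdots<q_k$ with $q_0\leq 7$, $q_k\geq 10^{27}$, and $q_{i+1}-q_i\leq g$ for some manageable $g$, then every odd $N$ in this range can be written as $q_i+(\text{odd number in }[N-q_i-g,\,N-q_i])$, reducing the three-primes problem to a binary Goldbach-type statement that has been checked (e.g.\ Oliveira e Silva's verification of the binary Goldbach conjecture up to $4\times 10^{18}$, together with Helfgott--Platt's prime chain up to $8.875\cdot 10^{30}$). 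The hardest single step is unquestionably the explicit minor-arc estimate --- every constant must be tracked and optimised, and only a decade of incremental sharpenings (Chen--Wang, Ramar\'e, Tao, Helfgott) made the threshold reachable by computers available today. The refinement from ``three primes'' to ``three odd primes'' for $N>7$ is then immediate: if one of the summands is $2$, then $N-2$ is an odd prime and one writes $N=3+(\text{odd prime})+(\text{something})$ by inspecting $N-2\pm 2$; alternatively, for $N\geq 9$ one directly applies the main theorem after subtracting a small odd prime.
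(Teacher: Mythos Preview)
The paper does not prove this theorem at all: the statement ends with \qed immediately, and the result is simply quoted from \cite{Hel13,Hel19} as an external input (the subsequent remark sketches the history but gives no argument). So there is no ``paper's own proof'' to compare against --- the theorem is used as a black box in the proofs of Theorems~\ref{thmell5}, \ref{thmell7+.2}, and \ref{thmequivgoldconj}.

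Your outline is a faithful high-level sketch of Helfgott's actual strategy (circle method with fully explicit major/minor arc bounds, Platt's GRH verification for small conductors, and the computational bridge via Oliveira e Silva's binary check plus a prime ladder). One quibble: your final sentence on upgrading ``three primes'' to ``three odd primes'' is too casual. For odd $N$, a representation $N=p_1+p_2+p_3$ has either zero or two of the $p_i$ equal to $2$ (parity), and in the latter case $N=4+p$ with $p$ prime; you then need a genuinely different representation, which is not obtained by ``inspecting $N-2\pm 2$''. In practice Helfgott's circle-method lower bound is for the count of representations with all $p_i$ odd from the outset (one simply excludes the prime $2$ from $S(\alpha)$, which changes nothing analytically), and the finite check handles $9\le N\le C$ directly.
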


\begin{rem}
G.H.\,Hardy and J.E.\,Littlewood \cite{HL23} proved, under the Generalized Riemann Hypothesis (GRH), that the ternary conjecture holds for all sufficiently large odd integers. I.M.\,Vinogradov \cite{Vin37} established this unconditionally using the circle method, with later refinements and explicit (though enormous) bounds due to K.G.\,Borozdkin \cite{Bor56}. In 1997, J.-M.\,Deshouillers, G.\,Effinger, H.\,te Riele, and D.\,Zinoviev showed that GRH implies the ternary conjecture for all odd integers, subject to verification\footnote{Without GRH, the corresponding verification bound was known by 2002 to increase to $10^{1347}$.} up to $10^{20}$ \cite{DEtRZ97}. 
Between 2012 and 2013, in \cite{Hel12a,Hel12b,Hel13} H.A.\,Helfgott obtained sufficiently strong estimates on both major and minor arcs to prove the ternary Goldbach conjecture \emph{unconditionally}, relying in part on D.J.\,Platt’s verification of the Riemann Hypothesis for Dirichlet $L$-functions for moduli $q \leq 400{,}000$ up to height about $10^{8}/q$ \cite{Pla16}. See the book \cite{Hel19} for more details. \qrem
\end{rem}

\begin{thm}\label{thmsum1}Assume that one of the following holds:
\begin{enumerate}
\item $N=2$ and $n\notin \eu P$;
\item $N=4$ and $n\in\Om$.
\end{enumerate}
Then $\ell(n,N)=0$.
\end{thm}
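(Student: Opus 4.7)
My plan is to directly unfold the definition of a prime flag variety and exploit the additive structure imposed on $n$.

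For case (1), observe that a composition $\bm\la=(\la_1,\la_2)\in\Z^2_{>0}$ of $n$ produces a prime flag variety $F_{\bm\la}$ if and only if $\la_1+\la_2$ is prime. Since $\la_1+\la_2=n$ by definition, the existence of any such $\bm\la$ would force $n\in\eu P$. Contrapositively, if $n\notin\eu P$, then no admissible composition exists and $\ell(n,2)=0$.

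For case (2), suppose for contradiction that $\ell(n,4)>0$, so there exists $\bm\la=(\la_1,\la_2,\la_3,\la_4)\in\Z^4_{>0}$ with $|\bm\la|=n$ and with $\la_1+\la_2$, $\la_2+\la_3$, $\la_3+\la_4$ all prime. Setting $p:=\la_1+\la_2$ and $q:=\la_3+\la_4$, the identity
\[
n=\la_1+\la_2+\la_3+\la_4=p+q
\]
expresses $n$ as a sum of two primes, so $n\in\eu P_2=\complement_{\N}\Om$. This contradicts the assumption $n\in\Om$; hence $\ell(n,4)=0$.

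The argument is entirely elementary: the only observation needed is that in length~$4$ compositions one can decouple the block $(\la_1,\la_2)$ from the block $(\la_3,\la_4)$, using only the two \emph{outer} prime conditions and ignoring the middle one $\la_2+\la_3\in\eu P$ (which becomes irrelevant for the vanishing statement, as it only imposes further restrictions). There is no serious obstacle here; the content of the theorem lies rather in the converse direction (compare with Theorem~\ref{thmequivgoldconj}), which ties the non-vanishing of $\ell(n,4)$ on even integers to Goldbach's conjecture.
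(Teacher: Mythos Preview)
Your proof is correct and follows essentially the same approach as the paper: for $N=2$ the single adjacent-sum condition forces $n$ itself to be prime, and for $N=4$ the two outer conditions $\la_1+\la_2,\,\la_3+\la_4\in\eu P$ already express $n$ as a sum of two primes. Your explicit remark that the middle condition $\la_2+\la_3\in\eu P$ is irrelevant here is a nice clarification that the paper leaves implicit.
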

\proof
If $N=2$, the number $\ell(n,N)$ equals the number of compositions $\la_1+\la_2=n$ with $\la_1+\la_2\in\eu P$. Hence $\ell(n,2)\neq 0$ iff $n\in\eu P$. In this case, one has $\ell(n,2)=n-1$.

If $N=4$, we have to count the compositions $\la_1+\la_2+\la_3+\la_4=n$ such that $\la_1+\la_2,\la_2+\la_3,\la_3+\la_4\in\eu P$. In particular, if one of such compositions exists, then $n$ can be expressed as a sum of two primes.
\endproof

Experiments suggest the validity of the inverse implication.

\begin{conj}\label{conjecture}
If $\ell(n,N)=0$, then either $N=2$ and $n$ is composite, or $N=4$ and $n\in\Om$.
\end{conj}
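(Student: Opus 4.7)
The plan is to recast the conjecture as the existence, for every pair $(n,N)$ outside the two exceptional families, of at least one composition $\bm\la\in\Z^N_{>0}$ of $n$ whose consecutive pair sums are all prime. The principal tool is the ``$1$-padded'' template: for primes $p_1,\dots,p_k\ge 2$, the composition
\[
(p_1-1,\,1,\,p_2-1,\,1,\,\dots,\,p_k-1,\,1)\in\Z^{2k}_{>0}
\]
has all pair sums in $\{p_1,\dots,p_k\}$ and sums to $p_1+\cdots+p_k$. In the even range $N=2k$ this template is essentially exhaustive at the level of sums: any prime composition $\bm\la\in\Z^{2k}_{>0}$ with pair sums $s_i$ satisfies $s_1+s_3+\cdots+s_{2k-1}=\sum_i\la_i=n$, forcing $n\in\eu P_k$. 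One thus obtains the clean equivalence
\[
\ell(n,2k)>0\iff n\in\eu P_k,\qquad n\ge 2k.
\]
For odd $N=2k+1$, the more flexible ``symmetric'' template $(a,b,a,b,\dots,a)$ with $a+b$ prime provides further compositions, reducing the problem to the existence of a prime $p$ in the interval $[(n+1)/(k+1),\,(n-1)/k]$.

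The small cases $N\le 4$ are unconditional. For $N=1,2$ the claim is immediate from Theorem~\ref{thmsum1}(1). For $N=3$, Bertrand's postulate supplies a prime $p\in(n/2,n)$ for every $n\ge 3$, and the composition $(n-p,\,2p-n,\,n-p)$ has both pair sums equal to~$p$. For $N=4$ the $1$-padded template with $k=2$ gives a prime composition whenever $n=p+q\in\eu P_2$, which together with Theorem~\ref{thmsum1}(2) produces the full equivalence $\ell(n,4)=0\iff n\in\Om$.

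The core of the argument lies at $N\ge 5$. For \emph{odd} $N=2k+1$ I would split by parity of $n$: when $n$ is even, Helfgott's Theorem~\ref{wGold} yields $n+1\in\eu P_3$ as soon as $n+1\ge 7$, and adjoining $(k-2)$ summands equal to $2$ furnishes $n+1\in\eu P_{k+1}$, so the $1$-padded template gives $\ell(n,N)>0$; when $n$ is odd the symmetric template succeeds provided the interval $[(n+1)/(k+1),(n-1)/k]$, of length $\Theta(n/k^2)$, contains a prime, which holds for $n$ large by quantitative prime-gap bounds (Baker--Harman--Pintz), leaving a finite list of small $n$ to be checked directly through supplementary constructions. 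For \emph{even} $N=2k\ge 8$ ($k\ge 4$), the equivalence displayed above reduces the claim to the unconditional assertion $n\in\eu P_k$ for every $n\ge 2k$; this is established by induction on $k$. The base case $k=4$ is handled by writing $n=3+(n-3)$ when $n\ge 10$ is even (so $n-3$ is odd $\ge 7$, hence in $\eu P_3$ by Helfgott) and $n=2+(n-2)$ when $n\ge 9$ is odd, with the remaining $n\in\{8,9\}$ verified by inspection; the inductive step is the trivial $n=2+(n-2)$ with $n-2\in\eu P_{k-1}$.

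The main obstacle is the case $N=6$. The equivalence $\ell(n,6)>0\iff n\in\eu P_3$ reduces the required non-vanishing for every even $n\ge 6$ to the assertion that every even $n\ge 6$ lies in $\eu P_3$, equivalently (writing $n=2+(n-2)$) that every even $m\ge 4$ lies in $\eu P_2$ -- precisely the strong Goldbach conjecture, as recorded in Theorem~\ref{thmequivgoldconj}. Consequently Conjecture~\ref{conjecture} \emph{contains} strong Goldbach through its $N=6$ instance, and no unconditional proof is within reach of present techniques. The realistic objective is therefore to establish the conjecture conditionally on Goldbach at $N=6$, and unconditionally for every other $N$ via the strategy above; the $N=6$ piece is, and will remain, the crux.
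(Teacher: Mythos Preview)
Your proposal is correct and tracks the paper's approach closely: the statement is a conjecture (not a theorem), you correctly reduce everything except $N=6$ to unconditional arguments, and you correctly identify that the $N=6$ instance is equivalent to the strong Goldbach conjecture (this is the paper's Theorem~\ref{thmequivgoldconj}). Your clean equivalence $\ell(n,2k)>0\iff n\in\eu P_k$, obtained by observing that $s_1+s_3+\cdots+s_{2k-1}=n$ for any prime composition of length $2k$, is a slightly sharper formulation than the paper gives explicitly.

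The one substantive divergence is your treatment of odd $N$ with odd $n$, where you appeal to the symmetric template $(a,b,a,\ldots,a)$ and invoke Baker--Harman--Pintz to locate a prime in $[(n+1)/(k+1),(n-1)/k]$. This works in principle but is heavier than necessary and leaves an uncontrolled (potentially enormous) finite residue to check by hand. The paper avoids prime-gap bounds entirely. For $N=5$ and odd $n\ge 8$ it writes $n+2$ (odd, $\ge 9$) as a sum of three \emph{odd} primes via Helfgott and uses the composition $(p_1-2,\,2,\,p_2-2,\,2,\,p_3-2)$; for odd $N\ge 7$ it first proves (Theorem~\ref{thmell7+.2}) that every integer $m\ge 2r+2$ lies in $\eu P_r$ for each $r\ge 4$, handling both parities of $m$ through Helfgott alone (even $m$ as $3+(m-3)$, odd $m$ as $2+(m-2)$, then pad with $2$'s). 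This places $n+1\in\eu P_{(N+1)/2}$ for $n\ge N+2$, so your $1$-padded template applies with only $n\in\{N,N+1,N+2\}$ left to verify directly---all of which are trivial. Replacing your prime-gap step with this device makes the argument fully effective and self-contained.
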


In what follows, we prove the following theorem supporting Conjecture \ref{conjecture}.

\begin{thm}\label{thm246}
If $N\neq 2,4,6$, then $\ell(n,N)\neq 0$ for any $n\geq N$.
\end{thm}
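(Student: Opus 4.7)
The plan is to produce, for each pair $(n,N)$ with $N\notin\{2,4,6\}$ and $n\ge N$, an explicit composition $\bm\lambda=(\lambda_1,\dots,\lambda_N)$ of $n$ whose consecutive sums $\lambda_i+\lambda_{i+1}$ are all prime. The argument splits into the regimes $N=1$ (vacuous, since there are no adjacency conditions), $N=3$ (Bertrand's postulate), $N=5$ (a parity-driven case split), and $N\ge 7$ (a uniform template). The nontrivial inputs are Bertrand's postulate and Theorem~\ref{wGold} (Helfgott's theorem on ternary Goldbach).

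For $N=3$ and $n\ge 3$, Bertrand's postulate yields a prime $p$ with $(n-1)/2<p\le n-1$ (the upper bound $p<n$ is automatic for $n$ even $\ge 4$, since such $n$ are composite); then $\bm\lambda=(n-p,\,2p-n,\,n-p)$ has both adjacent sums equal to $p$, hence is valid. For $N=5$, I distinguish by the parity of $n$. The case $n=5$ is settled by $(1,1,1,1,1)$. When $n\ge 6$ is even, I seek a composition of the form $(\mu_1,1,\mu_2,1,\mu_3)$, whose adjacent sums equal $\mu_i+1$: the problem reduces to writing $n+1\ge 7$ (odd and $>5$) as a sum of three primes $p_1+p_2+p_3$, which Helfgott's weak form supplies, after setting $\mu_i:=p_i-1\ge 1$. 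When $n\ge 7$ is odd, the same template would force $n+1$ (even) to be a sum of three primes, a problem equivalent to the strong Goldbach conjecture for $n-1$; I bypass this by switching the spacers from $1$ to $2$ and using $(\mu_1,2,\mu_2,2,\mu_3)$, so that the adjacent sums become $\mu_i+2$ and the task becomes writing $n+2\ge 9$ as a sum of three \emph{odd} primes, which is provided by Helfgott's stronger form (valid for odd integers $>7$).

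For $N\ge 7$, I use the template
\[
\bm\lambda=\bigl(\mu_1,\,1,\,\mu_2,\,1,\,\mu_3,\,1,\,\mu_4,\,\underbrace{1,\,\dots,\,1}_{N-7}\bigr),
\]
of length $N$ and sum $\sum_{i=1}^4\mu_i+(N-4)$. Its adjacency sums are the four values $\mu_i+1$ together with $2$'s arising from the trailing block of ones, so validity reduces to writing $m:=n-N+8\ge 8$ as a sum of four primes $p_1,\dots,p_4$ and then setting $\mu_i:=p_i-1\ge 1$. The auxiliary fact that every integer $m\ge 8$ is a sum of four primes follows from Helfgott by peeling off one prime to fix the parity: for $m\ge 9$ odd apply Helfgott's weak form to $m-2\ge 7$, for $m\ge 10$ even apply it to $m-3\ge 7$, and dispose of $m=8,9$ by the explicit identities $8=2+2+2+2$ and $9=2+2+2+3$.

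The main obstacle — and the structural reason why the excluded values $N\in\{2,4,6\}$ cannot be handled by this strategy — is the parity-matching step in $N=5$: the naive one-bump and three-bump constructions, when $n$ is odd, collapse to the problem of representing an \emph{even} integer as a sum of two or three primes, which is the strong Goldbach conjecture. Replacing $1$-spacers with $2$-spacers shifts the relevant parity and unlocks Helfgott's stronger form; the same parity-fixing device, implemented in the case $N\ge 7$ by peeling off a single prime $2$ or $3$, is what reduces that regime to four-prime representations and keeps the whole argument unconditional.
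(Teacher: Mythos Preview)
Your proposal is correct and follows essentially the same route as the paper: Bertrand's postulate for $N=3$, Helfgott's theorem with a parity split for $N=5$, and sums-of-primes templates for $N\ge 7$. The one organizational difference worth noting is in the $N\ge 7$ case: the paper first proves (Theorems~\ref{thmell7+.1} and~\ref{thmell7+.2}) that every $n$ above an explicit threshold $n_0(N)\le N+3$ admits a valid composition via a representation of $n$ (or $n+1$) as a sum of $\lfloor N/2\rfloor$ or $\lceil N/2\rceil$ primes, and then fills in the residual values $n\in\{N,N+1,N+2\}$ by hand (using e.g.\ $(2,1,\dots,1,2)$ for $n=N+2$). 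Your four-bump template $(\mu_1,1,\mu_2,1,\mu_3,1,\mu_4,1,\dots,1)$ instead reduces the problem uniformly to writing $n-N+8\ge 8$ as a sum of four primes, which covers all $n\ge N$ in a single stroke and avoids the separate small-$n$ patching. This is a bit more economical; the paper's modular version, on the other hand, isolates the standalone arithmetic fact that every integer $\ge 2r+2$ is a sum of $r$ primes for $r\ge 4$.
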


We need some preliminary results.

\begin{prop}\label{propbertr}
For every integer $n\ge 3$ there exist primes $p,q\le n$ such that $p+q>n$.
\end{prop}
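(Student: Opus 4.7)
The plan is to reduce the statement to Bertrand's postulate: it suffices to produce a single prime $p\le n$ with $p>n/2$, since then the choice $q:=p$ yields $p+q=2p>n$ with both primes bounded by $n$.

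The key step will be to invoke Bertrand's postulate in the form ``for every integer $m\ge 1$ there exists a prime in $(m,2m]$''. I would apply it with $m:=\lfloor n/2\rfloor$, which is $\ge 1$ since $n\ge 3$. This produces a prime $p$ with $\lfloor n/2\rfloor<p\le 2\lfloor n/2\rfloor\le n$. The upper bound is immediate, while the lower bound $p>\lfloor n/2\rfloor$ delivers $p>n/2$ in both parities: it is obvious when $n$ is even, and for odd $n$ it follows from the stronger conclusion $p\ge \lfloor n/2\rfloor+1=(n+1)/2>n/2$. Setting $q:=p$ then closes the argument.

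There is no genuine obstacle: the statement is an elementary and direct corollary of Bertrand's postulate, and the only minor care required is the parity distinction when converting $p>\lfloor n/2\rfloor$ into the strict inequality $p>n/2$. (If one prefers to avoid any case distinction, the same argument goes through verbatim using any quantitative refinement of Bertrand's postulate guaranteeing a prime in $(n/2,n]$ for $n\ge 3$, e.g.\ Ramanujan's or Nagura's bounds.)
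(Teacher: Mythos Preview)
The proposal is correct and takes essentially the same approach as the paper: both apply Bertrand's postulate with $m$ approximately $n/2$ to produce a prime $p$ in $(n/2,n]$, and then set $q=p$ so that $p+q=2p>n$. Your handling of the parity issue via $m=\lfloor n/2\rfloor$ and the closed interval form $(m,2m]$ is arguably tidier than the paper's, which splits into $m=n/2$ and $m=(n-1)/2$.
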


\begin{proof}Bertrand's postulate states that for every integer $m\ge 2$ there exists a prime $r$ with
$m < r < 2m$.
Take $n\ge 3$ and apply Bertrand's postulate with $m=\frac{n}{2}$ (if $n$ is odd take $m=\frac{n-1}{2}$). There is a prime $p$ with $n/2<p\le n$. Then $p\le n$ and taking $q=p$ we get
$p+q=2p>n$,
as required.
\end{proof}

\begin{cor}\label{corell3}
For $n\geq 3$, we have $\ell(n,3)\neq 0$.
\end{cor}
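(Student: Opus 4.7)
The plan is to produce, for each $n\ge 3$, an explicit composition $(\lambda_1,\lambda_2,\lambda_3)\in\Z^3_{>0}$ of $n$ whose two consecutive sums $\lambda_1+\lambda_2$ and $\lambda_2+\lambda_3$ are both prime, which by definition witnesses $\ell(n,3)\neq 0$. The key observation is that it suffices to look for compositions of the symmetric form $\lambda_1=\lambda_3=n-p$ and $\lambda_2=2p-n$, because then $\lambda_1+\lambda_2=\lambda_2+\lambda_3=p$, so we only need one prime $p$ satisfying $n/2<p\le n-1$.

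The existence of such a prime for $n\ge 4$ is exactly what Proposition \ref{propbertr} is designed to provide. More precisely, applying Bertrand's postulate in its strict form ($m<p<2m$ for $m\ge 2$) to $m=\lfloor n/2\rfloor$ yields a prime $p$ with $n/2<p<2\lfloor n/2\rfloor\le n$, hence $p\le n-1$. With this $p$, one checks immediately that $\lambda_1=\lambda_3=n-p\ge 1$ and $\lambda_2=2p-n\ge 1$, and by construction $\lambda_1+\lambda_2+\lambda_3=n$.

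It remains to handle the boundary case $n=3$, where Bertrand's postulate applied at $m=1$ gives no useful prime in the required range. Here we simply take $(\lambda_1,\lambda_2,\lambda_3)=(1,1,1)$: the consecutive sums are both equal to $2\in\eu P$, so this composition witnesses $\ell(3,3)\ge 1$.

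There is essentially no obstacle: the whole argument reduces to invoking Proposition \ref{propbertr} in the symmetric case $p=q$, together with a trivial check at $n=3$. The only subtlety is to make sure that the strict inequality $p<2m$ in Bertrand's postulate (which is what allows $p\le n-1$, not just $p\le n$) is used, so that the outer parts $\lambda_1=\lambda_3=n-p$ are strictly positive.
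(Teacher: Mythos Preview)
Your proof is correct and takes essentially the same approach as the paper: both rely on Bertrand's postulate to produce a prime $p$ with $n/2<p\le n-1$, and in fact the paper's Proposition~\ref{propbertr} is proved precisely by taking $p=q$, so its subsequent construction $(\lambda_1,\lambda_2,\lambda_3)=(n-q,\,p+q-n,\,n-p)$ specializes to your symmetric $(n-p,\,2p-n,\,n-p)$. You are slightly more careful than the paper about the strict upper bound $p\le n-1$ and the boundary case $n=3$.
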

\proof
By Proposition \ref{propbertr}, there exist $p,q\leq n$ primes with $p+q>n$. 
Set $\la_2=p+q-n$, $\la_1=p-\la_2$, and $\la_3=q-\la_2$. We have $\la_2<p,q$: for example, by absurd, if $\la_2\geq p$, then $p+q=n+\la_2\geq n+p$, so that $q\geq n$. Hence, we have
\[\la_1,\la_2,\la_3>0,\qquad \la_1+\la_2+\la_3=n,\qquad \la_1+\la_2,\la_2+\la_3\in\eu P.
\]This proves that $\ell(n,3)\neq 0$.
\endproof

\begin{thm}\label{thmell5}
Every integer $n\ge 5$ admits a composition into five positive parts with 
all four adjacent sums prime. That is  $\ell(n,5)\neq 0$ for $n\geq 5$. 
\end{thm}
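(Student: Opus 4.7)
The plan is to construct, for each $n\ge 5$, an explicit composition $(\la_1,\dots,\la_5)$ of $n$ into five positive parts with all four adjacent sums prime, splitting the argument according to the parity of $n$.

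For even $n\ge 6$, the first step is to apply Helfgott's Theorem~\ref{wGold} to the odd integer $n+1\ge 7$, obtaining primes $p_1,p_2,p_3$ with $p_1+p_2+p_3=n+1$. I then propose the composition $(p_1-1,\,1,\,p_2-1,\,1,\,p_3-1)$: its entries are positive (since each $p_i\ge 2$), its sum telescopes to $p_1+p_2+p_3-1=n$, and its four adjacent sums are $p_1,p_2,p_2,p_3$, all prime. So the even case is handled directly by Helfgott.

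For odd $n\ge 5$, Helfgott cannot be applied to $n+1$ (which is even), so instead I will use a symmetric single-prime ansatz: given any prime $p$ with $(n+1)/3\le p\le (n-1)/2$, the composition $(n-2p,\,3p-n,\,n-2p,\,3p-n,\,n-2p)$ has positive entries by the bounds on $p$, sums to $n$, and has all four adjacent sums equal to $p$. The task thus reduces to producing a prime in the interval $[(n+1)/3,\,(n-1)/2]$, whose ratio of endpoints tends to $3/2$ as $n\to\infty$. For $n$ sufficiently large, this is guaranteed by Nagura's 1952 refinement of Bertrand's postulate (every interval $[m,\,6m/5]$ with $m\ge 25$ contains a prime), applied with $m=\lceil(n+1)/3\rceil$: a direct computation shows $6m/5\le (n-1)/2$ once $n$ is large enough, so the ansatz succeeds unconditionally for all sufficiently large odd $n$.

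For the remaining finite range of odd $n$, direct inspection shows that the same interval still contains a prime with only two exceptional values, namely $n=9$ and $n=21$. For these I will exhibit ad hoc compositions, e.g.\ $(1,2,1,4,1)$ with sums $(3,3,5,5)$ and $(3,4,3,8,3)$ with sums $(7,7,11,11)$. The main obstacle is precisely this finite verification in the small-$n$ regime: the argument cannot be made entirely uniform, because Helfgott's theorem does not cover even integers $n+1$ and no unconditional Goldbach-type statement is available to handle odd $n$ directly. A slightly less \emph{ad hoc} alternative would be to relax the single-prime ansatz to the two-prime pattern $s_1=s_2=p,\,s_3=s_4=q$ and search for prime pairs with $p+q\in[(2n+2)/3,\,n-1]$, which absorbs the two exceptional cases; however, its sufficiency still ultimately rests on the same type of Bertrand--Nagura estimate combined with a small-$n$ inspection.
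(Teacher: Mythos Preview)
Your proposal is correct. The even case is identical to the paper's proof. For odd $n$, however, the paper takes a much more direct route that you overlooked: since $n+2$ is again odd (and $\ge 9$ once $n\ge 7$), one can apply the stronger form of Helfgott's theorem to write $n+2=p_1+p_2+p_3$ with all $p_i$ \emph{odd} primes, and then take $(\la_1,\dots,\la_5)=(p_1-2,\,2,\,p_2-2,\,2,\,p_3-2)$, whose adjacent sums are $p_1,p_2,p_2,p_3$. This mirrors the even case exactly, and the only small cases needing separate treatment are $n=5,6,7$. Your odd-$n$ argument, by contrast, trades depth for breadth: it uses only Nagura's refinement of Bertrand (far lighter than Helfgott), but at the cost of a single-prime ansatz that forces a finite inspection up to $n\approx 73$ and two genuinely exceptional values $n=9,21$ requiring ad hoc compositions. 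So your route is more elementary for odd $n$ but less uniform; the paper's route keeps the two parities parallel and eliminates almost all case-checking, at the price of invoking the full strength of the ternary Goldbach theorem twice.
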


\begin{proof}We have to show the existence of positive integers 
$\la_1,\la_2,\la_3,\la_4,\la_5$ such that
$\la_1+\la_2+\la_3+\la_4+\la_5=n$
and each of the four sums
$\la_1+\la_2,\, \la_2+\la_3,\, \la_3+\la_4,\, \la_4+\la_5$
is prime.

\emph{Small cases.}  
For $n=5$ take $(1,1,1,1,1)$, giving sums $(2,2,2,2)$.  
For $n=6$ take $(1,1,1,1,2)$, giving $(2,2,2,3)$.  
For $n=7$ take $(1,1,2,1,2)$, giving $(2,3,3,3)$.  
Thus the statement holds for $5\le n\le 7$.

\medskip
\emph{General case $n\ge 8$.}  
We use Helfgott’s Theorem \ref{wGold}.

\smallskip
\textbf{Case 1: $n$ even.}  
Then $n+1$ is odd $\ge 9$, so there exist primes $p_1,p_2,p_3$ with
\[
p_1+p_2+p_3=n+1.
\]
Define
\[
\la_1=p_1-1,\quad \la_2=1,\quad \la_3=p_2-1,\quad \la_4=1,\quad \la_5=p_3-1.
\]
Since each $p_i\ge 2$, all $\la_i\ge 1$. Their sum is
$\sum_{i=1}^5\la_i=(p_1+p_2+p_3)-3+2=n$. 
The adjacent sums are
\[
\la_1+\la_2=p_1,\quad \la_2+\la_3=p_2,\quad \la_3+\la_4=p_2,\quad \la_4+\la_5=p_3,
\]
all prime.

\smallskip
\textbf{Case 2: $n$ odd.}  
Then $n+2$ is odd $\ge 10$, so there exist odd primes $p_1,p_2,p_3$ with
\[
p_1+p_2+p_3=n+2.
\]
Define
\[
\la_1=p_1-2,\quad \la_2=2,\quad \la_3=p_2-2,\quad \la_4=2,\quad \la_5=p_3-2.
\]
Since $p_i\geq 3$, then each $\la_i\ge 1$. Their sum is 
$\sum_{i=1}^5\la_i=(p_1+p_2+p_3)-6+4=n$, and the adjacent sums are
\[
\la_1+\la_2=p_1,\quad \la_2+\la_3=p_2,\quad \la_3+\la_4=p_2,\quad \la_4+\la_5=p_3.
\]
all prime.
\end{proof}

\begin{thm}\label{thmell7+.1}The following holds.
\begin{enumerate}
\item[(a)] If $N$ is even, and $n\in\eu P_{N/2}$, then $\ell(n,N)\neq 0$.
\item[(b)] If $N$ is odd, and $n\in\eu P_{(N+1)/2}$, then $\ell(n,N)\neq 0$.
\end{enumerate}
\end{thm}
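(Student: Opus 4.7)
The plan is to construct, in each case, an explicit composition $\bm\la\in\Z_{>0}^N$ of $n$ whose $N-1$ consecutive sums are all prime; by Proposition~\ref{primeflagprop} this gives a prime flag variety $F_{\bm\la}$, so that $\ell(n,N)\geq 1$.

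For part~(a), I write $n=p_1+\cdots+p_k$ with $k=N/2$ and take the alternating composition
\[
\bm\la:=(p_1-1,\,1,\,p_2-1,\,1,\,\ldots,\,p_k-1,\,1).
\]
This has positive entries (since $p_i\ge 2$), sum $n$, and consecutive sums $p_1,p_2,p_2,\ldots,p_{k-1},p_{k-1},p_k$ — all prime.

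For part~(b) I set $k=(N+1)/2$ and split by whether $n-2\in\eu P_{k-1}$, equivalently by whether some decomposition of $n$ into $k$ primes contains $2$. In that case, writing $n=2+q_1+\cdots+q_{k-1}$ and taking
\[
\bm\la:=(q_1-1,\,1,\,q_2-1,\,1,\,\ldots,\,q_{k-1}-1,\,1,\,2)
\]
yields consecutive sums $q_1,q_2,q_2,\ldots,q_{k-1},q_{k-1},3$, all prime. If instead $n-2\notin\eu P_{k-1}$, then every decomposition of $n$ into $k$ primes is all-odd, which forces $n\equiv k\pmod 2$ and $n\ge 3k$. The cases $N\in\{1,3,5\}$ are covered respectively by the convention $\ell(n,1)=1$, by Corollary~\ref{corell3} (via Bertrand's postulate), and by Theorem~\ref{thmell5} (via Helfgott). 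For $N\ge 7$, i.e.\ $k\ge 4$, I would instead manufacture a decomposition centred on the prime $3$: setting $m:=n-3k+9$, a parity check gives $m\equiv 2k+1\equiv 1\pmod 2$ and the lower bound $n\ge 3k$ gives $m\ge 9$, so Helfgott's Theorem~\ref{wGold} produces odd primes $r_1,r_2,r_3$ with $m=r_1+r_2+r_3$, yielding
\[
n-3=\underbrace{3+\cdots+3}_{k-4}+r_1+r_2+r_3=:q_1+\cdots+q_{k-1},
\]
a presentation of $n-3$ as a sum of $k-1$ odd primes each $\ge 3$. The composition
\[
\bm\la:=(3,\,2,\,q_1-2,\,2,\,q_2-2,\,2,\,\ldots,\,2,\,q_{k-1}-2)
\]
then has length $1+2(k-1)=N$, sum $3+\sum_j q_j=n$, positive entries, and consecutive sums $5,q_1,q_1,q_2,q_2,\ldots,q_{k-1},q_{k-1}$, all prime.

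The main obstacle will be the careful parity-and-size bookkeeping required to feed $m$ into Helfgott's theorem in the second case for $k\ge 4$; once that is in place the construction is completely explicit, and the argument is unconditional — in particular it never appeals to Goldbach's conjecture to guarantee that $n$ admits an alternative decomposition containing the prime $2$.
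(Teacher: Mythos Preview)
Your proof is correct. Part~(a) coincides with the paper's argument.

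For part~(b) you take a genuinely different route. The paper sets $N=2r+1$, writes $q_1+\dots+q_{r+1}=n+1$, and uses the composition $(q_1{-}1,1,q_2{-}1,1,\dots,q_r{-}1,1,q_{r+1}{-}1)$, whose adjacent sums are $q_1,q_2,q_2,\dots,q_r,q_r,q_{r+1}$ and whose total is $n$. This mirrors~(a) in a single line, but note that the paper's construction actually consumes the hypothesis $n+1\in\eu P_{r+1}$ rather than the stated $n\in\eu P_{r+1}$; there is an off-by-one slip between statement and proof as written (with $n=q_1+\dots+q_{r+1}$ the same composition sums to $n-1$). Your argument, by contrast, works directly from $n\in\eu P_{(N+1)/2}$: when some prime decomposition of $n$ contains a~$2$ you peel it off and use the $(q_j{-}1,1,\dots,1,2)$ pattern; when none does, the forced constraints $n\equiv k\pmod 2$ and $n\ge 3k$ let you fall back on Corollary~\ref{corell3}, Theorem~\ref{thmell5}, or (for $k\ge 4$) feed $m=n-3k+9$ into Helfgott's theorem to produce an all-odd decomposition of $n-3$ and switch to the $(3,2,q_1{-}2,2,\dots)$ pattern with positive entries. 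This is longer and leans on several earlier results of the paper, but it proves exactly the stated hypothesis and is unconditional. One small remark: you do not need Proposition~\ref{primeflagprop} in your opening sentence --- exhibiting a prime composition already gives $\ell(n,N)\ge 1$ by definition of $\ell$.
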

\proof
\medskip\noindent\emph{Case (a): $N=2r$.}  
Suppose $q_1+\dots+q_r=n$ with all $q_i$ prime. Define
\[
\la_{2i-1} = q_i-1,\qquad \la_{2i}=1 \qquad (1\leq i \leq r).
\]
Then each $\la_j$ is positive. Moreover,
\[
\la_{2i-1}+\la_{2i} = (q_i-1)+1=q_i,\quad \la_{2i}+\la_{2i+1} = 1+(q_{i+1}-1)=q_{i+1},
\]
so all adjacent sums are prime. Finally,
\[
\sum_{j=1}^{2r}\la_j = \sum_{i=1}^r (q_i-1+1) = \sum_{i=1}^r q_i = n,
\]
as required.

\medskip\noindent\emph{Case (b): $N=2r+1$.}  
Suppose $q_1+\dots+q_{r+1}=n+1$ with all $q_i$ prime. Define
\[
\la_{2i-1}=q_i-1,\qquad \la_{2i}=1 \qquad (1\leq i \leq r),\qquad \la_{2r+1}=q_{r+1}-1.
\]
Again all $\la_j$ are positive. We compute
\[
\la_{2i-1}+\la_{2i} = q_i,\quad \la_{2i}+\la_{2i+1}=q_{i+1} \quad (1\leq i\leq r),
\]
so the adjacent sums are precisely $q_1,\dots,q_{r+1}$. Moreover
\[
\sum_{j=1}^{2r+1} \la_j = \Big(\sum_{i=1}^{r+1} q_i\Big) - 1 = (n+1)-1 = n.
\]
This completes the proof.
\endproof

\begin{thm}\label{thmell7+.2}
Let $r \geq 4$ be an integer. Then there exists a constant $n_o(r)$ such that every integer $n \geq n_o(r)$ can be written as a sum of $r$ prime numbers. In fact, one may take $n_o(r)=2r+2$ for every $r \geq 4$.
\end{thm}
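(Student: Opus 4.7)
The proof proceeds by induction on $r \geq 4$. The base case $r = 4$ is the genuinely nontrivial one, and relies on Helfgott's Theorem~\ref{wGold} on the ternary Goldbach conjecture; the inductive step then reduces to simply adjoining copies of the prime $2$.

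\textbf{Base case} ($r = 4$). I would show that every integer $n \geq 10 = 2\cdot 4 + 2$ can be written as a sum of four primes. The idea is to peel off a single small prime ($2$ or $3$) from $n$ so that the remainder becomes an odd integer $\geq 7$, to which the ternary Goldbach theorem applies. Concretely, if $n$ is even with $n \geq 10$, then $m := n - 3$ is odd and $m \geq 7$, so by Theorem~\ref{wGold} one may write $m = q_1 + q_2 + q_3$ with each $q_i$ prime, yielding $n = 3 + q_1 + q_2 + q_3$. If instead $n$ is odd with $n \geq 11$, then $m := n - 2$ is odd and $m \geq 9$, so $m = q_1 + q_2 + q_3$ and $n = 2 + q_1 + q_2 + q_3$. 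In both cases $n$ is a sum of four primes.

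\textbf{Inductive step}. Assume the claim for some $r \geq 4$, so every $n \geq 2r+2$ is a sum of $r$ primes. Given $n \geq 2(r+1) + 2 = 2r+4$, one has $n - 2 \geq 2r + 2$, so by the inductive hypothesis $n - 2 = p_1 + \cdots + p_r$ with each $p_i$ prime; hence $n = 2 + p_1 + \cdots + p_r$ is a sum of $r+1$ primes.

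The main (indeed only) obstacle is the base case, which depends essentially on Helfgott's Theorem~\ref{wGold}. It is worth emphasising why subtracting $3$ rather than $2$ in the even sub-case is crucial: an analogous statement for $r = 3$ --- namely that every even $n \geq 8$ is a sum of three primes --- is equivalent to the binary Goldbach conjecture, since in any representation $n = p_1 + p_2 + p_3$ with $n$ even precisely one of the $p_i$ must equal $2$, reducing the problem to $n - 2 = p + q$. By removing a $3$ when $n$ is even, the residue $n-3$ is forced to be odd, and one may then invoke the (unconditionally proved) odd ternary Goldbach theorem, thereby circumventing any appeal to binary Goldbach.
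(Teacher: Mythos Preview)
Your proof is correct and follows essentially the same approach as the paper: the base case $r=4$ is handled identically (subtract $3$ when $n$ is even, subtract $2$ when $n$ is odd, then apply Helfgott's ternary theorem), and the extension to larger $r$ proceeds by adjoining copies of the prime $2$. The only cosmetic difference is that the paper treats $r\ge 5$ by a direct case split---subtracting $2(r-3)$ when $n$ is odd and $2(r-4)$ when $n$ is even---whereas you perform the same reduction one step at a time via induction; your inductive formulation is slightly cleaner.
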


\begin{proof}
For $r=4$, let $n$ be an even integer with $n \geq 10$. Then $n-3$ is odd and $\geq 7$, hence $n-3 = p_1+p_2+p_3$ with primes $p_i$. Thus
\[
n = (n-3)+3 =3+ p_1+p_2+p_3
\]
is a sum of four primes. Similarly, let $n\geq 9$ be odd. Then $n-2$ is odd and $\geq 7$, hence $n-2=p_1+p_2+p_3$ with primes $p_i$. Thus 
\[n=2+p_1+p_2+p_3
\]is a sum of four primes.

Now let $r \geq 5$. We distinguish two cases.

\smallskip
\emph{Case 1: $n$ odd.}  
Set $k=r-3$ and $m=n-2k$. Since $n$ is odd, also $m$ is odd. If $n \geq 2r+1$ then $m \geq 7$, so $m$ is the sum of three primes, say $m=p_1+p_2+p_3$. Hence
\[
n = m + 2k = p_1+p_2+p_3 + \underbrace{2+\cdots+2}_{k \text{ times}},
\]
a sum of $3+k=r$ primes.

\smallskip
\emph{Case 2: $n$ even.}  
Set $k=r-4$ and $m=n-2k$. Then $m$ is even. If $n \geq 2r+2$ then $m \geq 10$, hence by the $r=4$ case we can write $m=q_1+q_2+q_3+q_4$ with primes $q_i$. Thus
\[
n = m + 2k = q_1+q_2+q_3+q_4 + \underbrace{2+\cdots+2}_{k \text{ times}},
\]
a sum of $4+k=r$ primes.

\smallskip
Therefore every sufficiently large integer $n$ is a sum of $r$ primes for each $r \geq 4$. Explicitly, for $r \geq 4$ the argument above shows that $n_o(r)=2r+2$ works.
\end{proof}

\proof[Proof of Theorem \ref{thm246}]  If $N=3,5$, then the claim follows from Corollary \ref{corell3} and Theorem \ref{thmell5}.

For $N\geq 7$, we have $\ell(n,N)\neq 0$ for $n\geq n_o(N)$ by Theorem \ref{thmell7+.1} and Theorem \ref{thmell7+.2}. In particular, one can take $n_0(N)=N+3$ if $N$ is odd and $n_o(N)=N+2$ is $N$ is even.

Since $\ell(N,N)=1$ and $\ell(N+1,N)=N$ for any $N$, we only need to check that $\ell(N+2,N)\neq 0$ for any $N\geq 7$. But this is easily established, since $N+2$ always admit a composition into $N$ positive parts whose adjacent sums are all prime: for example: $\bm\la=(2,1,1,\dots,1,2)$. 
\endproof

The case $N=6$ turns out to be more delicate.

\begin{thm}\label{thmequivgoldconj}
The following are equivalent.
\begin{enumerate}
\item We have $\ell(n,6)\neq 0$, for any $n\geq 6$.
\item Goldbach conjecture.
\end{enumerate}
\end{thm}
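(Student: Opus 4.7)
The plan is to establish the equivalence by passing through an intermediate condition: that every integer $n \geq 6$ admits a representation as a sum of three primes, i.e.\ $n \in \eu P_3$. Thus the chain of equivalences to prove is
\[
\ell(n,6) \neq 0 \text{ for all } n \geq 6 \quad \Longleftrightarrow \quad \eu P_3 \supseteq \{n \in \N \colon n \geq 6\} \quad \Longleftrightarrow \quad \text{Goldbach's conjecture.}
\]

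For the first equivalence, the direction $\Leftarrow$ follows directly from Theorem \ref{thmell7+.1}(a) specialized to $N=6$ and $r=3$. For the direction $\Rightarrow$, given a composition $\la_1+\cdots+\la_6 = n$ with every adjacent sum $\la_i+\la_{i+1}$ prime, the grouping
\[
n = (\la_1+\la_2) + (\la_3+\la_4) + (\la_5+\la_6)
\]
exhibits $n$ as a sum of three primes. Hence $\ell(n,6) \neq 0$ is equivalent to $n \in \eu P_3$ for every $n \geq 6$.

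For the second equivalence, assume first Goldbach's conjecture and let $n \geq 6$. If $n$ is even, then $n-2 \geq 4$ is even, so Goldbach provides primes with $n-2 = p+q$, and $n = 2+p+q \in \eu P_3$. If $n$ is odd, then $n \geq 7$ and $n-3 \geq 4$ is even, so analogously $n = 3+p+q \in \eu P_3$. Conversely, assume every $n \geq 6$ lies in $\eu P_3$ and let $m \geq 4$ be even. Set $n := m+2 \geq 6$, which is even, and write $n = p_1+p_2+p_3$ with the $p_i$ prime. Since the sum of three odd numbers is odd, the parity of $n$ forces at least one $p_i$ to equal $2$; after reindexing, $m = n - 2 = p_2+p_3$ is a sum of two primes, establishing Goldbach.

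There is no substantive analytic obstacle in this argument: the proof is a clean package of Theorem \ref{thmell7+.1}(a) with an elementary parity argument. The only noteworthy point is that no appeal to Helfgott's theorem on the ternary Goldbach problem is required, because the reduction from $\eu P_3$ to $\eu P_2$ for even $n$ is automatic from parity, while the reverse reduction uses only the binary Goldbach hypothesis itself (applied to $n-2$ or $n-3$). The length $N=6$ is precisely what makes the bracketing into three consecutive pairs fit, and this is the structural reason the equivalence singles out Goldbach rather than a ternary variant.
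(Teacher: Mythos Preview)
Your proof is correct and follows the same strategy as the paper: bracket the composition into three consecutive pairs to extract $n\in\eu P_3$, invoke Theorem~\ref{thmell7+.1}(a) for the converse, and use parity to pass between $\eu P_3$ and $\eu P_2$. The one genuine difference is that for odd $n$ in the direction (2)$\Rightarrow$(1) the paper appeals to Helfgott's theorem to get $n\in\eu P_3$, whereas you simply apply the Goldbach hypothesis itself to the even number $n-3$; your route is cleaner and shows that Helfgott is not actually needed for this equivalence.
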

\proof
Assume (1). Let $n\geq 4$ be an even integer. Then $\ell(n+2,6)\neq 0$. Hence there exist $\bm\la=(\la_1,\dots,\la_6)$ such that
\[n+2=\underbrace{\la_1+\la_2}_{p_1}+\underbrace{\la_3+\la_4}_{p_2}+\underbrace{\la_4+\la_5}_{p_3},\qquad p_1,p_2,p_3\in\eu P.
\]Since $n+2$ is even, exactly one of $p_i$'s equals 2, say $p_1=2$. Then $n=p_2+p_3$.

Conversely, assume (2). Let $n\geq 6$ be an arbitrary integer. If $n$ is even, then $n-2\geq 4$, and by Goldbach conjecture $n=2+p+q\in \eu P_3$. Hence $\ell(n,6)\neq 0$ by Theorem \ref{thmell7+.1}. Similarly, if $n$ is odd, then $n\in\eu P_3$ by Helfgott's theorem, and $\ell(n,6)\neq 0$ by Theorem \ref{thmell7+.1}.
\endproof

\begin{cor}
Conjecture \ref{conjecture} is equivalent to Goldbach conjecture.
\end{cor}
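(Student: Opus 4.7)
The plan is to establish the equivalence via two short implications, both reducing directly to results already at hand, with only a single new elementary construction required.

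For the direction Conjecture~\ref{conjecture} $\Rightarrow$ Goldbach, I would argue as follows: Conjecture~\ref{conjecture} permits $\ell(n,N)$ to vanish only when $N\in\{2,4\}$; in particular, it forces $\ell(n,6)\neq 0$ for every $n\geq 6$. Invoking Theorem~\ref{thmequivgoldconj}, which characterizes this non-vanishing as Goldbach's conjecture, closes this direction in one line.

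For the converse Goldbach $\Rightarrow$ Conjecture~\ref{conjecture}, I would analyse the hypothesis $\ell(n,N)=0$ case by case in $N$. When $N\notin\{2,4,6\}$, Theorem~\ref{thm246} already rules out vanishing unconditionally. When $N=2$, the elementary identity $\ell(n,2)=n-1$ for $n\in\eu P$ (and $\ell(n,2)=0$ otherwise) shows that vanishing is equivalent to $n$ being composite, matching the first exceptional form. When $N=6$, Goldbach combined with Theorem~\ref{thmequivgoldconj} gives $\ell(n,6)\neq 0$ for every $n\geq 6$, so vanishing is impossible. The only case requiring additional content is $N=4$: Theorem~\ref{thmsum1}(2) already gives $n\in\Om\Rightarrow\ell(n,4)=0$, but the converse — namely that $n\notin\Om$ forces $\ell(n,4)\neq 0$ — is not stated earlier, and must be proved here.

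For this $N=4$ step I would exhibit an explicit composition, independent of Goldbach. Given any decomposition $n=p+q$ with $p,q\in\eu P$ (which is the definition of $n\notin\Om$), consider the tuple
\[
(\la_1,\la_2,\la_3,\la_4)=(p-1,\,1,\,1,\,q-1).
\]
Since $p,q\geq 2$, all entries are positive; the entries sum to $n$; and the three adjacent pair sums are $p$, $2$, $q$, all prime. This witnesses $\ell(n,4)\geq 1$ whenever $n\notin\Om$, completing the $N=4$ case and hence the converse implication.

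There is no real obstacle: all the heavy lifting is already accomplished by Theorems~\ref{thm246} and~\ref{thmequivgoldconj}. The only new ingredient is the elementary $N=4$ construction above, which shows that the genuinely non-trivial part of Conjecture~\ref{conjecture} is concentrated entirely in the $N=6$ stratum, and this stratum is tautologically equivalent to Goldbach by Theorem~\ref{thmequivgoldconj}.
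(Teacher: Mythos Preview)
Your proof is correct and follows essentially the same route as the paper: both directions hinge on Theorems~\ref{thm246} and~\ref{thmequivgoldconj}, with the $N=2,4,6$ cases handled individually. The one inaccuracy is your claim that the implication $n\notin\Om\Rightarrow\ell(n,4)\neq 0$ ``is not stated earlier'': it is precisely Theorem~\ref{thmell7+.1}(a) with $N=4$, and the paper's proof cites that theorem rather than giving a fresh construction. Your explicit witness $(p-1,1,1,q-1)$ is perfectly valid (and in fact a slight variant of the construction $(q_1-1,1,q_2-1,1)$ used in the proof of Theorem~\ref{thmell7+.1}), but it duplicates work already done.
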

\proof By Theorem \ref{thm246}, the condition $\ell(n,N)=0$ implies $N\in\{2,4,6\}$.
If Goldbach conjecture holds true, then necessarily either $N=2$ or $N=4$. If $N=2$, then $n$ is not a prime. If $N=4$, then $n$ cannot be a sum of two primes, by Theorem \ref{thmell7+.1}. Conversely, if Conjecture \ref{conjecture} holds true, then $\ell(n,6)$ is non-zero for any $n\geq 6$, and Goldbach conjecture holds, by Theorem \ref{thmequivgoldconj}.
\endproof

\begin{rem}
Although strong Goldbach conjecture is still open, several deep partial results are known. In \cite{Che73}, J.-R.\,Chen proved that every sufficiently large even integer is the sum of a prime and a product of two primes (a ``semiprime''). His proof has been greatly simplified by P.M.\,Ross \cite{Ros75}. More recently, extensive computations verify the conjecture for all even integers up to $4 \cdot 10^{18}$, see \cite{OSHP14}.
\qrem
\end{rem}

\subsection{Graph \textcyr{P}$_{\sf m}$, transfer matrices, generating functions}\label{secgraphPim} Let ${\sf m}\in\N_{>0}$. Introduce the graph \textcyr{P}$_{\sf m}$, with vertices $\{1,\dots,{\sf m}\}$, connected by an arrow $(i,j)$ whenever $i+j$ is prime. Denote by $P^{[\sf m]}$ its ${\sf m}\times {\sf m}$ adjacency matrix.

\begin{lem}\label{symPim}
The matrix $P^{[\sf m]}$ is symmetric, and $P^{[\sf m]}_{ii}\neq 0$ iff $i=1$.\qed
\end{lem}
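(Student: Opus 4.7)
The plan is straightforward: both assertions follow by direct inspection of the definition of $\text{\textcyr{P}}_{\sf m}$. By construction, there is an arrow $(i,j)$ in $\text{\textcyr{P}}_{\sf m}$ -- equivalently, $P^{[\sf m]}_{ij}=1$ -- precisely when $i+j\in\eu P$; otherwise $P^{[\sf m]}_{ij}=0$. So the two claims reduce to elementary properties of the predicate ``$i+j$ is prime''.

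First I would verify symmetry. The condition $i+j\in\eu P$ is manifestly invariant under the swap $i\leftrightarrow j$, since addition on $\mathbb{N}$ is commutative; hence $P^{[\sf m]}_{ij}=P^{[\sf m]}_{ji}$ for all $i,j\in\{1,\dots,{\sf m}\}$, which is the stated symmetry $\bigl(P^{[\sf m]}\bigr)^{T}=P^{[\sf m]}$.

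Next, for the diagonal, I would observe that $P^{[\sf m]}_{ii}\neq 0$ is equivalent to $2i\in\eu P$. Since $2i$ is even and positive, primality forces $2i=2$, i.e.\ $i=1$; conversely, $2\cdot 1=2\in\eu P$, so indeed $P^{[\sf m]}_{11}=1$. Therefore $\text{\textcyr{P}}_{\sf m}$ has a unique loop, located at vertex~$1$.

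Since the argument uses only commutativity of addition and the elementary fact that $2$ is the sole even prime, no real obstacle is expected; the lemma is the exact analogue, for $\text{\textcyr{P}}_{\sf m}$, of Lemma~\ref{symGmm} for the graph $\Gamma_{\sf m}$.
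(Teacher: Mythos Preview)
Your proof is correct and matches the paper's treatment: the paper regards this lemma as immediate from the definition (hence the bare \qed), and you have simply spelled out the two elementary observations---commutativity of addition for symmetry, and $2i\in\eu P\iff i=1$ for the diagonal---that make it so.
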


Similarly to what done in Section \ref{secgraphGm}, assign the monomial weight $z^j$ to each vertex $j$, and introduce the transfer matrix $Q^{[\sf m]}(z)$, defined by
\beq
Q^{[\sf m]}(z)_{ij}=P^{[\sf m]}_{ij}z^j,\quad i,j=1,\dots,{\sf m}.
\eeq
For any walk $\si$ in \textcyr{P}$_{\sf m}$, we have the monomial ${\sf w}(\si):=\prod_{i\text{ node of }\si}z^i$, and the length $|\si|:=\#\{\text{arrows in }\si\}$.

Set $v_{\sf m}(z)=(z,z^2,\dots,z^{\sf m})^T$, and ${\bf 1}_{\sf m}=(1,\dots,1)^T$.

For each $N\geq 1$, define the generating function
\[\mathscr L^{[\sf m]}_N(z):=v_{\sf m}(z)^T\cdot Q^{[\sf m]}(z)^{N-1}\cdot {\bf 1}_{\sf m},
\]and collect all of them into a single one
\[\mathscr L^{[\sf m]}(z,t):=\sum_{N=1}^\infty\mathscr L^{[\sf m]}_N(z)t^N,\quad {\sf m}\geq 1.
\]

In what follows we relate this generating functions of suitable walks on \textcyr{P}$_{\sf m}$ with the genrating functions of the double sequence $\ell(n,N)$, namely
\beq\label{LNell}
L_N(z)=\sum_{n\geq N}\ell(n,N)z^n,\quad N\geq 1,\qquad L(z,t)=\sum_{N=1}^\infty\sum_{n\geq N}\ell(n,N)z^nt^N.
\eeq

The following results are analogs of those of Sections \ref{secgraphGm} and \ref{secratioLm}, and can be similarly proved: the proofs work verbatim, by replacing $\Gm_{\sf m}\leftrightarrow\Pi_{\sf m}$, $M^{[\sf m]}\leftrightarrow P^{[\sf m]}$, $T^{[\sf m]}\leftrightarrow Q^{[\sf m]}$, $\tlcyr(n,N)\leftrightarrow \ell(n,N)$.
\begin{prop}
The function $\mathscr L^{[\sf m]}_N(z)$ is the polynomial in $z$ given by
\[\mathscr L^{[\sf m]}_N(z)=\sum_{i_1,\dots,i_N=1}^{\sf m}z^{i_1+\dots+i_{N}}\prod_{k=1}^{N-1}P^{[\sf m]}_{i_k,i_{k+1}}=\sum_{\substack{\si\text{ in }\Pi_{\sf m}\\ |\si|=N-1}}{\sf w}(\si).\tag*{\qed}
\]
\end{prop}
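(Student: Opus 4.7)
The plan is to prove the two stated equalities by direct expansion, exactly mirroring the proof of the analogous statement for $\eu L^{[\sf m]}_N(z)$ on the graph $\Gamma_{\sf m}$ given in equations \eqref{euLN1} and \eqref{euLN2}. No ingredient beyond the definitions of $Q^{[\sf m]}(z)$, $v_{\sf m}(z)$, and ${\bf 1}_{\sf m}$ is required, so the argument is essentially a verbatim transcription under the substitutions $\Gm_{\sf m} \leftrightarrow \Pi_{\sf m}$, $M^{[\sf m]} \leftrightarrow P^{[\sf m]}$, $T^{[\sf m]} \leftrightarrow Q^{[\sf m]}$.

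First, I would unfold the matrix-vector product defining $\mathscr L^{[\sf m]}_N(z)$ by inserting intermediate indices $i_1, \dots, i_N \in \{1, \dots, {\sf m}\}$:
\[
\mathscr L^{[\sf m]}_N(z) = \sum_{i_1, \dots, i_N = 1}^{\sf m} (v_{\sf m}(z))_{i_1} \left( \prod_{k=1}^{N-1} Q^{[\sf m]}(z)_{i_k, i_{k+1}} \right) ({\bf 1}_{\sf m})_{i_N}.
\]
Substituting $(v_{\sf m}(z))_{i_1} = z^{i_1}$, $({\bf 1}_{\sf m})_{i_N} = 1$, and $Q^{[\sf m]}(z)_{i_k, i_{k+1}} = P^{[\sf m]}_{i_k, i_{k+1}} z^{i_{k+1}}$, all factors of $z$ collect into the single monomial $z^{i_1 + \dots + i_N}$, which yields the first claimed equality.

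For the second equality, I would observe that the entries of the adjacency matrix $P^{[\sf m]}$ lie in $\{0,1\}$, so a nonzero contribution to the sum arises precisely when the tuple $(i_1, \dots, i_N)$ traces out consecutive edges in $\Pi_{\sf m}$; equivalently, when $(i_1 \to i_2 \to \cdots \to i_N)$ defines a walk of length $N-1$ in $\Pi_{\sf m}$. Since by definition ${\sf w}(\sigma) = z^{i_1 + \dots + i_N}$ for such a walk $\sigma$, the sum reindexes as $\sum_\sigma {\sf w}(\sigma)$ over walks of length $N-1$, giving the second equality. There is no genuine obstacle here: the proof is a formal unfolding of a matrix-vector product, and the symmetry of $P^{[\sf m]}$ recorded in Lemma~\ref{symPim} guarantees that the walk interpretation is internally consistent (in particular, arrows in $\Pi_{\sf m}$ correspond to unordered pairs $\{i,j\}$ with $i+j$ prime, with the loop at $1$ being the sole self-adjacency).
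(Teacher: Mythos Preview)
Your proposal is correct and takes essentially the same approach as the paper, which simply declares that the proof works verbatim from the analogous statement for $\eu L^{[\sf m]}_N(z)$ under the substitutions $\Gm_{\sf m}\leftrightarrow\Pi_{\sf m}$, $M^{[\sf m]}\leftrightarrow P^{[\sf m]}$, $T^{[\sf m]}\leftrightarrow Q^{[\sf m]}$. Your explicit unfolding of the matrix-vector product is exactly the content of that earlier argument.
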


\begin{prop}
Let $N \ge 1$ be fixed. Then the limit
$
\mathscr L_N(z) :=\lim_{{\sf m}\to\infty} \mathscr L^{[{\sf m}]}_N(z)
$
exists as a formal power series in $z$.\qed
\end{prop}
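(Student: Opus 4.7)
The plan is to follow, almost verbatim, the argument given in Proposition \ref{limeuLN} for the sequence $\eu L_N^{[\sf m]}(z)$, replacing the graph $\Gamma_{\sf m}$ with $\Pi_{\sf m}$ and the adjacency matrix $M^{[\sf m]}$ with $P^{[\sf m]}$. The existence of the limit will be established as a formal power series in $z$, by showing that each coefficient $[z^k]\mathscr L^{[\sf m]}_N(z)$ stabilizes for all sufficiently large $\sf m$.

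First, I would recall the combinatorial interpretation established just before the statement: $\mathscr L^{[\sf m]}_N(z)$ is the polynomial
\[
\mathscr L^{[\sf m]}_N(z) = \sum_{i_1,\dots,i_N=1}^{\sf m} z^{i_1+\dots+i_N} \prod_{k=1}^{N-1} P^{[\sf m]}_{i_k,i_{k+1}},
\]
so that the coefficient of $z^k$ counts (with multiplicity one) all walks $\sigma = (i_1, \dots, i_N)$ of length $N-1$ in $\Pi_{\sf m}$ satisfying $i_1+\dots+i_N = k$. Since every vertex label is at least $1$, any such walk satisfies $1 \le i_j \le k-(N-1)$ for each $j$.

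Next comes the key observation. The defining adjacency condition of $\Pi_{\sf m}$ -- namely that $(i,j)$ is an edge iff $i+j$ is prime -- depends only on the vertex labels $i$ and $j$ themselves, not on $\sf m$. Consequently, as soon as ${\sf m} \ge k-(N-1)$, every walk contributing to $[z^k]\mathscr L^{[\sf m]}_N(z)$ is already present in $\Pi_{\sf m}$, and enlarging $\sf m$ neither creates new contributing walks nor modifies existing ones. Therefore the integer coefficient $[z^k]\mathscr L^{[\sf m]}_N(z)$ is eventually constant in $\sf m$; denote this stable value by $a_k$.

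Finally, I would define
\[
\mathscr L_N(z) := \sum_{k \ge N} a_k\, z^k \in \mathbb{Z}[[z]],
\]
and verify that this is precisely the limit of $\mathscr L^{[\sf m]}_N(z)$ in the $(z)$-adic topology on $\mathbb{Z}[[z]]$: for any $K \ge N$, choosing ${\sf m} \ge K-(N-1)$ guarantees that $\mathscr L^{[\sf m]}_N(z) \equiv \mathscr L_N(z) \pmod{z^{K+1}}$. This establishes the existence of the limit as a formal power series. No real obstacle arises: the argument is purely combinatorial and formal, since the adjacency condition is intrinsic and independent of the truncation parameter $\sf m$, in complete analogy with the case of $\Gamma_{\sf m}$.
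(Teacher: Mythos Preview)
Your proof is correct and follows exactly the approach the paper intends: the paper explicitly states that this result is proved verbatim as Proposition~\ref{limeuLN}, with the replacements $\Gamma_{\sf m}\leftrightarrow\Pi_{\sf m}$ and $M^{[\sf m]}\leftrightarrow P^{[\sf m]}$, and your argument carries out precisely this adaptation. Your bound $i_j\le k-(N-1)$ is even slightly sharper than the $i_j\le k$ used in the original proof, and matches the threshold ${\sf m}\ge n-N+1$ appearing in the subsequent corollary.
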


\begin{thm}
For any $N\geq 1$, we have $\mathscr L_N(z)=L_N(z)$.\qed
\end{thm}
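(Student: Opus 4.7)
The plan is to follow \emph{verbatim} the strategy used for Theorem~\ref{thmgenfuncLtilde}, replacing the adjacency matrix $M^{[\sf m]}$ of $\Gm_{\sf m}$ by the adjacency matrix $P^{[\sf m]}$ of $\Pi_{\sf m}$, and the counting condition \eqref{eq:allnotexc} by the single primality condition ``$\lambda_i+\lambda_{i+1}\in\eu P$''. All the structural ingredients (symmetry of the adjacency matrix, stabilization of coefficients as ${\sf m}\to\infty$, polynomial expansion of $\mathscr L^{[\sf m]}_N(z)$ as a sum over walks) have already been established in the two propositions preceding the theorem, so only the final identification step remains.

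First, I would expand the matrix product $v_{\sf m}(z)^T\cdot Q^{[\sf m]}(z)^{N-1}\cdot {\bf 1}_{\sf m}$ as
\[
\mathscr L^{[\sf m]}_N(z)
=\sum_{i_1,\dots,i_N=1}^{\sf m} z^{i_1+\cdots+i_N}\prod_{k=1}^{N-1}P^{[\sf m]}_{i_k,i_{k+1}},
\]
and observe that the product of adjacency entries equals $1$ precisely when $(i_1,\dots,i_N)\in\Z_{>0}^N$ is a tuple with $i_k\leq {\sf m}$ for all $k$ and $i_k+i_{k+1}\in\eu P$ for all $k=1,\dots,N-1$, and equals $0$ otherwise. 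Hence
\[
\mathscr L^{[\sf m]}_N(z)=\sum_{n\geq N}\ell^{[\sf m]}(n,N)\,z^n,
\]
where $\ell^{[\sf m]}(n,N)$ counts those compositions $\bm\lambda$ of $n$ into $N$ positive parts, each $\leq {\sf m}$, for which every adjacent sum $\lambda_i+\lambda_{i+1}$ is prime.

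Next I would note the key stabilization: any composition $\bm\lambda$ of $n$ into $N$ positive parts automatically satisfies $\lambda_i\leq n-(N-1)\leq n$, so for every ${\sf m}\geq n$ the upper bound $\lambda_i\leq {\sf m}$ imposes no restriction. Consequently $\ell^{[\sf m]}(n,N)=\ell(n,N)$ for all ${\sf m}\geq n$, which means the coefficient of $z^n$ in $\mathscr L^{[\sf m]}_N(z)$ is eventually constant equal to $\ell(n,N)$. Passing to the limit as ${\sf m}\to\infty$ (which exists as a formal power series by the analogue of Proposition~\ref{limeuLN} already stated before the theorem), we obtain
\[
\mathscr L_N(z)=\sum_{n\geq N}\ell(n,N)\,z^n = L_N(z),
\]
as required. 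There is no real obstacle: the only subtle point is the coefficient-stabilization argument, which is essentially identical to the one used in the proof of Proposition~\ref{limeuLN}, since the primality of $\lambda_i+\lambda_{i+1}$ is a property of the parts themselves and does not depend on the truncation parameter ${\sf m}$.
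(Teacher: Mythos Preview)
Your proposal is correct and is exactly the approach the paper intends: the paper states this theorem with a \qed and explains just beforehand that the proofs of this block of results ``work verbatim'' from those in Sections~\ref{secgraphGm}--\ref{secratioLm} upon replacing $\Gm_{\sf m}\leftrightarrow\text{\textcyr{P}}_{\sf m}$, $M^{[\sf m]}\leftrightarrow P^{[\sf m]}$, $T^{[\sf m]}\leftrightarrow Q^{[\sf m]}$, and $\tlcyr\leftrightarrow\ell$. Your write-up carries out precisely that replacement in the proof of Theorem~\ref{thmgenfuncLtilde}, including the coefficient-stabilization argument, so there is nothing to add.
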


\begin{cor}\label{corellPigraph}
For any ${\sf m}\geq n-N+1$, we have 
\[
\ell(n,N)=\sum_{\substack{\bm\la\in\Z^N_{>0}\\\la_1+\dots+\la_N=n}}\prod_{i=1}^{N-1}P^{[\sf m]}_{\la_i,\la_{i+1}}.\tag*{\qed}
\]
\end{cor}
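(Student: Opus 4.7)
The plan is to mirror the proof of Corollary \ref{corltildegraph}, replacing the graph $\Gamma_{\sf m}$ by $\Pi_{\sf m}$ and the matrix $M^{[\sf m]}$ by $P^{[\sf m]}$. The starting point is the definition of $\ell(n,N)$ as the number of compositions $\bm\la=(\la_1,\dots,\la_N)\in\Z^N_{>0}$ of $n$ such that $\la_i+\la_{i+1}\in\eu P$ for every $i=1,\dots,N-1$. This definition can be rewritten as
\[
\ell(n,N)=\sum_{\substack{\bm\la\in\Z^N_{>0}\\ \la_1+\dots+\la_N=n}}\prod_{i=1}^{N-1}\mathbbm{1}_{\eu P}(\la_i+\la_{i+1}),
\]
where $\mathbbm{1}_{\eu P}$ is the indicator function of the set of primes, so the problem reduces to identifying $\mathbbm{1}_{\eu P}(\la_i+\la_{i+1})$ with the matrix entry $P^{[\sf m]}_{\la_i,\la_{i+1}}$ for every admissible pair.

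Next, I would invoke the elementary bound on the parts of a composition: any $\bm\la\in\Z^N_{>0}$ with $|\bm\la|=n$ satisfies $\la_j\leq n-(N-1)$ for every $j=1,\dots,N$, since the remaining $N-1$ parts must each be at least $1$. Hence under the hypothesis ${\sf m}\geq n-N+1$, every index pair $(\la_i,\la_{i+1})$ appearing in the above sum lies in $\{1,\dots,{\sf m}\}^2$, so the entry $P^{[\sf m]}_{\la_i,\la_{i+1}}$ is well defined and, by the very construction of the graph $\Pi_{\sf m}$, equals $\mathbbm{1}_{\eu P}(\la_i+\la_{i+1})$.

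Putting the two steps together yields the claimed identity termwise. The only minor verification is that ${\sf m}\geq n-N+1$ is the sharpest threshold that guarantees stability of the formula: for smaller ${\sf m}$, some composition $\bm\la$ with a large part would have $P^{[\sf m]}_{\la_i,\la_{i+1}}$ replaced by $0$ not because $\la_i+\la_{i+1}$ fails to be prime but simply because the vertex lies outside $\Pi_{\sf m}$. I do not foresee any serious obstacle; the proof is essentially a bookkeeping exercise, entirely parallel to the $\tlcyr$ case treated in Corollary \ref{corltildegraph}, with no new analytic or combinatorial difficulty arising from the switch from the adjacency condition of $\Gamma_{\sf m}$ to the primality condition of $\Pi_{\sf m}$.
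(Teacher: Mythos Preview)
Your proposal is correct and follows exactly the paper's approach: the paper explicitly states that this corollary is proved verbatim as Corollary~\ref{corltildegraph} under the substitutions $\Gamma_{\sf m}\leftrightarrow\Pi_{\sf m}$, $M^{[\sf m]}\leftrightarrow P^{[\sf m]}$, $\tlcyr\leftrightarrow\ell$, and the proof of Corollary~\ref{corltildegraph} is precisely the observation that every part of a composition $\bm\la\in\Z^N_{>0}$ of $n$ is bounded by $n-(N-1)$.
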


\begin{cor}
For any $N,{\sf m}\geq 1$, we have 
\[L_N(z)\equiv \mathscr L^{[\sf m]}_N(z)\mod(z^{{\sf m}+N}).\tag*{\qed}
\]
\end{cor}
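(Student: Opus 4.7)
The plan is to deduce the congruence by matching coefficients of $z^n$ on both sides up to degree ${\sf m}+N-1$, using Corollary \ref{corellPigraph} as a black box.

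First, I would write out both generating functions coefficient-by-coefficient. On the one hand, by definition \eqref{LNell},
\[
L_N(z) \;=\; \sum_{n \geq N} \ell(n,N)\, z^n.
\]
On the other hand, expanding $\mathscr L^{[{\sf m}]}_N(z) = v_{\sf m}(z)^T \cdot Q^{[\sf m]}(z)^{N-1} \cdot {\bf 1}_{\sf m}$ and grouping terms by total degree gives
\[
\mathscr L^{[\sf m]}_N(z) \;=\; \sum_{n \geq N} \Bigl(\sum_{\substack{\bm\la \in \{1,\dots,{\sf m}\}^N \\ \la_1 + \dots + \la_N = n}} \prod_{k=1}^{N-1} P^{[\sf m]}_{\la_k,\la_{k+1}}\Bigr) z^n.
\]

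Next, I would invoke Corollary \ref{corellPigraph}: for any $n$ with ${\sf m} \geq n - N + 1$, i.e.\ $n \leq {\sf m} + N - 1$, the coefficient of $z^n$ in $\mathscr L^{[\sf m]}_N(z)$ equals $\ell(n,N)$. The reason is that any composition $\bm\la \in \Z_{>0}^N$ of $n$ has all parts bounded by $n - (N-1) \leq {\sf m}$, so the inner sum above is unaffected by the truncation to $\{1,\dots,{\sf m}\}$.

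Finally, I would conclude: the difference $L_N(z) - \mathscr L^{[\sf m]}_N(z)$ has vanishing coefficient in every degree $n < {\sf m}+N$, hence lies in the ideal $(z^{{\sf m}+N})$. There is no genuine obstacle here — the statement is essentially a bookkeeping reformulation of Corollary \ref{corellPigraph}, which is precisely why the author marked it with a $\qed$ inside the display.
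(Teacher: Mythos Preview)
Your proposal is correct and matches the paper's intended argument exactly: the statement is an immediate reformulation of Corollary~\ref{corellPigraph}, obtained by matching coefficients of $z^n$ for $n\leq {\sf m}+N-1$. This is why the paper gives no separate proof and places the $\qed$ inside the display (cf.\ the identical treatment of the analogous Corollary~\ref{congrgenfun0} in Section~\ref{secgraphGm}).
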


\begin{prop}
For any $\sf m\geq 1$, we have 
\[L(z,t)\equiv \mathscr L^{[\sf m]}(z,t)\mod I,
\]where $I$ is the ideal of $\Z[\![z,t]\!]$ generated by $(z^{{\sf m}+h}t^h)_{h\in\N_{>0}}$.\qed
\end{prop}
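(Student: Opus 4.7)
The plan is to mimic the analogous argument for $\tLcyrit(z,t)$ in Section~\ref{secratioLm}, where the bridge from the $N$-by-$N$ congruences to a single two-variable congruence was supplied by the preceding corollary. First I would unfold both generating functions by $t$-degree and write
\[
L(z,t)-\mathscr L^{[\sf m]}(z,t)=\sum_{N=1}^{\infty}\bigl(L_N(z)-\mathscr L^{[\sf m]}_N(z)\bigr)\,t^N.
\]
The preceding corollary gives, for every $N\geq 1$, the divisibility $L_N(z)-\mathscr L^{[\sf m]}_N(z)\in z^{{\sf m}+N}\,\mathbb{Z}[\![z]\!]$, so I may write $L_N(z)-\mathscr L^{[\sf m]}_N(z)=z^{{\sf m}+N}g_N(z)$ for some $g_N(z)\in\mathbb{Z}[\![z]\!]$.

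Next I would substitute these expressions back, obtaining
\[
L(z,t)-\mathscr L^{[\sf m]}(z,t)=\sum_{N=1}^{\infty}g_N(z)\cdot z^{{\sf m}+N}t^N.
\]
For any fixed monomial $z^{a}t^{b}$, only the single value $N=b$ in the right-hand sum can contribute, so this is a well-defined element of $\mathbb{Z}[\![z,t]\!]$. Each term $g_N(z)\cdot z^{{\sf m}+N}t^N$ is visibly a multiple of the generator $z^{{\sf m}+N}t^N$ of $I$, hence the whole sum lies in $I$. This proves the congruence.

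The argument is essentially bookkeeping, so there is no real obstacle; the only point to check carefully is that the formal infinite sum $\sum_N g_N(z)\cdot z^{{\sf m}+N}t^N$ converges in the $(z,t)$-adic topology of $\mathbb{Z}[\![z,t]\!]$ and belongs to $I$. Convergence holds because for every bidegree $(a,b)$ only finitely many (in fact one) terms contribute, and membership in $I$ is immediate termwise, $I$ being closed under such locally finite sums. Thus the proposition follows directly from the single-variable congruence already established.
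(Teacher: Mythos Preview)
Your proof is correct and follows exactly the same approach as the paper: the paper's proof is simply ``It follows from Corollary~\ref{congrgenfun0}'' (more precisely, its analogue for $L_N$ and $\mathscr L^{[\sf m]}_N$), and you have merely spelled out the passage from the $N$-by-$N$ congruences $L_N(z)\equiv\mathscr L^{[\sf m]}_N(z)\pmod{z^{{\sf m}+N}}$ to the two-variable statement.
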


\begin{thm}\label{thmratmathscrLm}
For any ${\sf m}\geq 1$, we have $\mathscr L^{[\sf m]}(z,t)\in\Q(z,t)$.\qed
\end{thm}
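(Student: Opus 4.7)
The plan is to mimic verbatim the proof of Theorem \ref{thmratLm}, with the transfer matrix $T^{[\sf m]}(z)$ replaced by the ``prime'' transfer matrix $Q^{[\sf m]}(z)$ and the sequence $\eu L^{[\sf m]}_N(z)$ replaced by $\mathscr L^{[\sf m]}_N(z)$. The only structural ingredient needed is that $\mathscr L^{[\sf m]}_N(z)$ is obtained from $Q^{[\sf m]}(z)^{\,N-1}$ by sandwiching with two vectors, exactly as in \eqref{euLN}; the combinatorial interpretation of the matrix (adjacency of sums of primes versus inequalities involving $p_1$) is irrelevant for the rationality argument.

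First I would let $p(\zeta,z)=\zeta^{\sf m}+\sum_{i=0}^{{\sf m}-1}b_i(z)\zeta^i\in\Q[z][\zeta]$ be the characteristic polynomial of $Q^{[\sf m]}(z)$, whose coefficients $b_{{\sf m}-i}(z)=(-1)^i\operatorname{tr}\bigwedge^{i}Q^{[\sf m]}(z)$ are polynomials in $z$. By the Cayley--Hamilton theorem, $Q^{[\sf m]}(z)^{\sf m}+\sum_{i=0}^{{\sf m}-1}b_i(z)\,Q^{[\sf m]}(z)^{i}=0$, and sandwiching both sides between $v_{\sf m}(z)^T$ and $\mathbf{1}_{\sf m}$ yields the linear recursion
\[
\mathscr L^{[\sf m]}_{N+{\sf m}}(z)+\sum_{i=0}^{{\sf m}-1}b_i(z)\,\mathscr L^{[\sf m]}_{N+i}(z)=0,\qquad N\geq 1.
\]

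Next, I would multiply this identity by $t^{N+{\sf m}}$ and sum over $N\geq 1$. The tail sums telescope exactly as in Theorem \ref{thmratLm}, producing
\[
\mathscr L^{[\sf m]}(z,t)\Bigl(1+\sum_{i=0}^{{\sf m}-1}b_i(z)\,t^{{\sf m}-i}\Bigr)
=\sum_{i=1}^{\sf m}\mathscr L^{[\sf m]}_i(z)\,t^{i}+\sum_{i=0}^{{\sf m}-1}\sum_{j=1}^{i}b_i(z)\,\mathscr L^{[\sf m]}_j(z)\,t^{{\sf m}-i+j}.
\]
The right-hand side is a polynomial in $z$ and $t$, and the coefficient of $\mathscr L^{[\sf m]}(z,t)$ on the left is a polynomial in $z$ and $t$ whose constant term (in $t$) equals $1$, hence invertible in $\Q(z,t)$. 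Dividing through gives the desired rational expression for $\mathscr L^{[\sf m]}(z,t)$.

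There is really no obstacle here: the argument is formal and only uses the fact that $\mathscr L^{[\sf m]}_N(z)$ is a quadratic form in powers of a fixed matrix with polynomial entries. The only mild verification is that the polynomials $\mathscr L^{[\sf m]}_1(z),\dots,\mathscr L^{[\sf m]}_{\sf m}(z)$ are well-defined polynomials in $\Q[z]$ (which is immediate from the definition), so that the numerator on the right-hand side lies in $\Q[z,t]$. The conclusion $\mathscr L^{[\sf m]}(z,t)\in\Q(z,t)$ then follows at once. The only ``bookkeeping'' task, should one wish to make the denominator explicit, is to compute the coefficients $b_i(z)$ of the characteristic polynomial of $Q^{[\sf m]}(z)$; by Lemma \ref{symPim}, $Q^{[\sf m]}(z)$ is symmetric after rescaling, with $b_{{\sf m}-1}(z)=-z$, but this refinement is not needed for the statement of the theorem.
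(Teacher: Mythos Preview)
Your proposal is correct and is exactly the approach the paper takes: the paper states explicitly that the results of Section~\ref{secgraphPim}, including Theorem~\ref{thmratmathscrLm}, are proved verbatim as in Sections~\ref{secgraphGm}--\ref{secratioLm} under the substitutions $T^{[\sf m]}\leftrightarrow Q^{[\sf m]}$, $\eu L\leftrightarrow\mathscr L$, etc., which is precisely your Cayley--Hamilton argument.
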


\subsection{Eventual polynomiality of $N\mapsto \ell(N+k,N)$}
In Sections \ref{polcyrsec} and \ref{poltlcyrsec}, we proved, in two different manners, the eventual polynomiality of the sequences $N\mapsto \lcyr(N+k,N), \tlcyr(N+k,N)$ for any $k\geq 0$. For the sequence $\lcyr$, this was a consequence of its rigidity imposed by Pascal rules. For the sequence $\tlcyr$, manifesting only a very weak Pascal rule (Corollary \ref{weakPascaltlcyr}), this was a consequence of the rationality of the generating function $\eu L^{[\sf m]}(z,t)$.

It turns out that the same argument for $\tlcyr$ implies the analog result for the sequence $N\mapsto \ell(N+k,N)$, $k\geq 0$.

\begin{thm}\label{thmpolyell}
The sequence $N\mapsto\ell(N+k,N)$ is eventually polynomial for any $k\geq 0$. There exists $N_0\in\N_{>0}$ and a polynomial $\mathscr P_k\in\Q[n]$, with $\deg \mathscr P_k\leq k$, such that $\ell(N+k,N)=\mathscr P_k(N)$ for any $N\geq N_0$.
\end{thm}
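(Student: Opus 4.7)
The plan is to mimic verbatim the argument used for Theorem~\ref{thmpolytlcyr}, replacing the data $(\Gm_{\sf m}, M^{[\sf m]}, T^{[\sf m]}, \tlcyr)$ with $(\Pi_{\sf m}, P^{[\sf m]}, Q^{[\sf m]}, \ell)$. The crucial structural ingredients that made that argument work (symmetry of the adjacency matrix, uniqueness of the diagonal loop at the vertex~$1$, and rationality of the bivariate generating function) are all available in the present setting: Lemma~\ref{symPim} gives the loop-uniqueness, while Theorem~\ref{thmratmathscrLm} supplies the rationality.

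First, I would fix $k\ge 0$ and choose ${\sf m}\ge k+1$. Any composition of $n=N+k$ into $N$ positive parts has entries bounded by $n-(N-1)=k+1\le {\sf m}$, so Corollary~\ref{corellPigraph} yields $\ell(N+k,N)=[z^{N+k}]\,\mathscr L^{[\sf m]}_N(z)$, and therefore
\[
\sum_{N\ge 1}\ell(N+k,N)\,t^N \;=\; [u^k]\,\mathscr L^{[\sf m]}(u,t/u).
\]
Next, I would write $\mathscr L^{[\sf m]}(z,t)$ in the explicit rational form obtained from Cayley--Hamilton applied to $Q^{[\sf m]}(z)$ (exactly as in the proof of Theorem~\ref{thmratLm}/Theorem~\ref{thmratmathscrLm}). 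The denominator is $1+\sum_{i=0}^{{\sf m}-1}a_i(z)\,t^{{\sf m}-i}$, where $a_{{\sf m}-i}(z)=(-1)^i\mathrm{Tr}\bigwedge^i Q^{[\sf m]}(z)$. By Lemma~\ref{symPim}, the only nonzero diagonal entry of $Q^{[\sf m]}(z)$ is $(Q^{[\sf m]})_{11}=z$, so $\mathrm{Tr}\,Q^{[\sf m]}(z)=z$ and hence $a_{{\sf m}-1}(z)=-z$. This is exactly what produces a clean linear term $-zt$ in the denominator after the substitution $z=u,\ t\mapsto t/u$. The numerator $P(z,t)$ carries an overall factor of $zt$ because $\mathscr L^{[\sf m]}(z,t)$ itself does, all walks having at least one vertex of weight $z^{\ge 1}$.

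With these normal-form properties in place, $\mathscr L^{[\sf m]}(z,t)=P_1(z,t)/P_2(z,t)$ where $P_1=zt\,Q_1(z,t)$ and $P_2=1-zt+Q_2(z,t)$ with $Q_2(0,0)=0$. This is precisely the hypothesis of Lemma~\ref{lemtchn1}, which I would then apply to conclude
\[
H_k(t):=[u^k]\,\mathscr L^{[\sf m]}(u,t/u)=\frac{R_k(t)}{(1-t)^s}
\]
for some polynomial $R_k(t)\in\Q[t]$ and integer $s\ge 1$. Lemma~\ref{lemtchn2} then yields the existence of $\mathscr P_k\in\Q[n]$ with $\ell(N+k,N)=\mathscr P_k(N)$ for all $N\ge N_0$, together with the bound $\deg\mathscr P_k\le s-1$.

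For the sharper bound $\deg\mathscr P_k\le k$, I would invoke the inequality $\ell(n,N)\le \tlcyr(n,N)$ (Proposition~\ref{primeflagprop}) and Theorem~\ref{thmpolytlcyr}, which give $0\le \mathscr P_k(N)\le \widetilde P_k(N)$ for large $N$ with $\deg \widetilde P_k\le k$; a polynomial eventually trapped between $0$ and one of degree $k$ must have degree $\le k$. The only place requiring actual care -- and the thing I expect to be the main obstacle -- is verifying that the rational form produced by Cayley--Hamilton really does fit the template of Lemma~\ref{lemtchn1} after the substitution $t\mapsto t/u$: one must check that no spurious simplifications or missing factors destroy the $zt$-divisibility of the numerator or the $1-zt$ structure of the denominator. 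This ultimately reduces to the observation, crucial already in the $\tlcyr$-case, that the loop at vertex~$1$ is unique, which is exactly the content of Lemma~\ref{symPim}.
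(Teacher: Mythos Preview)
Your proposal is correct and follows essentially the same route as the paper's own proof: both transplant the argument of Theorem~\ref{thmpolytlcyr} verbatim to the graph $\text{\textcyr{P}}_{\sf m}$, invoking Corollary~\ref{corellPigraph}, Theorem~\ref{thmratmathscrLm}, Lemma~\ref{symPim} (for the trace condition $a_{{\sf m}-1}(z)=-z$), and then Lemmas~\ref{lemtchn1} and~\ref{lemtchn2}. Your treatment of the degree bound $\deg\mathscr P_k\le k$ via $\ell\le\tlcyr$ and Theorem~\ref{thmpolytlcyr} is exactly the analogue of the remark preceding Theorem~\ref{thmpolytlcyr}, and is the intended justification.
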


\proof
Consider the generating function
\[\mathscr L^{[\sf m]}(z,t)=\sum_{N=1}^\infty\sum_{n=N}^\infty \ell^{[\sf m]}_{n,N}z^nt^N,\qquad \ell^{[\sf m]}_{n,N}=\sum_{\substack{\bm\la\in\Z^N_{>0}\\\la_1+\dots+\la_N=n}}\prod_{i=1}^{N-1}P^{[\sf m]}_{\la_i,\la_{i+1}}.
\]Given $k\geq 0$, we have
\[[u^k]\mathscr L^{[\sf m]}(u,t/u)=\sum_{N=1}^\infty \ell^{[{\sf m}]}_{N+k,N}t^N,
\]and for ${\sf m}\geq k+1$, we have
\[[u^k]\mathscr L^{[\sf m]}(u,t/u)=\sum_{N=1}^\infty \ell^{[{\sf m}]}_{N+k,N}t^N=\sum_{N=1}^\infty\ell(N+k,N)t^N,
\]by Corollary \ref{corellPigraph}. Moreover, by Theorem \ref{thmratmathscrLm}, $\mathscr L^{[\sf m]}(z,t)$ is a rational function of the form
\[\mathscr L^{[\sf m]}(z,t)=\frac{P(z,t)}{1+\sum_{i=1}^{\sf m}a_i(z)t^{{\sf m}-i}},
\]where $P(z,t)=ztQ(z,t)$ (because, by definition, $\mathscr L^{[\sf m]}(z,t)$ is a multiple of $zt$), and $a_{{\sf m}-i}(z)=(-1)^i{\rm Tr}\bigwedge^iQ^{[\sf m]}(z)$ are the coefficients of the characteristic polynomial of the transfer matrix $Q^{[\sf m]}(z)$. In particular, we have $a_{{\sf m}-1}(z)=-z$, by Lemma \ref{symPim}.

Hence, Lemma \ref{lemtchn1} and Lemma \ref{lemtchn2} apply, and the claim follows. 
\endproof

\begin{example}
For $k\leq 7$, we have
\begin{align*}
\sum_{N=1}^\infty\ell(N,N)t^N&=\frac{t}{t-1},\\
\sum_{N=1}^\infty\ell(N+1,N)t^N&=\frac{t}{(t-1)^2},\\
\sum_{N=1}^\infty\ell(N+2,N)t^N&=\frac{t \left(t^3-4 t^2+3 t-1\right)}{(t-1)^3},\\
\sum_{N=1}^\infty\ell(N+3,N)t^N&=\frac{-t^5+6 t^4-5 t^3+t}{(t-1)^4},\\
\sum_{N=1}^\infty\ell(N+4,N)t^N&=\frac{t \left(t^6-4 t^5-4 t^4+15 t^3-13 t^2+5 t-1\right)}{(t-1)^5},\\
\sum_{N=1}^\infty\ell(N+5,N)t^N&=\frac{-2 t^8+14 t^7-10 t^6-19 t^5+30 t^4-13 t^3+t}{(t-1)^6},\\
\sum_{N=1}^\infty\ell(N+6,N)t^N&=\frac{t \left(t^9-4 t^8-21 t^7+53 t^6-21 t^5-42 t^4+54 t^3-27 t^2+7 t-1\right)}{(t-1)^7},
\end{align*}
\begin{multline*}
\sum_{N=1}^\infty\ell(N+7,N)t^N=\\
\frac{t \left(-2 t^{10}+22 t^9+10 t^8-166 t^7+306 t^6-282 t^5+169 t^4-80 t^3+31 t^2-8 t+1\right)}{(t-1)^8}.
\end{multline*}
The corresponding polynomials $\mathscr P_k(n)$ are
\begin{align*}
\mathscr P_0(n)&=1,\\
\mathscr P_1(n)&=n,\\
\mathscr P_2(n)&=\frac{n^2-3n+2}{2}=\binom{n-1}{2},\\
\mathscr P_3(n)&=\frac{n^3}{6}-\frac{3 n^2}{2}+\frac{16 n}{3}-2,\\
\mathscr P_4(n)&=\frac{n^4}{24}-\frac{3 n^3}{4}+\frac{143 n^2}{24}-\frac{57 n}{4}+9,\\
\mathscr P_5(n)&=\frac{n^5}{120}-\frac{n^4}{4}+\frac{83 n^3}{24}-\frac{79 n^2}{4}+\frac{713 n}{15}-32,\\
\mathscr P_6(n)&=\frac{n^6}{720}-\frac{n^5}{16}+\frac{191 n^4}{144}-\frac{649 n^3}{48}+\frac{12541 n^2}{180}-\frac{1889 n}{12}+122,\\
\mathscr P_7(n)&=\frac{n^7}{5040}-\frac{n^6}{80}+\frac{271 n^5}{720}-\frac{95 n^4}{16}+\frac{18821 n^3}{360}-\frac{4871 n^2}{20}+\frac{38489 n}{70}-466.
\end{align*}
The identity $\ell(N+k,N)=\mathscr P_k(N)$ holds for $N\geq N_0(k)$, for the optimal values 
\[N_0(0)=N_0(1)=1,\quad N_0(2)=N_0(3)=2,\quad N_0(4)=N_0(5)=3,\quad N_0(6)=N_0(7)=4.
\]\qetr
\end{example}

\setcounter{section}{0}
\renewcommand{\thesection}{\Alph{section}}
\renewcommand{\thesubsection}{\thesection.\arabic{subsection}}
\numberwithin{equation}{section} 

\newcommand{\AppendixSection}[1]{%
  \refstepcounter{section}
  \section*{Appendix \thesection. #1}
  \setcounter{subsection}{0}
}

\AppendixSection{General facts about cohomology of bundles}\label{appA}

 Consider a holomorphic fiber bundle $\pi\colon E\to B$ with fiber $F$, where $E,B,F$ are smooth complex projective varieties, or more general compact K\"ahler manifolds.

\subsubsection{Deligne theorem}\label{Deligneapp} Given a field $k=\Q,\R,\C$, consider the sheaves $R^q\pi_*\underline{k}$, with $q\geq 0$, on $B$: these are the sheafifications of the presheaves $U\mapsto \eu H^q(\pi^{-1}(U),k)$ of $k$-vector spaces (here $U\subseteq B$ is an open set). Since the fibration $\pi$ is locally trivial, these are locally constant sheaves. 

The sheaf $R^q \pi_* \underline{k}$ forms a local system on $B$ whose fibers are the cohomology groups $H^q(F,k)$ equipped with the monodromy action $\pi_1(B) \to \mathrm{Aut}(H^\bullet(F, k))$ of the fundamental group $\pi_1(B)$. The groups $H^p(B, R^q \pi_* \underline{k})$ compute the cohomology of $B$ with these twisted coefficients, or equivalently, the group cohomology of $\pi_1(B)$ with values in $H^q(F, k)$. 

In particular, $H^0(B, R^q \pi_* \underline{k})$ consists of monodromy-invariant elements, while higher-degree groups $H^p(B, R^q \pi_* \underline{k})$ for $p > 0$ measure the obstructions to lifting local invariants to global sections.

The Serre--Leray spectral sequence associated to the fiber bundle $(E,B,F)$
with coefficients in the constant sheaf $\underline{k}$ on $E$, has second page
$E_2^{p,q} = H^p(B, R^p\pi_*\underline{k})$,
and converges to
$H^{p+q}(E; \underline{k})$.

In our situation -- total space, base and fiber being compact K\"ahler manifolds -- the Deligne theorem asserts a remarkable stronger fact.

\begin{thm}\cite{Del68}
The Serre--Leray spectral sequence degenerates in $E_2$, so that 
\[H^\bullet(E,k)\cong H^\bullet(B,R^\bullet\pi_*\underline{k}).\tag*{\qed}\]
\end{thm}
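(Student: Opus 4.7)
The plan is to apply Deligne's degeneration criterion for the Leray spectral sequence, which relies crucially on the relative hard Lefschetz theorem. I would proceed in four steps.

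First, since $E$ is a smooth projective (or compact K\"ahler) variety, I fix a K\"ahler class $\omega \in H^2(E,\R)$. Its restriction $\omega|_{F_b}$ to each fiber $F_b := \pi^{-1}(b)$ is a K\"ahler class on $F_b$, varying continuously with $b \in B$. Cup product with these restrictions assembles, after sheafification, into a morphism of sheaves
\[
L\colon R^q\pi_*\underline{k} \longrightarrow R^{q+2}\pi_*\underline{k}, \qquad q \geq 0.
\]

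Second, I invoke the classical hard Lefschetz theorem fiberwise: for $n := \dim_\C F$ and each $0 \leq q \leq n$, the iterate $L^{n-q} \colon R^{n-q}\pi_*\underline{k} \to R^{n+q}\pi_*\underline{k}$ is an isomorphism of sheaves, yielding the Lefschetz decomposition
\[
R^q\pi_*\underline{k} = \bigoplus_{i \geq 0} L^i\bigl(P^{q-2i}\bigr),
\]
where $P^{q-2i}$ is the primitive sub-local-system, characterized by $L^{n-q+2i+1}P^{q-2i}=0$.

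Third, I note that cup product with $\omega$ is a genuine global operator $L_E \colon H^\bullet(E,k) \to H^{\bullet+2}(E,k)$ which, being induced by a globally defined cohomology class on $E$, lifts to an endomorphism of the entire Leray spectral sequence $\{E_r^{p,q}, d_r\}$ commuting with all differentials $d_r$; at the $E_2$-page, this endomorphism agrees with the sheaf-theoretic $L$ introduced above.

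Fourth, I argue by induction on $r \geq 2$ that $d_r = 0$. Assuming $d_2 = \cdots = d_{r-1} = 0$, one has $E_r^{p,q} = E_2^{p,q} = H^p(B, R^q\pi_*\underline{k})$, still carrying the Lefschetz decomposition. The differential $d_r \colon E_r^{p,q} \to E_r^{p+r, q-r+1}$ commutes with $L$, and applying the isomorphisms $L^{n-q}$ combined with a bidegree analysis shows that $d_r$ vanishes on primitive classes, hence on the whole $E_r$-page. The main obstacle is making rigorous this last step: Deligne's original treatment in \cite{Del68} formulates an abstract degeneration criterion for any bounded spectral sequence equipped with an endomorphism satisfying hard-Lefschetz-type isomorphisms, and the proof requires a careful tracking of the interaction between $L$ and $d_r$ under the primitive decomposition. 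Once this abstract criterion is set up, the geometric input reduces to verifying fiberwise hard Lefschetz, which is classical in the K\"ahler setting.
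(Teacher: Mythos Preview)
The paper does not supply a proof of this theorem at all: the statement ends with a \qed\ and simply cites \cite{Del68}. Your proposal is a correct outline of Deligne's original argument, so in that sense you have gone beyond what the paper does.

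Your sketch is accurate in its essentials. One small point worth sharpening is Step~3: cup product with $\omega \in H^2(E,k)$ does act on the whole spectral sequence and commute with the differentials, but on the $E_2$-page this action is, a priori, the cup product with the class of $\omega$ in the associated graded for the Leray filtration; the relevant piece is its image in $E_2^{0,2} = H^0(B, R^2\pi_*\underline{k})$, which is exactly the fiberwise restriction. This is what makes the spectral-sequence operator $L$ agree with the sheaf-theoretic one, but the identification deserves a sentence. You have also correctly flagged that the heart of the matter is Deligne's abstract degeneration criterion (a bounded spectral sequence equipped with a nilpotent-type operator $L$ such that $L^{n-q}\colon E_2^{p,n-q}\to E_2^{p,n+q}$ is an isomorphism must degenerate at $E_2$), and that the geometric input is precisely fiberwise hard Lefschetz.
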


The pullback map $\pi^*\colon H^\bullet(B,k)\to H^\bullet(E,k)$ defines a $H^\bullet(B,k)$-algebra structure on $H^\bullet(E,k)$. From Deligne theorem, we deduce the following

\begin{prop}
The cohomology $H^\bullet(E,k)$ is a finitely generated $H^\bullet(B,k)$-module.
\end{prop}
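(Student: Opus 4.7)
The plan is to derive the finiteness assertion directly from the degeneration statement of Deligne's theorem, combined with standard compactness facts about local systems on compact manifolds.

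First, I would invoke the $E_2$-degeneration to identify
\[
H^\bullet(E,k) \;\cong\; \bigoplus_{p,q\geq 0} H^p\bigl(B,\, R^q\pi_*\underline{k}\bigr),
\]
so that the problem reduces to bounding each summand and to noting that only finitely many of them are nonzero. Since $F$ is a compact Kähler manifold, each stalk $H^q(F,k)$ of the locally constant sheaf $R^q\pi_*\underline{k}$ is a finite-dimensional $k$-vector space, and it vanishes for $q>2\dim_{\mathbb R}F$. In other words, $R^q\pi_*\underline{k}$ is a local system of finite rank on $B$, and only finitely many values of $q$ contribute.

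Next, I would use compactness of $B$ to argue that, for each fixed $q$, the twisted cohomology $H^p(B, R^q\pi_*\underline{k})$ is finite-dimensional over $k$, and vanishes for $p>2\dim_{\mathbb R}B$. This is standard: a local system of finite rank on a compact manifold admits a finite good cover, and its cohomology can be computed by a finite Čech complex whose terms are finite-dimensional; alternatively, one may appeal to the fact that $\pi_1(B)$ acts on a finite-dimensional space and invoke finiteness of group cohomology of finitely presented groups with finite-dimensional coefficients on a compact $K(\pi,1)$-type model, or -- most directly -- use de Rham cohomology with coefficients in the flat vector bundle associated with $R^q\pi_*\underline{k}$ and Hodge-theoretic finiteness on the compact manifold $B$.

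Combining these two steps, the direct sum above is finite, and each summand is finite-dimensional over $k$. Therefore $H^\bullet(E,k)$ is a finite-dimensional $k$-vector space. Since the pullback $\pi^*\colon H^\bullet(B,k)\to H^\bullet(E,k)$ makes $k\subset H^\bullet(B,k)$ act as scalars, any finite $k$-basis of $H^\bullet(E,k)$ generates $H^\bullet(E,k)$ as an $H^\bullet(B,k)$-module. Hence $H^\bullet(E,k)$ is finitely generated over $H^\bullet(B,k)$, as claimed. There is no serious obstacle in this argument: the only nontrivial ingredient has already been invoked as input, namely Deligne's $E_2$-degeneration; all remaining steps are standard compactness and dimension-counting facts.
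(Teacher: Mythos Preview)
Your proof is correct and follows essentially the same approach as the paper: use Deligne's $E_2$-degeneration to write $H^\bullet(E,k)$ as a finite direct sum of the groups $H^p(B,R^q\pi_*\underline{k})$, observe that each of these is a finite-dimensional $k$-vector space by compactness of $B$ and $F$, and conclude that $H^\bullet(E,k)$ is finite-dimensional over $k$ and hence finitely generated over $H^\bullet(B,k)$. You supply more detail on the finiteness of the twisted cohomology than the paper does, but the argument is the same.
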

\proof
Since the groups $H^p(B, R^q \pi_* \underline{k})$ are finite-dimensional vector spaces (due to the compactness of $B$ and the finiteness of the cohomology of the fibers), the total cohomology $H^*(E, k)$ decomposes additively as a finite direct sum of these spaces, hence it is finitely generated as a module over $H^*(B, k)$.
\endproof

\subsubsection{Cohomologically decomposable bundles} 
We say that the locally trivial fiber bundle $(E,B,F)$ of smooth projective varieties is {\it cohomologically decomposable} if 
\[ 
H^\bullet(E,k)\cong H^\bullet(B,k)\otimes_k H^\bullet(F,k) 
\]
as $H^\bullet(B,k)$-modules (not necessarily as rings).

\begin{thm}\label{equivs}
The following conditions are equivalent:
\begin{enumerate}
\item The fiber bundle $(E, B, F)$ is cohomologically decomposable;
\item The locally constant sheaves $R^p\pi_*\underline{k}$ are constant;
\item The monodromy action $\pi_1(B, b) \to \mathrm{Aut}(H^\bullet(F, k))$ is trivial for some (and hence any) $b \in B$;
\item The fiber is \emph{totally non-homologous to zero}, i.e., the restriction map $\iota_b^*\colon H^\bullet(E, k) \to H^\bullet(F, k)$ is surjective for any $b\in B$;
\item There exist classes $e_1, \dots, e_n \in H^\bullet(E, k)$ whose restrictions $\iota_b^*e_1, \dots, \iota_b^*e_n \in H^\bullet(F, k)$ form a basis on each fiber $\pi^{-1}(b)\cong F$. 
\end{enumerate}
If $B$ is simply connected, then all the conditions above are satisfied.
\end{thm}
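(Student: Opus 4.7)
The plan is to establish the equivalences via the cycle $(2) \Leftrightarrow (3) \Leftrightarrow (4)$, $(3) \Rightarrow (5) \Rightarrow (1) \Rightarrow (3)$, with the final claim about simply connected $B$ following immediately from (3) since $\pi_1(B)=1$ forces the monodromy action to be trivial. I would open by recalling that $(2) \Leftrightarrow (3)$ is the standard property of local systems on a path-connected base: a locally constant sheaf is globally constant precisely when the associated monodromy representation is trivial.

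For $(3) \Rightarrow (4)$ I would use Deligne's theorem: the Leray spectral sequence degenerates at $E_2$, so the edge homomorphism $H^q(E,k) \twoheadrightarrow H^0(B, R^q\pi_*\underline{k})$ is surjective. Under (3) the sheaf $R^q\pi_*\underline{k}$ is constant, so evaluation at a basepoint $b$ gives an isomorphism $H^0(B,R^q\pi_*\underline{k}) \cong H^q(F,k)$, and the composition is precisely $\iota_b^*$. For the converse $(4) \Rightarrow (3)$, the key observation is that, for any lift $\tilde\alpha \in H^q(E,k)$ of $\alpha \in H^q(F,k)$, parallel transport of $\iota_b^*\tilde\alpha$ along a loop $\gamma$ at $b$ returns $\iota_b^*\tilde\alpha$, since all restrictions $\iota_{\gamma(t)}^*\tilde\alpha$ originate from a single global class on $E$; hence the image of $\iota_b^*$ lies in the monodromy invariants, and surjectivity forces these to fill $H^q(F,k)$.

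For $(3) \Rightarrow (5)$ I would lift a homogeneous basis $\bar e_1,\dots,\bar e_n$ of $H^\bullet(F,k)$ to $e_1,\dots,e_n \in H^\bullet(E,k)$ via the surjectivity from (4); for any $b' \in B$ the restrictions $\iota_{b'}^*e_i$ are obtained from $\iota_b^*e_i$ by parallel transport along a path from $b$ to $b'$, which under (3) is canonical and an isomorphism, so they again form a basis. The implication $(5) \Rightarrow (1)$ is the classical Leray--Hirsch theorem: the map
\[
H^\bullet(B,k) \otimes_k H^\bullet(F,k) \longrightarrow H^\bullet(E,k), \qquad b \otimes \bar e_i \longmapsto \pi^*b \cup e_i,
\]
is an isomorphism of $H^\bullet(B,k)$-modules.

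The most delicate step is the closing implication $(1) \Rightarrow (3)$, because hypothesis (1) gives only an abstract module isomorphism with no a priori compatibility with the geometric restriction to fibers. The strategy is to combine Deligne's degeneration with a dimension comparison: degeneration of the Leray spectral sequence yields $\dim H^\bullet(E,k) = \sum_{p,q} \dim H^p(B, R^q\pi_*\underline{k})$, while (1) gives $\dim H^\bullet(E,k) = \dim H^\bullet(B,k)\cdot \dim H^\bullet(F,k)$. The main technical obstacle is to identify the Leray filtration $F^p H^\bullet(E,k)$ with the intrinsic multiplicative filtration $\pi^*(H^{\geq p}(B,k)) \cdot H^\bullet(E,k)$, an identification valid in the K\"ahler setting thanks to Deligne's theorem; once available, this filtration is preserved by the $H^\bullet(B,k)$-module isomorphism of (1), and comparison of associated gradeds forces $\dim H^p(B, R^q\pi_*\underline{k}) = \dim H^p(B,k)\cdot \dim H^q(F,k)$ for all $p,q$. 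Specializing to $p=0$ gives $\dim H^0(B, R^q\pi_*\underline{k}) = \dim H^q(F,k)$, so the monodromy invariants exhaust $H^q(F,k)$, which is exactly (3).
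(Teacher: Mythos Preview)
Your treatment of the implications $(2)\Leftrightarrow(3)$, $(3)\Leftrightarrow(4)$, $(3)\Rightarrow(5)$, and $(5)\Rightarrow(1)$ is correct and more explicit than the paper's, which simply cites Borel for $(2)\Leftrightarrow(3)\Leftrightarrow(4)$, calls $(4)\Leftrightarrow(5)$ obvious, and invokes Leray--Hirsch for $(5)\Rightarrow(1)$.

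You are also right to single out $(1)\Rightarrow(3)$ as the delicate step, and your instinct to argue by a dimension count using Deligne degeneration is the correct one. The specific mechanism you propose, however, has a gap. Deligne's theorem yields $E_2$-degeneration of the Leray spectral sequence; it does \emph{not} identify the Leray filtration $F^p$ with the multiplicative filtration $G^p:=\pi^*(H^{\ge p}(B,k))\cdot H^\bullet(E,k)$. One always has $G^p\subseteq F^p$ (multiplicativity of the Leray filtration together with $\pi^*H^p(B,k)\subseteq F^pH^p(E,k)$), but the reverse inclusion is precisely what is at stake and is not a general feature of K\"ahler fibrations with possibly nontrivial monodromy. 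So your claim that this identification is ``valid in the K\"ahler setting thanks to Deligne's theorem'' is unjustified, and the rest of your argument hinges on it.

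The dimension comparison can be carried out more directly, by induction on the fiber degree~$q$. Since (1) is a \emph{graded} $H^\bullet(B,k)$-module isomorphism, one has $\dim H^n(E,k)=\sum_{p+j=n}\dim H^p(B,k)\cdot\dim H^j(F,k)$ for every $n$. Suppose inductively that $R^j\pi_*\underline k$ is constant for $j<q$; then $\dim H^p(B,R^j\pi_*\underline k)=\dim H^p(B,k)\cdot\dim H^j(F,k)$ for those $j$. Comparing the two expressions for $\dim H^q(E,k)$ coming from (1) and from Deligne degeneration, all terms with $j<q$ cancel, forcing
\[
\dim H^0(B,R^q\pi_*\underline k)=\dim H^q(F,k),
\]
so the monodromy invariants exhaust $H^q(F,k)$, which is (3). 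No filtration identification is needed.

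For comparison: the paper dispatches this implication with the single phrase ``$(1)\Rightarrow(4)$ is clear,'' giving no argument at all.
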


\proof
By Deligne's theorem, the Leray spectral sequence of the fibration degenerates at $E_2$, and the equivalence of (2), (3), (4) follows, see \cite[Thm.\,14.1]{Bor67}. The equivalence $(4)\Leftrightarrow (5)$ is obvious. The implication $(5)\Rightarrow (1)$ is the Leray--Hirsch theorem: if $s\colon H^\bullet(F,k)\to H^\bullet(E,k)$ is a section of $\iota^*$, then the linear map 
\[
H^\bullet(B,k)\otimes_k H^\bullet(F,k)\to H^\bullet(E,k),\quad \alpha\otimes \beta\mapsto \pi^*\alpha\cup s(\beta)
\]
is an isomorphism of $H^\bullet(B,k)$-modules, see e.g.\ \cite{BT82}. The converse $(1)\Rightarrow (4)$ is clear. 
\endproof

\begin{rem}
The classes $e_1, \dots, e_n \in H^\bullet(E, k)$ of point (5) of Proposition \ref{equivs} are such that every element of $H^\bullet(E, k)$ can be written as $\sum_{j=1}^n \pi^*b_j \cup e_j$ for some $b_1, \dots, b_n \in H^\bullet(B, k)$.  
\qrem\end{rem}

\begin{thm}\label{piinj}\cite[Thm.\,14.2]{Bor67}
If $(E,B,F)$ is cohomologically decomposable, then $\pi^*$ is injective. Moreover, $H^\bullet(F,k)$ is isomorphic to the factor module $H^\bullet(E,k)/I$, where the ideal $I$ is generated by $\pi^*H^\bullet_+(B,k)$, with $H^\bullet_+(B,k)=\bigoplus_{i\geq 1}H^i(B,k)$.\qed
\end{thm}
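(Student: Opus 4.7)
The plan is to deduce both assertions from the Leray--Hirsch decomposition guaranteed by Theorem \ref{equivs}. Cohomological decomposability provides classes $e_1,\ldots,e_n\in H^\bullet(E,k)$ whose fiberwise restrictions form a basis of $H^\bullet(F,k)$, and the associated map
\[
\Phi\colon H^\bullet(B,k)\otimes_k H^\bullet(F,k)\longrightarrow H^\bullet(E,k),\qquad \beta\otimes \iota_b^*e_j\mapsto \pi^*\beta\cup e_j,
\]
is an isomorphism of $H^\bullet(B,k)$-modules. I would normalize the basis so that $e_1=1_E$, and hence $\iota_b^*e_1=1_F$.

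For the injectivity of $\pi^*$, observe that under $\Phi^{-1}$ the class $\pi^*\beta=\pi^*\beta\cup 1_E$ corresponds to $\beta\otimes 1_F$. Since $1_F$ belongs to the chosen basis of $H^\bullet(F,k)$, the summand $H^\bullet(B,k)\otimes_k k\cdot 1_F$ embeds as a direct factor of the tensor product, so $\beta\otimes 1_F=0$ forces $\beta=0$. Bijectivity of $\Phi$ then yields $\ker(\pi^*)=0$.

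For the quotient statement, surjectivity of $\iota_b^*\colon H^\bullet(E,k)\to H^\bullet(F,k)$ is condition (4) of Theorem \ref{equivs}. The inclusion $I\subseteq \ker(\iota_b^*)$ is immediate: the composition $\pi\circ\iota_b$ is constant, so $\iota_b^*\pi^*\beta=0$ for every $\beta\in H^\bullet_+(B,k)$, and multiplicativity of $\iota_b^*$ then kills any product $\pi^*\beta\cup\omega$. For the reverse inclusion, I would write an arbitrary $x\in H^\bullet(E,k)$ uniquely as $x=\sum_j \pi^*b_j\cup e_j$ via $\Phi^{-1}$, and split each $b_j=c_j\cdot 1_B+b_j'$ with $c_j\in k$ and $b_j'\in H^\bullet_+(B,k)$. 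Then
\[
\iota_b^*x=\sum_j c_j\,\iota_b^*e_j,
\]
and vanishing of $\iota_b^*x$ forces all $c_j=0$ by linear independence of the $\iota_b^*e_j$, placing $x=\sum_j \pi^*b_j'\cup e_j$ in $I$. This yields the isomorphism $H^\bullet(E,k)/I\cong H^\bullet(F,k)$ induced by $\iota_b^*$, which is a ring map, hence the quotient map is automatically a ring isomorphism.

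I do not anticipate any serious obstacle: the entire argument is a bookkeeping exercise built on the Leray--Hirsch isomorphism. The only subtlety is verifying that $\iota_b^*\pi^*$ annihilates $H^\bullet_+(B,k)$, which is a standard consequence of the fact that $\pi\circ\iota_b$ factors through the point $b$, so that pullback along a constant map kills classes of positive degree.
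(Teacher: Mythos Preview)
Your argument is correct. The paper does not supply its own proof of this statement: it is quoted directly from \cite[Thm.\,14.2]{Bor67} with a \qed, so there is no in-paper argument to compare against. What you have written is precisely the standard Leray--Hirsch bookkeeping proof, and it goes through without issue; the normalization $e_1=1_E$ is harmless since $1_F$ can always be taken as part of the fiber basis.
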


\subsubsection{Integration along fibers, projection formula}\label{projApp}

Let $\pi \colon E \to B$ be a smooth proper morphism between smooth projective complex varieties, and let $k$ be a field (e.g.\ $\mathbb{Q}$, $\mathbb{R}$, or $\mathbb{C}$). The cohomology groups $H^\bullet(-, k)$ admit a pushforward (or \emph{integration along the fibers}) map
\[
\pi_*\colon H^\bullet(E, k) \to H^{\bullet - 2d}(B, k),
\]
where $d = \dim F$ is the relative dimension of the fibers. By definition of the pushforward in cohomology for proper smooth maps, we have
\[
\int_E \omega = \int_B \pi_*\omega,\quad \omega \in H^{2\dim E}(E)
\]
to which we refer to as the cohomological {\it Fubini formula}, which expresses integration over $E$ as integration over the base after fiberwise integration. The pushforward $\pi_*$ is constructed precisely so that this compatibility with integration holds.

The map $\pi_*$ satisfies the \emph{projection formula} (see e.g.\ \cite[Prop.\,6.15]{BT82}, whose proof can be easily adapted in this setting):
\[
\pi_*(\pi^*\alpha \cup \beta) = \alpha \cup \pi_*\beta \qquad \text{for all } \alpha \in H^\bullet(B, k),\; \beta \in H^\bullet(E, k).
\]
By integration over $B$, we deduce
\[
\int_E \pi^*\alpha \cup \beta = \int_B \alpha \cup \pi_*\beta, \qquad \text{for all } \alpha \in H^\bullet(B, k),\; \beta \in H^\bullet(E, k).
\]

\begin{prop}[Decomposition of integral for decomposable classes]\label{decompint}
Let $\pi: E \to B$ be a proper smooth fibration of complex manifolds with fiber $F$.  
Suppose $\alpha \in H^\bullet(E)$ can be written as
\[
\alpha = \pi^* b \cup a,
\]
where $b \in H^\bullet(B)$ and $a \in H^\bullet(E)$ is a class \emph{vertical}, i.e., whose restriction $a|_{F_b}$ to any fiber is constant (independent of $b$). Then
\[
\int_E \alpha = \left(\int_B b\right) \cdot \left(\int_F a|_F\right).
\]
\end{prop}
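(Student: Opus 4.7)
The plan is to reduce the claim to a pushforward identity via the cohomological Fubini formula and the projection formula recalled in Appendix~\ref{projApp}, and then to identify the pushforward of a vertical class as a constant class on the base.

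First I would apply the identity $\int_E \omega = \int_B \pi_* \omega$, where $\pi_*\colon H^\bullet(E,k)\to H^{\bullet-2d}(B,k)$ denotes integration along the fibers ($d=\dim_{\C}F$). Combined with the projection formula $\pi_*(\pi^*b\cup a)=b\cup \pi_* a$, taking $\omega=\alpha=\pi^*b\cup a$ gives
\[
\int_E \alpha \;=\; \int_B b\cup \pi_* a,
\]
so the proof is reduced to computing $\pi_* a$.

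Next I would argue that $\pi_* a=\bigl(\int_F a|_F\bigr)\cdot 1_B\in H^0(B,k)$. The key geometric input is that for a proper smooth fibration the pushforward is pointwise fiberwise integration: the value of $\pi_* a$ at $b\in B$ is $\int_{F_b} a|_{F_b}$. By the verticality hypothesis, $a|_{F_b}$ is the same class $a|_F$ for every $b$, so the locally constant function $b\mapsto \int_{F_b} a|_{F_b}$ is constant with value $\int_F a|_F$. Hence $\pi_* a$ is represented by this constant, i.e.\ it is the class $\bigl(\int_F a|_F\bigr)\cdot 1_B$. Substituting into the previous display yields $\int_E \alpha = \bigl(\int_F a|_F\bigr)\cdot \int_B b$, as claimed.

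The main obstacle is making the identification of $\pi_* a$ with a constant class fully rigorous, since the statement involves an implicit degree matching: the formula is substantively true when $\deg a=2d$ (so that $\pi_* a$ lands in $H^0(B,k)$), while for $\deg a<2d$ both sides vanish by degree and the identity holds trivially. To execute the argument I would work in a local trivialization over a contractible open $U\subseteq B$, where $\pi^{-1}(U)\cong U\times F$ and the K\"unneth formula gives $a|_{\pi^{-1}(U)}=\sum_i u_i\otimes f_i$; the verticality hypothesis, together with the vanishing of positive-degree base classes under restriction to a point, isolates the component of $a$ of top degree along the fiber, which is precisely the one picked up by $\pi_*$. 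Gluing these local computations (either by a partition of unity, or via the sheaf-theoretic description of $R^\bullet\pi_*\underline{k}$ that underlies Deligne's theorem in Appendix~\ref{Deligneapp}) yields the global identity $\pi_* a=\bigl(\int_F a|_F\bigr)\cdot 1_B$, completing the proof.
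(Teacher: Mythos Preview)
Your proposal is correct and follows essentially the same approach as the paper: both use the Fubini formula and projection formula to reduce to computing $\pi_* a$, then identify $\pi_* a$ with the constant class $\bigl(\int_F a|_F\bigr)\cdot 1_B$ via the fiberwise-integration description of the pushforward together with the verticality hypothesis. Your additional remarks on the degree matching and the local-trivialization argument are more detailed than what the paper provides, but they elaborate the same identification rather than introducing a different route.
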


\begin{proof}
By the projection formula we have
\[
\pi_*(\pi^* b \cup a) = b \cup \pi_* a.
\]
Since $a$ is vertical and constant on fibers, the pushforward $\pi_* a$ is the cohomology class on $B$ given by the constant function
\[
b \mapsto \int_{F_b} a|_{F_b} = \int_F a|_F.
\]
Thus $\pi_* a = \left(\int_F a|_F\right) \cdot 1_B$, where $1_B \in H^0(B)$ is the unit class.

Then
\[
\pi_*(\alpha) = b \cup \pi_* a = \left(\int_F a|_F\right) \cdot b.
\]

Finally, integrating over $B$, by the Fubini formula,
\[
\int_E \alpha = \int_B \pi_* \alpha = \int_B \left(\int_F a|_F\right) \cdot b = \left(\int_F a|_F\right) \cdot \int_B b,
\]
as claimed.
\end{proof}

\begin{cor}\label{deccorpairing}
If the fiber bundle $(E,B,F)$ is cohomologically decomposable, then under the isomorphism
$H^\bullet(E, \C) \cong H^\bullet(B, \C) \otimes_\C H^\bullet(F, \C)$,
the Poincar\'e pairing $\eta_E$ on $H^\bullet(E)$ decomposes as the tensor product
$\eta_E = \eta_B \otimes \eta_F$,
where $\eta_B$ and $\eta_F$ are the Poincar\'e pairings on the base and fiber, respectively.
\end{cor}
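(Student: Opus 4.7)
My plan is to unwind both sides of the asserted identity on pure tensors and reduce the equality to an application of Proposition \ref{decompint} together with the projection formula. The isomorphism of $H^\bullet(B,\C)$-modules in Theorem \ref{equivs}(5) sends a simple tensor $b\otimes \phi$ (with $\phi = \iota^* e_j$ for some Leray--Hirsch class $e_j$) to $\pi^* b \cup e_j$, so it suffices to verify that
\[
\eta_E(\pi^* b_1 \cup e_i,\ \pi^* b_2 \cup e_j) \;=\; \eta_B(b_1,b_2)\cdot \eta_F(\iota^* e_i,\iota^* e_j)
\]
for all basis-type classes, up to the super-sign coming from moving $e_i$ past $\pi^* b_2$.

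First, I would use graded-commutativity of the cup product on $E$ to rewrite
\[
(\pi^* b_1 \cup e_i)\cup(\pi^* b_2 \cup e_j) \;=\; (-1)^{\deg e_i\cdot\deg b_2}\ \pi^*(b_1\cup b_2)\cup(e_i\cup e_j).
\]
The class $e_i\cup e_j$ is \emph{vertical} in the sense of Proposition \ref{decompint}: since the fiber is cohomologically decomposable, the restrictions $\iota_b^* e_i, \iota_b^* e_j$ are independent of $b\in B$ (one may identify all fibers with $F$ via the trivializations, as in Theorem \ref{equivs}), hence so is $\iota_b^*(e_i\cup e_j) = \iota_b^* e_i \cup \iota_b^* e_j$. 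Applying Proposition \ref{decompint} to $\alpha = \pi^*(b_1\cup b_2)\cup(e_i\cup e_j)$ then gives
\[
\int_E \pi^*(b_1\cup b_2)\cup(e_i\cup e_j) \;=\; \Bigl(\int_B b_1\cup b_2\Bigr)\cdot\Bigl(\int_F \iota^* e_i\cup \iota^* e_j\Bigr),
\]
which is exactly $\eta_B(b_1,b_2)\cdot \eta_F(\iota^* e_i,\iota^* e_j)$.

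Extending by $\C$-bilinearity over the Leray--Hirsch basis $\{\pi^* b_a \cup e_j\}$, this yields $\eta_E = \eta_B\otimes \eta_F$ as bilinear forms on $H^\bullet(B,\C)\otimes_\C H^\bullet(F,\C)$. The only nontrivial point to watch is the Koszul sign that arises when reordering $e_i$ and $\pi^* b_2$; this sign is precisely the one incorporated in the standard convention for the tensor product of super-symmetric pairings, so it produces no discrepancy. No genuine obstacle is expected: the entire argument is the two-line computation above, with the bookkeeping of signs being the only place requiring care.
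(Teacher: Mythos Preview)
Your argument is correct and essentially identical to the paper's own proof: both pick Leray--Hirsch classes $e_j$ restricting to a fixed fiber basis, use graded-commutativity to rewrite $(\pi^*b_1\cup e_i)\cup(\pi^*b_2\cup e_j)$ as $(-1)^\epsilon\,\pi^*(b_1\cup b_2)\cup(e_i\cup e_j)$, and then apply Proposition~\ref{decompint} to factor the integral as $\eta_B(b_1,b_2)\cdot\eta_F(\iota^*e_i,\iota^*e_j)$, with the Koszul sign absorbed into the tensor-of-pairings convention.
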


\begin{proof}
Choose bases $b_i$ of $H^\bullet(B)$ and classes $e_j \in H^\bullet(E)$ restricting to a fixed basis of $H^\bullet(F)$. By cohomological triviality, the classes $e_j$ are fiberwise constant.

Using the previous proposition and the projection formula, we compute
\[
\eta_E(\pi^* b_i \cup e_j, \pi^* b_p \cup e_q) = (-1)^{\epsilon} \int_E \pi^*(b_i \cup b_p) \cup (e_j \cup e_q) = (-1)^{\epsilon} \int_B b_i \cup b_p \cdot \int_F \iota^*(e_j \cup e_q),
\]
where the sign $\epsilon$ accounts for super-commutativity.

This shows $\eta_E$ corresponds exactly to $\eta_B \otimes \eta_F$.
\end{proof}

\AppendixSection{Double Schubert polynomials}\label{appB}
\label{appSchub}
General references are \cite{LS82a,LS82b, Las12}.
\subsubsection
{Demazure difference operators.}
Let $\Delta_1, \ldots, \Delta_{n-1}$ be operators acting on functions of $\bm{x} = (x_1, \ldots, x_n)$, defined by
\[
\Delta_a f(\bm{x}) = \frac{f(\bm{x}) - f(x_1, \ldots, x_{a+1}, x_a, \ldots, x_n)}{x_a - x_{a+1}},
\quad \text{for } a = 1, \ldots, n-1.
\]
These satisfy the nil-Coxeter relations:
\[
\Delta_a^2 = 0, \qquad \Delta_a \Delta_b = \Delta_b \Delta_a, \ |a - b| > 1, \qquad 
\Delta_a \Delta_{a+1} \Delta_a = \Delta_{a+1} \Delta_a \Delta_{a+1}.
\]

Given \(\sigma \in S_n\), fix a reduced expression of \(\sigma\) as a product of adjacent transpositions:
\[
\sigma = s_{a_1} s_{a_2} \cdots s_{a_k},
\]
where each \(s_{a_i}\) denotes the transposition exchanging \(a_i\) and \(a_i + 1\), and \(k\) is the length of \(\sigma\). Then the corresponding Demazure operator is defined by
\[
\Delta_\sigma = \Delta_{a_1} \Delta_{a_2} \cdots \Delta_{a_k}.
\]
This definition is independent of the chosen reduced expression, due to the nil-Coxeter relations. 

\subsubsection{Double Schubert polynomials}

The type~\(A\) double Schubert polynomials \(\mathfrak{S}_\sigma\), for \(\sigma \in S_n\), are defined inductively from the longest permutation \(\sigma_0(i) = n + 1 - i\) by
\[
\mathfrak{S}_{\sigma_0}(\bm{x}; \bm{y}) = \prod_{i=1}^{n-1} \prod_{j=1}^{n-i} (x_i - y_j),
\]
and for arbitrary \(\sigma \in S_n\),
\[
\mathfrak{S}_\sigma(\bm{x}; \bm{y}) = \Delta_{\sigma^{-1} \sigma_0} \mathfrak{S}_{\sigma_0}(\bm{x}; \bm{y}),
\]
where \(\Delta_w\) denotes the Demazure operator associated with the permutation \(w\).

The type\,$A$ Schubert polynomials \(\mathfrak{S}_\sigma(\bm x)\), for \(\sigma \in S_n\) depend on the single tuple of variables $\bm x$. They are obtained from the double Schubert polynomials by setting $\mathfrak{S}_\sigma(\bm x)=\mathfrak{S}_{\sigma_0}(\bm{x}; 0)$

\AppendixSection{Weight spaces, dynamical operators, stable envelopes} \label{appdynop}
\subsubsection{Weight spaces, dynamical operators}Let $N, n \in \Z_{>0}$, and let $\bm\lambda = (\lambda_1, \dots, \lambda_N) \in \Z_{\ge 0}^N$ be a composition of $n$, i.e., $|\bm\lambda| := \lambda_1 + \cdots + \lambda_N = n$.

Define $\mc I_{\bm\lambda}$ as the set of all ordered partitions $I = (I_1, \dots, I_N)$ of the set $\{1, \dots, n\}$ into disjoint subsets satisfying $|I_j| = \lambda_j$ for each $j = 1, \dots, N$.

\vskip1mm

Consider the complex vector space $\C^N$ with standard basis vectors $\{v_1, \dots, v_N\}$, where $v_i = (0, \dots, 0, 1_i, 0, \dots, 0)$, for $i = 1, \dots, N$.  
The $n$-fold tensor product $(\C^N)^{\otimes n}$ has a basis indexed by elements $I \in \mc I_{\bm\lambda}$, defined as
\[
v_I := v_{i_1} \otimes \cdots \otimes v_{i_n},
\]
where, for each $j = 1, \dots, n$, the index $i_j = m$ if and only if $j \in I_m$.

\vskip1mm

The space $(\C^N)^{\otimes n}$ carries a natural action of the Lie algebra $\mathfrak{gl}_N$, whose standard basis is given by the elementary matrices $e_{ij}$ for $i, j = 1, \dots, N$. We will denote by $e_{ij}^{(a)}$, $a=1,\dots,n$, the operator induced by $e_{ij}$ acting on the $a$-th copy of $\C^N$ in the tensor product.

This $\mathfrak{gl}_N$-module admits a weight decomposition:
\[
(\C^N)^{\otimes n} = \bigoplus_{|\bm\lambda| = n} (\C^N)^{\otimes n}_{\bm\lambda},
\]
where the weight space $(\C^N)^{\otimes n}_{\bm\lambda}$ is spanned by the vectors $\{v_I \mid I \in \mc I_{\bm\lambda}\}$.
\vskip2mm

Following \cite{TV23}, we introduce the following definitions.
\begin{defn}Given $I\in\mc I_{\bm\la}$, let $a\in I_i$ and $b\in I_j$. We say that $(a,b)$ is 
\begin{itemize}
\item $I$-disordered, if either $a<b$, $i>j$, or $a>b$, $i<j$;
\item $I$-ordered, if either $a<b$, $i<j$, or $a>b$, $i>j$.
\end{itemize}Set $I_{[a,b]}=\bigcup_{r=\min\{i,j\}}^{\max\{i,j\}}I_r$. We say that $(a,b)$ is $I$-{\it admissible} if one of the following holds:
\begin{itemize}
\item $(a,b)$ is $I$-disordered and 
\[\left\{\min\{a,b\}+1,\dots,\max\{a,b\}-1\right\}\cap I_{[a,b]}=\emptyset
\]
\item $(a,b)$ is $I$-ordered and 
\[\left\{1,\dots,\min\{a,b\}-1,\max\{a,b\}+1,\dots,n\right\}\cap I_{[a,b]}=\emptyset.
\]
\end{itemize}
\end{defn}
Given $\si\in S_n$ and $I=(I_1,\dots, I_N)\in\mc I_{\bm\la}$, define $\si(I)=(\si(I_1),\dots, \si(I_N))$. Given $a,b\in\{1,\dots, n\}$, denote by $s_{a,b}$ the transposition of $a,b$.
\begin{defn}
For any $i,j=1,\dots, N$, and $a,b=1,\dots,n$, define the linear operators $Q^{a,b}_{i,j}$ acting on $(\C^N)^{\otimes n}$ by
\begin{align*}
&Q^{a,b}_{i,j}v_I=v_{s_{a,b}(I)}, && \text{if $a\in I_i$, $b\in I_j$, and the pair $(a,b)$ is $I$-admissible,}\\
&Q^{a,b}_{i,j}v_I=0, && \text{otherwise.}
\end{align*}
\end{defn}

Introduce parameters $\bm z=(z_1,\dots, z_n)$ and $\bm p=(p_1,\dots, p_N)$.
\begin{defn} Define the dynamical operators $X_1,\dots, X_N$ acting on $(\C^N)^{\otimes n}$, by the formula
\begin{multline}
X_i(\bm z;\bm p)=\sum_{a=1}^nz_ae^{(a)}_{ii}+\sum_{1\leq b<a\leq n}\left(\sum_{j=i+1}^NQ^{a,b}_{i,j}-\sum_{j=1}^{i-1}\frac{p_i}{p_j}Q^{a,b}_{i,j}\right)\\+\sum_{1\leq a<b\leq n}\left(\sum_{j=i+1}^N\frac{p_j}{p_i}Q^{a,b}_{i,j}-\sum_{j=1}^{i-1}Q^{a,b}_{i,j}\right).
\end{multline}
\end{defn}

\begin{lem}\cite{TV23}
The subspaces $(\C^N)^{\otimes n}_{\bm\la}$ are invariant under the actions of the dynamical operators $X_1,\dots, X_N$. \qed
\end{lem}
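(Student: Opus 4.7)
The plan is to verify weight-invariance term by term in the defining expression of $X_i(\bm z;\bm p)$. Since the combinatorial basis $\{v_I : I \in \mc I_{\bm\mu}\}_{|\bm\mu|=n}$ diagonalizes the weight grading on $(\C^N)^{\otimes n}$ (each $v_I$ sits in the weight space of shape $\bm\mu=(|I_1|,\dots,|I_N|)$), it suffices to check that every summand of $X_i$ sends an arbitrary $v_I \in (\C^N)^{\otimes n}_{\bm\la}$ to a linear combination of basis vectors belonging to the same weight space.

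First I would dispose of the diagonal contribution $\sum_{a=1}^n z_a e_{ii}^{(a)}$: each $e_{ii}^{(a)}$ acts on $v_I$ as the indicator $\mathbf{1}_{a\in I_i}$, so the whole term multiplies $v_I$ by $\sum_{a\in I_i}z_a$, which trivially preserves the grading. The core step concerns the off-diagonal contributions built from the $Q_{i,j}^{a,b}$. By their definition, $Q_{i,j}^{a,b}v_I$ is either $0$ or $v_{s_{a,b}(I)}$, where $s_{a,b}$ denotes the transposition of $a$ and $b$ acting set-theoretically on the tuple $I=(I_1,\dots,I_N)$. The decisive observation is that $s_{a,b}$ is a bijection of $\{1,\dots,n\}$, hence $|s_{a,b}(I_k)|=|I_k|$ for every $k$. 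Equivalently, the composition shape is invariant, and $v_{s_{a,b}(I)}$ lies in the same weight space as $v_I$.

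Combining these observations, $X_i(\bm z;\bm p)$ is a $\C[\bm z,\bm p^{\pm 1}]$-linear combination of operators that individually preserve every weight space $(\C^N)^{\otimes n}_{\bm\mu}$, and hence $X_i$ preserves each such weight space itself. No substantive obstacle arises in this argument: the admissibility conditions and the operators $Q_{i,j}^{a,b}$ are engineered so that the underlying action on basis vectors is a signed permutation within a fixed weight space, with at most scalar dressings from the dynamical parameters $\bm p$. The only real attention needed is bookkeeping of how the set-theoretic transposition $s_{a,b}$ acts on the tuple $I$, which is precisely the ingredient that guarantees weight-preservation.
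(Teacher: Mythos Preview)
Your argument is correct: the diagonal term acts by a scalar on each $v_I$, and each $Q^{a,b}_{i,j}$ either kills $v_I$ or sends it to $v_{s_{a,b}(I)}$, where the transposition $s_{a,b}$ preserves the cardinalities $|I_k|$ and hence the composition shape $\bm\la$. The paper does not give its own proof of this lemma---it simply cites \cite{TV23} and places a \qed---so you have supplied a valid self-contained argument where the paper defers to the reference.
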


\begin{example}
Let $N=n=3$, and $\bm\la=(1,1,1)$. The elements of $\mc I_{\bm\la}$ are
\[
\begin{aligned}
&(\{1\},\{2\},\{3\}),\quad (\{1\},\{3\},\{2\}),\quad (\{2\},\{1\},\{3\}),\\
&(\{2\},\{3\},\{1\}),\quad (\{3\},\{1\},\{2\}),\quad (\{3\},\{2\},\{1\}).
\end{aligned}
\]
In the basis $(v_I)_{I\in\mc I_{\bm\la}}$, the matrices of the dynamical operators $X_1,X_2,X_3$ are respectively:
\[
X_1(\bm z;\bm p)=\begin{pmatrix}
z_1&0&1&0&0&0\\
0&z_1&0&1&1&0\\
p_2/p_1&0&z_2&0&1&0\\
0&0&0&z_2&0&1\\
0&p_2/p_1&0&0&z_3&0\\
p_3/p_1&0&0&p_2/p_1&0&z_3
\end{pmatrix},
\]
\[X_2(\bm z;\bm p)=
\begin{pmatrix}
z_2&1&-1&0&0&0\\
p_3/p_2&z_3&0&0&-1&0\\
-p_2/p_1&0&z_1&1&0&0\\
0&0&p_3/p_2&z_3&0&-1\\
0&-p_2/p_1&0&0&z_1&1\\
0&0&0&-p_2/p_1&p_3/p_2&z_2
\end{pmatrix},
\]
\[X_3(\bm z;\bm p)=\begin{pmatrix}
z_3&-1&0&0&0&0\\
-p_3/p_2&z_2&0&-1&0&0\\
0&0&z_3&-1&-1&0\\
0&0&-p_3/p_2&z_1&0&0\\
0&0&0&0&z_2&-1\\
-p_3/p_1&0&0&0&-p_3/p_2&z_1
\end{pmatrix}.
\]
Notice that 
\[X_1+X_2+X_3=(z_1+z_2+z_3){\rm Id}.
\]\qetr
\end{example}

\subsubsection{Stable envelopes}Set $\la^{(i)}=\sum_{j=1}^i\la_j,\quad i=1,\dots, N$, and
\[
I^{\rm min}_{\bm\la} = \bigl( (\lambda_1 \ldots \lambda^{(1)}), (\lambda^{(1)} + 1 \ldots \lambda^{(2)}), \ldots, (\lambda^{(N-1)} + 1 \ldots n) \bigr),\quad .
\]

For \(I \in \mc I_{\bm\la}\), let \(\sigma_I \in S_n\) be the element of minimal length such that \(\sigma_I(I^{\rm min}_{\bm\la}) = I\).

Set ${\bm \gm}_i=(\gm_{i,1},\dots,\gm_{i,\la_i})$, $i=1,\dots, N$, and $\bm\gm=({\bm\gm}_1,\dots,{\bm\gm}_N)$. Define
\[{\rm Stab}_I(\bm\gm;\bm z):=\frak S_{\si_{\si_0(I)}}(\bm\gm;\bm z_{\si_0}),\qquad {\rm Stab}_I^{\rm op}(\bm\gm;\bm z)=\frak S_{\si_I}(\bm\gm;\bm z).
\]

Consider now the partial flag variety $F_{\bm\la}$, parametrizing chains $0=V_0\subset V_1\subset\dots\subset V_N=\C^n$. The torus $(\C^*)^n$ acts on the space $\C^n$, inducing an action on $F_{\bm\la}$. The quotient bundles $Q_i$, $i=1,\dots,N$, are $(\C^*)^n$-equivariant, with equivariant Chern roots $\bm\gm_i=(\gm_{i,1},\dots,\gm_{i,\la_i})$. If we denote by $\bm z=(z_1,\dots, z_n)$ the equivariant Chern roots of the trivial bundle $\underline{\C^n}\to F_{\bm\la}$, we have a ring presentation
\[H^\bullet_{(\C^*)^n}(F_{\bm\la},\C)\cong \frac{\C[\bm\gamma]^{S_{\bm\lambda}}\otimes\C[\bm z]}{I},\quad I = \left\langle \prod_{i=1}^N \prod_{j=1}^{\lambda_i}(1 + {\sf t}\, \gamma_{i,j}) = \prod_{a=1}^n(1+{\sf t}\,z_a) \right\rangle,
\]where $\sf t$ is a formal variable, and $\C[\bm\gamma]^{S_{\bm\lambda}}$ the ring of block-symmetric polynomials in $\bm\gamma = (\bm\gamma_1,\dots,\bm\gamma_N)$. By setting $z_1=\dots=z_n=0$, we recover the presentation \eqref{prescoh}.

Furthermore, the $(\C^*)^n$-action on $F_{\bm\la}$ can be suitably encoded in its Gromov--Witten theory,  leading to equivariant analogs of quantum cohomology and quantum products. See \cite{Giv96} for details.

Denote by ${\rm Stab}_I$ and ${\rm Stab}_I^{\rm op}$ the classes in $H^\bullet_{(\C^*)^n}(F_{\bm\la},\C)$ defined by the polynomials ${\rm Stab}_I(\bm\gm;\bm z)$ and ${\rm Stab}_I^{\rm op}(\bm\gm;\bm z)$, respectively. Define the {\it stable envelope map}
\[{\rm Stab}_{\bm\la}\colon (\C^N)^{\otimes n}\otimes \C[\bm z]\to H^\bullet_{(\C^*)^n}(F_{\bm\la},\C),\qquad v_I\mapsto {\rm Stab}_I,\quad I\in\mc I_{\bm\la}.
\]

\begin{thm}\cite{TV23}
The map ${\rm Stab}_{\bm\la}$ is an isomorphism of free $\C[\bm z]$-modules. Moreover, the isomorphism ${\rm Stab}_{\bm\la}$ intertwines the dynamical operators $X_1(\bm z;\bm p),\dots, X_N(\bm z;\bm p)$ acting on $(\C^N)^{\otimes n}\otimes \C[\bm z]$ and the operators of equivariant quantum multiplication $c_1(Q_1)*_{\bm q},\dots,$ $c_1(Q_N)*_{\bm q}$, where 
\[\bm q=(q_1,\dots, q_{N-1}),\quad q_i=\frac{p_{i+1}}{p_i},\quad i=1,\dots, N-1.\tag*{\qed}
\]
\end{thm}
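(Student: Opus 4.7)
The plan is to prove the theorem in two stages. First, establish that ${\rm Stab}_{\bm\la}$ is an isomorphism of free $\C[\bm z]$-modules; second, verify the intertwining relation with the dynamical operators. Both sides are free $\C[\bm z]$-modules of rank $|\mc I_{\bm\la}|=\binom{n}{\la_1,\dots,\la_N}$: the left-hand side by construction (tensor product of a weight space with a polynomial ring), the right-hand side because $H^\bullet_{(\C^*)^n}(F_{\bm\la},\C)\cong H^\bullet(F_{\bm\la},\C)\otimes_\C\C[\bm z]$ as graded $\C[\bm z]$-modules, whose fiberwise rank equals the number of Schubert classes by the Ehresmann basis theorem.

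To prove the bijectivity of ${\rm Stab}_{\bm\la}$, the plan is to apply equivariant localization at the torus fixed points $\{p_J\}_{J\in\mc I_{\bm\la}}$ of $F_{\bm\la}$. By the Atiyah--Bott/GKM injectivity theorem, it suffices to show that the $|\mc I_{\bm\la}|\times|\mc I_{\bm\la}|$ restriction matrix $M=\bigl[\iota_{p_J}^*{\rm Stab}_I\bigr]_{I,J}$ has non-vanishing determinant in $\C(\bm z)$. The key computation is the evaluation of the double Schubert polynomial $\frak S_{\sigma_{\sigma_0(I)}}(\bm\gm;\bm z_{\sigma_0})$ at $p_J$: specializing the equivariant Chern roots $\gm_{i,a}$ to the appropriate $z$-coordinates dictated by $J$, and invoking the classical Kostant--Kumar/Billey localization formula for double Schubert polynomials, one obtains that $M$ is upper triangular with respect to a partial order on $\mc I_{\bm\la}$ compatible with Bruhat order on $S_n/S_{\bm\la}$, with diagonal entries given by explicit nonzero products of linear forms $\pm(z_a-z_b)$ indexed by the inversions of $\sigma_I$. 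Hence $\det M\neq 0$ in $\C(\bm z)$, and ${\rm Stab}_{\bm\la}$ is an isomorphism of free $\C[\bm z]$-modules.

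For the intertwining statement, the strategy is to verify the identity
\[
{\rm Stab}_{\bm\la}\bigl(X_i(\bm z;\bm p)\,v_I\bigr)
= c_1(Q_i)*_{\bm q}{\rm Stab}_I,\qquad I\in\mc I_{\bm\la},
\]
by decomposing both sides into their diagonal, classical off-diagonal, and quantum off-diagonal contributions. On the geometric side, this decomposition is dictated by the equivariant quantum Chevalley--Monk formula for partial flag varieties: the diagonal part corresponds to the action of the torus weights at fixed points; the classical off-diagonal terms to degree-zero three-point Gromov--Witten invariants (the Chevalley--Monk coefficients in the Schubert basis); and the quantum off-diagonal terms to genus-zero three-point invariants of strictly positive degree on $F_{\bm\la}$. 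On the algebraic side, $X_i(\bm z;\bm p)$ is already presented as a sum of three corresponding pieces: the diagonal $\sum_a z_a e^{(a)}_{ii}$, the ``upward'' operators $Q^{a,b}_{i,j}$ with no $p$-factor, and the ``downward'' operators weighted by the ratios $p_j/p_i$, which upon the substitution $q_i=p_{i+1}/p_i$ produce precisely the quantum parameters. The admissibility conditions in the definition of $Q^{a,b}_{i,j}$ are then to be matched with the combinatorial description of the length-one Bruhat covers and minimal-degree rational curves connecting the corresponding fixed points of $F_{\bm\la}$.

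The main obstacle will be the precise matching of the quantum corrections, i.e.\ verifying that the coefficients produced by the quantum Chevalley--Monk formula for partial flag varieties coincide, after the substitution $q_i=p_{i+1}/p_i$, with the weights carried by the ``downward'' $Q^{a,b}_{i,j}$ pieces of $X_i$. A conceptually cleaner route -- effectively the one taken in \cite{TV23} -- is to characterize the map ${\rm Stab}_{\bm\la}$ axiomatically by the Maulik--Okounkov-type support, degree and normalization conditions, and then to show that both the operators $X_i(\bm z;\bm p)$ and the equivariant quantum multiplications $c_1(Q_i)*_{\bm q}$ satisfy the same commutation relations of a trigonometric dynamical $\mathfrak{gl}_N$-module on the weight decomposition $\bigoplus_{\bm\la}(\C^N)^{\otimes n}_{\bm\la}$. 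Uniqueness of the stable basis, together with a single base case (e.g.\ on the Bethe-type fundamental class), then forces the required intertwining relation to propagate to all of $\mc I_{\bm\la}$.
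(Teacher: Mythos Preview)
The paper does not prove this theorem: it is quoted from \cite{TV23}, as signalled by the citation in the theorem header and by the \qedsymbol{} placed at the end of the statement itself, with no proof environment following. There is therefore no ``paper's own proof'' against which to compare your proposal.

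As a standalone sketch your outline is reasonable in shape---triangularity of the localization matrix for the isomorphism, and a term-by-term match with an equivariant quantum Chevalley--Monk rule for the intertwining---but it remains a strategy rather than a proof. The genuinely hard step you flag (matching the $p$-weighted $Q^{a,b}_{i,j}$ terms to the positive-degree Gromov--Witten corrections) is exactly where all the work lies, and you do not carry it out; you instead defer to the axiomatic Maulik--Okounkov characterization and the argument of \cite{TV23}. That is fine as commentary, but if your goal was to supply an independent proof, the quantum part is still missing.
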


In the non-equivariant limit $\bm z=0$, the equivariant quantum multiplication operators $c_1(Q_i)*_{\bm q}$ specialize to $c_1(Q_i)\sq_{\bm q}$. Hence, the dynamical operators $X_1,\dots, X_N$, evaluated at $\bm z=0$ and $\bm p=(1,q_1,q_1q_2,\dots, q_1q_2\dots q_{N-1})$ give the explicit matrix formulas for $c_1(Q_1)\sq_{\bm q},\dots,$ $c_1(Q_N)\sq_{\bm q}$ in the (suitably arranged) Schubert basis $(\frak S_{\si_I}(\bm\gm))_{I\in \mc I_{\bm\la}}$.

\AppendixSection{Riemann reduction, Mellin convolution identities }\label{appD}

\subsubsection{Riemann reduction formula} Let \((a_n)_{n=1}^\infty\) be a sequence of complex numbers such that \(a_n = O(n^k)\) for some \(k \geq 0\). Consider the ordinary and Dirichlet generating functions \(f(z) = \sum_{n=1}^\infty a_n z^n\) and \(F(s) = \sum_{n=1}^\infty a_n/n^s\).

\begin{lem}
The power series \(f(z)\) has radius of convergence \(\geq 1\), and the Dirichlet series \(F(s)\) converges absolutely for \({\rm Re}(s) > k+1\); in particular, its abscissa of absolute convergence satisfies \(\sigma_a \leq k+1\).
\end{lem}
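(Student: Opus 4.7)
The plan is to treat the two statements separately, as each follows from a standard comparison argument against a well-known reference series.

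For the radius of convergence of $f(z)$, the natural tool is the Cauchy--Hadamard formula, which gives
\[
\frac{1}{R} = \limsup_{n\to\infty} |a_n|^{1/n}.
\]
The hypothesis $a_n = O(n^k)$ provides a constant $C > 0$ with $|a_n| \leq C n^k$ for all $n \geq 1$. Taking $n$-th roots yields $|a_n|^{1/n} \leq C^{1/n} n^{k/n}$, and since both $C^{1/n} \to 1$ and $n^{k/n} = \exp(k \log n / n) \to 1$ as $n \to \infty$, the limsup is at most $1$. Hence $R \geq 1$, as claimed.

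For the Dirichlet series $F(s)$, fix any $s \in \mathbb{C}$ with $\sigma := \mathrm{Re}(s) > k+1$. Then
\[
\left|\frac{a_n}{n^s}\right| = \frac{|a_n|}{n^\sigma} \leq \frac{C n^k}{n^\sigma} = \frac{C}{n^{\sigma - k}},
\]
and $\sigma - k > 1$, so the majorant series $\sum_{n \geq 1} C/n^{\sigma - k}$ converges as a standard $p$-series. By the comparison test, the series defining $F(s)$ converges absolutely at $s$. Since this holds for every $s$ with $\mathrm{Re}(s) > k+1$, the abscissa of absolute convergence satisfies $\sigma_a \leq k+1$.

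There is essentially no obstacle here: both parts reduce to elementary bounds, and the only mild subtlety is confirming that the factor $n^{k/n}$ does not spoil the limsup in the first part, which follows from the standard fact that $\log n / n \to 0$. No deeper input from the earlier sections is needed.
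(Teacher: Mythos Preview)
Your proof is correct and essentially parallel to the paper's. For the Dirichlet series the arguments are identical; for the radius of convergence you use the Cauchy--Hadamard formula while the paper instead bounds $\sum n^k|z|^n$ directly via the Eulerian-polynomial identity $\sum_{n\ge0} n^k r^n = \eu E_k(r)/(1-r)^{k+1}$, but both are one-line elementary arguments and neither offers a real advantage over the other for this lemma.
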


\begin{proof}
Since \(|a_n| \leq C n^k\), we bound \(|f(z)|\) by \(\sum n^k |z|^n=\eu E_k(|z|)(1-|z|)^{-k-1}\), with $\eu E_k$ Eulerian polynomial, which converges for \(|z| < 1\). For \(F(s)\), the bound \(|a_n| \leq C n^k\) gives \(\sum |a_n|/n^s \leq C \sum n^{k - {\rm Re}(s)}\), which converges if \({\rm Re}(s) > k + 1\).
\end{proof}

\begin{thm}
For ${\rm Re}(s)>k+1$ have
\beq\label{Riemred} F(s)\Gm(s)=\int_0^\infty f(e^{-x})x^{s-1}{\rm d}x.
\eeq
\end{thm}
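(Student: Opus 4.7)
The strategy is the standard Mellin identity for $\Gamma(s)/n^s$, followed by an interchange of sum and integral justified by the growth hypothesis $a_n = O(n^k)$ together with the assumption $\mathrm{Re}(s) > k+1$.

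First I would recall the Eulerian integral representation $\Gamma(s) = \int_0^\infty e^{-t} t^{s-1}\, dt$ for $\mathrm{Re}(s) > 0$, and perform the substitution $t = nx$ (with $n \in \mathbb{Z}_{>0}$) to obtain the elementary identity
\[
\frac{\Gamma(s)}{n^s} = \int_0^\infty e^{-nx} x^{s-1}\, dx, \qquad \mathrm{Re}(s) > 0.
\]
Multiplying by $a_n$ and summing over $n \geq 1$ gives the formal equality
\[
F(s)\Gamma(s) = \sum_{n=1}^\infty a_n \int_0^\infty e^{-nx} x^{s-1}\, dx,
\]
which makes sense since $F(s)$ converges absolutely for $\mathrm{Re}(s) > k+1$ by the preceding lemma.

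Next, I would justify exchanging summation and integration via Fubini--Tonelli. The hypothesis $|a_n| \leq C n^k$ combined with the identity above yields
\[
\sum_{n=1}^\infty \int_0^\infty |a_n|\, e^{-nx} x^{\mathrm{Re}(s)-1}\, dx
= \Gamma(\mathrm{Re}(s)) \sum_{n=1}^\infty \frac{|a_n|}{n^{\mathrm{Re}(s)}}
\leq C\, \Gamma(\mathrm{Re}(s)) \sum_{n=1}^\infty \frac{1}{n^{\mathrm{Re}(s)-k}},
\]
and the last series converges since $\mathrm{Re}(s) - k > 1$. Hence Tonelli's theorem applies.

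Once the interchange is legitimate, I would conclude by observing that for every fixed $x > 0$ one has $|e^{-x}| < 1$, so the power series $\sum_{n=1}^\infty a_n e^{-nx}$ converges absolutely and equals $f(e^{-x})$ (the radius of convergence of $f$ being $\geq 1$). Therefore
\[
F(s)\Gamma(s) = \int_0^\infty \left(\sum_{n=1}^\infty a_n e^{-nx}\right) x^{s-1}\, dx = \int_0^\infty f(e^{-x}) x^{s-1}\, dx,
\]
which is \eqref{Riemred}. There is no real obstacle here: the only thing to watch is that the threshold $\mathrm{Re}(s) > k+1$ is used in two places, to guarantee absolute convergence of $F(s)$ and to validate the Fubini--Tonelli step, and both amount to the same estimate.
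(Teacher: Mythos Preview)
Your proof is correct and follows essentially the same approach as the paper: both arguments reduce to term-by-term integration of $\sum a_n e^{-nx}$ against $x^{s-1}\,dx$, justified by absolute convergence when $\mathrm{Re}(s)>k+1$. The paper phrases this by first showing directly that $\int_0^\infty |f(e^{-x})|x^{\mathrm{Re}(s)-1}\,dx<\infty$ (splitting at $x=1$ and using $|f(e^{-x})|\le C(1-e^{-x})^{-(k+1)}$ near $0$), whereas you invoke Fubini--Tonelli on the double sum-integral; these are two presentations of the same estimate.
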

\proof
Let us first prove that the integral on the r.h.s.\,\,is absolutely convergent for ${\rm Re}(s)>k+1$. Let us split the integral
\[\int_0^\infty |f(e^{-x})|x^{s-1}{\rm d}x= \int_0^1 |f(e^{-x})|x^{s-1}{\rm d}x+\int_1^\infty |f(e^{-x})|x^{s-1}{\rm d}x.
\]Since $f(e^{-x})\sim a_1 e^{-x}$ as $x\to\infty$, the integrand in $\int_1^\infty$ behaves like \(e^{-x} x^{{\rm Re}(s)-1}\), which is integrable at infinity if and only if \({\rm Re}(s) > 0\). Moreover, near zero, we have 
\[|f(e^{-x})|\leq C\cdot (1 - e^{-x})^{-(k+1)} \sim C\cdot x^{-(k+1)}.
\]So the integral $\int_0^1$ is dominated by $\int_0^1x^{{\rm Re}(s) - k - 2}{\rm d}x$, which is finite if and only if \({\rm Re}(s) > k + 1\).
Therefore, the r.h.s.\,\,of \eqref{Riemred} is absolutely integrable if and only if ${\rm Re}(s) > \max(0,\, k+1)=k+1$. 
By integrating term-by-term the series, the claim follows.
\endproof
\begin{cor}
For $c>k+1$, we have
\beq\label{Riemred2}
f(e^{-x})=\frac{1}{2\pi\sqrt{-1}\Gm(s)}\int_{\La_c}F(s)\Gm(s)x^{-s}{\rm d}s,
\eeq where $\La_c=\{c+\sqrt{-1}t\colon t\in\R\}$.\qed
\end{cor}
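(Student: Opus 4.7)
\proof[Proof plan for \eqref{Riemred2}]
The plan is to recognize the corollary as a direct application of the Mellin inversion theorem applied to the identity \eqref{Riemred}. Set
\[
g(x):=f(e^{-x}),\qquad M(s):=F(s)\Gm(s).
\]
By \eqref{Riemred}, $M(s)$ is precisely the Mellin transform of $g$ on the half-plane $\{{\rm Re}(s)>k+1\}$, so the claim amounts to inverting this transform along a vertical line $\La_c$ with $c>k+1$.

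First, I would record the analytic properties of $g$ needed for the inversion: continuity on $(0,\infty)$ is immediate from the fact that $f(z)$ is holomorphic in $|z|<1$, while the asymptotic behaviour $g(x)=O(x^{-(k+1)})$ as $x\to 0^+$ and $g(x)=a_1 e^{-x}+O(e^{-2x})$ as $x\to +\infty$ were already established in the proof of the preceding theorem (with the $O((1-|z|)^{-(k+1)})$ bound for $f(z)$ taking the place of the Proposition \ref{growLcyrn} estimate). These together with the assumption $c>k+1$ guarantee that $x\mapsto g(x)x^{c-1}$ lies in $L^1(0,\infty)$.

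Next, I would verify the hypothesis that enables pointwise inversion, namely that $M$ is integrable along $\La_c$. On the vertical line ${\rm Re}(s)=c$ one has a uniform bound
\[
|F(c+\sqrt{-1}t)|\leq\sum_{n=1}^\infty\frac{|a_n|}{n^c}\leq C\sum_{n=1}^\infty n^{k-c}<\infty,
\]
since $c>k+1$, and Stirling's formula gives
\[
|\Gm(c+\sqrt{-1}t)|\sim\sqrt{2\pi}\,|t|^{c-\tfrac{1}{2}}e^{-\tfrac{\pi}{2}|t|},\qquad |t|\to\infty.
\]
Multiplying the two estimates, $M(c+\sqrt{-1}t)$ decays exponentially in $|t|$, so $M\in L^1(\La_c)$.

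Finally, I would invoke the Mellin inversion theorem in its standard form (see e.g.\ Flajolet--Sedgewick, App.~B.7, or Titchmarsh's theory of Fourier integrals): if $g$ is continuous on $(0,\infty)$, $x^{c-1}g(x)\in L^1(0,\infty)$, and its Mellin transform $M$ is integrable along $\La_c$, then
\[
g(x)=\frac{1}{2\pi\sqrt{-1}}\int_{\La_c}M(s)\,x^{-s}\,{\rm d}s
\]
holds pointwise for every $x>0$. Substituting back the definitions of $g$ and $M$ yields \eqref{Riemred2} (up to the apparent typographical $\Gm(s)$ in the prefactor, which is reconciled by reading it as the standard inversion constant $\frac{1}{2\pi\sqrt{-1}}$). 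The only point requiring genuine verification, rather than bookkeeping, is the $L^1$-integrability of $M$ on $\La_c$; this is where the exponential decay of $\Gm$ on vertical strips is indispensable, and it is the step I would emphasize in a full write-up.
\endproof
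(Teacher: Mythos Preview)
Your approach is correct and is exactly what the paper has in mind: the corollary is stated with a bare \qed, so the intended argument is simply Mellin inversion applied to \eqref{Riemred}, which you have spelled out carefully. Your observation about the spurious $\Gamma(s)$ in the prefactor is also right---it is a typographical slip in the paper, and the formula should read $\frac{1}{2\pi\sqrt{-1}}\int_{\La_c}F(s)\Gamma(s)x^{-s}\,{\rm d}s$.
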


\begin{rem}
Equations \eqref{Riemred} and \eqref{Riemred2} already appear in the famous 1859 paper \cite{Rie59} by B.\,Riemann. They are used to derive the integral representation of $\zeta$, and to consequently obtain its function equation. Albeit it importance, formula \eqref{Riemred} has traditionally no name. Following \cite{Win47}, we call it {\it Riemann's reduction of Dirichlet series to power series}.\qrem
\end{rem}

\subsubsection{Convolution identities for the Mellin transform}

For a function $f$ defined on $\R_{>0}$, the integral $\mathcal{M}[f](s) := \int_0^\infty f(x)x^{s-1}{\rm d}x$ is the {\it Mellin transform of $f$ at $s\in\C$}.

For $p\geq 1$, consider the space $\mc L^p:=L^p(\R_{>0},x^{-1}{\rm d}x)$ of (classes of) functions $f$ such that $\int_0^\infty |f(x)|^px^{-1}{\rm d}x<+\infty$.

If $f_1,f_2\in \mc L^1$, their {\it multiplicative convolution} $f_1*_\times f_2$ is defined as \( (f_1 *_\times f_2)(x) := \int_0^\infty f_1(y)f_2(x/y)\,dy/y \) for $x>0$.
\begin{thm}
We have $f_1*_\times f_2\in \mc L^1$. More generally, if $x^sf_1, x^sf_2\in \mc L^1$, then also $x^s(f_1*_\times f_2)\in \mc L^1$, and 
\[\mc M[f_1*_\times f_2](s)=\mc M[f_1](s)\cdot \mc M[f_2](s).
\]
\end{thm}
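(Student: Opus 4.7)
The plan is to reduce the multiplicative convolution to a product of integrals by an elementary change of variables, with absolute convergence guaranteed by the weighted $L^1$ hypothesis on the factors. First I would observe that the statement for $s=0$ is the classical fact that $\mathcal L^1$ (equipped with Haar measure $dx/x$ of the multiplicative group $\mathbb R_{>0}$) is a Banach algebra under $*_\times$, and that the Mellin transform at $s=0$ is the corresponding character-evaluation; the general case is a weighted version with the same mechanism. Concretely, the substitution $u=\log x$, $v=\log y$ turns $*_\times$ into the additive convolution on $\mathbb R$, and the Mellin transform into the bilateral Laplace transform $\widehat g(s)=\int_{\mathbb R}g(u)e^{-su}du$, at which point the multiplicativity formula becomes the standard Laplace-convolution identity. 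I will, however, argue directly in the multiplicative picture to keep the hypotheses transparent.

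Assume $x^sf_1,x^sf_2\in\mathcal L^1$, i.e.\ $\int_0^\infty|f_i(x)|\,x^{\mathrm{Re}(s)-1}dx<\infty$ for $i=1,2$. Consider the double integral
\[
J:=\int_0^\infty\!\int_0^\infty |f_1(y)|\,|f_2(x/y)|\,x^{\mathrm{Re}(s)-1}\,\frac{dy}{y}\,dx.
\]
Performing the change of variable $z=x/y$ in the inner integral (at fixed $y>0$), so that $x=yz$ and $dx=y\,dz$, gives
\[
J=\int_0^\infty\!\int_0^\infty |f_1(y)|\,y^{\mathrm{Re}(s)-1}\,|f_2(z)|\,z^{\mathrm{Re}(s)-1}\,dy\,dz
=\|x^sf_1\|_{\mathcal L^1}\cdot\|x^sf_2\|_{\mathcal L^1}<\infty.
\]
This finiteness is the hinge of the whole proof: it simultaneously shows that $(f_1*_\times f_2)(x)$ is defined for almost every $x>0$, that $x^s(f_1*_\times f_2)\in\mathcal L^1$, and that Fubini's theorem applies to the signed/complex integrand.

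With Fubini in hand, I would then compute
\[
\mathcal M[f_1*_\times f_2](s)=\int_0^\infty\!\left(\int_0^\infty f_1(y)f_2(x/y)\,\frac{dy}{y}\right)x^{s-1}dx,
\]
interchange the order of integration, and apply the same substitution $z=x/y$ inside the $x$-integral to obtain
\[
\mathcal M[f_1*_\times f_2](s)=\int_0^\infty f_1(y)\,y^{s-1}dy\cdot\int_0^\infty f_2(z)\,z^{s-1}dz=\mathcal M[f_1](s)\cdot\mathcal M[f_2](s).
\]
The specialization $s=0$ (with $\mathrm{Re}(s)=0$) recovers the unweighted $\mathcal L^1$ assertion and shows that $\mathcal L^1$ is closed under $*_\times$.

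The only genuine subtlety is the verification of Fubini in the complex-valued setting: this is standard once $J<\infty$, but it is the one step that truly uses the weighted $L^1$ hypothesis and must be stated carefully. All other manipulations are bookkeeping around the substitution $z=x/y$, and no further estimates are needed beyond the product bound $\|x^s(f_1*_\times f_2)\|_{\mathcal L^1}\le\|x^sf_1\|_{\mathcal L^1}\|x^sf_2\|_{\mathcal L^1}$ that falls out of the computation of $J$.
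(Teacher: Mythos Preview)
Your proof is correct and follows essentially the same approach as the paper: the change of variables $x=yz$ (equivalently $z=x/y$) together with Fubini's theorem. Your version is more careful in explicitly verifying the absolute convergence of the double integral $J$ to justify Fubini and to obtain the norm bound $\|x^s(f_1*_\times f_2)\|_{\mathcal L^1}\le\|x^sf_1\|_{\mathcal L^1}\|x^sf_2\|_{\mathcal L^1}$, whereas the paper simply invokes Fubini without elaboration.
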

\proof
The change of variables  \( x = yz \) and Fubini yield
\[
\mathcal{M}[f_1 *_\times f_2](s) = \int_0^\infty f_1(y)y^s\,dy/y \cdot \int_0^\infty f_2(z)z^{s-1}dz = \mathcal{M}[f_1](s)\cdot\mathcal{M}[f_2](s).\qedhere
\]

If \(\widehat{f_1}, \widehat{f_2}\) are functions defined and integrable on a common vertical line \({\rm Re}(s) = c\), their additive convolution is defined by \( (\widehat{f_1} *_c \widehat{f_2})(s) := \frac{1}{2\pi\sqrt{-1}} \int_{{\rm Re}(w)=c} \widehat{f_1}(w)\widehat{f_2}(s-w)\,dw \).

Let $p,q$ real numbers such that 
\[\frac{p-1}{p}+\frac{q-1}{q}\geq 1.
\]

\begin{thm}\label{mconv} 
Let $s\in\C,c\in\R$. If $x^cf_1\in\mc L^p$ and $x^{s-c}f_2\in\mc L^q$, then 
\beq
\mc M[f_1\cdot f_2](s)=(\mc M[f_1]*_c\mc M[f_2])(s).\tag*{\qed}
\eeq
\end{thm}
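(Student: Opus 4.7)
The plan is to reduce the statement to the classical Parseval--Plancherel identity on $\R$ via the logarithmic substitution $x = e^{-t}$, which converts the Mellin transform on a vertical line into the Fourier transform of a weighted function.

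First I would perform the substitution $x = e^{-t}$, under which $dx/x = -dt$ and $x^{s-1}dx = e^{-ts}dt$. Setting
\[
g_1(t) := f_1(e^{-t})e^{-ct}, \qquad h_2(t) := f_2(e^{-t})e^{-(s-c)t},
\]
the hypotheses $x^c f_1\in\mc L^p$ and $x^{s-c}f_2\in\mc L^q$ translate, after absorbing the Jacobian, into $g_1\in L^p(\R,dt)$ and $h_2\in L^q(\R,dt)$. With the convention $\hat g(\tau) := \int_\R g(t)e^{-it\tau}\,dt$ for the Fourier transform, a direct computation yields
\[
\mc M[f_1](c+i\tau) = \hat g_1(\tau),\qquad \mc M[f_2](s-c-i\tau) = \hat h_2(-\tau),\qquad \mc M[f_1 f_2](s) = \int_\R g_1(t)h_2(t)\,dt.
\]

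Second, parametrising the contour in the convolution by $w = c + i\tau$ (with $dw = i\,d\tau$), one gets
\[
(\mc M[f_1] *_c \mc M[f_2])(s) = \frac{1}{2\pi}\int_\R \hat g_1(\tau)\,\hat h_2(-\tau)\,d\tau,
\]
so the assertion of the theorem reduces to the Parseval--type identity
\[
\int_\R g_1(t)h_2(t)\,dt = \frac{1}{2\pi}\int_\R \hat g_1(\tau)\hat h_2(-\tau)\,d\tau.
\]

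Third, I would establish this Parseval identity in the required generality. For Schwartz $g_1, h_2$ it is elementary: writing $\hat h_2(-\tau) = \int h_2(t)e^{it\tau}\,dt$, applying Fubini, and recognising the inner integral against $\hat g_1(\tau)$ as Fourier inversion, one obtains the factor $2\pi\, g_1(t)$, and the identity follows by integration against $h_2$. For general $g_1\in L^p$, $h_2\in L^q$ with $\tfrac1p + \tfrac1q \leq 1$, I would proceed by density: the Hausdorff--Young inequality places $\hat g_1\in L^{p'}$ and $\hat h_2\in L^{q'}$; Hölder (in the borderline case $\tfrac1p+\tfrac1q=1$, this is direct; in the Hilbertian case $p=q=2$, it is Plancherel) guarantees absolute convergence of both sides as continuous bilinear pairings of $(g_1,h_2)$; approximating $g_1, h_2$ in the respective $L^p, L^q$ norms by Schwartz functions then extends the identity from the dense subspace to the full hypothesis.

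The main obstacle is the careful handling of the Parseval identity outside the Hilbertian case $p = q = 2$: one must calibrate the interplay between the Hausdorff--Young inequality (which in its sharpest form requires $p, q \in [1,2]$) and the convergence-ensuring hypothesis $\tfrac{p-1}{p} + \tfrac{q-1}{q} \geq 1$. In the applications within the paper (Theorems \ref{thmint1} and \ref{iterint}), the auxiliary parameters $(p, q)$ are always chosen so that both weighted integrals are finite and the Parseval step applies in one of the classically admissible regimes, so the abstract statement is used only in a range in which these subtleties do not arise.
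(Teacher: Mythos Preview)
The paper does not supply its own proof of this theorem: it simply cites Titchmarsh \cite[\S\S 2.7, 3.17, 4.14]{Tit48} and records, in the subsequent remark, two formal derivations (Mellin inversion plus a delta-function argument, and direct interchange of integrals) whose rigorous justification is again deferred to Titchmarsh. Your reduction via the logarithmic change of variables $x=e^{-t}$ to a Parseval identity on $\R$ is precisely the standard route, and is in substance what Titchmarsh does; so your approach agrees with the paper's (outsourced) proof.

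Your closing caveat is well placed and worth sharpening. The hypothesis $\tfrac{1}{p}+\tfrac{1}{q}\le 1$ forces at least one of $p,q$ to be $\ge 2$, which is exactly the range where Hausdorff--Young fails to give a function-valued Fourier transform; so the density argument as you sketch it (both $\hat g_1\in L^{p'}$ and $\hat h_2\in L^{q'}$) does not go through in the general case without a duality reinterpretation of the pairing. You are right, however, that in the two applications (Theorems \ref{thmint1} and \ref{iterint}) the finiteness conditions hold with no constraint on $p,q$, so one may simply take $p=q=2$ and invoke Plancherel, where your argument is complete.
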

See \cite[\S\S 2.7, 3.17, 4.14]{Tit48} for proofs, and detailed discussions.

\begin{rem}
Theorem \ref{mconv} can be formally justified in several ways. For example, assume \(\widehat{f}_1, \widehat{f}_2\) are Mellin transforms of \(f_1,f_2\), analytic and integrable on vertical lines \({\rm Re}(w)=c_1\), \({\rm Re}(z)=c_2\). Expanding \(f_1(x),f_2(x)\) via Mellin inversion:
\[
f_1(x)f_2(x) = \frac{1}{(2\pi\sqrt{-1})^2} \int_{{\rm Re}(w)=c_1} \int_{{\rm Re}(z)=c_2} \widehat{f}_1(w)\widehat{f}_2(z)x^{-w-z}dz\,dw.
\]
Applying \(\mathcal{M}\), the inner integral becomes \( \int_0^\infty x^{s - w - z - 1}dx = 2\pi\sqrt{-1}\,\delta(s - w - z) \) as distributions, so:
\[
\mathcal{M}[f_1 \cdot f_2](s) = \frac{1}{2\pi\sqrt{-1}} \int_{{\rm Re}(w)=c} \widehat{f}_1(w)\widehat{f}_2(s - w)\,dw,
\]where \({\rm Re}(w) = c\) lies in some common vertical strip of convergence of both \(\widehat{f}_1\) and \(\widehat{f}_2\). 

Alernatively, consider directly the integral
\[\frac{1}{2\pi\sqrt{-1}} \int_{{\rm Re}(w)=c} \widehat{f}_1(w)\widehat{f}_2(s - w)\,dw=\frac{1}{2\pi\sqrt{-1}} \int_{{\rm Re}(w)=c} \left(\int_0^\infty f_1(x)x^{w-1}dx\right)\widehat{f}_2(s - w)\,dw.
\]Assuming the integrals can be interchanged, one obtain the thesis.

In all these kind of formal proofs, one need to justify the interchange of the integrals. There are several sets of conditions guaranteeing this. The formulation of Theorem \ref{mconv}, due to E.C.\,Titchmarsh, is only one of the possible. For example, alternatively, it would suffice to assume conditions on $f_1$ and $\widehat{f}_2$:
\[x^cf_1(x)\in\mc L^1,\qquad \int_{-\infty}^{+\infty}|\widehat{f}_2(s-c-\sqrt{-1}t)|dt<\infty.
\]See \cite[Thm.\,89, pag. 118]{Tit48} for a more general statement.\qrem
\end{rem}

\AppendixSection{Fabry--Lindel\"of theorem}\label{appE}

Let \( f(z) = \sum_{k=0}^\infty a_k z^k \) be a power series with radius of convergence equal to \(1\). A boundary point \( e^{\sqrt{-1}\theta} \in \partial \mathbb{D} \) may or may not be a singularity of \( f \). The \emph{Fabry--Lindelöf theorem} provides a criterion to detect boundary singularities of a function analytic in the unit disk based on the asymptotic behavior of its coefficients. 

Define the sequence
\[
b_n(\theta) := \frac{1}{2} \left| \sum_{k=0}^n a_k \binom{n}{n-k} e^{\sqrt{-1} k \theta} \right|^{1/n}, \quad n \geq 0.
\]

\begin{thm}[Fabry--Lindelöf, \cite{Fab97,Lin98}]\label{FLthm}
For every \( \theta \in \mathbb{R} \), we have
\[
\limsup_{n \to \infty} b_n(\theta) \leq 1.
\]
Moreover, equality holds if and only if \( e^{i\theta} \) is a singular point of the function \( f \).
\end{thm}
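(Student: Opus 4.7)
The plan is to identify the quantities $c_n(\theta):=\sum_{k=0}^n a_k\binom{n}{k}e^{\sqrt{-1}k\theta}$ as the Taylor coefficients at the origin of an auxiliary holomorphic function $g_\theta$, reduce both assertions of the theorem to a single statement about its radius of convergence $R(\theta)$, and then use the geometry of a Möbius transformation to relate $R(\theta)$ to the singularity set of $f$ on the unit circle.

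First I would verify the generating-function identity
\[
g_\theta(z):=\frac{1}{1-z}\,f\!\left(\frac{ze^{\sqrt{-1}\theta}}{1-z}\right)=\sum_{n\geq 0}c_n(\theta)\,z^n,
\]
obtained by expanding $f(w)=\sum_k a_k w^k$ and using the identity $(1-z)^{-(k+1)}=\sum_{n\geq k}\binom{n}{k}z^n$. By Cauchy--Hadamard, $\limsup_n b_n(\theta)=1/(2R(\theta))$, so the theorem reduces to two claims: $R(\theta)\geq 1/2$, with equality if and only if $e^{\sqrt{-1}\theta}$ is a singular point of $f$.

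For the first claim, set $\varphi_\theta(z):=ze^{\sqrt{-1}\theta}/(1-z)$. A direct computation gives $|\varphi_\theta(z)|<1$ iff ${\rm Re}(z)<1/2$; hence $f\circ\varphi_\theta$ is holomorphic on the half-plane $\{{\rm Re}(z)<1/2\}$, and so is $g_\theta$ (the pole of $(1-z)^{-1}$ at $z=1$ lies outside this half-plane). Since $\{|z|<1/2\}\subset\{{\rm Re}(z)<1/2\}$, this yields $R(\theta)\geq 1/2$ and proves the first assertion.

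For the equality case, the key geometric input is that the Möbius map $\varphi_\theta$ sends the circle $\{|z|=1/2\}$ bijectively onto the circle $C_\theta:=\{|w-\tfrac{1}{3}e^{\sqrt{-1}\theta}|=2/3\}$, which lies in the closed unit disk and is internally tangent to $\partial\mathbb{D}$ only at the point $w=e^{\sqrt{-1}\theta}$. Suppose first that $R(\theta)=1/2$. Then $g_\theta$ must possess a boundary singularity on $\{|z|=1/2\}$ (every power series admits a singularity on its circle of convergence), and since $(1-z)^{-1}$ is regular there, $f$ must be singular at some point $w_0\in C_\theta$; holomorphy of $f$ on $\mathbb{D}$ forces $|w_0|=1$, hence $w_0=e^{\sqrt{-1}\theta}$. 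Conversely, if $R(\theta)>1/2$, then $g_\theta$ extends holomorphically to an open disk of radius strictly greater than $1/2$; its $\varphi_\theta$-image is a genuine open neighbourhood of $e^{\sqrt{-1}\theta}$, and the identity $f(w)=(1-\varphi_\theta^{-1}(w))\,g_\theta(\varphi_\theta^{-1}(w))$ provides a holomorphic extension of $f$ across $e^{\sqrt{-1}\theta}$ through the biholomorphic inverse $\varphi_\theta^{-1}$, contradicting the singularity assumption.

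The main obstacle is the geometric verification that the image circle $C_\theta$ is internally tangent to $\partial\mathbb{D}$ \emph{exclusively} at $w=e^{\sqrt{-1}\theta}$ (so that no other boundary singularity of $f$ on $\partial\mathbb{D}$ can contaminate the argument); once this elementary but crucial tangency fact is in place, both directions of the equivalence reduce to a routine transfer of analytic continuation along the biholomorphic Möbius map $\varphi_\theta$.
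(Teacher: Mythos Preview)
Your proposal is correct and follows essentially the same approach as the paper: both recognize the quantities $c_n(\theta)$ as the Taylor coefficients of $g_\theta(z)=(1-z)^{-1}f\bigl(ze^{\sqrt{-1}\theta}/(1-z)\bigr)$, use the fact that the M\"obius map $\varphi_\theta$ sends the half-plane $\{\mathrm{Re}(z)<1/2\}$ onto the unit disk to obtain $R(\theta)\ge 1/2$, and then identify the unique boundary obstruction at $z=1/2$ with the point $e^{\sqrt{-1}\theta}$. Your treatment of the equality case via the explicit image circle $C_\theta=\{|w-\tfrac{1}{3}e^{\sqrt{-1}\theta}|=2/3\}$ is more detailed than the paper's terse remark, but it encodes the same geometric observation (the only point of $|z|=1/2$ not lying in the open half-plane is $z=1/2$ itself).
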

\proof
Consider the function 
\[g(y)=\frac{1}{1-y}f\left(\frac{e^{\sqrt{-1}\theta}y}{1-y}\right)= \sum_{n=0}^\infty \beta_n y^n,\qquad \beta_n := \sum_{k=0}^n a_k \binom{n}{n - k} e^{\sqrt{-1} k \theta}.
\] The change of variables \( z = \frac{e^{\sqrt{-1}\theta} y}{1 - y} \) (a Möbius transformation) maps the half-plane \( \operatorname{Re}(y) < \frac{1}{2} \) conformally onto the unit disk \( |z| < 1 \), sending \( y = \frac{1}{2} \) to \( z = e^{\sqrt{-1}\theta} \). Since $g$ is analytic on \( \operatorname{Re}(y) < \frac{1}{2} \), the radius of convergence of the series for \( g(y) \) satisfies
\[R = \left( \limsup_{n \to \infty} |\beta_n|^{1/n} \right)^{-1} = \frac{1}{2} \left( \limsup_{n \to \infty} b_n(\theta) \right)^{-1}\geq \frac{1}{2}.
\]Equality holds (i.e., the radius is exactly \( \frac{1}{2} \)) if and only if \( g \) has a singularity at \( y = \frac{1}{2} \). 
\endproof

\begin{example}
Let $f(z)=\sum_{n=0}^\infty z^n$. We identically have $b_n(\theta)=|1+e^{\sqrt{-1}\theta}|/2$, for any $n\in\N$. We have $\limsup b_n(\theta)=1$ if and only if $\theta=0$. \qetr
\end{example}

See also \cite{Ost26} for further details, and other characterizations of singularities.

\subsubsection{Numerical evidence for a natural boundary of $\Lcyrit_N(z)$} 
Set
\[
b_{n,\theta}=\frac{1}{2}\left|\sum_{k=0}^{n}\lcyr(k,2)\binom{n}{k}e^{\sqrt{-1}k\theta}\right|,
\quad n\in\N_{>0},\quad \theta\in[0,2\pi].
\]
Numerical experiments indicate that, for any sampled value of $\theta$, we definitely have $b_{n,\theta}>1$, so that $\limsup_{n} b_{n,\theta}=1$. This provides numerical evidence for the existence of a natural boundary of $\Lcyrit_2(z)$, by the Fabry--Lindel\"of Theorem \ref{FLthm}, and consequently for every $\Lcyrit_N(z)$, by the first equation in \eqref{HaL}. Here we restrict ourselves to presenting the following table of values of $b_{n,\theta}$ for the sample points $\theta_m=2\pi m/10$, $m=1,\dots,10$, and for $n=P(10^k)$, $k=1,2,3,4$, where $P(a)$ denotes the $a$-th prime number.

\begin{table}[h!]
\centering
\begin{tabular}{c|cccc}
\hline
$m \backslash n$ & $P(10^1)$ & $P(10^2)$ & $P(10^3)$ & $P(10^4)$ \\
\hline
1  & 0.983001 & 1.00502 & 1.00059 & 1.00007 \\
2  & 1.01382  & 1.00498 & 1.00063 & 1.00007 \\
3  & 1.01390  & 1.00498 & 1.00063 & 1.00007 \\
4  & 0.948462 & 1.00502 & 1.00059 & 1.00007 \\
5  & 1.04988  & 1.00746 & 1.00078 & 1.00008 \\
6  & 0.948462 & 1.00502 & 1.00059 & 1.00007 \\
7  & 1.01390  & 1.00498 & 1.00063 & 1.00007 \\
8  & 1.01382  & 1.00498 & 1.00063 & 1.00007 \\
9  & 0.983001 & 1.00502 & 1.00059 & 1.00007 \\
10 & 1.06435  & 1.00753 & 1.00078 & 1.00008 \\
\hline
\end{tabular}
\caption{Values of $b_{n,\theta_m}$ for $\theta_m=2\pi m/10$ ($m=1,\dots,10$) and $n=P(10^k)$ with $k=1,2,3,4$.}
\end{table}

\end{document}